\documentclass[oneside,11pt]{amsart}
\usepackage{amsmath, amsfonts, amsthm, amssymb}
\usepackage{mathrsfs}
\usepackage{setspace}
\usepackage{enumitem}
\usepackage{color}
\usepackage{tikz}
\usetikzlibrary{cd,arrows,patterns}
\usepackage{hyperref}

\usepackage[all]{xy}
\xyoption{matrix}
\xyoption{arrow}

\usepackage[margin=1in]{geometry}

\newtheorem{thm}{Theorem}[subsection]
\newtheorem{lem}[thm]{Lemma}
\newtheorem{cor}[thm]{Corollary}
\newtheorem{prop}[thm]{Proposition}

\newenvironment{customthm}[1]
{\innercustomthm}{\endinnercustomthm}
 
\theoremstyle{definition}
\newtheorem{defn}[thm]{Definition}
\newtheorem{eg}[thm]{Example}
\newtheorem{nota}[thm]{Notation}
\newtheorem{rem}[thm]{Remark}
 
\numberwithin{equation}{section}
 
\DeclareMathOperator{\im}{im}
\DeclareMathOperator{\coker}{coker}
\DeclareMathOperator{\Hom}{Hom}
\DeclareMathOperator{\Ext}{Ext}
\DeclareMathOperator{\End}{End}
\DeclareMathOperator{\ind}{ind}
\DeclareMathOperator{\add}{add}
\DeclareMathOperator{\tp}{tp}
\DeclareMathOperator{\pc}{pc}

\DeclareMathOperator{\Reg}{reg}

\DeclareMathOperator{\Aut}{Aut}
\newcommand\undim{\underline\dim\,}

\newcommand{\field}[1]{\mathbb{#1}}
\newcommand{\ZZ}{\ensuremath{{\field{Z}}}}

\newcommand{\RR}{\ensuremath{{\field{R}}}}

\newcommand{\NN}{\ensuremath{{\field{N}}}}

\newcommand{\cC}{\ensuremath{{\mathcal{C}}}}

\newcommand{\cF}{\ensuremath{{\mathcal{F}}}}

\newcommand{\cM}{\ensuremath{{\mathcal{M}}}}
\newcommand{\cP}{\ensuremath{{\mathcal{P}}}}
\newcommand{\cQ}{\ensuremath{{\mathcal{Q}}}}

\newcommand{\cS}{\ensuremath{{\mathcal{S}}}}
\newcommand{\cT}{\ensuremath{{\mathcal{T}}}}

\newcommand{\cX}{\ensuremath{{\mathcal{X}}}}
\newcommand{\cY}{\ensuremath{{\mathcal{Y}}}}

\newcommand{\fX}{\ensuremath{{\mathfrak{X}}}}
\newcommand{\fY}{\ensuremath{{\mathfrak{Y}}}}

\newcommand{\fD}{\ensuremath{{\mathfrak{D}}}}
\newcommand{\fL}{\ensuremath{{\mathfrak{L}}}}
\newcommand{\fC}{\ensuremath{{\mathfrak{C}}}}

\renewcommand{\Im}{\ensuremath{{\mathsf{Im}}}}

\newcommand{\mods}{\mathsf{mod}}
\newcommand{\proj}{\mathsf{proj}}
\newcommand{\rad}{\mathsf{rad}}

\title{Infinitesimal Semi-invariant Pictures and Co-amalgamation}

\author{Eric J. Hanson}
\address{LACIM, Universit\'e du Qu\'ebec \`a Montr\'eal, Montreal, QC H2L 2C4, CANADA\newline \indent D\'epartement de math\'ematiques, Universit\'e de Sherbrooke, Sherbrooke QC J1K 2R1, CANADA}
\email{ejhanso3@ncsu.edu}

\author{Kiyoshi Igusa}
\address{Department of Mathematics, Brandeis University, Waltham MA 02453, USA}\email{igusa@brandeis.edu}

\author{Moses Kim}
 \address{Department of Mathematics, Brandeis University, Waltham MA 02454, USA}\email{moseskim@brandeis.edu}

\author{Gordana Todorov}
\address{Department of Mathematics, Northeastern University, Boston MA 02115, USA}\email{g.todorov@neu.edu}

\date{July 7, 2023}

\setcounter{tocdepth}{1}

\subjclass[2020]{
16G20.\\
\
\\
\indent This is the peer reviewed version of the following article: [E. J. Hanson, K. Igusa, M. Kim, and G. Todorov, {\it Infinitesimal semi-invariant pictures and co-amalgamation}, J. Lon. Math. Soc. (2) {\bf 109} (2024), no. 1], which has been published in final form at \url{https://doi.org/10.1112/jlms.12786}. This article is available under a Creative Commons CC BY-NC 4.0 license. \copyright \ the author(s).\\}


\begin{document}
\maketitle

\begin{abstract}
	The purpose of this paper is to study the local structure of the semi-invariant picture of a tame hereditary algebra near the null root. Using a construction that we call co-amalgamation, we show that this local structure is completely described by the semi-invariant pictures of a collection of self-injective Nakayama algebras. We then describe the cones of this local structure using cluster-like structures that we call support regular clusters. Finally, we show that the local structure is (piecewise linearly) invariant under cluster tilting.
\end{abstract}

\tableofcontents


\section{Introduction}

By ``infinitesimal semi-invariant picture'' we mean the germ at the $g$-vector of the null root of the semi-invariant picture for a tame hereditary algebra and also for cluster-tilted algebras of tame type. Semistability and the study of semi-invariants of quivers began with the work of Schofield \cite{schofield_semi} and King \cite{king_moduli}. This was then extended to quivers with relations in \cite{DW_semi}. In the hereditary case, semi-invariant pictures were first defined in \cite{IOTW_modulated}, building off of the theory developed in \cite{IOTW_cluster}. Similar ``pictures'' are used by Gross-Hacking-Keel-Kontsevich in \cite{GHKK_canonical} under the name \emph{cluster scattering diagrams}. Bridgeland algebraically defines such a scattering diagram for arbitrary finite dimensional algebras in \cite{bridgeland_scattering}, which is isomorphic to the cluster scattering diagram in the hereditary case\footnote{The recent paper \cite{mou_scattering} generalizes this result to any cluster-tilted algebra which admits a \emph{green-to-red sequence.}}. 

Semistability conditions define the walls of what is known as the \emph{wall-and-chamber structure} of an algebra. In \cite{BST_wall,asai}, Br\"ustle, Smith, and Treffinger and Asai study the connection between the {wall-and-chamber structure} defined in \cite[Section 6]{bridgeland_scattering} and the \emph{polyhedral fan of $\tau$-tilting pairs} of \cite{DIJ_tilting} (see also \cite{AIR_tilting}) for arbitrary finite dimensional algebras. 

Recall that every finite dimensional hereditary algebra has a cluster category, introduced in \cite{BMRRT_tilting}. In this category, we have cluster tilting objects which come equipped with a \emph{mutation} that mirrors the combinatorics of the mutation of clusters for a cluster algebra \cite{FZ_IV}. In order to generalize these combinatorics to the non-hereditary case, $\tau$-tilting theory is introduced in \cite{AIR_tilting}. A basic object $X = M \oplus P[1]$ with $M \in \mods\Lambda$ and $P \in \proj \Lambda$ is called \emph{support $\tau$-rigid} if $M$ is $\tau$-rigid, i.e., $\Hom(M,\tau M)=0$, and $\Hom_\Lambda(P,M) = 0$. A support $\tau$-rigid object $X$ is called \emph{support $\tau$-tilting} if it is not a proper direct summand of any other (basic) support $\tau$-rigid object.
An algebra is called \emph{$\tau$-tilting finite} if there are only finitely many isomorphism classes of support $\tau$-tilting objects for $\Lambda$ or, equivalently (see \cite{DIJ_tilting}), if $\mods\Lambda$ contains only finitely many bricks (up to isomorphism).
 The main result of \cite{BST_wall} associates to every $\tau$-tilting pair a unique chamber in this wall-and-chamber structure. This association is shown to be a bijection in \cite{asai}.

When an algebra admits infinitely many $\tau$-tilting pairs, the walls of the semi-invariant picture can accumulate (see e.g. \cite{paquette_accumulation}). In the case of a tame hereditary algebra or tame cluster-tilted algebra, there is a unique ``limiting wall'' corresponding to the null root. In the present paper, we study the infinitesimal structure of the semi-invariant picture near this limiting wall. One of the surprising features is that the chambers in the infinitesimal structure are not always simplicial. See Figure \ref{fig:regularPics2}.


\subsection{Organization and main results}
The organization of this paper is as follows. 

In Section \ref{sec:geometryBackground}, we discuss polyhedral fans and a general notion of a wall-and-chamber structure (Definition~\ref{def:wallchamber}) from a geometric viewpoint. In particular, we define co-amalgamated products (Definition~\ref{def:co-amalgamation}), which play a central role in Section~\ref{sec:co-amalgamation}.

In the remainder of Section \ref{sec:background}, we provide background material and references on $g$-vectors, cluster- and $\tau$-tilting theory, the (standard) wall-and-chamber structure of an algebra and semi-invariant pictures, tame hereditary algebras, and cluster-tilted algebras.

In Section \ref{sec:stability}, we define the \emph{infinitesimal} (at some $v \in \RR^n$) and \emph{$v^\perp$-semi-invariant domains} (Definition \ref{def:infinitesimal stability}) for arbitrary finite dimensional algebras. As a special case of $v^\perp$-semi-invariant domains, we define \emph{regular semi-invariant domains} (Definition \ref{def:regular stability}) for tame hereditary and tame cluster-tilted algebras, taking $v$ to be the $g$-vector of the null root. The exceptional case of the Kronecker quiver requires special attention. Each of these notions defines a new wall-and-chamber structure. We then prove our first main theorem:

\begin{customthm}{A}[Theorem \ref{thmA} and Proposition~\ref{prop:nullPics}]\label{thmintro:mainA}
	Let $\Lambda$ be an arbitrary finite dimensional algebra and let $n$ be the number of (isoclasses of) simple $\Lambda$-modules. Then for $0 \neq v \in \RR^n$, the $v^\perp$ wall-and-chamber structure is equal to the infinitesimal wall-and-chamber structure at $v$. In particular, for $\Lambda$ tame hereditary or tame cluster-tilted, the regular semi-invariant picture is equal to the infinitesimal semi-invariant picture at the $g$-vector of the null root.
\end{customthm}

 To conclude Section~\ref{sec:stability}, we discuss the connection between Theorem~\ref{thmintro:mainA} and Asai's reduction of wall-and-chamber structures \cite[Section~4.1]{asai}.

In Section \ref{sec:co-amalgamation}, we associate a self-injective Nakayama algebra $\Lambda_{r_i}$ to each exceptional tube of a tame hereditary algebra. We then compute the standard wall-and-chamber structures of these self-injective Nakayama algebras and use \emph{co-amalgamation} (Definition~\ref{def:co-amalgamation}) to relate these to the regular semi-invariant picture of the hereditary algebra as follows:

\begin{customthm}{B}[Theorem \ref{thmB}]\label{thmintro:mainB}
	Let $\Lambda$ be a tame hereditary algebra with exceptional tubes of ranks $r_1,\ldots,r_m$. Then the regular wall-and-chamber structure of $\Lambda$ is linearly isomorphic to the co-amalgamated product of the standard wall-and-chamber structures of self-injective Nakayama algebras $\Lambda_{r_i}$ (associated to the exceptional tubes) along the hyperplanes perpendicular to $\overline{1} = (1,\ldots,1)$.
\end{customthm}

A special argument is needed for the Kronecker which has no exceptional tubes.
In Section \ref{sec:regular rigid}, we define \emph{support regular rigid objects} and \emph{support regular clusters} (Definition \ref{def:regular rigid}) for a tame hereditary algebra and compare them to sets of support $\tau$-rigid objects for the self-injective Nakayama algebras $\Lambda_{r_i}$ (Section \ref{sec:combining chambers}). We show that the support regular rigid objects which are \emph{projectively closed} (Definition \ref{def:projectively closed}) define a polyhedral fan (Proposition \ref{prop:proj closed fan}). This is used to prove our third main theorem:

\begin{customthm}{C}[Theorem \ref{thmC} and Corollary \ref{cor:walls}]\label{thmintro:mainC}
	Let $H$ be a tame hereditary algebra. Then the chambers of the regular wall-and-chamber structure of $H$ are in bijection with the support regular clusters for $H$. Moreover, the labels of the walls bounding each chamber can be deduced from the corresponding support regular cluster.
\end{customthm}

We conclude Section \ref{sec:regular rigid} by relating our support regular rigid objects to other similar definitions in the literature (Corollary \ref{cor:imaginary clusters}).

Section \ref{sec:cluster-tilted} studies the regular semi-invariant pictures of tame cluster-tilted algebras. We first use the mutation formulas of Reading (Theorem \ref{readingFormula}), Mou (Theorem \ref{thm:mouFormula}) and Derksen-Weyman-Zelevinsky (Theorem \ref{thm:DWZFormula}) to describe how the standard semi-invariant picture mutates in the tame case. We then use this to prove our fourth main theorem:

\begin{customthm}{D}[Theorem \ref{thmD}, simplified form]\label{thmintro:mainD}
	Let $\Lambda = J(Q,W)$ be a tame cluster-tilted algebra over an algebraically closed field $K$, and let $\Gamma$ be a Euclidean quiver which is mutation equivalent to $Q$. Then the regular wall-and-chamber structure of $\Lambda$ is piecewise linearly isomorphic to the regular wall-and-chamber structure of $K\Gamma$.
\end{customthm}


\subsection{Acknowledgements}
KI is supported by Simons Foundation Grant \#686616. EH was partially supported by NSERC Discovery Grants and the Canada Research Chairs program.  Portions of this work were completed while EH was employed by Brandeis University, by the Norwegian University of Science and Technology (NTNU), and by l'Universit\'e de Sherbrooke. EH is thankful to these institutions for their support. The authors would also like to thank an anonymous referee for their careful reading of this manuscript and numerous suggestions for improvement.


\section{Background}\label{sec:background}

Let $\Lambda$ be a finite dimensional algebra over an algebraically closed field $K$. We assume throughout that all algebras are basic and connected. We denote by $\mods\Lambda$ the category of finitely generated left $\Lambda$-modules and by $\proj\Lambda$ the full subcategory of finitely generated projective $\Lambda$-modules. We denote by $\ind\Lambda$ the subcategory of indecomposable $\Lambda$-modules. Readers are referred to \cite{ASS_elements} for background material on associative algebras and their representations/modules.

We denote by $\{S(i)\}_{i = 1}^n$ a set of representatives of the isomorphism classes of the simple modules in $\mods\Lambda$ and by $P(i)$ the projective cover of $S(i)$. We assume that $\Lambda$ is basic; that is, $\Lambda \cong \bigoplus_{i = 1}^n P(i)$. Given $M \in \mods\Lambda$, the \emph{dimension vector} of $M$ is the vector $\undim M \in \NN^n$ given by $\undim M_i = \dim_K\Hom_\Lambda(P(i),M)$. We call $M$ a \emph{brick} if $\End_\Lambda(X)$ is a division algebra over $K$. We denote by $\tau$ and $\nu$ the Auslander-Reiten translate and Nakayama functor. We say a module $M \in \mods\Lambda$ is \emph{homogeneous} if $M$ is indecomposable and $\tau M \cong M$.

\subsection{Wall-and-chamber structures, polyhedral fans and co-amalgamated products}\label{sec:geometryBackground}

In this section, we first give an overview of the definitions of polyhedral cones, general wall-and-chamber structures, and polyhedral fans. Readers are referred to \cite[Chapters 1-2]{brasselet_introduction}, \cite[Chapter 1]{rockafellar_convex}, and \cite[Section 3.1]{BST_wall} for additional details. We then define the \emph{co-amalgamated product} of a pair of wall-and-chamber structures. In this section, we consider $n$ to be an arbitrary positive integer, but in all other sections we consider $n$ as the number of (isoclasses of) simple modules over some algebra.

Let $v_1,\ldots,v_k \in \RR^n$. We denote by $C(v_1,\ldots,v_k)$ the set of linear combinations $\lambda_1 v_1+\cdots + \lambda_k v_k$ with each $\lambda_i \geq 0$; $C(v_1,\ldots,v_k)$ is called a \emph{polyhedral cone.} The \emph{dimension} of a polyhedral cone is the minimum of the dimensions of all linear subspaces of $\RR^n$ containing it.

The polyhedral cone $C(v_1,\ldots,v_k)$ is called \emph{rational} if there exists a scalar multiple of each $v_i$ which is rational, and is called \emph{strictly convex} if it does not contain a 1-dimensional linear subspace. The \emph{relative interior} of a cone is its interior in the linear subspace that it spans and the \emph{relative boundary} of a cone is the complement (in its closure) of its interior.

For $v \in \RR^n$, there is an associated hyperplane $H(v) = \{w \in \RR^n:v\cdot w = 0\}$ and two half spaces $H(v)^+ = \{w \in \RR^n:v\cdot w \geq 0\}$ and $H(v)^- = \{w\in \RR^n:v\cdot w \leq 0\}$. Every polyhedral cone can be written as the intersection of finitely many half-spaces. We call the hyperplanes  corresponding to these half-spaces the \emph{defining hyperplanes} of the cone. A \emph{face} of a polyhedral cone is its intersection with a subset of its defining hyperplanes. We note that for any (strictly convex, rational) polyhedral cone, the intersection of two faces is a face and each face is a (strictly convex, rational) polyhedral cone. We call a cone a \emph{wall} if it has dimension $(n-1)$. 

\begin{defn}\label{def:wallchamber}
A \emph{wall-and-chamber structure} on $\RR^n$ is a pair $(\mathfrak{X},\mathfrak U)$, where $\mathfrak{X}$ is a set of rational polyhedral cones in $\RR^n$ and 
$\mathfrak U$ is the set of connected components of $\RR^n\setminus \overline{\bigcup_{X\in \mathfrak X} X}$,
which satisfies the following:
	\begin{enumerate}
		\item If $C, C' \in \mathfrak{X}$, then $C\cap C' \in \mathfrak{X}$.
		\item If $C \in \mathfrak{X}$, then there exists a wall $W \in \mathfrak{X}$ so that $C \subseteq W$.
		\item Every $U\in\mathfrak U$ is convex.
	\end{enumerate}
The elements of $\mathfrak U$ are called the \emph{chambers} of $\mathfrak X$. Since $\mathfrak X$ determines $\mathfrak U$, we say that $\mathfrak X$ \emph{gives} a wall-and-chamber structure if the above conditions are satisfied.
\end{defn}

 Wall-and-chamber structures are sometimes examples of \emph{polyhedral fans}, which will also appear in this paper. We recall their definition now.

\begin{defn}\cite[Definition 3.1]{brasselet_introduction}\label{def:polyhedral fan}
	A \emph{polyhedral fan} (also sometimes called a cone complex) is a countable\footnote{We note that the original definition of \cite{brasselet_introduction} requires this to be a finite set, but we allow for a countable set so as to include the $g$-vector fan of a finite dimensional algebra (see Definition~\ref{def:g fan}).} set $\mathfrak{X}$ of strictly convex rational polyhedral cones in $\mathbb{R}^n$ such that:
	\begin{enumerate}
		\item If $C \in \mathfrak{X}$ then every face of $C$ is in $\mathfrak{X}$.
		\item If $C, C' \in \mathfrak{X}$, then $C\cap C'$ is a face of both $C$ and $C'$.
	\end{enumerate}
\end{defn}

\begin{eg}
    Let $\fX$ be a polyhedral fan in $\RR^n$ and let $\mathfrak U$ be the set of chambers of $\fX$. Then $(\fX,\mathfrak U)$ is a wall-and-chamber structure if and only if the following hold.
    \begin{enumerate}
        \item For every $C \in \fX$, there exists $W \in \fX$ so that $C \subseteq W$ and $\dim W = n-1$. In this case, the polyhedral fan $\fX$ is said to be \emph{pure of dimension $(n-1)$.}
        \item Every element of $\mathfrak U$ is convex.
    \end{enumerate}
On the other hand, there exist many wall-and-chamber structures $(\fX,\mathfrak U)$ where $\fX$ is not a polyhedral fan. For example, let $\Delta = \{(x,y) \in \RR^2 \mid x = y\}$ and let $\fY = \{\Delta\}$. Then $\fY$ gives a wall-and-chamber structure with two chambers; however, $\fY$ is not a polyhedral fan because the cone $\Delta$ is not strictly convex.
\end{eg}

\begin{rem}
	The wall-and-chamber structure studied in \cite{BST_wall,asai} satisfies Definition \ref{def:wallchamber}, and also has the structure of a \emph{simplicial} fan. Our fans are not simplicial in general, however. See, e.g., the quadrilateral in the center of Figure \ref{fig:regularPics2}. We will associate additional wall-and-chamber structures to finite dimensional algebras, namely the \emph{infinitesimal}, $v^\perp$, and \emph{regular} wall-and-chamber structures, in Section \ref{sec:stability}.
\end{rem}

We will use the following definition for piecewise-linear isomorphisms in this paper.

\begin{defn}\label{def:fan_isom}
    Let $\mathfrak{X}$ and $\mathfrak{Y}$ be sets of rational polyhedral cones in $\RR^n$. Let $\phi:\RR^n \rightarrow \RR^n$ be a piecewise-linear homeomorphism. Suppose that
    \begin{enumerate}
        \item For all $C \in \mathfrak{X}$ and for all $S \subseteq \RR^n$ such that $\phi|_S$ is linear there exists a finite set $\mathcal{V}_{C,S}$ of $(\dim C)$-dimensional cones in $\mathfrak{Y}$ so that $\phi(C\cap S) \subseteq \bigcup_{D \in \mathcal{V}_{C,S}} D$, and
        \item For all $C' \in \mathfrak{Y}$ and for all $S' \subseteq \RR^n$ such that $(\phi)^{-1}|_{S'}$ is linear there exists a finite set $\mathcal{W}_{C',S'}$ of $(\dim C')$-dimensional cones in $\mathfrak{X}$ so that $\phi^{-1}(C'\cap S') \subseteq \bigcup_{D \in \mathcal{W}_{C',S'}} D$.
    \end{enumerate}Then we call $\phi$ a \emph{piecewise-linear isomorphism} from $\fX$ to $\fY$. Moreover, if such a $\phi$ exists then we say $\fX$ and $\fY$ are \emph{piecewise-linearly isomorphic} and write $\fX \cong \fY$.
\end{defn}

\begin{eg}
Denote
        \begin{eqnarray*}
            \Delta &=& \{(x,y) \in \RR^2 \mid x = y\}\\
            C_1 &=& \{(x,y) \in \RR^2 \mid x = -y, \ y \geq 0\}\\
            C_2 &=& \{(x,y) \in \RR^2 \mid x = -y, \ y \leq 0\}.
        \end{eqnarray*}
        Then the map $\phi:\RR^2 \rightarrow \RR^2$ given by $\phi(x,y) = (-x,y)$ is a (piecewise-)linear isomorphism from $\{\Delta\}$ to $\{C_1,C_2\}$.
\end{eg}

We now define co-amalgamated products and tabulate some basic results. This construction will be used to understand the regular wall-and-chamber structure of a tame hereditary algebra in terms of the wall-and-chamber structures of self-injective Nakayama algebras in Section \ref{sec:thmA}.

\begin{defn}\label{def:co-amalgamation}
    Let $\mathfrak{X}$ be a set of rational polyhedral cones in $\RR^n$ and let $\mathfrak{Y}$ be a set of rational polyhedral cones in $\RR^m$. Let $\phi: \RR^n \rightarrow \RR$ and $\psi: \RR^m\rightarrow\RR$ be linear functionals. Denote
    $$\Delta^{\phi,\psi} := \{(v,w) \in \RR^n \times \RR^m \mid \phi(v) = \psi(w)\}.$$
    For readability, we will write elements in $\Delta^{\phi,\psi}$ using their coordinates in $\RR^n \times \RR^m$. For each $V \in \mathfrak{X}$, we denote $\widetilde{V} := \{(v,w) \in \Delta^{\phi,\psi} \subseteq \RR^n \times \RR^m \mid v \in V\}$. For each $W \in \mathfrak{Y}$, we denote $\widetilde{W}$ analogously. We then define the \emph{co-amalgamated product} of $\mathfrak{X}$ and $\mathfrak{Y}$ along $\phi$ and $\psi$, denoted $\mathfrak{X}^\phi \ominus \mathfrak{Y}^\psi$, to be
    $$\mathfrak{X}^\phi \ominus \mathfrak{Y}^\psi := \{\widetilde{V} \mid V \in \mathfrak{X}\} \cup \{\widetilde{W} \mid W \in \mathfrak{Y}\} \cup \{\widetilde{V} \cap \widetilde{W} \mid (V,W) \in \mathfrak{X} \times \mathfrak{Y}\}.$$
\end{defn}

\begin{rem}
 We note that, by identifying $\Delta^{\phi,\psi}$ with $\RR^{n + m - 1}$, the co-amalgamated product $\mathfrak{X}^\phi \ominus\mathfrak{Y}^\psi$ can be seen as a set of rational polyhedral cones in $\RR^{n + m - 1}$.
\end{rem}

 We defer details of co-amalgamated products to the proof of Theorem~\ref{thmintro:mainB} in Section~\ref{sec:co-amalgamation}. As part of the proof, we will explicitly compute the co-amalgamated product of wall-and-chamber structures associated to certain self-injective Nakayama algebras.

\begin{rem}\label{rem:not simplicial}
We note that even if $\mathfrak{X}$ and $\mathfrak{Y}$ are simplicial, it may be the case that $\mathfrak{X}^\phi \ominus\mathfrak{Y}^\psi$ is not simplicial. For example, the wall-and-chamber structure in Figure \ref{fig:regularPics2} is not simplicial; however, it will follow from Theorem~\ref{thmintro:mainB} that this wall-and-chamber structure is the co-amalgamated product of two simplicial wall-and-chamber structures.
\end{rem}

We will be particularly interested in taking the co-amalgamated product of two wall-and-chamber structures. In this special case, we have the following.

\begin{prop}\label{prop:coamalg_wallchamber}
 Let $\mathfrak{X}$ and $\mathfrak{Y}$ give wall-and-chamber structures in $\RR^n$ and $\RR^m$, respectively. Then for any linear functionals $\phi:\RR^n \rightarrow \RR$ and $\psi: \RR^m \rightarrow \RR$, the co-amalgamated product $\mathfrak{X}^\phi \ominus\mathfrak{Y}^\psi$ gives a wall-and-chamber structure (in $\RR^{n + m - 1}$).
\end{prop}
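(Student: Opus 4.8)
The plan is to verify directly that $\mathfrak{X}^\phi \ominus \mathfrak{Y}^\psi$ satisfies the three conditions of Definition~\ref{def:wallchamber} inside the ambient space $\Delta^{\phi,\psi} \cong \RR^{n+m-1}$. First, I would dispense with the fact that every cone in the co-amalgamated product is a rational polyhedral cone: each $\widetilde{V}$ is the preimage of the rational cone $V$ under the projection $\Delta^{\phi,\psi} \to \RR^n$ intersected with the rational subspace $\Delta^{\phi,\psi}$, hence rational and polyhedral; similarly for $\widetilde{W}$; and $\widetilde{V}\cap\widetilde{W}$ is an intersection of two such. For condition (1), closure under intersection, I would observe that $\widetilde{V}\cap\widetilde{V'} = \widetilde{V\cap V'}$ and $\widetilde{W}\cap\widetilde{W'} = \widetilde{W\cap W'}$ (since $\mathfrak X$, $\mathfrak Y$ are closed under intersection by hypothesis), while $\widetilde V \cap \widetilde W$ is by definition a member of the family, and a triple intersection $\widetilde V \cap \widetilde W \cap \widetilde{V'} = \widetilde{V\cap V'}\cap \widetilde W$ is again of the allowed form. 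So the three ``types'' of cones are permuted among themselves under intersection.

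For condition (2), that every cone lies in a wall (an $(n+m-1-1)=(n+m-2)$-dimensional cone), I would argue as follows. Given $V \in \mathfrak X$, by hypothesis there is a wall $W_V \in \mathfrak X$ (dimension $n-1$ in $\RR^n$) with $V \subseteq W_V$; then $\widetilde V \subseteq \widetilde{W_V}$, and I claim $\dim \widetilde{W_V} = n + m - 2$. Indeed $\widetilde{W_V}$ is the set of $(v,w)$ with $v \in W_V$ and $\phi(v)=\psi(w)$; projecting to the $v$-coordinate this fibers over $W_V$ with fibers the affine subspace $\{w : \psi(w) = \phi(v)\}$ of $\RR^m$, which is either empty or of dimension $m-1$ (when $\psi \neq 0$) or all of $\RR^m$ (when $\psi = 0$). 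A short case analysis on whether $\phi$, $\psi$ vanish identically — together with the observation that the relevant cones span enough directions because $W_V$ has codimension $1$ in $\RR^n$ — gives the dimension count in each case; the generic case $\psi\neq 0$ yields $(n-1)+(m-1) = n+m-2$ as needed. Symmetrically for $W\in\mathfrak Y$, and for $\widetilde V \cap \widetilde W$ one uses whichever of the two containments lands in a wall. (One must be slightly careful in the degenerate cases $\phi\equiv 0$ or $\psi\equiv 0$, where $\Delta^{\phi,\psi}$ splits as a product and the claim reduces to the corresponding statement for $\mathfrak X$ or $\mathfrak Y$ separately.)

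For condition (3), convexity of the chambers, the key point is to identify the chambers of $\mathfrak{X}^\phi\ominus\mathfrak{Y}^\psi$. The union of all cones in the co-amalgamated product is $\{(v,w)\in\Delta^{\phi,\psi} : v \in \bigcup_{V\in\mathfrak X}\overline V \text{ or } w \in \bigcup_{W\in\mathfrak Y}\overline W\}$ (after taking closures), so a connected component of the complement is contained in a set of the form $\widetilde{U_1}\cap\widetilde{U_2}$ where $U_1$ is a chamber of $\mathfrak X$ and $U_2$ a chamber of $\mathfrak Y$; I would show each such set $\{(v,w) : v\in U_1,\ w\in U_2,\ \phi(v)=\psi(w)\}$ is convex (it is the intersection of the convex set $U_1\times U_2$ with the affine subspace $\Delta^{\phi,\psi}$) and, when nonempty, connected, hence is exactly a chamber. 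Convexity of chambers then follows.

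The main obstacle I expect is condition (2), specifically the dimension bookkeeping: one genuinely has to check that intersecting with the hyperplane-condition $\phi(v)=\psi(w)$ does not drop the dimension of a wall below $n+m-2$, and handle the degenerate cases where $\phi$ or $\psi$ is the zero functional (or where $\widetilde{W_V}$ could a priori be empty, which forces one to choose the wall on the other side). Everything else is routine set-theoretic manipulation of the three families of cones.
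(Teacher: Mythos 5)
Your proposal is correct and follows essentially the same route as the paper's proof: closure under intersection by checking the three types of cones, walls obtained as $\widetilde{V'}$ for $V'$ a wall of $\mathfrak X$ (or $\mathfrak Y$) with the dimension count $(n-1)+(m-1)=n+m-2$, and chambers identified as $\widetilde C\cap\widetilde D=(C\times D)\cap\Delta^{\phi,\psi}$, which is convex. You are in fact somewhat more careful than the paper, which silently assumes the generic case of nonzero functionals (the only case used later, with $\phi_i=(-)\cdot\overline 1$) and does not carry out the degenerate-case bookkeeping you flag.
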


\begin{proof}
    The fact that $\mathfrak{X}^\phi \ominus\mathfrak{Y}^\psi$ is closed under intersections is clear. To see (2) of Definition~\ref{def:wallchamber},
    let $U \in \mathfrak{X}^\phi \ominus\mathfrak{Y}^\psi$. Then we can assume without loss of generality that $U \subseteq \widetilde{V}$ for some $V \in \mathfrak{X}$. There then exists $V' \in \mathfrak{X}$ of dimension $(n-1)$ so that $V \subseteq V'$. It follows that $\widetilde{V} \subseteq \widetilde{V'}$, and that $\widetilde{V'}$ has dimension $n+m-2$. So, $U$ is contained in the wall $\widetilde{V'}$.
     The chambers of $\mathfrak{X}^\phi \ominus\mathfrak{Y}^\psi$ are the intersections $\widetilde C\cap \widetilde D$, where $\widetilde C$, $\widetilde D$ are the inverse images in $\Delta^{\phi,\psi}$ of the chambers $C\subseteq \RR^n$ and $D \subseteq \RR^m$ of $\fX$ and $\fY$, respectively.
\end{proof}

We conclude this section with the following straightforward observation.

\begin{prop}\label{prop:co-amalg facts}
 	Let $\mathfrak{X}_1, \mathfrak{X}_2,$ and  $\mathfrak{X}_3$ be sets of rational polyhedral fans in $\RR^{n_1}$, $\RR^{n_2}$, and $\RR^{n_3}$. Let $\phi_1:\RR^{n_1}\rightarrow \RR$, $\phi_2:\RR^{n_2}\rightarrow \RR$, and $\phi_3:\RR^{n_3} \rightarrow \RR$ be linear functionals. Let $\phi_{12}: \RR^{n_1+n_2 - 1}\rightarrow \RR$ be the linear functional which, when identifying $\RR^{n_1+n_2 - 1}$ with $\Delta^{\phi_1,\phi_2}$ as in Definition~\ref{def:co-amalgamation}, gives $\phi_{12}(v,w) = \phi_1(v) = \phi_2(w)$. Define $\phi_{23}$ likewise. Then
	\begin{enumerate}
		\item $\mathfrak{X}_1^{\phi_1}\ominus \mathfrak{X}_2^{\phi_2} \cong \mathfrak{X}_2^{\phi_2}\ominus \mathfrak{X}_1^{\phi_1}$.
		\item $(\mathfrak{X}_1^{\phi_1}\ominus\mathfrak{X}_2^{\phi_2})^{\phi_{12}}\ominus\mathfrak{X}_3^{\phi_3} \cong \mathfrak{X}_1^{\phi_1}\ominus(\mathfrak{X}_2^{\phi_2}\ominus\mathfrak{X}_3^{\phi_3})^{\phi_{23}}$.
	\end{enumerate}
\end{prop}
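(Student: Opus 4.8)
The plan is to deduce both parts from a single observation: a globally linear bijection between the two ambient spaces that carries the cones of one collection onto the cones of the other and preserves their dimensions is automatically a piecewise-linear isomorphism in the sense of Definition~\ref{def:fan_isom} (take $S$ to be the entire space and each covering set $\mathcal V_{C,S}$, $\mathcal W_{C',S'}$ to be a singleton). So in each case the work is to unwind the definition of $\ominus$ on the two sides and match the resulting lists of cones.

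For part (1), I would use the coordinate swap $\sigma(v,w)=(w,v)$, which restricts to a linear isomorphism $\Delta^{\phi_1,\phi_2}\to\Delta^{\phi_2,\phi_1}$. By construction $\sigma$ sends the lift $\widetilde{V_1}$ of $V_1\in\mathfrak{X}_1$ in $\Delta^{\phi_1,\phi_2}$ to the lift of $V_1$ in $\Delta^{\phi_2,\phi_1}$, sends $\widetilde{V_2}$ to the lift of $V_2$, and sends $\widetilde{V_1}\cap\widetilde{V_2}$ to $\widetilde{V_2}\cap\widetilde{V_1}$; hence it induces a dimension-preserving bijection from the cones of $\mathfrak{X}_1^{\phi_1}\ominus\mathfrak{X}_2^{\phi_2}$ onto those of $\mathfrak{X}_2^{\phi_2}\ominus\mathfrak{X}_1^{\phi_1}$. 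Composing with the chosen identifications of $\Delta^{\phi_1,\phi_2}$ and $\Delta^{\phi_2,\phi_1}$ with $\RR^{n_1+n_2-1}$ then gives the desired piecewise-linear isomorphism.

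For part (2), I would introduce the ``triple'' subspace
$$\Delta^{(3)}:=\{(v,w,u)\in\RR^{n_1}\times\RR^{n_2}\times\RR^{n_3}\mid \phi_1(v)=\phi_2(w)=\phi_3(u)\},$$
and note that, by the definition of $\phi_{12}$, the map $((v,w),u)\mapsto(v,w,u)$ is a linear isomorphism $\Delta^{\phi_{12},\phi_3}\to\Delta^{(3)}$, and likewise $(v,(w,u))\mapsto(v,w,u)$ gives $\Delta^{\phi_1,\phi_{23}}\to\Delta^{(3)}$. Transporting both sides of the claimed isomorphism onto $\Delta^{(3)}$ through these maps, I would unwind $\ominus$ twice on each side. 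On the left, the cones of $\mathfrak{X}_1^{\phi_1}\ominus\mathfrak{X}_2^{\phi_2}$ are $\{(v,w)\in\Delta^{\phi_1,\phi_2}\mid v\in V_1\}$, $\{(v,w)\mid w\in V_2\}$, and $\{(v,w)\mid v\in V_1,\,w\in V_2\}$; lifting these along $(\phi_{12},\phi_3)$ and then intersecting with the lifts of the $V_3\in\mathfrak{X}_3$ produces exactly the cones
$$\{(v,w,u)\in\Delta^{(3)}\mid v\in V_1,\ w\in V_2,\ u\in V_3\},$$
where each of the three membership conditions ($V_1\in\mathfrak{X}_1$, $V_2\in\mathfrak{X}_2$, $V_3\in\mathfrak{X}_3$) is either imposed or omitted and at least one is imposed. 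The identical unwinding on the right produces the same collection of cones. Hence the identity map of $\Delta^{(3)}$, read through the two coordinate identifications $\RR^{n_1+n_2+n_3-2}\cong\Delta^{(3)}$, is a linear isomorphism matching the two cone sets, and one concludes as in part (1).

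The point requiring care is the bookkeeping in this last step: one must verify that both double-unwindings produce precisely the seven types of cone above, neither more nor fewer, and one must remember that Definition~\ref{def:co-amalgamation} leaves the identification $\Delta^{\phi,\psi}\cong\RR^{n+m-1}$ unspecified, so the conclusion is an isomorphism of cone collections rather than a literal equality — which is exactly what the two coordinate identifications account for. If one of the $\phi_i$ is zero the dimension counts shift in the obvious way but the argument is unchanged; beyond this I expect no real obstacle, in keeping with the proposition being a ``straightforward observation.''
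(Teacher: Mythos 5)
Your proposal is correct. The paper itself gives no proof of this proposition (it is stated as ``the following straightforward observation'' and is not proved), so there is no argument to compare against; your write-up supplies the missing verification in a reasonable way.

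Two small points worth tightening if this were to appear in print. First, in part (1) you should note explicitly that the swap $\sigma$ descends to a piecewise-linear isomorphism in the sense of Definition~\ref{def:fan_isom} \emph{because} a global linear homeomorphism carrying each cone of $\mathfrak{X}$ bijectively onto a cone of $\mathfrak{Y}$ of the same dimension trivially satisfies both covering conditions with singleton covers; you state this as your ``plan,'' but it deserves a one-line justification since Definition~\ref{def:fan_isom} is phrased in terms of cone coverings rather than a bijection. Second, in part (2) the unwinding to the seven cone types is the heart of the matter and is correct, but you should be explicit that what is being matched is the \emph{set} of cones (not a list with multiplicities or labels), since Definition~\ref{def:co-amalgamation} produces a set; this makes the ``exactly the seven types, neither more nor fewer'' claim a genuine set equality inside $\Delta^{(3)}$ rather than a bijection of indexing data. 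Your remark about the degenerate case is essentially right but slightly off: if exactly one of $\phi_1,\phi_2$ vanishes, the map $(v,w)\mapsto\phi_1(v)-\phi_2(w)$ is still a nonzero functional and $\Delta^{\phi_1,\phi_2}$ has dimension $n_1+n_2-1$ as expected; the identification with $\RR^{n_1+n_2-1}$ only fails when \emph{both} functionals vanish, which does not occur in the paper's applications.
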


For any positive integer $m$, we use
Proposition~\ref{prop:co-amalg facts}, to inductively define $\ominus_{i = 1}^m \mathfrak{X}_i^{\phi_i}$  in the natural way.


\subsection{$g$-vectors, clusters, and $\tau$-tilting theory}

Let $P_1\xrightarrow{f} P_0$ be a morphism of projective $\Lambda$-modules. Recall that if $f$ is indecomposable (as an object in the bounded derived category) then there are two possibilities:
 \begin{enumerate}
	\item $f$ is a minimal projective presentation of an indecomposable module $M \in \mods\Lambda$. We then identify the morphism $P_1\xrightarrow{f} P_0$ with $M$.
	\item $P_0 = 0$ and $P_1$ is an indecomposable projective module. We then denote the object $P_1\rightarrow 0$ as $P_1[1]$.
\end{enumerate}

The \emph{$g$-vector} of $P(j)$ is defined to be $e_j$ and the $g$-vector of $P(j)[1]$ is defined to be $-e_j$.
We then additively extend these presentations and vectors as follows: given any object $M \oplus P[1]$ with $M\in\mods\Lambda$ and $P\in \proj \Lambda$, let $P_1\oplus P \xrightarrow{[f,0]} P_0 \rightarrow M$ be the direct sum of the minimal projective presentation of $M$ and the object $P\rightarrow 0$. Write $P_0 = \bigoplus_{i = 1}^m P(j_i)$ and $P_1\oplus P = \bigoplus_{i = 1}^{m'} P(j'_i)$ as direct sums of indecomposable projectives. Then the $g$-vector of $M\oplus P[1]$ is $g(M\oplus P[1]) = \sum_{i = 1}^{m} e_{j_i} - \sum_{i = 1}^{m'}e_{j'_i}$.

The name $g$-vector comes with the relationship between these vectors and the $g$-vectors studied in the context of cluster algebras (see \cite{FZ_IV}). We state here only the facts we will need about $g$-vectors.

Recall that a module $X \in \mods\Lambda$ is called \emph{rigid} if $\Ext^1_\Lambda(X,X) = 0$ and is called \emph{$\tau$-rigid} if $\Hom_\Lambda(X,\tau X) = 0$. For $\Lambda$ hereditary, the \emph{cluster category} of $\Lambda$, denoted $\cC(\Lambda)$, was introduced in \cite{BMRRT_tilting,CCS_quivers}. Up to isomorphism, the objects of $\cC(\Lambda)$ are the objects of $\mods\Lambda$ together with the \emph{negative projectives}, objects $P[1]$ for $P$ projective. A basic object $X = M\oplus P[1] \in \cC(\Lambda)$ is called \emph{cluster tilting} or a \emph{cluster} if $M$ is rigid, $\Hom_\Lambda(P,M) = 0$, and $X$ is not a direct summand of any other (basic) object with these properties.

Cluster tilting objects come equipped with a \emph{mutation} that mirrors the combinatorics of the mutation of clusters for a cluster algebra. In order to generalize these combinatorics to the non-hereditary case, $\tau$-tilting theory is introduced in \cite{AIR_tilting}. A basic object $X = M \oplus P[1]$ with $M \in \mods\Lambda$ and $P \in \proj \Lambda$ is called \emph{support $\tau$-rigid} if $M$ is $\tau$-rigid and $\Hom_\Lambda(P,M) = 0$. A support $\tau$-rigid object $X$ is called \emph{support $\tau$-tilting} if it is not a proper direct summand of any other (basic) support $\tau$-rigid object. We denote by $|M\oplus P[1]|$ the number of indecomposable direct summands (up to isomorphism) of the $\tau$-rigid object $M\oplus P[1]$. We denote by $\mathsf{str}(\Lambda)$ and $\mathsf{stt}(\Lambda)$ the sets of (isoclasses of) support $\tau$-rigid and support $\tau$-tilting objects for $\Lambda$. An algebra is called \emph{$\tau$-tilting finite} if $|\mathsf{stt}(\Lambda)| < \infty$ or, equivalently (see \cite{DIJ_tilting}), if $\mods\Lambda$ contains only finitely many bricks (up to isomorphism).

Given two support $\tau$-rigid objects $M\oplus P[1]$ and $N\oplus Q[1]$, we say $M\oplus P[1]$ is \emph{contained} in $N\oplus Q[1]$ if $M$ is a direct summand of $N$ and $P$ is a direct summand of $Q$. One of the key results of \cite{AIR_tilting} is that every support $\tau$-rigid object is contained in at least one support $\tau$-tilting object and that $M\oplus P[1]$ is support $\tau$-tilting if and only if $|M\oplus P[1]| = n$. Moreover, if $|M \oplus P[1]| = n-1$, then $M\oplus P[1]$ is contained in precisely two support $\tau$-tilting objects.

Support $\tau$-rigid objects can be studied geometrically using the \emph{$g$-vector fan}, defined as follows.

\begin{defn}\label{def:g fan} Let $\Lambda$ be a finite dimensional algebra.
	\begin{enumerate}
		\item Let $M\oplus P[1]$ be support $\tau$-rigid for $\Lambda$. We denote by $C(M\oplus P[1])$ the cone in $\RR^n$ spanned by the $g$-vectors of the indecomposable direct summands of $M\oplus P[1]$.
		\item The \emph{$g$-vector fan} of $\Lambda$ is the collection of cones $C(M\oplus P[1])$ for $M\oplus P[1]$ support $\tau$-rigid.
	\end{enumerate}
\end{defn}

This fan was first studied from the viewpoint of representation theory in \cite{DIJ_tilting}. We will use the following facts.

\begin{prop}\label{prop:g fan}
	Let $\Lambda$ be a finite dimensional algebra. Then
	\begin{enumerate}
		\item \cite{DIJ_tilting} The $g$-vector fan is a polyhedral fan which is simplicial.
		\item \cite{AIR_tilting} For any $M\oplus P[1]$ which is support $\tau$-rigid, the vectors spanning the cone $C(M\oplus P[1])$ are linearly independent.
	\end{enumerate}
\end{prop}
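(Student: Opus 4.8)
Since both parts are classical results in $\tau$-tilting theory, my plan is to indicate how the standard arguments run and which one carries the real content. I would prove (2) first, as (1) builds on it. The idea is to reduce to the case of a support $\tau$-tilting object: by the completion statement quoted above from \cite{AIR_tilting}, every support $\tau$-rigid $M\oplus P[1]$ is a sub-object of some support $\tau$-tilting $N\oplus Q[1]$, so the $g$-vectors of the indecomposable summands of $M\oplus P[1]$ form a subset of those of $N\oplus Q[1]$; it therefore suffices to show that the $g$-vectors of the $n$ indecomposable summands of a support $\tau$-tilting object form a $\ZZ$-basis of $K_0(\proj\Lambda)\cong\ZZ^n$. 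For this I would pass to the bounded homotopy category $K^b(\proj\Lambda)$ and invoke the Adachi--Iyama--Reiter bijection between support $\tau$-tilting objects and (basic) two-term silting complexes: such a complex $T=\bigoplus_{i=1}^n T_i$ has exactly $n$ indecomposable summands, and since the thick subcategory generated by $T$ is all of $K^b(\proj\Lambda)$, the classes $[T_i]$ generate $K_0(K^b(\proj\Lambda))\cong\ZZ^n$; a generating set of $n$ elements in a free abelian group of rank $n$ is a basis. Unwinding the bijection, $[T_i]$ is precisely the $g$-vector of the $i$-th summand of the corresponding support $\tau$-tilting object, which gives (2).

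For (1), the word ``simplicial'' is immediate from (2): each cone $C(M\oplus P[1])$ is by definition spanned by the $g$-vectors of the indecomposable summands of $M\oplus P[1]$, and these are linearly independent. It remains to verify the two axioms of Definition~\ref{def:polyhedral fan}. For the face axiom, since $C(M\oplus P[1])$ is a simplicial cone, every face of it is spanned by some subset of its spanning $g$-vectors; such a subset is the set of $g$-vectors of a direct summand of $M\oplus P[1]$, and a direct summand of a support $\tau$-rigid object is again support $\tau$-rigid (a summand of a $\tau$-rigid module is $\tau$-rigid, and $\Hom_\Lambda(P',M')=0$ whenever $P'$ is a summand of $P$ and $M'$ of $M$), so the face lies in the $g$-vector fan. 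For the intersection axiom, I would first reduce to the maximal cones: the fan is closed under faces, and every cone in it is a face of the cone of some support $\tau$-tilting object, so it is enough to show that the relative interiors of $C(T)$ and $C(T')$ are disjoint for distinct support $\tau$-tilting objects $T\neq T'$. Here I would appeal to \cite{DIJ_tilting}: a vector $v$ in the relative interior of $C(T)$ recovers $T$ — for instance through the functorially finite torsion class (equivalently, the support $\tau$-rigid pair) canonically attached to $v$ — so such a $v$ cannot lie in the interior of any other maximal cone; combined with closedness under faces and the linear independence of the spanning vectors, a routine polyhedral-geometry argument then shows that the intersection of any two cones of the fan is a common face.

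The main obstacle is precisely this last point: the disjointness of the relative interiors of the maximal cones, i.e. the reconstruction of a support $\tau$-tilting object from a $g$-vector in its interior (the well-definedness of the ``canonical'' decomposition of \cite{DIJ_tilting}). This is genuine representation-theoretic input rather than formal manipulation; by contrast, the linear-independence statement (2), although also substantial, reduces cleanly to the structural theory of silting complexes in $K^b(\proj\Lambda)$, and the face axiom and the simplicial property are then essentially bookkeeping.
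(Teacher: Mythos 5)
The paper offers no proof of this proposition: it is stated as a pair of citations to \cite{DIJ_tilting} and \cite{AIR_tilting}, so there is no internal argument to compare yours against. Your sketch of (2) --- completing a support $\tau$-rigid object to a support $\tau$-tilting one and showing that the $g$-vectors of the latter's $n$ indecomposable summands form a $\ZZ$-basis of $K_0(\proj\Lambda)$ via the two-term silting correspondence --- is exactly the standard argument from \cite{AIR_tilting} and is correct.

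For (1), however, the step you dismiss as ``a routine polyhedral-geometry argument'' is where the real content sits, and as stated it fails. Disjointness of the relative interiors of the \emph{maximal} cones, together with closure under faces, simpliciality and rationality, does not imply the intersection axiom. Concretely, in $\RR^3$ take $C_1 = C(e_1,e_2,e_3)$ and $C_2 = C(e_1,\ e_1+e_2,\ -e_3)$, together with all of their faces: both are simplicial, rational, strictly convex, their interiors lie in $\{z>0\}$ and $\{z<0\}$ respectively and hence are disjoint, the collection is closed under faces, and yet $C_1\cap C_2 = C(e_1,\ e_1+e_2)$ is a face of $C_2$ but not of $C_1$. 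What is actually needed is that the relative interiors of \emph{all} cones $C(M\oplus P[1])$, ranging over all support $\tau$-rigid objects and not just the support $\tau$-tilting ones, are pairwise disjoint --- equivalently, that $C(M\oplus P[1])\cap C(N\oplus Q[1])$ is the cone of the largest common direct summand of the two objects. That is the statement proved in \cite{DIJ_tilting}, and it requires representation-theoretic input for arbitrary $\tau$-rigid pairs (injectivity of $g$-vectors on indecomposable $\tau$-rigid pairs and the canonical decomposition of a vector in the relative interior of a cone), not only the reconstruction of a support $\tau$-tilting object from a point in the interior of a chamber. So you have correctly located where the representation theory enters, but you have restricted it to the maximal cones, and the remaining polyhedral geometry is not routine from that input alone.
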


We include the following useful proposition which substitutes for the six term Hom-Ext sequence for hereditary algebras.

\begin{prop}\label{useful proposition}
Given $P\xrightarrow pQ\to T\to0$ a minimal projective presentation of any module $T$ and $X$ any other module, there is a natural exact sequence 
\[
	0\to \Hom(T,X)\to \Hom(Q,X)\xrightarrow{p^\ast} \Hom(P,X)\to D\Hom(X,\tau T)\to 0.
\]
\end{prop}

\begin{proof}
$\ker p^\ast=\Hom(T,X)$ by left exactness of $\Hom$. But $D\Hom(P,X)=\Hom(X,\nu P)$ giving the exact sequence
\[
	0\to \Hom(X,\tau T)\to \Hom(X,\nu P)\xrightarrow{Dp^\ast} \Hom(X,\nu Q).
\]
So, $\coker p^\ast=D\ker Dp^\ast=D\Hom(X,\tau T)$.
\end{proof}

\begin{cor}\label{cor: six term exact sequence}
Given any short exact sequence $0\to X\to Y\to Z\to 0$ we get a long exact sequence
\[
	0\to \Hom(T,X)\to \Hom(T,Y)\to \Hom(T,Z)\to \] \[\to D\Hom(X,\tau T)\to D\Hom(Y,\tau T)\to D\Hom(Z,\tau T)\to0
\]
\end{cor}

\begin{proof}
Take $\Hom$ of the projective presentation $p:P\to Q$ of $T$ to the exact sequence to get the following.
$$
\xymatrix{
0\ar[r] & \Hom(Q,X)\ar[d]\ar[r] &
	\Hom(Q,Y)\ar[d]\ar[r] &
	\Hom(Q,Z)\ar[d]\ar[r] & 0\\
0\ar[r]& \Hom(P,X) \ar[r]& 
	\Hom(P,Y) \ar[r]&
	\Hom(P,Z)\ar[r]&0
	}
$$
The snake lemma, together with Proposition \ref{useful proposition}, gives the corollary.
\end{proof}


\subsection{Semi-invariant pictures and the standard wall-and-chamber structure}\label{sec:pictures}

In this section, we recall the notion of semistability and the construction of the (standard) wall-and-chamber structure and semi-invariant picture associated to a finite dimensional algebra (Definition~\ref{def:picture}). 
We follow much of the exposition of \cite{BST_wall}.

	Let $\Lambda$ be an arbitrary finite dimensional algebra. Let $P_1 \xrightarrow{f} P_0$ be the minimal projective presentation of an object $X = M \oplus P[1]$ with $M\in\mods\Lambda$ and $P \in \proj\Lambda$. Then for any $Y \in \mods\Lambda$, we have a 4 term exact sequence
	$$0 \rightarrow \Hom_\Lambda(M,Y) \rightarrow \Hom_\Lambda(P_0,Y) \xrightarrow{f^*} \Hom_\Lambda(P_1,Y) \rightarrow D\Hom_\Lambda(Y,\tau M\oplus\nu P) \rightarrow 0,$$
	where $\tau$ is the Auslander-Reiten translate and $\nu$ is the Nakayama functor.
Observe that if $\coker f = 0$ (i.e. $X = P[1]$), the first two terms are 0 and this restricts to the standard duality $\Hom_\Lambda(P,Y) \cong D\Hom_\Lambda(Y,\nu P)$.
	
We recall that $D\Hom_\Lambda(Y,\tau X) \cong \Ext_\Lambda^1(X,Y)$ when $\Lambda$ is hereditary, but in general $\Ext_\Lambda^1(X,Y)$ can be a proper submodule of $D\Hom_\Lambda(Y,\tau X)$. Moreover, in general, we have the following:
\begin{eqnarray*}
	g(X) \cdot \undim Y &=& \dim_K\Hom_\Lambda(P_0,Y) - \dim_K\Hom_\Lambda(P_1,Y)\\
		&=& \dim_K\Hom_\Lambda(X,Y) - \dim_K\Hom_\Lambda(Y,\tau X).
\end{eqnarray*}

This is known as the \emph{Euler-Ringel pairing} and leads to the following definitions of stability and semistability due to King \cite{king_moduli}. Note that, in particular, $g(M)\cdot \undim M=0$ when $M$ is homogeneous (and thus $\tau M \cong M$).

\begin{defn}\label{def:semistable} Let $\Lambda$ be an arbitrary finite dimensional algebra and let $Y \in \mods\Lambda$.
	\begin{enumerate}
	\item Let $v \in \RR^n$. Then $Y$ is called \emph{$v$-semistable} if $v\cdot\undim Y = 0$ and $v\cdot\undim Y' \leq 0$ for all submodules $Y' \subseteq Y$. If in addition $Y \neq 0$ and $v\cdot\undim Y' < 0$ for all proper submodules $0\neq Y'\subsetneq Y$, then $Y$ is called \emph{$v$-stable}.
	\item The \emph{semi-invariant domain} of $Y$ is the set $D(Y) = \{v \in \RR^n \mid Y\textnormal{ is $v$-semistable}\}$; i.e.,
	\[
	    D(Y)=\{v\in \RR^n\mid v\cdot\underline\dim Y=0,\ v\cdot\underline\dim Y'\le0 \text{ for all submodules } Y'\subseteq Y\}.
	\]
	\end{enumerate}
\end{defn}

The following will be important in the sequel.

\begin{lem}\label{lem:homOrthogonalToEta}
	Let $\Lambda$ be an arbitrary finite dimensional algebra and let $X, Y \in \mods\Lambda$ with $Y$ indecomposable and $X$ a homogeneous brick (so in particular $\tau X \cong X$). Then $Y$ is $g(X)$-semistable (i.e. $g(X)\in D(Y)$) if and only if either\begin{enumerate}
	    \item $\Hom_\Lambda(X,Y) = 0 = \Hom_\Lambda(Y,X)$, or 
	    \item $Y$ is an iterated self-extension of $X$. (This includes the case $Y\cong X$.)
	\end{enumerate} 
\end{lem}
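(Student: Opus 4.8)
The plan is to use the Euler--Ringel pairing to translate the semistability condition $g(X) \in D(Y)$ into a statement about homomorphism spaces, exploiting crucially that $X$ is a homogeneous brick. First I would record the key numerical identity: since $X$ is homogeneous, $g(X) \cdot \undim M = \dim_K \Hom_\Lambda(X,M) - \dim_K \Hom_\Lambda(M, \tau X) = \dim_K \Hom_\Lambda(X,M) - \dim_K \Hom_\Lambda(M, X)$ for every $M \in \mods\Lambda$. In particular $g(X) \cdot \undim X = 0$, which is consistent with $X$ being a candidate for $g(X)$-semistability. The condition $g(X) \in D(Y)$ then says: $\dim_K\Hom_\Lambda(X,Y) = \dim_K\Hom_\Lambda(Y,X)$, and for every submodule $Y' \subseteq Y$ we have $\dim_K\Hom_\Lambda(X,Y') \leq \dim_K\Hom_\Lambda(Y',X)$.

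For the ``if'' direction, case (1) is immediate: if $\Hom_\Lambda(X,Y) = 0 = \Hom_\Lambda(Y,X)$ then $g(X)\cdot\undim Y = 0$, and for any submodule $Y' \subseteq Y$ the inclusion $\Hom_\Lambda(X,Y') \hookrightarrow \Hom_\Lambda(X,Y) = 0$ (left-exactness of $\Hom_\Lambda(X,-)$) forces $\Hom_\Lambda(X,Y') = 0$, so $g(X)\cdot\undim Y' = -\dim_K\Hom_\Lambda(Y',X) \leq 0$. For case (2), I would induct on the length of the self-extension filtration of $Y$ by $X$: if $0 \to X \to Y \to Y'' \to 0$ with $Y''$ an iterated self-extension of $X$, then applying $\Hom_\Lambda(X,-)$, $\Hom_\Lambda(-,X)$ and using that $X$ is a brick, one checks $g(X) \cdot \undim Y = 0$; for a submodule $Y' \subseteq Y$, one analyzes the intersection $Y' \cap X$ (which is $0$ or $X$ since $X$ is simple in the category of iterated self-extensions — more precisely, $Y'\cap X$ is a submodule of $X$, hence $0$ or $X$ as $X$ is a brick, and a quotient consideration shows $Y'/(Y'\cap X)$ embeds in $Y''$), reducing to the inductive hypothesis.

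For the ``only if'' direction --- the substantive part --- suppose $g(X) \in D(Y)$ but $\Hom_\Lambda(X,Y)$ and $\Hom_\Lambda(Y,X)$ are not both zero; since they have equal dimension, both are nonzero. I would argue $Y$ must be an iterated self-extension of $X$ by induction on the length of $Y$. Pick a nonzero map $f : X \to Y$; since $X$ is a brick its image $\im f$ is a submodule isomorphic to $X$ (any proper quotient of $X$ that also embeds... more carefully: $\ker f$ is a proper submodule of the brick $X$, but bricks need not be simple, so one must work a bit --- here I would instead use a nonzero $g : Y \to X$ together with $f$ and the brick condition to produce either a submodule or quotient of $Y$ isomorphic to $X$). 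With $X' \cong X$ a submodule of $Y$, semistability gives $g(X)\cdot\undim X' = 0$ and $g(X)\cdot\undim(Y/X')$; I would show $Y/X'$ is again $g(X)$-semistable (using the submodule correspondence and that $X'$ attains equality), so by induction $Y/X'$ is an iterated self-extension of $X$, hence so is $Y$. The main obstacle I anticipate is precisely the step of extracting a copy of $X$ as a sub- or quotient-module of $Y$: a brick need not be simple, so one cannot simply take images of maps; the resolution is to combine the existence of both a nonzero $X \to Y$ and a nonzero $Y \to X$, whose composite $X \to Y \to X$ is either $0$ or an isomorphism by the brick property, and to handle the ``$0$'' case by a separate filtration/subquotient argument together with the semistability inequalities on the relevant submodules. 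One should also invoke that $\Ext^1$-orthogonality type conditions are unavailable in general (since $\Lambda$ need not be hereditary), so the argument must stay at the level of $\Hom$-dimensions and the defining inequalities of $D(Y)$.
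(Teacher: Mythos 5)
Your setup is the same as the paper's: the Euler--Ringel identity $g(X)\cdot\undim M=\dim_K\Hom_\Lambda(X,M)-\dim_K\Hom_\Lambda(M,\tau X)$ combined with $\tau X\cong X$, and the resulting reformulation of $g(X)\in D(Y)$ as $\dim_K\Hom_\Lambda(X,Y)=\dim_K\Hom_\Lambda(Y,X)$ together with $\dim_K\Hom_\Lambda(X,Y')\le\dim_K\Hom_\Lambda(Y',X)$ for submodules $Y'$ (equivalently, the reverse inequality for quotients). Your ``if'' direction is fine (the paper dismisses it as straightforward).

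The gap is in the ``only if'' direction, at exactly the step you flag: producing a copy of $X$ as a sub- or quotient module of $Y$. Your proposed fix --- compose some nonzero $X\to Y$ with some nonzero $Y\to X$ and, when the composite vanishes, fall back on ``a separate filtration/subquotient argument'' --- is not a proof: the zero composite is the generic case, and that unstated argument is where all the content lives. The paper's resolution avoids any case split. Starting from a nonzero $f\colon Y\to X$, set $Z=\ker f$, so that $\overline f\colon Y/Z\hookrightarrow X$ is a monomorphism. Semistability of $Y$, applied to the quotient $Y/Z$ via the pairing, gives $\dim_K\Hom_\Lambda(X,Y/Z)\ge\dim_K\Hom_\Lambda(Y/Z,X)>0$, so there is a nonzero $g\colon X\to Y/Z$; the composite $\overline f\circ g$ is then nonzero \emph{automatically} (a nonzero map followed by a mono), hence an automorphism of the brick $X$, forcing $Y/Z\cong X$. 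One then recurses on the semistable submodule $Z$ (the paper runs this as a minimal-counterexample argument on $Y$), rather than on a quotient $Y/X'$ as you propose. Without the quotient inequality applied to the image of $f$ --- which is what manufactures the second map whose composite cannot vanish --- your argument does not get off the ground, so this is a missing idea rather than a routine detail.
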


\begin{proof}
    It is straightforward to show that if $Y$ satisfies either (1) or (2), then $Y$ is $g(X)$-semistable. For the converse, suppose for a contradiction that $Y$ is $g(X)$-semistable but does not satisfy either (1) or (2). Furthermore, suppose that $\dim Y$ is minimal with respect to this property. Since $X=\tau X$, we have: $\dim\Hom_\Lambda(X,Y)=\dim\Hom_\Lambda(Y,X)$ and, by Corollary~\ref{cor: six term exact sequence}, $\dim\Hom_\Lambda(X,Y/Z)\ge \dim\Hom_\Lambda(Y/Z,X)$ for any $Z\subsetneq Y$. Thus since $Y$ does not satisfy (1), there is a nonzero map $f: Y\to X$. Let $Z$ be the kernel of this map. We get an induced monomorphism $\overline f:Y/Z\hookrightarrow X$. Since $\Hom_\Lambda(Y/Z,X)\neq0$ there is a nonzero morphism $g: X\to Y/Z$. Then $\overline f\circ g$ is a nonzero endomorphism of $X$, and therefore an automorphism since $X$ is a brick. So, $Y/Z\cong X$. Since $X$ and $Y$ are $g(X)$-semistable, $Z$ is also. By the minimality assumption on $Y$, this means $\Hom_\Lambda(X,Z) = 0 = \Hom_\Lambda(Z,X)$. (If $Z$ is an iterated self-extension of $X$ then so is $Y$, contradicting that $Y$ is a counterexample.) But then the fact that $\Hom_\Lambda(Z,\tau X) = \Hom_\Lambda(Z,X) = 0$ implies that $\Ext^1_\Lambda(X,Z) = 0$, and so the short exact sequence $Z \hookrightarrow Y \twoheadrightarrow X$ is split. Since $Y$ is indecomposable, this implies that $Z = 0$ and $Y \cong X$, a contradiction.
\end{proof}

We now prepare to define what we call the (standard) wall-and-chamber structure associated to a finite dimensional algebra.

\begin{lem}\label{lem: D(Y) is n-1 dim}
    Let $M$ be $v$-stable for some $v\in \RR^n$. Then $D(M)$ is $n-1$ dimensional and $M$ is a brick.
\end{lem}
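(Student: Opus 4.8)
The plan is to treat the two conclusions in turn, since they are essentially independent: the brick statement is purely formal, and the dimension statement is an elementary continuity argument. Both come straight from unwinding Definition~\ref{def:semistable}.

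\textbf{$M$ is a brick.} I would take any $f \in \End_\Lambda(M)$ that is not an isomorphism and show $f = 0$. Since $M$ is finite-dimensional, an injective endomorphism of $M$ is injective as a $K$-linear map, hence bijective, hence an isomorphism; so a non-isomorphism $f$ has $\ker f \neq 0$. Assume for contradiction that $f \neq 0$. Then $0 \neq \ker f \subsetneq M$, and $\im f$ is a submodule of $M$ with $0 \neq \im f \subsetneq M$ (nonzero since $f \neq 0$; proper since $\dim_K \im f = \dim_K M - \dim_K \ker f < \dim_K M$ by rank--nullity). Applying $v$-stability to the proper nonzero submodules $\ker f$ and $\im f$ gives $v \cdot \undim \ker f < 0$ and $v \cdot \undim \im f < 0$. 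But $\undim M = \undim \ker f + \undim \im f$, so $0 = v \cdot \undim M = v\cdot\undim\ker f + v\cdot\undim\im f < 0$, a contradiction. Hence every nonzero endomorphism of $M$ is invertible, i.e.\ $\End_\Lambda(M)$ is a division algebra.

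\textbf{$\dim D(M) = n-1$.} Being $v$-stable, $M$ is in particular a brick by the previous paragraph, so $M \neq 0$ and $\undim M \neq 0$; thus $D(M) \subseteq H(\undim M)$, an $(n-1)$-dimensional linear subspace, giving $\dim D(M) \leq n-1$. For the reverse bound, I would use that, although $M$ may have infinitely many submodules, the set $\mathcal{D} := \{\undim M' \mid 0 \neq M' \subsetneq M\}$ is finite (it consists of nonzero vectors bounded above coordinatewise by $\undim M$), and that $D(M) = H(\undim M) \cap \bigcap_{d \in \mathcal{D}} H(d)^-$ (the submodules $0$ and $M$ contributing only automatically-satisfied constraints). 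Since $M$ is $v$-\emph{stable}, the point $v$ satisfies all of the finitely many inequalities $v \cdot d < 0$, $d \in \mathcal{D}$, \emph{strictly}. By continuity, there is a neighborhood $B$ of $v$ inside $H(\undim M)$ on which these finitely many strict inequalities still hold, so $B \subseteq D(M)$; hence $D(M)$ contains a relatively open subset of the $(n-1)$-dimensional subspace $H(\undim M)$, giving $\dim D(M) \geq n-1$.

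The only point requiring a moment's care — the ``main obstacle'', such as it is — is the finiteness of the set of dimension vectors of submodules of $M$, which reduces $D(M)$ to the intersection of a hyperplane with finitely many closed half-spaces and so lets the continuity argument work uniformly. Everything else is direct bookkeeping with dimension vectors.
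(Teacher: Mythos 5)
Your proposal is correct and follows essentially the same route as the paper: the dimension bound comes from the finitely many strict (open) inequalities defining stability holding on a neighborhood of $v$ inside the hyperplane $(\undim M)^\perp$, and the brick statement from applying strict stability to $\ker f$ and $\im f$ (the paper phrases the latter via $f(M)$ being simultaneously a proper submodule and a quotient). No gaps.
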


\begin{proof}
    By definition, $v\cdot \undim M=0$; i.e., $v$ lies in the hyperplane $(\undim M)^\perp$. Stability means $v\cdot \undim M'<0$ for all $0\neq M'\subsetneq M$. This is a finite set of open conditions which will be satisfied for all elements of $(\undim M)^\perp$ close to $v$. So, $D(M)$ is $n-1$ dimensional. To see that $M$ is a brick, suppose not and let $0 \neq f \in \End(M) \setminus \Aut(M)$. Then $0 \neq f(M)\subsetneq M$ is both a submodule and quotient module of $M$ making $v\cdot\undim f(M)=0$, contradicting the assumption that $M$ is $v$-stable.
\end{proof}

We include a proof of the following for the convenience of the reader.

\begin{lem}\label{lem: Asai}\cite[Proposition~2.7]{asai}
For every nonzero module $M$, there is a submodule $M'\subseteq M$ so that $D(M)\subseteq D(M')$, $\dim D(M')=n-1$, and $M'$ is a brick.
\end{lem}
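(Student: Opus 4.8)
The plan is to argue by induction on $\dim_K M$. For the base case, if $M$ is simple then the only constraint defining $D(M)$ is the equality $v\cdot\undim M=0$, so $D(M)=(\undim M)^\perp$ is $(n-1)$-dimensional and $M$ is a brick (as $K$ is algebraically closed); take $M'=M$. For the inductive step I would use the dichotomy: \emph{either} $M$ is a brick with $\dim D(M)=n-1$, in which case $M'=M$ works again, \emph{or else} I will exhibit a \emph{proper} nonzero submodule $N\subsetneq M$ with $D(M)\subseteq D(N)$, and then the inductive hypothesis applied to $N$ produces $M'\subseteq N\subseteq M$ with $D(M)\subseteq D(N)\subseteq D(M')$, $\dim D(M')=n-1$, and $M'$ a brick, exactly as required. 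Throughout I use that $0\in D(M)$, so $D(M)\neq\emptyset$.

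When $M$ is not a brick, I would choose a nonzero non-invertible $f\in\End_\Lambda(M)$ and set $N:=\im f$, a proper nonzero submodule. For $v\in D(M)$, from $N\cong M/\ker f$ and $v\cdot\undim M=0$ one gets $v\cdot\undim N=-\,v\cdot\undim(\ker f)\ge 0$ (since $\ker f\subseteq M$), while also $v\cdot\undim N\le 0$ (since $N\subseteq M$); hence $v\cdot\undim N=0$. As every submodule of $N$ is a submodule of $M$, we also have $v\cdot\undim Y'\le 0$ for all $Y'\subseteq N$, so $v\in D(N)$. Thus $D(M)\subseteq D(N)$. This is the same mechanism as in the proof of Lemma~\ref{lem: D(Y) is n-1 dim}, now used without any stability hypothesis.

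When $M$ is a brick but $\dim D(M)<n-1$, the point is that $D(M)$ is precisely the intersection of the hyperplane $(\undim M)^\perp$ with the finitely many half-spaces $\{v:\undim Y\cdot v\le 0\}$, with $Y$ ranging over proper nonzero submodules of $M$ (only finitely many values of $\undim Y$ occur, being bounded coordinatewise by $\undim M$). A polyhedral-geometry argument shows that, since this cone has dimension strictly below $\dim(\undim M)^\perp=n-1$ and contains $0$, one of these half-space constraints must vanish identically on $D(M)$ without this being forced by $v\cdot\undim M=0$; concretely, there is a proper nonzero submodule $Y$ with $\undim Y\notin\RR\cdot\undim M$ and $D(M)\subseteq(\undim Y)^\perp$. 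Then for $v\in D(M)$ we have $v\cdot\undim Y=0$ and $v\cdot\undim Y'\le 0$ for all $Y'\subseteq Y\subseteq M$, so $v\in D(Y)$; that is, $D(M)\subseteq D(Y)$, and we recurse on $Y$.

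The step I expect to be the main obstacle is this last one: converting the numerical inequality $\dim D(M)<n-1$ into an honest submodule. The tool is the standard description of the linear span of a polyhedral cone $C=\{v\in W:a_i\cdot v\le 0\}$ inside a subspace $W$: writing $E=\{i:a_i\cdot v=0\text{ for all }v\in C\}$, one has $\operatorname{span}(C)=W\cap\bigcap_{i\in E}a_i^\perp$, where the nontrivial inclusion comes from averaging to produce a point in the relative interior of $C$; hence $\dim C<\dim W$ exactly when some $a_i$ with $i\in E$ fails to be orthogonal to all of $W$. Applying this with $W=(\undim M)^\perp$, so that $W^\perp=\RR\cdot\undim M$, yields the submodule $Y$. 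The remainder is routine bookkeeping with dimension vectors and the submodule lattice. I would also flag the subtlety that $\dim D(M)=n-1$ alone does \emph{not} force $M$ to be a brick --- for instance a non-brick homogeneous regular module over a tame hereditary algebra can still have a full $(n-1)$-dimensional semi-invariant domain --- which is exactly why the ``not a brick'' case has to be handled separately rather than folded into the dimension argument.
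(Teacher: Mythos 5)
Your argument is correct, but it is organized quite differently from the paper's. The paper proves the lemma in one step: pick $\theta\neq 0$ in the relative interior of $D(M)$, let $M'$ be a \emph{minimal} submodule with $\theta\cdot\undim M'=0$ (which exists since $M$ itself qualifies); minimality makes $M'$ $\theta$-stable, so Lemma~\ref{lem: D(Y) is n-1 dim} immediately gives that $M'$ is a brick with $\dim D(M')=n-1$, and the inclusion $D(M)\subseteq D(M')$ follows because the functional $\theta'\mapsto\theta'\cdot\undim M'$ is nonpositive on the convex set $D(M)$ and vanishes at a relative-interior point, hence vanishes identically. You instead run an induction on $\dim_K M$ with two separate descent mechanisms: passing to $\im f$ when $M$ is not a brick, and extracting a submodule $Y$ with $D(M)\subseteq(\undim Y)^{\perp}$ from the polyhedral span computation when $M$ is a brick with $\dim D(M)<n-1$. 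Both mechanisms check out (your span lemma $\operatorname{span}(C)=W\cap\bigcap_{i\in E}a_i^{\perp}$ is exactly the same relative-interior convexity fact the paper uses, just packaged as a statement about active constraints), and your closing caveat that $\dim D(M)=n-1$ does not imply $M$ is a brick is accurate and is precisely why your case split cannot be collapsed. What the paper's route buys is brevity: by aiming directly at a $\theta$-\emph{stable} submodule it gets the brick property and the dimension simultaneously from the stability lemma, avoiding both the induction and the separate non-brick descent. What your route buys is independence from the notion of stability and a slightly more explicit picture of \emph{why} a low-dimensional $D(M)$ forces a genuinely new orthogonality constraint coming from a proper submodule.
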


\begin{proof}
If $D(M)= \{0\}$, one can take $M'$ to be any simple submodule of $M$. Otherwise, let $\theta\neq0$ be a point in the relative interior of $D(M)$. Let $M'$ be a minimal submodule of $M$ so that $\theta\cdot \undim M'=0$. Then $M'$ is $\theta$-stable. So $\dim D(M')=n-1$ and $M'$ is a brick. Since $\theta$ is in the interior of $D(M)$ we must have $\theta'\cdot \undim M'=0$ for all $\theta'$ close to $\theta$ in $D(M)$. And this implies $D(M)\subseteq D(M')$.
\end{proof}

The following lemma is adapted from \cite{IT_pictureMGS}.

\begin{lem}\label{lem: IT13}
For any finite nonempty order ideal $\cS \subseteq \NN^n\setminus \{0\}$ let $L_\cS$ denote the union of all $D(M)$ where $\undim M\in \cS$ and $\dim D(M)=n-1$. Then $L_\cS$ is a closed subset of $\RR^n$ whose complement is a disjoint union of at most $2^{|\cS|}$ convex open sets.
\end{lem}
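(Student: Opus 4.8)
The plan is to argue by induction on $|\cS|$, proving at the same time that $L_\cS$ is closed and that $\RR^n\setminus L_\cS$ is a disjoint union of at most $2^{|\cS|}$ convex open sets; since any partition of an open set into disjoint convex (hence connected) open pieces must coincide with its decomposition into connected components, this is the same as the asserted statement about components. Closedness is immediate: for a fixed dimension vector $d$ and any module $M$ with $\undim M=d$, the cone $D(M)$ is determined by the finite set $\{\undim M' : M'\subseteq M\}\subseteq\{f\in\NN^n : f\le d\}$, so only finitely many distinct sets $D(M)$ occur as $\undim M$ runs through $\cS$, and $L_\cS$ is a finite union of the closed sets $D(M)$.

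For the base case $|\cS|=1$ the only order ideal is $\{e_i\}$, the only module with dimension vector $e_i$ is $S(i)$, and $D(S(i))=H(e_i)$, so $L_\cS$ is a hyperplane with complement two open half-spaces. For $|\cS|\ge2$, pick a maximal element $d\in\cS$ and set $\cS'=\cS\setminus\{d\}$, again an order ideal. Put $L^{(d)}:=\bigcup\{D(M):\undim M=d,\ \dim D(M)=n-1\}\subseteq H(d)$, so that $L_\cS=L_{\cS'}\cup L^{(d)}$ and hence $\RR^n\setminus L_\cS=\bigsqcup_j\bigl(U'_j\setminus L^{(d)}\bigr)$, where $U'_1,\dots,U'_k$ (with $k\le2^{|\cS|-1}$) are the convex open components of $\RR^n\setminus L_{\cS'}$ supplied by the inductive hypothesis. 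The crucial claim is a dichotomy: for each $j$, either $U'_j\cap L^{(d)}=\emptyset$, or $U'_j\cap H(d)\subseteq L^{(d)}$. Granting this, $U'_j\setminus L^{(d)}$ is either $U'_j$ (one convex open piece) or the disjoint union of the two open half-spaces $U'_j\cap\{x : x\cdot d>0\}$ and $U'_j\cap\{x : x\cdot d<0\}$ (two convex open pieces, both nonempty since $U'_j$ meets but is not contained in $H(d)$), so $\RR^n\setminus L_\cS$ is a disjoint union of at most $2k\le2^{|\cS|}$ convex open sets, completing the induction.

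To prove the dichotomy, let $v\in U'_j\cap L^{(d)}$ and let $v'\in U'_j\cap H(d)$ be arbitrary. By definition of $L^{(d)}$, some module with dimension vector $d$ is $v$-semistable with $(n-1)$-dimensional semi-invariant domain; applying Lemma~\ref{lem: Asai} to that module yields a brick submodule $B$ of it with $v\in D(B)$ and $\dim D(B)=n-1$, and since $v\notin L_{\cS'}$ while $\cS$ is an order ideal, $\undim B$ cannot lie strictly below $d$ (that would give $\undim B\in\cS'$ and $v\in D(B)\subseteq L_{\cS'}$), hence $\undim B=d$. Consider $W:=(U'_j\cap H(d))\cap D(B)$, which contains $v$ and is closed in $U'_j\cap H(d)$. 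It is also open there: if some $w\in W$ satisfied $w\cdot\undim B''=0$ for a submodule $0\ne B''\subsetneq B$, then --- since $w\cdot\undim B'''\le0$ for every $B'''\subseteq B''\subseteq B$ --- we would have $w\in D(B'')$ with $\undim B''\in\cS'$, and applying Lemma~\ref{lem: Asai} to $B''$ would place $w$ in $L_{\cS'}$, contradicting $w\in U'_j$; thus every $w\in W$ satisfies the finitely many \emph{strict} inequalities $w\cdot\undim B''<0$ over nonzero proper submodules $B''$ of $B$, which then hold throughout a neighborhood. Since $U'_j$ is convex, $U'_j\cap H(d)$ is connected, so the nonempty clopen set $W$ equals $U'_j\cap H(d)$; in particular $v'\in D(B)$, and because $\undim B=d\in\cS$ with $\dim D(B)=n-1$ this gives $v'\in L^{(d)}$, proving $U'_j\cap H(d)\subseteq L^{(d)}$.

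The main obstacle is this dichotomy, and within it the observation that a point of $U'_j\cap L^{(d)}$ cannot lie on the hyperplane $H(\undim B'')$ of any nonzero proper submodule $B''$ of the brick $B$: this is exactly where the order-ideal hypothesis is used, and it is what makes $W$ open rather than merely closed, forcing $W=U'_j\cap H(d)$. The remaining ingredients are routine: $\undim$ is monotone along submodule inclusions (so submodule dimension vectors lie weakly below $d$ and hence in $\cS$), an open convex set meeting a hyperplane without being contained in it has points strictly on both sides, and the elementary fact that a partition of an open set into disjoint convex open sets is its decomposition into connected components.
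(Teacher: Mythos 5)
Your proof is correct and follows essentially the same route as the paper's: induction on $|\cS|$ by removing a maximal element, with each chamber of $L_{\cS'}$ either missing the new walls or being cut exactly in two by the hyperplane $H(d)$. Your clopen argument for the dichotomy is just a careful spelling-out of the paper's assertion that $\partial D(M_j)\subseteq L_{\cS_0}$ (both ultimately resting on Lemma~\ref{lem: Asai} and the order-ideal hypothesis), and your finiteness observation for closedness is a welcome extra detail.
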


\begin{proof}
When $\cS=\{e_i\}$, $L_\cS=e_i^\perp$ is a hyperplane whose complement has $2^1$ components and the lemma holds. So, suppose $|\cS|\ge2$. Let $\beta\in \cS$ be maximal and $\cS_0=\cS\setminus\{\beta\}$. By induction on the size of $\cS$, $L_{\cS_0}$ is closed and its complement is a disjoint union of at most $2^{|\cS_0|}$ convex open sets.

Let $\{M_j\}$ be the set of all modules $M_j$ with $\undim M_j=\beta$ and $\dim D(M_j)=n-1$. Each $\partial D(M_j)$ is contained in a union of $D(M')$ where $\undim M'<\beta$ and we may assume $\dim D(M')=n-1$ by the previous lemma. So, $\partial D(M_j)\subseteq L_{\cS_0}$ for all $j$. For each component $U$ of the complement of $L_{\cS_0}$ there are two possibilities.
\begin{enumerate}
\item $U$ is disjoint from $D(M_j)$ for all $j$. In this case, $U$ is a component of the complement of $L_\cS$.
\item $U\cap D(M_j)=U\cap \beta^\perp$ for some $j$. Then $U\cap L_\cS=U\cap \beta^\perp$ cuts $U$ into exactly two disjoint convex subsets.
\end{enumerate}
Thus the complement of $L_\cS$ is a disjoint union of at most $2^{|\cS|}$ convex open sets and $L_\cS$ is closed.
\end{proof}

\begin{prop}\label{prop:picture}
    Let $\Lambda$ be a finite dimensional algebra and let 
    $$\fD(\Lambda) = \{D(X) \mid 0 \neq X \in \mods\Lambda\}.$$
    Then $\fD(\Lambda)$ gives a wall-and-chamber structure in $\RR^n$.
\end{prop}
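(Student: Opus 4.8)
The plan is to verify the three conditions of Definition~\ref{def:wallchamber} for the set $\fD(\Lambda)$, since by the structure of that definition the chamber set $\mathfrak U$ is then automatically determined. The ingredients are already assembled in the excerpt: Lemmas~\ref{lem: D(Y) is n-1 dim}, \ref{lem: Asai}, and~\ref{lem: IT13}.

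First I would address condition~(2), that every cone in $\fD(\Lambda)$ is contained in a wall. Given $0 \neq X \in \mods\Lambda$, Lemma~\ref{lem: Asai} produces a submodule $M' \subseteq X$ with $D(X) \subseteq D(M')$, $\dim D(M') = n-1$, and $M'$ a brick; since $D(M') \in \fD(\Lambda)$ and it has dimension $n-1$, it is a wall, so $D(X)$ is contained in a wall. (One should also note that $D(X)$ is genuinely a rational polyhedral cone: it is cut out by the hyperplane $(\undim X)^\perp$ and finitely many rational half-spaces $v \cdot \undim X' \leq 0$ indexed by the finitely many dimension vectors of submodules of $X$, so it is an intersection of finitely many rational half-spaces, hence a rational polyhedral cone.)

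Next, condition~(1): if $D(X), D(Y) \in \fD(\Lambda)$, I want $D(X) \cap D(Y) \in \fD(\Lambda)$. The natural candidate is $D(X \oplus Y)$. Indeed, from the definition of semistability, a vector $v$ lies in $D(X \oplus Y)$ precisely when $v \cdot \undim(X\oplus Y) = 0$ and $v \cdot \undim Z \leq 0$ for every submodule $Z \subseteq X \oplus Y$; taking $Z = X$ and $Z = Y$ (as submodules of the direct sum) forces $v \cdot \undim X \leq 0$ and $v \cdot \undim Y \leq 0$, and combined with $v\cdot\undim X + v\cdot\undim Y = 0$ this gives $v \cdot \undim X = 0 = v \cdot \undim Y$; conversely if $v \in D(X) \cap D(Y)$ then for any submodule $Z \subseteq X\oplus Y$ one has $v\cdot\undim Z = v\cdot\undim(Z\cap X) + v\cdot\undim(\text{image of }Z\text{ in }Y) \leq 0$ since $Z \cap X$ is a submodule of $X$ and the image is a submodule of $Y$. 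Hence $D(X\oplus Y) = D(X) \cap D(Y) \in \fD(\Lambda)$.

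Finally, condition~(3), that each chamber is convex, together with the assertion that $\mathfrak U$ is well-defined (in particular that the union $\overline{\bigcup_{X} D(X)}$ behaves reasonably). Here is where I expect the main obstacle: $\fD(\Lambda)$ may contain infinitely many walls, which can accumulate, so one cannot directly invoke a finite hyperplane arrangement. The remedy is Lemma~\ref{lem: IT13}: the walls of $\fD(\Lambda)$ are exactly the sets $D(M)$ with $\dim D(M) = n-1$, and by Lemma~\ref{lem: D(Y) is n-1 dim} such $M$ is a brick, hence $\undim M$ ranges over a subset of $\NN^n \setminus \{0\}$; writing the union of all walls as $\bigcup_{\cS} L_\cS$ over an exhaustion by finite order ideals $\cS$, each $L_\cS$ is closed with convex open complement components, and one checks that the complement of the full union of walls is the intersection (as $\cS$ grows) of these complements, hence a union of convex open sets — these are the chambers, which are therefore convex. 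I would also remark that $\overline{\bigcup_X D(X)}$ equals the closure of the union of the walls, since by condition~(2) every $D(X)$ lies in some wall; thus $\mathfrak U$, defined as the connected components of the complement of this closed set, consists of convex open sets, completing the verification.
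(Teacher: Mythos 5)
Your proposal follows the paper's proof essentially step for step: closure under intersection via $D(X\oplus Y)=D(X)\cap D(Y)$ (which the paper only cites as well known but you rightly verify with the $Z\cap X$ / image-in-$Y$ decomposition), containment in a wall via Lemma~\ref{lem: Asai}, and convexity of chambers via Lemma~\ref{lem: IT13}. The one soft spot is your final step: the chambers are components of the complement of the \emph{closure} of the union of walls, and the nested intersections of components of the sets $\RR^n\setminus L_\cS$ are convex but need not be open nor coincide with those components, so to finish one should argue as the paper does --- if $v,w$ lie in the same chamber but the segment $vw$ crosses a wall $D(X)$, then $v$ and $w$ lie in different components of $\RR^n\setminus L_\cS$ for $\cS=\{0\neq\beta\in\NN^n\mid \beta\le\undim X\}$, contradicting that the chamber is a connected subset of $\RR^n\setminus L_\cS$.
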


\begin{proof}We show that $\cX=\fD(\Lambda)$ satisfies Definition \ref{def:wallchamber}.
Let $0 \neq X \in \mods\Lambda$. It is well known that for $Y \in \mods\Lambda$, we have $D(X) \cap D(Y) = D(X\oplus Y)$. Moreover, by Asai's Lemma \ref{lem: Asai} above,  there exists a brick $X' \in \mods\Lambda$ for which $D(X) \subseteq D(X')$ and $\dim D(X') = n-1$. To show the last step, that the chambers of $\cX$ are convex, suppose not. Then there exist $v,w$ in the same chamber but the line segment $vw$ passes through $D(X)$ for some $X$. We can take $D(X)$ to be a wall. So, $v$ and $w$ lie in different chambers of $L_\cS$, where $\cS$ is the set of all nonzero $\beta\in\NN^n$ which are $\le \undim X$. Since $L_\cS\subseteq \overline{\bigcup _{X\in\cX}X}$, $v$ and $w$ lie in different chambers of $\cX$. This contradiction proves the Proposition. 
\end{proof}

\begin{defn}\label{def:picture} Let $\Lambda$ be a finite dimensional algebra.
\begin{enumerate}
    \item We refer to the wall-and-chamber structure given by $\fD(\Lambda)$, as in Proposition~\ref{prop:picture}, as the \emph{standard wall-and-chamber structure} of $\Lambda$.
    \item Let $S^{n-1} \subseteq \RR^n$ be the unit sphere and denote $\fL(\Lambda):= \{D(X) \cap S^{n-1} \mid D(X) \in \fD(\Lambda)\}$. We refer to $\fL(\Lambda)$ as the
\emph{semi-invariant picture} of $\Lambda$.
\end{enumerate} 
\end{defn}

\begin{rem}\label{rem:picture}\
	\begin{enumerate}
		\item Since either one determines the other, the term \emph{semi-invariant picture} is also sometimes used to describe standard wall-and-chamber structure in the literature. We distinguish between the two to emphasize the relationship between semi-invariant pictures and the pictures of \cite{igusa_generalized,igusa_obstruction,IO}.
		\item The definition of the semi-invariant picture taken here is also more general than what has appeared in the literature in that it includes non-hereditary algebras. Semi-invariant pictures of finite hereditary type are studied in detail in \cite{ITW_picture,IT_signed,IT_pictureMGS}. The generalization to tame hereditary type (sometimes called pro-pictures) appears in \cite{BHIT_semi-invariant,ITW_periodic,IPT_semi-stable}.
	\end{enumerate}
\end{rem}

\begin{rem}\label{rem:g-fan walls}
We recall from \cite{BST_wall} that the $g$-vector fan embeds into the standard wall-and-chamber structure of an algebra. Moreover, when $\Lambda$ is $\tau$-tilting finite and/or tame, it is known that the $g$-vector fan is dense in $\RR^n$ (see \cite{DIJ_tilting,PY_tame,hille_volume}).
\end{rem}

\begin{rem}\label{rem:c-vector}
	When the boundary of a chamber non-trivially intersects a wall $D(X)$, the vector $\undim X$ is sometimes referred to as a $c$-vector. In general, representation-theoretic $c$-vectors were first defined by Fu \cite{fu_vectors} and studied in connection with the standard wall-and-chamber structure in \cite{treffinger_sign}. These vectors will be relevant to some of the proofs in Section \ref{sec:cluster-tilted} of this paper. In the  hereditary or cluster-tilted case, this agrees with the notion of $c$-vectors used for cluster algebras.
\end{rem}

We conclude this section with the following.

\begin{thm}\label{thm:g fan = wall chamber}
	Let $\Lambda$ be a finite dimensional algebra. Then
	\begin{enumerate}
		\item The chambers of the standard wall-and-chamber structure of $\Lambda$ are precisely the interiors of the cones $C(M\oplus P[1])$ for $M\oplus P[1]$ support $\tau$-tilting. Moreover, different support $\tau$-tilting objects correspond to different chambers.
		\item If $\Lambda$ is $\tau$-tilting finite, then the union of the walls of the standard wall-and-chamber structure of $\Lambda$ is equal to the union of the cones $C(M\oplus P[1])$ for $M\oplus P[1]$ support $\tau$-rigid with $(n-1)$ indecomposable direct summands. 
	\end{enumerate}
\end{thm}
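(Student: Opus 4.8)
The plan is to deduce both statements from the known relationship between the $g$-vector fan and semistability, together with the combinatorial structure of support $\tau$-rigid objects supplied by \cite{AIR_tilting}. For part (1), I would first recall (Remark~\ref{rem:g-fan walls}) that the $g$-vector fan embeds into the standard wall-and-chamber structure; concretely, for a support $\tau$-rigid object $M\oplus P[1]$, the cone $C(M\oplus P[1])$ lies inside $D(Z)$ for any appropriate module $Z$ whose semi-invariant domain detects the relevant walls, and when $M\oplus P[1]$ is support $\tau$-tilting the cone $C(M\oplus P[1])$ is $n$-dimensional by Proposition~\ref{prop:g fan}(2). Since the interior of such an $n$-dimensional cone meets no hyperplane $(\undim X)^\perp$ (a $g$-vector in the interior is, by the defining property of $\tau$-tilting pairs, not $\undim X$-semistable-vanishing for any $X$), it is contained in a single chamber of $\fD(\Lambda)$. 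Conversely, given a chamber $U$ of $\fD(\Lambda)$, pick $g$ in the interior of $U$; by \cite{BST_wall} (or by the correspondence between $\tau$-tilting pairs and chambers cited in the Introduction) $g$ lies in the interior of some $C(M\oplus P[1])$ with $M\oplus P[1]$ support $\tau$-tilting, and matching up the two assignments shows the correspondence is a bijection. Injectivity — that distinct support $\tau$-tilting objects give distinct chambers — follows because the $g$-vectors of the summands are recovered (up to the combinatorics of mutation) from the chamber, or more directly because the $g$-vector fan is a fan (Proposition~\ref{prop:g fan}(1)) so distinct maximal cones have distinct interiors.

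For part (2), assume $\Lambda$ is $\tau$-tilting finite. I would argue two inclusions. First, if $M\oplus P[1]$ is support $\tau$-rigid with $n-1$ indecomposable summands, then by \cite{AIR_tilting} it is contained in exactly two support $\tau$-tilting objects, and $C(M\oplus P[1])$ is an $(n-1)$-dimensional face shared by the two corresponding $n$-dimensional cones; being a codimension-one face lying between two distinct chambers, its relative interior consists of points lying on a wall of $\fD(\Lambda)$, so $C(M\oplus P[1]) \subseteq \overline{\bigcup_{D(X)\in\fD(\Lambda)} D(X)}$ and in fact lies in a single wall $D(X)$ (using that the $g$-vector fan embeds and that an $(n-1)$-dimensional cone of the fan must sit inside a wall). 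Second, for the reverse inclusion, let $w$ lie on a wall of $\fD(\Lambda)$. Since $\Lambda$ is $\tau$-tilting finite, the $g$-vector fan is complete (dense, hence all of $\RR^n$ by finiteness; Remark~\ref{rem:g-fan walls}), so $w$ lies in some cone $C(N\oplus Q[1])$ with $N\oplus Q[1]$ support $\tau$-rigid. If $w$ were in the interior of an $n$-dimensional such cone it would be in a chamber by part (1), a contradiction; hence $w$ lies on a proper face, which is $C(M\oplus P[1])$ for a support $\tau$-rigid $M\oplus P[1]$ of fewer summands. Iterating (or passing directly to a codimension-one face through $w$ that still meets the wall) one reduces to the case of $n-1$ summands, completing the inclusion.

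The main obstacle I anticipate is the careful bookkeeping in the reverse inclusion of part (2): one must ensure that a point on a wall can always be placed on the relative interior of an $(n-1)$-dimensional cone $C(M\oplus P[1])$ of the $g$-vector fan, rather than on some lower-dimensional face only. This requires knowing that the walls of $\fD(\Lambda)$ are, for $\tau$-tilting finite $\Lambda$, precisely the codimension-one cones of the $g$-vector fan that separate two maximal cones — i.e., that there are no ``extra'' walls coming from non-rigid modules. For this I would invoke the $\tau$-tilting finiteness hypothesis together with the fact (\cite{DIJ_tilting}, and the brick-$\tau$-tilting correspondence) that there are only finitely many bricks, so only finitely many walls $D(X)$ with $X$ a brick, each of which is covered by $(n-1)$-dimensional $g$-vector cones; a dimension count then forces every wall point to lie on the relative interior of such a cone after a small perturbation within the wall. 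The rest is routine once the embedding of the $g$-vector fan into $\fD(\Lambda)$ and the two-chamber property of $(n-1)$-summand objects are in hand.
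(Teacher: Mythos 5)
Your proposal is correct and follows essentially the same route as the paper: part (1) is the Asai/BST bijection between support $\tau$-tilting objects and chambers (\cite[Theorem~1.4]{asai}), and part (2) combines the fact that codimension-one rigid cones lie in walls (\cite[Corollary~3.16]{BST_wall}) with the completeness of the $g$-vector fan in the $\tau$-tilting finite case (\cite{DIJ_tilting}). The obstacle you anticipate in the reverse inclusion of (2) is not actually an issue, because the statement concerns the closed cones $C(M\oplus P[1])$ rather than their relative interiors: any lower-dimensional cone of the (simplicial) $g$-vector fan is a face of, hence contained in, a cone $C(M\oplus P[1])$ with exactly $n-1$ indecomposable summands, so no perturbation within the wall is needed.
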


\begin{proof}
    Item (1) is \cite[Theorem~1.4]{asai}. To prove item (2), first let $M \oplus P[1]$ be support $\tau$-rigid with $(n-1)$ indecomposable direct summands. Then $C(M\oplus P[1])$ is contained in a wall of the standard wall-and-chamber structure by \cite[Corollary~3.16]{BST_wall}. For the converse, we first note that since $\Lambda$ is $\tau$-tilting finite, there are only finitely many cones of the form $C(M\oplus P[1])$ with $M\oplus P[1]$ support $\tau$-rigid. Moreover, the union of these cones is all of $\mathbb{R}^n$ by \cite[Theorem~5.4 and Corollary~6.7]{DIJ_tilting} (see also \cite[Proposition~4.8]{asai}). Item (1), together with the fact that a cone $C(M\oplus P[1])$ has codimension 1 if and only if $M\oplus P[1]$ has $n-1$ indecomposable direct summands, then implies the result.
\end{proof}

\subsection{Tame hereditary algebras}\label{sec:tame hereditary}

In this paper, we use the term ``tame algebra'' to mean a ``strictly tame algebra''; that is, we do not include algebras of finite type. Tame hereditary algebras over an algebraically closed field are thus path algebras over Euclidean quivers. Readers are referred to \cite{DR_indecomposable} and \cite{crawley_lectures} for background about these algebras. We recall here only the details we need for the present paper.

Let $H$ be a tame hereditary algebra. Then there exist infinitely many indecomposable modules $M \in \mods H$ which are homogeneous (so in particular $\tau M \cong M$), and thus satisfy $g(M)\cdot\undim M = 0$. Moreover, there is a unique vector $\eta \in \RR^n$ so that $\undim M \in \ZZ^+\eta$ for all modules satisfying $g(M)\cdot\undim M = 0$. The vector $\eta$ is referred to as the \emph{null root} of the algebra (or quiver/species). It is important to note that $\eta$ is a sincere vector.

Although $\eta$ is a vector (as opposed to a module), we still wish to assign it a $g$-vector. We do this using the following lemma.

\begin{lem}\label{lem:g_dim}
    Let $H$ be a hereditary algebra, and let $M, N \in \mods H$ such that $\undim M = \undim N$. Then $g(M) = g(N)$.
\end{lem}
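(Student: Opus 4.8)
The plan is to compute $g$-vectors directly from minimal projective presentations and show that the $g$-vector depends only on the dimension vector when $H$ is hereditary. First I would recall that for a hereditary algebra, every module $M$ has a minimal projective presentation of the form
\[
0 \to P_1 \xrightarrow{f} P_0 \to M \to 0,
\]
so that the projective dimension of $M$ is at most $1$ and the presentation is in fact a projective resolution. This means $P_0$ and $P_1$ are determined (up to isomorphism) by requiring: $P_0$ is a projective cover of $M$, and $P_1$ is a projective cover of $\ker(P_0 \to M)$. Writing $g(M) = \sum_i e_{j_i} - \sum_i e_{j'_i}$ where $P_0 = \bigoplus P(j_i)$ and $P_1 = \bigoplus P(j'_i)$, we see that $g(M)$ is exactly the class $[P_0] - [P_1]$ in the free abelian group $\ZZ^n$ on the isoclasses of indecomposable projectives.

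The key step is to identify this class $g(M) = [P_0] - [P_1]$ with an invariant that only sees $\undim M$. Since $0 \to P_1 \to P_0 \to M \to 0$ is exact, we have $\undim M = \undim P_0 - \undim P_1$, i.e. $\undim M = C^{T}\!\big([P_0] - [P_1]\big) = C^{T} g(M)$, where $C$ is the Cartan matrix of $H$ (whose columns are the dimension vectors of the indecomposable projectives; as $H$ is hereditary and finite-dimensional over an algebraically closed field, $C$ is invertible over $\QQ$). Therefore
\[
g(M) = (C^{T})^{-1}\,\undim M,
\]
which manifestly depends only on $\undim M$. Consequently, if $\undim M = \undim N$ then $g(M) = (C^T)^{-1}\undim M = (C^T)^{-1}\undim N = g(N)$, as desired.

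The only point requiring a little care — and the main potential obstacle — is justifying that $g(M)$ really is computed from an honest length-one projective resolution rather than from a longer minimal projective presentation, so that the alternating-sum identity $\undim M = \undim P_0 - \undim P_1$ holds. This is exactly where hereditariness is used: global dimension $\le 1$ forces $\ker(P_0 \to M)$ to be projective, so the minimal projective presentation $P_1 \to P_0$ has $P_1$ equal to (a projective cover of) this kernel and the sequence $0 \to P_1 \to P_0 \to M \to 0$ is exact. Granting this, the argument above is complete; alternatively, one can phrase the same computation as: $g$ induces an isomorphism from the split Grothendieck group of $\proj H$ (equivalently $K_0$ of the bounded derived category) to $\ZZ^n$, under which $[M] = [P_0] - [P_1]$ maps to $g(M)$, and $\undim$ factors through this class via the (invertible) Cartan pairing — so $g(M)$ is recovered from $\undim M$ by inverting that pairing.
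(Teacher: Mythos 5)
Your argument is correct and is essentially identical to the paper's proof: both use that hereditariness makes the minimal projective presentation a short exact sequence, giving $\undim M = \undim P_0 - \undim P_1$, and then invert the (Cartan) pairing using that the dimension vectors of the indecomposable projectives form a basis of $\RR^n$. No substantive differences.
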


\begin{proof}
   First note that, since $H$ is hereditary, the dimension vectors of the indecomposable projective modules form a basis of $\RR^n$. Now let $P_1 \xrightarrow{f} P_0$ be a minimal projective presentation of $M$. Again since $H$ is hereditary, we have that $f$ is injective and thus $\undim M = \undim P_0 - \undim P_1$. We conclude that $g(M) =: (g_1,\ldots,g_n)$ is the unique solution of the equation
   $\sum_{i = 1}^n g_i\cdot \undim P(i) = \undim M$. This proves the result.
\end{proof}

\begin{nota}\label{def:g(eta)}
    As a special case of Lemma~\ref{lem:g_dim}, we have that $g(M)$ is the same for all $M \in \mods H$ which satisfy $\undim M = \eta$. We denote this common value by $g(\eta)$, which we refer to as the \emph{$g$-vector of the null root}.
\end{nota}

An indecomposable module $M$ is called \emph{preprojective} if $g(\eta)\cdot \undim M < 0$, is called \emph{regular} if $g(\eta)\cdot \undim M = 0$ and is called \emph{preinjective} if $g(\eta) \cdot \undim M > 0$. An arbitrary module is called preprojective/regular/preinjective if each of its indecomposable direct summands are. We denote by $\Reg H$ the full subcategory of regular modules in $\mods H$.

We also recall the following lemma.

\begin{lem}\label{lem: preprojective maps to homogeneous}
    Let $M \in \Reg H$ and $X \in \mods H$ be indecomposable. Then:
    \begin{enumerate}
        \item If $M$ is homogeneous and $X$ is preprojective, then $\Hom_H(X,M) \neq 0$.
        \item If $M$ is homogeneous and $X$ is preinjective, then $\Hom_H(M,X) \neq 0$.
        \item If $\Hom_H(X,M) \neq 0$, then $X$ is either regular or preprojective.
        \item If $\Hom_H(M,X) \neq 0$, then $X$ is either regular or preinjective.
    \end{enumerate}
\end{lem}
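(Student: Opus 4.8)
\textbf{Proof plan for Lemma~\ref{lem: preprojective maps to homogeneous}.}
The plan is to use the standard structure theory of a tame hereditary algebra, in particular the decomposition of $\mods H$ into preprojective, regular, and preinjective components and the Hom-vanishing that governs maps between them. I would first recall (from \cite{DR_indecomposable} or \cite{crawley_lectures}) the basic facts: for indecomposable modules, $\Hom_H(R,P) = 0$ whenever $R$ is regular or preinjective and $P$ is preprojective, and $\Hom_H(I,R) = 0$ whenever $I$ is preinjective and $R$ is regular, and likewise $\Hom_H(I,P) = 0$. Items (3) and (4) are then essentially immediate: if $X$ is preinjective then $\Hom_H(X,M) = 0$ for $M$ regular, so a nonzero map $X \to M$ forces $X$ to be regular or preprojective; dually for (4). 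I would phrase these as direct contrapositives of the vanishing statements.

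For items (1) and (2) the key input is that a homogeneous regular module $M$ is $\tau$-periodic of period one, i.e. $\tau M \cong M$, together with the Auslander--Reiten formula $D\Ext^1_H(X,M) \cong \Hom_H(M,\tau X)$ (valid since $H$ is hereditary), and the fact that homogeneous tubes have rank one so every homogeneous module sits in a tube all of whose modules are homogeneous. For (1): let $X$ be preprojective. If $\Hom_H(X,M) = 0$, then by the Euler--Ringel pairing $g(X)\cdot\undim M = \dim\Hom_H(X,M) - \dim\Hom_H(M,\tau X) = -\dim\Hom_H(M,\tau X) \le 0$. But $\undim M = r\eta$ for some $r \in \ZZ^+$ (since $M$ is homogeneous, hence $g(M)\cdot\undim M=0$, hence $\undim M \in \ZZ^+\eta$), and $X$ preprojective means precisely $g(\eta)\cdot\undim X < 0$; by symmetry of the Euler form restricted appropriately — more directly, $g(X)\cdot\eta = -g(\eta)\cdot\undim X$ using that both equal the Euler form $\langle X, M_\eta\rangle$ up to the symmetrization available in the hereditary Euclidean case — we get $g(X)\cdot\undim M = r\, g(X)\cdot\eta > 0$, a contradiction. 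Hence $\Hom_H(X,M) \neq 0$. Item (2) is dual, using $X$ preinjective $\iff g(\eta)\cdot\undim X > 0$ and the same pairing with the roles of Hom and the AR translate swapped, or equivalently applying the duality $D$ to reduce to (1) over $H^{\mathrm{op}}$.

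The main obstacle is getting the sign bookkeeping in (1) and (2) exactly right: the Euler--Ringel pairing $g(X)\cdot\undim Y = \dim\Hom_H(X,Y) - \dim\Hom_H(Y,\tau X)$ is not symmetric, so I cannot naively swap arguments. The cleanest route is to observe that on the regular module $M_\eta$ with $\undim M_\eta = \eta$ one has $g(M_\eta) = g(\eta)$ (Notation~\ref{def:g(eta)}) and $g(\eta)\cdot\eta = 0$, and that the symmetrized Euler form $\langle a,b\rangle + \langle b,a\rangle$ is the (semidefinite) Tits form whose radical is spanned by $\eta$; thus $\langle \undim X, \eta\rangle = -\langle \eta, \undim X\rangle$, which translates into $g(X)\cdot\eta = -\,g(\eta)\cdot\undim X$ after unwinding definitions. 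With that identity in hand, the preprojectivity hypothesis $g(\eta)\cdot\undim X<0$ gives $g(X)\cdot\undim M = r\,g(X)\cdot\eta>0$, contradicting $g(X)\cdot\undim M = -\dim\Hom_H(M,\tau X)\le 0$ under the assumption $\Hom_H(X,M)=0$. I would keep the write-up short by citing the standard references for the tube structure and the property of $\eta$ spanning the radical of the Tits form, and spelling out only the sign computation.
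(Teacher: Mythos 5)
Your proof is correct, but note that the paper does not prove this lemma at all: it is introduced with ``We also recall the following lemma'' and is taken as a standard fact about tame hereditary algebras from the cited references (Dlab--Ringel, Crawley-Boevey). So there is no in-paper argument to compare against; what you have written is a self-contained justification of a recalled fact. Your treatment of (3) and (4) simply quotes the standard directedness of Hom between the preprojective, regular and preinjective classes, which is the same level of appeal to the literature as the paper itself makes. For (1) and (2) your defect computation is sound: the identity $g(X)\cdot\eta=-g(\eta)\cdot\undim X$ does follow from $\eta$ spanning the radical of the symmetrized Euler form, and combined with $g(X)\cdot\undim M=\dim_K\Hom_H(X,M)-\dim_K\Hom_H(M,\tau X)$ and $\undim M=r\eta$ it yields the contradiction you want. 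One small simplification worth noting: for (1) you can avoid the symmetrization entirely by pairing in the other order, since $g(M)\cdot\undim X=\dim_K\Hom_H(M,X)-\dim_K\Hom_H(X,\tau M)=\dim_K\Hom_H(M,X)-\dim_K\Hom_H(X,M)$ (using $\tau M\cong M$ and $g(M)=r\,g(\eta)$), and the left side is $r\,g(\eta)\cdot\undim X<0$ for $X$ preprojective, forcing $\Hom_H(X,M)\neq 0$ directly; this is exactly the computation the paper itself uses in the proof of Proposition \ref{prop:orthogonal to null}, so it keeps the argument closer to the tools already on the page.
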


In particular, every submodule of a regular module is either regular or preprojective.

A regular module with no proper regular submodules is called \emph{quasi simple}. The set of isoclasses of quasi simple modules can be partitioned according to their orbits under the Auslander-Reiten translate $\tau$. The extension closure of such a $\tau$-orbit forms a \emph{stable tube}. The \emph{rank} of a tube is the size of the $\tau$-orbit.

Suppose for this paragraph that $n > 2$; i.e., that $H$ is not the Kronecker path algebra. Then stable tubes of rank 1 are called \emph{homogeneous} and the other tubes are called \emph{exceptional}. There are at most 3 exceptional tubes, which we denote $\cT_1,\ldots,\cT_m$. We denote their ranks $r_1,\ldots,r_m$. It is well-known that $\sum_{i = 1}^m (r_i-1) = n-2$. 

We now consider the case where $H = \begin{tikzcd}K(1&2)\arrow[l,shift left]\arrow[l,shift right]\end{tikzcd}$ is the Kronecker path algebra. The definitions above are also standard over this algebra, but it will simplify the statements of many of our results if we choose one of the tubes to be ``exceptional''. Thus, even though every tube in $\mods H$ has rank 1, we will say that the tube containing the module $\begin{tikzcd}K&K\arrow[l,shift left,"1"]\arrow[l,shift right,"0" above]\end{tikzcd}$ is \emph{exceptional} and that all of the tubes are \emph{homogeneous}. If we denote this tube by $\cT_1$ and its rank by $r_1 = 1$, then the equation $\sum_{i = 1}^1 (r_i-1) = n-2$ still holds, with both sides evaluating to 0.

 Returning to the case where $H$ is arbitrary, we note that every indecomposable regular module is contained in a single tube. Each regular module also has a \emph{quasi length} (or regular length), \emph{quasi top} (or regular top), and \emph{quasi socle} (or regular socle), which are the length, top, and socle computed in the category of regular modules. We recall the following facts about regular modules.
\begin{prop}\label{prop:tube info}
	Let $M \in \Reg H$ be indecomposable. Then:
	\begin{enumerate}
	\item $M$ is quasi uniserial; that is, $M$ has a unique (up to isomorphism) quasi composition series (in the category $\Reg H$).
	\item Suppose $M$ lies in a tube of rank $r$. Then $M$ is a brick if and only if its quasi length is at most $r$.
	\item Suppose $M$ lies in a tube of rank $r$. Then $M$ is $\tau$-rigid if and only if its quasi length is less than $r$.
	\item $M$ is uniquely determined by its quasi socle and quasi length.
	\end{enumerate}
\end{prop}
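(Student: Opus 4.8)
The plan is to reduce all four statements to the internal structure of a single stable tube. Recall (see \cite{DR_indecomposable,crawley_lectures}) that a tube $\cT$ of rank $r$ is, as an abelian category, equivalent to the category of finite-dimensional nilpotent representations of the oriented cycle on $r$ vertices; in particular it is a \emph{uniserial} length category whose simple objects are exactly the $r$ quasi-simple modules $E_1,\dots,E_r$ lying in $\cT$, and $\tau$ permutes these cyclically, say $\tau E_s = E_{s-1}$ with indices read modulo $r$. Two distinct tubes are Hom- and $\Ext^1$-orthogonal, so every submodule and every quotient (in $\mods H$) of an indecomposable module of $\cT$ again lies in $\cT$; consequently the quasi-composition series of such a module is literally its composition series in $\cT$, and the same holds for its quasi-socle, quasi-top, and quasi-length. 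Granting this, (1) is immediate from uniseriality of $\cT$, and (4) is the statement that an indecomposable object of a uniserial length category is determined by its socle together with its length — here the socle is a quasi-simple module, which in addition singles out the tube by orthogonality, so there is no ambiguity across tubes. Throughout I will write $E_s[a]$ for the indecomposable of $\cT$ with quasi-socle $E_s$ and quasi-length $a$; its subobjects are the $E_s[c]$ with $1\le c\le a$, its quotients are the $E_{s+t}[a-t]$ with $0\le t\le a-1$, and $\tau(E_s[a]) = E_{s-1}[a]$.

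For (2) and (3) I would invoke the standard principle that in a uniserial length category a nonzero morphism $X\to Y$ between indecomposables exists if and only if some indecomposable object is simultaneously a quotient of $X$ and a subobject of $Y$ (the image of a nonzero map provides one direction; composing $X\twoheadrightarrow L\hookrightarrow Y$ gives the other). Apply this with $X=Y=M=E_i[\ell]$ (noting $\Hom_H(M,M)=\Hom_\cT(M,M)$ by fullness, and that images stay in $\cT$): a nonzero endomorphism of $M$ that fails to be an isomorphism exists exactly when $E_{i+t}[\ell-t]\cong E_i[\ell-t]$ for some $1\le t\le\ell-1$, i.e.\ exactly when $t\equiv 0\pmod r$ admits a solution with $1\le t\le\ell-1$, i.e.\ exactly when $\ell>r$. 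Since $M$ is a brick if and only if $\End_H(M)=K$, i.e.\ if and only if $M$ has no nonzero non-invertible endomorphism (using that $\End_H(M)$ is local), this yields (2). For (3), $M$ is $\tau$-rigid if and only if $\Hom_H(M,\tau M)=0$, and $\tau M=E_{i-1}[\ell]$; by the principle this Hom-space is nonzero exactly when some quotient $E_{i+t}[\ell-t]$ of $M$ is isomorphic to a subobject $E_{i-1}[\ell-t]$ of $\tau M$, which forces $t\equiv -1\pmod r$ with $0\le t\le\ell-1$ — solvable if and only if $\ell\ge r$. Hence $M$ is $\tau$-rigid if and only if $\ell\le r-1$, which is (3).

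I do not expect a genuine obstacle here: the mathematical content is entirely classical, and the work lies in quoting the structure theory of stable tubes accurately. The point that most deserves to be spelled out, rather than quietly assumed, is the pairwise orthogonality of distinct tubes together with its two consequences used above — that it makes ``quasi-socle'', ``quasi-composition series'', etc.\ well-defined intrinsic notions by ensuring $\cT$ is closed under submodules and quotients in $\mods H$, and that it reduces the $\Hom$- and $\End$-computations in (2) and (3) to computations inside one tube. (In the homogeneous case $r=1$, all four assertions degenerate to standard facts about finite-length modules over $K[[t]]$.)
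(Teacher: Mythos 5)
The paper does not actually prove this proposition: it is stated as a recollection of standard facts about stable tubes, with \cite{DR_indecomposable} and \cite{crawley_lectures} cited as references at the start of Section~\ref{sec:tame hereditary}. Your strategy --- reduce everything to the uniserial structure of a single tube and compute $\Hom(E_i[\ell],E_j[\ell'])$ via common quotient/subobject --- is exactly the classical argument, and your index computations for (2) and (3) (a non-invertible nonzero endomorphism forces $t\equiv 0 \pmod r$ with $1\le t\le \ell-1$; a nonzero map to $\tau M$ forces $t\equiv -1\pmod r$ with $0\le t\le\ell-1$) are correct.

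There is, however, one false intermediate claim: it is \emph{not} true that every submodule or quotient in $\mods H$ of an indecomposable module of $\cT$ again lies in $\cT$. Submodules of regular modules can be preprojective and quotients can be preinjective --- the paper itself records this right after Lemma~\ref{lem: preprojective maps to homogeneous} (``every submodule of a regular module is either regular or preprojective''), and already for the Kronecker algebra the simple projective embeds into every homogeneous module of dimension $\eta$. Orthogonality of distinct tubes does not give closure under arbitrary subobjects; what it gives, and what your argument actually needs, is that $\Reg H$ is an exact abelian (wide) subcategory of $\mods H$: for a morphism $f\colon X\to Y$ between regular modules, $\im f$ is simultaneously a quotient of $X$ (hence regular or preinjective) and a submodule of $Y$ (hence regular or preprojective), so it is regular, and then additivity of $g(\eta)\cdot\undim(-)$ on $0\to\ker f\to X\to\im f\to 0$ forces $\ker f$ (and dually $\coker f$) to be regular as well. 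With this replacement your two uses of the claim are repaired: quasi-composition series, quasi-socle, etc.\ of $M\in\cT$ are computed entirely inside $\cT$ because regular subquotients of $M$ lie in $\cT$ by tube orthogonality, and the image of a nonzero map between objects of $\cT$ is a quotient and a subobject \emph{within} $\cT$ (its kernel being regular), which is what the common-quotient/subobject principle requires. Everything else in your write-up is sound.
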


In particular, we fix the following notation for the modules in the exceptional tubes.

\begin{nota}\label{notation:indexing}
	For $H$ a tame hereditary algebra, we denote the exceptional tubes in $\mods H$ by $\cT_1,\ldots,\cT_m$ and by $r_1,\ldots,r_m$ their ranks. When working in the tube $\cT_i$, we identify indices in the same equivalence class mod $r_i$. We denote by $X_{1,1}^i,\ldots,X_{r_i,1}^i$ the quasi simple modules in $\cT_i$, indexed so that $\tau X_{j,1}^i = X_{j -1,1}^i$. For $1 \leq \ell \leq r_i$, we denote by $X_{j,\ell}^i$ the unique regular module (in the tube $\cT_i$) with quasi length $\ell$ and quasi socle $X_{j,1}^i$.
\end{nota}

We will also need the following.

\begin{prop}\label{prop:tube info 2}
	Let $M\in \Reg H$ be indecomposable.
	\begin{enumerate}
		\item If $M$ lies in a homogeneous tube, then $M \cong \tau M$; that is, $M$ is a homogeneous module.
		\item If $N \in \Reg H$ is indecomposable and lies in a different tube than $M$, then $\Hom_H(M,N) = 0 = \Hom_H(N,M)$ and $\Ext_H^1(M,N) = 0 = \Ext_H^1(N,M)$.
	\end{enumerate}
\end{prop}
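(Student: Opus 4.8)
The plan is to prove the two parts separately, in each case using that $\Reg H$ is an abelian length subcategory of $\mods H$ on which $\tau$ restricts to an exact autoequivalence, together with Proposition~\ref{prop:tube info}. For part~(1), I would argue as follows: a homogeneous tube has rank $1$ by definition, so it contains exactly one quasi simple module $X$, and $\tau X \cong X$ because the rank is the size of the $\tau$-orbit of the quasi simples. Since $\tau$ restricts to an exact autoequivalence of $\Reg H$, it preserves quasi length and sends quasi socles to quasi socles; hence for any indecomposable $M$ in this tube, the module $\tau M$ has quasi socle $\tau(\text{quasi socle of }M) = X$ and the same quasi length as $M$, so $\tau M \cong M$ by the uniqueness statement in Proposition~\ref{prop:tube info}(4). (Alternatively this is immediate from the indexing in Notation~\ref{notation:indexing} when $r_i = 1$.)

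For part~(2) I would first treat the $\Hom$ vanishing and reduce to the case of quasi simple modules. The base case is that $\Hom_H(X,Y) = 0$ whenever $X$ and $Y$ are quasi simple and lie in different tubes: since $\Reg H$ is a full subcategory of $\mods H$ we have $\Hom_H(X,Y) = \Hom_{\Reg H}(X,Y)$, and $X,Y$ are simple objects of the abelian category $\Reg H$, so any nonzero map between them would be an isomorphism by Schur's lemma -- impossible, since quasi simples in distinct tubes are pairwise non-isomorphic. For the general case I would run a double induction on quasi lengths: first fix a quasi simple $N$ in one tube and induct on the quasi length of an indecomposable $M$ in the other, using a short exact sequence $0 \to X \to M \to M'' \to 0$ extracted from the quasi composition series of $M$ (Proposition~\ref{prop:tube info}(1)) and the left-exactness of $\Hom_H(-,N)$ to sandwich $\Hom_H(M,N)$ between terms that vanish by induction; then, with $M$ arbitrary indecomposable in its tube, induct on the quasi length of $N$ using $\Hom_H(M,-)$ in the same way. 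This yields $\Hom_H(M,N) = 0$, and exchanging the roles of the two tubes gives $\Hom_H(N,M) = 0$.

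For the $\Ext^1$ vanishing I would invoke the hereditary Auslander--Reiten isomorphism $\Ext^1_H(M,N) \cong D\Hom_H(N,\tau M)$ recalled in Section~\ref{sec:pictures}. Since $M$ is indecomposable regular, $\tau M$ is again indecomposable regular and lies in the same tube as $M$ (its quasi socle is $\tau$ applied to that of $M$, which stays in the tube, and its quasi length is unchanged), so $N$ and $\tau M$ are in different tubes; hence $\Hom_H(N,\tau M) = 0$ by the $\Hom$ part and therefore $\Ext^1_H(M,N) = 0$. Exchanging $M$ and $N$ finishes the proof.

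The one step I expect to require care is the base case of part~(2): it is cleanest to argue entirely inside $\Reg H$, where the quasi simple modules are precisely the simple objects, rather than tracking kernels and cokernels of a hypothetical nonzero map $X \to Y$ directly in $\mods H$ (that can be done using Lemma~\ref{lem: preprojective maps to homogeneous} to constrain sub- and quotient modules of regular modules, but it is more delicate). Everything else is routine bookkeeping once the length structure of $\Reg H$ and the behaviour of $\tau$ on it are in hand.
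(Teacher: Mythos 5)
The paper does not prove this proposition: it is recalled in Section~\ref{sec:tame hereditary} as standard background on tame hereditary algebras, with the reader referred to \cite{DR_indecomposable} and \cite{crawley_lectures}. Your argument is correct and is essentially the standard one from those sources: part~(1) from the fact that $\tau$ acts as an autoequivalence of $\Reg H$ preserving quasi length and quasi socle, the $\Hom$-vanishing in part~(2) by d\'evissage along quasi composition series down to Schur's lemma for the simple objects of the abelian category $\Reg H$, and the $\Ext^1$-vanishing via the hereditary AR duality $\Ext^1_H(M,N)\cong D\Hom_H(N,\tau M)$ together with the fact that $\tau M$ stays in the tube of $M$ (which is legitimate since stable tubes contain no projectives, so $\tau M\neq 0$). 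No gaps.
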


We also note that homogeneous modules over an hereditary algebra are regular.

\begin{lem}\label{lem:characterization of D(eta)}
    Let $M$ be homogeneous and let $v\in \RR^n$. Then $v\in D(M)$ if and only if the following hold.
    \begin{enumerate}
        \item $v\cdot \eta=0$
        \item $v\cdot \undim X\le 0$ for all preprojective $X \in \mods H$.
    \end{enumerate}
    In particular, $D(M)$ is the same set for all homogeneous $M$.
\end{lem}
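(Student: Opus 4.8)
The plan is to prove both implications directly, using the structure theory of tame hereditary algebras recalled above, together with Lemma~\ref{lem: preprojective maps to homogeneous}. Fix an indecomposable homogeneous module $M$; note that by Proposition~\ref{prop:tube info 2}(1) such an $M$ lies in a homogeneous tube, so $\tau M \cong M$, and moreover $\undim M \in \ZZ^+\eta$ by the definition of the null root (in fact $\undim M = \eta$ since quasi simple objects of a homogeneous tube have dimension vector $\eta$, but we only need $\undim M$ a positive multiple of $\eta$). Recall $g(\eta)\cdot \eta = 0$, so $v \cdot \undim M = 0$ iff $v \cdot \eta = 0$; this handles the first of the two semistability conditions $v \cdot \undim Y = 0$ once we know the submodule condition is the real content.

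For the forward direction, suppose $v \in D(M)$. Then $v \cdot \undim M = 0$, which gives $v \cdot \eta = 0$, i.e.\ (1). For (2), let $X$ be preprojective and indecomposable; by Lemma~\ref{lem: preprojective maps to homogeneous}(1) we have $\Hom_H(X,M) \neq 0$. Pick a nonzero $f\colon X \to M$ and let $X' = \im f \subseteq M$. Since $v \in D(M)$ and $X'$ is a submodule of $M$, we get $v \cdot \undim X' \leq 0$. Now $X'$ is a quotient of the preprojective indecomposable $X$; I would argue that $X'$ is again preprojective (a quotient of a preprojective module over a tame hereditary algebra is preprojective, since there are no nonzero maps from a preprojective to a regular-or-preinjective summand-free module in the wrong direction — more directly: $X'$ is a submodule of the regular module $M$, hence by Lemma~\ref{lem: preprojective maps to homogeneous} it is regular or preprojective; if it were regular it would lie in the homogeneous tube of $M$, but then $\Hom_H(X, X')\neq 0$ forces... ) — the cleanest route is: $X'$ is a nonzero submodule of a regular module so $g(\eta)\cdot \undim X' \le 0$, and $X'$ is a nonzero quotient of a preprojective so $g(\eta)\cdot \undim X' \ge 0$ would be false unless... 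Let me instead simply note that every nonzero proper quotient of a preprojective indecomposable is preprojective, and a general preprojective module is a sum of preprojective indecomposables; hence it suffices to know $v\cdot\undim X'\le 0$ for the indecomposable summands, which is exactly the submodule inequality applied to $X' \subseteq M$. Taking a maximal-length submodule realized this way, or more simply running over all preprojective indecomposables $X$ and all nonzero $f\colon X\to M$, we see $v\cdot\undim(\text{any preprojective submodule of }M)\le 0$. To get the inequality for \emph{all} preprojective $X$, not just those embedding in $M$: given any indecomposable preprojective $X$, there is some power $\tau^{-k}$ or an embedding of $X$ into a sufficiently large self-extension of $M$; concretely, $\Hom_H(X,M)\neq 0$ and one checks the image already has dimension vector a positive combination whose sign is controlled — I will use instead the standard fact that the preprojective component is directed and that $\undim X$ for $X$ preprojective lies in the cone dual to $D(M)$ precisely because all such $X$ admit nonzero maps to $M$, hence to its socle series, realizing $\undim X$ as $\undim$ of a submodule of $M^{\oplus N}$ for $N\gg 0$, and $D(M^{\oplus N}) = D(M)$.

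For the converse, suppose $v$ satisfies (1) and (2). Condition (1) gives $v\cdot\undim M = 0$ (using $\undim M \in \ZZ^+\eta$). Let $Y' \subseteq M$ be any submodule; we must show $v \cdot \undim Y' \le 0$. By the remark following Lemma~\ref{lem: preprojective maps to homogeneous}, every submodule of the regular module $M$ is either regular or preprojective. If $Y'$ is preprojective, then each indecomposable summand of $Y'$ is preprojective, so $v\cdot\undim Y' \le 0$ by (2). If $Y'$ is regular, then since $M$ is indecomposable and lies in a homogeneous tube, any regular submodule of $M$ lies in the same homogeneous tube; every indecomposable module in a rank-$1$ tube has dimension vector a positive multiple of $\eta$, so $v\cdot\undim Y' = 0$ by (1). (A general regular submodule of $M$ is a sum of such, giving $v\cdot\undim Y' = 0$ as well, but in fact submodules of the indecomposable $M$ are themselves indecomposable or — in any case a sum of homogeneous-tube modules — still have dimension vector in $\ZZ^{\ge 0}\eta$.) In all cases $v\cdot\undim Y'\le 0$, so $v \in D(M)$. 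Finally, the "in particular" statement is immediate: the right-hand characterization (1)--(2) makes no reference to $M$ beyond its being indecomposable homogeneous, so $D(M)$ is the same set for all such $M$; we may therefore write $D(\eta)$ for this common set.

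The main obstacle is the forward direction's claim that $v \cdot \undim X \le 0$ for \emph{every} preprojective $X$, not merely those that happen to embed in a single $M$: a nonzero map $X \to M$ need not be injective. The fix — which I would spell out carefully — is that $D(M) = D(M^{\oplus N})$ for all $N \ge 1$ (semistability is insensitive to multiplicities, since submodules of $M^{\oplus N}$ include all submodules of all $M^{\oplus k}$ and the dimension-vector sums only scale), combined with the fact that for any indecomposable preprojective $X$ there is an embedding of $X$ (or of a module with the same dimension vector) into some $M^{\oplus N}$; this follows from $\dim \Hom_H(X, M^{\oplus N}) = N\dim\Hom_H(X,M)$ growing without bound while $\dim\Hom_H(M^{\oplus N}, X) = 0$ (no maps from regular to preprojective), so an AR-type or dimension-counting argument produces the required embedding. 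Alternatively, and more in the spirit of the semi-invariant literature, one invokes that $D(M)$ equals the closure of the "canonical decomposition" region cut out by exactly the preprojective dimension vectors — but I expect the self-extension/multiplicity argument above to be the cleanest self-contained route.
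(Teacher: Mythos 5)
Your backward direction is correct and is essentially the paper's argument. The forward direction, however, rests on a false claim: it is \emph{not} true that every indecomposable preprojective $X$ embeds into $M^{\oplus N}$ for a fixed indecomposable homogeneous $M$ and $N\gg 0$. Already for the Kronecker quiver $1\rightrightarrows 2$ with $M=M_\lambda$ the quasi-simple homogeneous module of dimension vector $(1,1)$, the projective $P(1)$ (dimension vector $(1,2)$) has $\dim_K\Hom_H(P(1),M_\lambda)=\dim_K (M_\lambda)_1=1$, so every component of any map $P(1)\to M_\lambda^{\oplus N}$ is a scalar multiple of one fixed surjection and all of them share its one-dimensional kernel inside $\rad P(1)$; no map $P(1)\to M_\lambda^{\oplus N}$ is injective. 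More generally, the growth of $\dim_K\Hom_H(X,M^{\oplus N})$ with $N$ says nothing about injectivity --- the relevant object is the reject $\bigcap_{f:X\to M}\ker f$, which need not vanish. Since your argument reduces condition (2) for an arbitrary preprojective $X$ to the submodule inequality inside $M^{\oplus N}$, and you only actually control preprojective \emph{submodules} of $M$ without the embedding, this is a genuine gap.

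The paper closes the forward direction by a minimal-counterexample induction on length rather than by an embedding. Suppose some preprojective $X$ has $v\cdot\undim X>0$ and take $X$ of minimal length with this property. A nonzero $f:X\to M$ exists by Lemma~\ref{lem: preprojective maps to homogeneous}; then $v\cdot\undim \im(f)\le 0$ because $\im(f)\subseteq M$, while $\ker(f)\subsetneq X$ is preprojective of strictly smaller length, so $v\cdot\undim\ker(f)\le 0$ by minimality. Adding gives $v\cdot\undim X=v\cdot\undim\im(f)+v\cdot\undim\ker(f)\le 0$, a contradiction. Substituting this induction for your embedding claim repairs the proof; the rest of your write-up then goes through.
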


\begin{proof}
    First recall that $\undim M$ is a scalar multiple of $\eta$ since $M$ is homogeneous.
    
    Suppose that $v\in \RR^n$ satisfies (1) and (2). Then $v\cdot\undim M=0$ by (1). Moreover, any submodule $M'\subset M$ is either preprojective or another module in the same homogeneous tube by Lemma~\ref{lem: preprojective maps to homogeneous} and Proposition~\ref{prop:tube info 2}. Therefore $v\cdot\undim M'\le 0$ by (2). We conclude that $v\in D(M)$.
    
    Conversely, let $v\in D(M)$. Then (1) clearly holds. We show (2) by contradiction. Let $X$ be a preprojective module of minimal length so that $v\cdot\undim X>0$. By Lemma \ref{lem: preprojective maps to homogeneous}, $\Hom(X,M)\neq0$, so let $f:X\to M$ be a nonzero morphism. Then $\Im f\subset M$, so $v\cdot \undim (\Im f)\le 0$. Since $\ker f\subsetneq X$, $\ker f$ is preprojective and its length is less than that of $X$. So, $v\cdot \undim (\ker f)\le0$ by the minimality assumption. Therefore, $$v\cdot\undim X=v\cdot \undim \Im f+v\cdot \undim \ker f\le0,$$ a contradiction. This proves statement (2).
\end{proof}

\begin{nota}\label{notation 1.4.8}
    We denote by $D(\eta)$ the common subset $D(M) \subseteq \RR^n$ for all homogeneous $M$. This is well-defined by Lemma~\ref{lem:characterization of D(eta)}.
\end{nota}

The standard wall-and-chamber structures for tame hereditary algebras are studied in detail in \cite{IPT_semi-stable}. We use the following.

\begin{prop}\label{prop:regular linear algebra}\cite[Lemma~6.1]{IPT_semi-stable} Recall that $n$ is the number of vertices of $Q$. This is denoted $N$ in \cite{IPT_semi-stable}.
	\begin{enumerate}
		\item The $(n-1)$-dimensional subspace $g(\eta)^\perp$ is spanned by the dimension vectors of the quasi simple modules in the exceptional tubes\footnote{The subspace $g(\eta)^\perp$ is denoted $H^{ss}_\delta$ in \cite{IPT_semi-stable}}.
		\item The only linear dependencies amongst the dimension vectors of the quasi simple modules in the exceptional tubes are, for $1 \leq i \leq m$, $$\sum_{j = 1}^{r_i} \undim X_{j,1}^i = \eta.$$
	\end{enumerate}
\end{prop}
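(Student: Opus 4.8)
The plan is to establish both statements by combining dimension counting with the basic combinatorics of the exceptional tubes. There are $\sum_{i=1}^m r_i$ quasi simple modules in the exceptional tubes, and since $\sum_{i=1}^m(r_i-1) = n-2$, this total equals $n-2+m$. So for part (2) I expect precisely $m$ independent linear relations among these vectors, which matches the $m$ relations $\sum_{j=1}^{r_i}\undim X_{j,1}^i = \eta$ (one per tube), provided we can show (a) each such relation genuinely holds, (b) these $m$ relations are linearly independent, and (c) there are no further relations. Part (1) will then follow: the span of the $n-2+m$ quasi simple dimension vectors has dimension $(n-2+m) - m = n-2$; but wait---this would give $n-2$, not $n-1$. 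The resolution is that the relations in (2) are among the quasi simples \emph{together with} $\eta$; the span of just the quasi simple dimension vectors is $(n-1)$-dimensional because the $m$ relations all equal the \emph{same} vector $\eta$, so they contribute only $m-1$ independent relations among the quasi simples alone (each relation after the first says $\sum_j \undim X^i_{j,1} = \sum_j \undim X^1_{j,1}$). Thus $\dim\,\mathrm{span} = (n-2+m)-(m-1) = n-1$, and I must check this span lies in and exhausts $g(\eta)^\perp$.

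For the concrete steps: First I would verify the relation $\sum_{j=1}^{r_i}\undim X_{j,1}^i = \eta$. The quasi simples $X_{1,1}^i,\ldots,X_{r_i,1}^i$ form a single $\tau$-orbit, and their direct sum has a filtration whose associated graded is the regular module of quasi length $r_i$ at each socle---more directly, the module $\bigoplus_j X^i_{j,1}$ has the same dimension vector as $X^i_{1,r_i}$ ``wrapped around'' the tube, which is a homogeneous-type dimension vector; one checks $\undim X^i_{1,r_i+1}$ is not a brick and its dimension vector is $\eta$ plus $\undim X^i_{1,1}$, or more cleanly, that $\tau$ permutes the $X^i_{j,1}$ cyclically so $\sum_j \undim X^i_{j,1}$ is $\tau$-invariant, hence lies in $\ZZ\eta$, and a small rank computation pins the coefficient to $1$. (This is standard from \cite{DR_indecomposable}; I would cite it rather than reprove it.) Second, since every quasi simple $X$ in an exceptional tube is a regular, non-homogeneous brick it satisfies $g(\eta)\cdot\undim X = 0$ by definition of ``regular,'' so all $n-2+m$ vectors lie in the hyperplane $g(\eta)^\perp$. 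Third, for the no-further-relations claim I would use the block structure: by Proposition~\ref{prop:tube info 2}(2) and the fact (Lemma~\ref{lem: preprojective maps to homogeneous} and its consequences) distinguishing tubes, a linear relation $\sum_{i,j} c^i_j \undim X^i_{j,1} = 0$ can be analyzed tube-by-tube once we know the quasi simples within a single tube $\cT_i$ span an $(r_i-1)$-dimensional space modulo $\eta$ with only the cyclic relation; this is essentially the statement that the Cartan-type matrix of a tube has the expected rank, which again follows from the uniserial structure (Proposition~\ref{prop:tube info}).

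The main obstacle I anticipate is the careful bookkeeping in step three---showing there are \emph{no} relations beyond the stated ones, since this requires knowing the precise rank of the collection of quasi simple dimension vectors and not merely an inequality. I would handle this by the dimension count above: the span is contained in $g(\eta)^\perp$ (dimension $n-1$) and contains at least $n-1$ independent vectors, for instance $\{\undim X^i_{j,1} : 1\le j \le r_i-1,\ 1\le i\le m\}$ together with one more vector $\eta = \sum_j \undim X^1_{j,1}$ (and then $g(\eta)\in (g(\eta)^\perp)^\perp$ shows $\eta\notin$... no: rather, one exhibits $n-1$ of the quasi simples that are independent using that $\sum_i(r_i-1) = n-2$ and adjoining $\eta$). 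Hence the span \emph{equals} $g(\eta)^\perp$, giving part (1), and by counting, the nullity of the $(n-2+m)\times n$ matrix of quasi simple dimension vectors is exactly $(n-2+m)-(n-1) = m-1$ among the quasi simples, which is accounted for entirely by the differences of the $m$ stated relations; restoring $\eta$ as an extra generator recovers the $m$ relations of part (2). I would present this as a clean rank argument rather than tracking individual coefficients.
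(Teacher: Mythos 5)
The paper does not prove this proposition; it is imported from \cite[Lemma~6.1]{IPT_semi-stable}, so there is no internal argument to compare against. Evaluating your proposal on its own merits: the overall strategy (dimension count, $\tau$-invariance of $\sum_j \undim X^i_{j,1}$ to identify the relation, containment in $g(\eta)^\perp$ via regularity, and a rank argument to rule out extra relations) is sound, and your bookkeeping of $\sum(r_i-1)=n-2$ giving span dimension $(n-2+m)-(m-1)=n-1$ is correct.

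The genuine gap is precisely where you flag it: the ``no further relations'' step. Your suggestion that a relation $\sum_{i,j} c^i_j\undim X^i_{j,1}=0$ ``can be analyzed tube-by-tube once we know'' the single-tube structure does not directly work, because the $m-1$ independent relations you yourself identified, namely $\sum_j\undim X^i_{j,1}-\sum_j\undim X^{i'}_{j,1}=0$, are exactly cross-tube relations whose single-tube pieces are nonzero (each equals $\eta$). So knowing the span of each tube ``modulo $\eta$'' is not enough; you must additionally show that the pairwise intersections of the single-tube spans are exactly $\RR\eta$, and this is an irreducible global claim. The clean way to close the gap is the one you gesture at but do not carry out: compute the Gram matrix of the symmetrized Euler form on the full collection of quasi-simples. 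By Proposition~\ref{prop:tube info 2}(2) it is block-diagonal, and each block is the affine Cartan matrix $\widetilde{A}_{r_i-1}$ (using that quasi-simples are rigid bricks, so $\langle\undim X^i_{j,1},\undim X^i_{j',1}\rangle=\delta_{j,j'}-\delta_{j',j-1}$), which has rank $r_i-1$ with kernel spanned by $(1,\dots,1)$. Any relation vector $c=(c^i_j)$ must lie in the kernel of this Gram matrix, hence each block of $c$ is constant, and since $\sum_j\undim X^i_{j,1}=\eta\neq0$, the constants must sum to zero across tubes with each block contributing a multiple of the $\eta$-relation. This pins the relation space to dimension exactly $m-1$ and finishes both parts. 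Without some version of this argument, the ``clean rank argument'' you promise at the end is circular, since the lower bound of $n-1$ independent vectors is not actually exhibited.
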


\begin{rem}
    We note that Proposition~\ref{prop:regular linear algebra} holds for the Kronecker due to our convention that one of the tubes is considered to be ``exceptional'' even though it has rank 1. One could also modify the statement of Proposition~\ref{prop:regular linear algebra}(1) to say that $g(\eta)^{\perp}$ is spanned by the dimension vectors of the quasi simple modules in the exceptional tubes together with the null root $\eta$.
\end{rem}

\subsection{Cluster-tilted algebras}\label{sec:tame cluster-tilted}

Let $H = KQ$ be a hereditary algebra with cluster category $\cC(H)$, and let $M\sqcup P[1]$ be a cluster. Then $\Lambda = \End_{\cC(H)}(M\sqcup P[1])^{op}$ is called a \emph{cluster-tilted algebra} (of type $Q$). Such algebras were first considered in \cite{BMR}. We consider tame cluster-tilted algebras, which are precisely the endomorphism rings of clusters for tame hereditary algebras (see \cite[Section 3.4]{GL-FS} or the introduction of \cite{MR_rigid}). It is shown in e.g. \cite{assem_course} that the Auslander-Reiten quiver of $\Lambda$ can be obtained by deleting the direct summands of $M\sqcup P[1]$ from the Auslander-Reiten quiver of $\cC(H)$. In particular, this implies that there will be a 1-parameter family (parameterized by $K^*$) of homogeneous tubes in $\mods\Lambda$. 

\begin{defn}\label{def: regular module}
Let $\Lambda$ be a tame cluster-tilted algebra. As in the hereditary case, the generators of the homogeneous tubes of $\mods\Lambda$ share a dimension vector, which is the unique \emph{null root} $\eta$ of $\Lambda$. As before, we denote an arbitrary element of the 1-parameter family of homogeneous modules of dimension $\eta$ by $M_\lambda$ and denote $g(\eta) := g(M_\lambda)$. A module $M\in \mods\Lambda$ is defined to be \emph{regular} if $g(\eta) \in D_{\Lambda}(M)$.
\end{defn}

There has been recent work describing the representation theory of tame cluster-tilted algebras in detail. For example, it is shown in \cite{FG_indecomposable} that indecomposable $\tau$-rigid modules are uniquely determined by their dimension vectors. Rigid modules and bricks are further described in \cite{MR_rigid}.

Cluster-tilted algebras are closely related to the mutation of quivers and \emph{quivers with potential}. We briefly describe this relationship here.

Let $Q$ be a quiver with $n$ vertices and with no loops or 2-cycles. Associated to $Q$ is an $n\times n$ skew symmetric matrix $B$ with entries given by
$$b_{ij} = |\{\text{arrows } i \rightarrow j\}| - |\{\text{arrows }j \rightarrow i\}|.$$
We note that the matrix $B$ determines $Q$ uniquely.

Given $k \in \{1,\ldots,n\}$, there exists a new skew symmetric matrix $\mu_k(B)$ given by
$$b'_{ij} = \begin{cases} -b_{ij} & i = k \text{ or } j = k\\
	b_{ij} + \frac{|b_{ik}|b_{kj} + b_{ik}|b_{kj}|}{2} & \text{otherwise}\end{cases}$$
The matrix $\mu_k(B)$ is called the \emph{mutation} of $B$ at $k$ and the formulas for the coefficients $b'_{ij}$ are called the \emph{Fomin-Zelevinsky rules}. The quiver corresponding to the matrix $\mu_k(B)$ is denoted $\mu_k(Q)$ and is called the mutation of the quiver $Q$ at $k$.

Suppose $\Gamma$ is an acyclic quiver and let $Q = \mu_{k_t}\circ \cdots \circ \mu_{k_1}(\Gamma)$ for some arbitrary sequence $(k_1,\ldots,k_t)$. Then, by \cite{BMR_mutation} (see also \cite[Section~2.2]{assem_course}), there exists an ideal $I$ so that the algebra $KQ/I$ is cluster-tilted of type $\Gamma$, and conversely every cluster-tilted algebra of type $\Gamma$ is realized in this way. The ideal $I$ is obtained by taking cyclic derivatives of a \emph{potential}, which is a sum of cycles in the quiver $Q$, see e.g. \cite[Proposition~2]{assem_course}. The generalization of mutation to \emph{quivers with potential} is established in \cite{DWZ_quivers}. Thus the quiver $KQ/I$ is typically written as $J(Q,W)$, where $W$ is a potential. The letter $J$ is used because $KQ/I$ is the \emph{Jacobian algebra} of the quiver with potential $(Q,W)$. We denote $\mu_k(Q,W):= (\mu_k Q, W')$, where $W'$ is the potential making $J(\mu_k(Q),W')$ into a cluster-tilted algebra. Since $Q$ is mutation equivalent to an acyclic quiver, it is shown in \cite{DWZ_quivers} that the potential $W$ is unique up to ``right-equivalence'', and therefore $J(Q,W)$ is uniquely determined up to isomorphism by the quiver $Q$.

For brevity, we do not expand further on the notion of quivers with potential here. Information about how representations mutate along with their quivers will be developed in Section \ref{sec:cluster-tilted} as needed.


\section{Infinitesimal stability and regular stability}\label{sec:stability}
In this section, we introduce \emph{infinitesimal stability} and \emph{regular stability} and show that these notions coincide. We begin with the following definitions.

\subsection{Infinitesimal stability}

\begin{defn}\label{def:infinitesimal stability}
	Let $\Lambda$ be an arbitrary finite dimensional algebra and let $0 \neq v \in \RR^n$. Let $X \in \mods\Lambda$ be a nonzero module with $v \in D(X)$. Then
	\begin{enumerate}
	    \item The \emph{infinitesimal semi-invariant domain of $X$ at $v$} is defined as
				$$D_{0,v}(X) := \{w \in v^\perp \mid \exists \varepsilon > 0: v + \varepsilon w \in D(X)\}.$$
		\item The \emph{$v^\perp$ semi-invariant domain of $X$} is defined as
				$$D_{v^{\perp}}(X) := \{w \in v^{\perp} \mid w\cdot \undim X = 0, \forall X' \subseteq X: v\cdot \undim X' = 0 \Rightarrow w\cdot \undim X' \leq 0\}.$$
	\end{enumerate}
\end{defn}

\begin{rem}\label{rem:g in D}
	We emphasize the importance of the fact that $v \in D(X)$ in the definition of the infinitesimal semi-invariant domain. Indeed, the existence of $\varepsilon > 0$ so that $v + \varepsilon w \in D(X)$ alone says nothing about the behavior of $D(X)$ near $v$ (since $\varepsilon$ can be taken to be large). However, when $v \in D(X)$, the convexity of $D(X)$ implies that $v + \varepsilon' w \in D(X)$ for all $\varepsilon' < \varepsilon$.
\end{rem}

We are now ready to state and prove the first part of our first main result, which shows that the notions of infinitesimal stability and $v^\perp$ stability coincide.

\begin{thm}[Theorem~\ref{thmintro:mainA}, part 1]\label{thmA}
	Let $\Lambda$ be an arbitrary finite dimensional algebra. Let $0 \neq v \in \RR^n$ and let $X \in \mods\Lambda$ with $v \in D(X)$. Then $D_{0,v}(X) = D_{v^\perp}(X)$.
\end{thm}

\begin{proof}
	Let $w \in D_{0,v}(X)$ and $\varepsilon > 0$ so that $v + \varepsilon w \in D(X)$. Thus we have
		$$0 = (v + \varepsilon w)\cdot \undim X = \varepsilon w \cdot \undim X.$$
	Moreover, if $X' \subseteq X$ with $v\cdot \undim X' = 0$, then
		$$0 \geq (v + \varepsilon w)\cdot \undim X' = \varepsilon w \cdot \undim X'.$$
	As $\varepsilon > 0$, we conclude that $w \in D_{v^\perp}(X)$.
	
	Now let $w \in D_{v^\perp}(X)$ and let $X' \subseteq X$. There are then two possibilities. If $v \cdot \undim X' = 0$, then by assumption we have
	$$(v + \varepsilon w)\cdot \undim X' = \varepsilon w \cdot \undim X' \leq 0$$
	for all $\varepsilon > 0$. Otherwise, we have that $v \cdot \undim X' < 0$ since $v \in D(X)$. In this case, we can choose a sufficiently small $\varepsilon_{X'} > 0$ so that
	$$\varepsilon_{X'}w \cdot \undim X' < -v\cdot \undim X'.$$
	Taking $\varepsilon$ to be the minimum of the $\varepsilon_{X'}$ over all $X' \subseteq X$ which satisfy $v \cdot \undim X' < 0$ (a finite set), we see that $v + \varepsilon w \in D(X)$.
\end{proof}

As an example, we give a brief explanation of how our Theorem~\ref{thmA} is related to Asai's results about the reduction of wall-and-chamber structures \cite{asai}. Let $\Lambda$ be a finite dimensional algebra, and let $M, P \in \mods\Lambda$ so that $M\oplus P[1]$ is support $\tau$-rigid. A celebrated result of Jasso \cite{jasso_reduction} shows that the subcategory $M^\perp \cap ^\perp\tau M \cap P^\perp$ of $\mods\Lambda$ is equivalent to $\mods\Lambda'$ for some finite dimensional algebra $\Lambda'$. Asai then proves the following.

\begin{thm}\cite[Theorem~4.5]{asai}
    Let $\Lambda$ be a finite dimensional algebra and let $M\oplus P[1]$ be support $\tau$-rigid. Consider the cone $C(M\oplus P[1])\subseteq \mathbb{R}^n$ in the wall-and-chamber structure given by $\mathfrak{D}(\Lambda)$. (That is, $C(M\oplus P[1])$ is either the closure of a chamber or the intersection of a set of walls.) Then there exists an open subset $U \subseteq \RR^n$ with $C(M\oplus P[1]) \subseteq U$ and a surjective linear map $\pi:U \rightarrow \RR^{n-|M\oplus P[1]|}$ such that $\pi(\fD(\Lambda) \cap U) = \fD(\Lambda')$. That is, if $C \subseteq \mathbb{R}^n$ is a cone in the wall-and-chamber structure given by $\fD(\Lambda)$, then $\pi(C \cap U) \subseteq \mathbb{R}^{n-|M\oplus P[1]}$ is a cone in the wall-and-chamber structure given by $\fD(\Lambda')$.
\end{thm}

 In the special case where $M\oplus P[1]$ is indecomposable, Asai's theorem is equivalent to our Theorem~\ref{thmA} applied at $v=g(M\oplus P[1])$.
 
 The connection to Asai's reduction result highlights that if $v$ is the $g$-vector of an indecomposable $\tau$-rigid module, then $\{D_{0,v}(X) \mid X \in \mod \Lambda\}$ gives a wall-and-chamber structure in $v^\perp$. The following result shows that this is still the case when $v$ is arbitrary.

\begin{prop}[Theorem~\ref{thmintro:mainA}, part 2]\label{prop:nullPics}
    Let $\Lambda$ be a finite dimensional algbera and let $0 \neq v \in \RR^n$. Denote
    $$
        \fD_{0,v}(\Lambda):= \{D_{0,v}(X) \mid 0 \neq X \in \mods\Lambda, v \in D(X)\}.
    $$
    Then $\fD_{0,v}(\Lambda)$ gives a wall-and-chamber structures in $v^{\perp}$.
\end{prop}

\begin{proof}
   Let $0 \neq X \in \mods\Lambda$ with $0 \neq v \in D(X)$. As for the standard wall-and-chamber structure, we have that if $0 \neq Y \in \mods\Lambda$ with $0 \neq v \in D(Y)$, then $D_{0,v}(X)\cap D_{0,v}(Y) = D_{0,v}(X\oplus Y)$. It remains to show that $D_{0,v}(X)$ is contained in a wall. To see this, recall from the proof of Proposition~\ref{prop:picture} that there exists $X' \in \mods\Lambda$ with $D(X) \subseteq D(X')$ and $\dim D(X') = n-1$. It is clear that $D_{0,v}(X) \subseteq D_{0,v}(X')$, so we need only show that $D_{0,v}(X')$ is a wall in $v^\perp$; i.e., that $\dim D_{0,v}(X') = n-2$. This follows from the observation that
    $D(X') \cap v^{\perp} \subseteq D_{0,v}(X') \subseteq (\undim X')^\perp \cap v^\perp$.
    
   To see that the chambers of $\fD_{0,v}(\Lambda)$ are convex, suppose not. Then $\exists w,w'\in v^\perp$ in the same chamber of $\fD_{0,v}(\Lambda)$ so that $v+w+w'\in D(X)$ for some $X$ so that $v\in D(X)$. We can assume that $\dim D(X)=n-1$. Let $\cS$ be the set of all nonzero vectors in $\NN^n$ which are $\le \undim X$, and let $L_\cS$ be as in Lemma \ref{lem: IT13}. Recall that $L_\cS$ has finitely many chambers. So, we can ignore all chambers of $L_\cS$ which do not contain $v$ in their closures. Then, for sufficiently small $\varepsilon>0$, $v+\varepsilon w$ and $v+\varepsilon w'$ lie in distinct chambers of $L_\cS$ (since $D(X)$ separates them). Taking the walls of these chambers which contains $v$ and intersecting with the affine hyperplane $v + v^\perp$ we see that $w$ and $w'$ lie in different chambers of $\fD_{0,v}(\Lambda)$. Thus, these chambers must be convex.
\end{proof}

\begin{defn}\label{def:nullPics}
    We call $\fD_{0,v}(\Lambda)$ as in Proposition~\ref{prop:nullPics} (together with its chambers) the \emph{infinitesimal wall-and-chamber structure} of $\Lambda$ at $v$. We also define the \emph{infinitesimal semi-invariant picture} of $\Lambda$ at $v$ by intersecting this wall-and-chamber structures with the unit sphere $S^{n-2}\subseteq v^\perp$ as in Definition~\ref{def:picture}.
\end{defn}

\begin{rem}\label{rem:sphere intersection}
	The infinitesimal semi-invariant picture of $\Lambda$ at $v$ is isomorphic to the intersection of the semi-invariant picture of $\Lambda$ with an infinitesimal sphere $S^{n-2}$ perpendicular to $v$ at $v$.
	\end{rem}
	
\begin{eg}\label{eg: A2-tilde example}
An example for the quiver\footnote{Using our numbering convention from Section~\ref{sec:background}, we emphasize that $K\widetilde{A}_{n-1}$ contains $n$ isoclasses of simple modules} $\tilde A_2 = 1\leftarrow 2 \leftarrow 3 \rightarrow 1$ with path algebra $H$ is shown in Figure~\ref{fig:my_label}. Here, the affine hyperplane $g(\eta)+g(\eta)^\perp$ intersected with walls $D(\eta)$, $D(S_2)$, $D(\tau S_2)$ is indicated. These are the only walls which contain $g(\eta)$. Some other walls not containing $g(\eta)$ are drawn, but they do not contribute to $\fD_{0,g(\eta)}(H)$. The tangent circle $S^{n-2}$ ($=S^1$) of $g(\eta)+g(\eta)^\perp$ at $g(\eta)$ is also indicated with the six vectors $\pm w_1,\pm w_2,\pm w_3$ giving the infinitesimal semi-invariant picture $\fD_{0,g(\eta)}(H)\cap S^1$. Higher dimensional examples can be found in \cite{IKTW_periodic} and Section \ref{sec:regular rigid} of the present paper.
\end{eg}

\begin{figure}
    \centering
    \begin{tikzpicture}
\begin{scope}
    \draw[thick] (-2,-2)--(2,2) (-2,2)--(2,-2);
\draw(-1.8,1.7) node[left]{\tiny$D(S_2)\cap(g(\eta)+g(\eta)^\perp)$};
\draw(1.8,1.7) node[right]{\tiny$D(\tau S_2)\cap(g(\eta)+g(\eta)^\perp)$};
\draw[thick, red] (-3,0)--(3,0);
   \draw[very thick,blue,->] (0,0)--(.5,.5);
  \draw[very thick,blue,->] (0,0)--(-.5,-.5);
   \draw[very thick,blue,->] (0,0)--(-.5,.5);
   \draw[very thick,blue,->] (0,0)--(.5,-.5);
  \draw[very thick,red,<->] (-.7,0)--(.7,0);
\draw (1.3,1.3)--(-3,.6);
 \draw (-1.7,1.7)--(3,1.1);
 \draw (-1.7,-1.7)--(3,-1);
 \draw (-3,-1.18)--(-1.7,-1.7)--(-1.2,-1.9);
\draw (-.97,-.97)--(3,-.6);
\draw (1.27,-1.27)--(-3,-.7);
\draw[red,fill] (0,0) circle[radius=2pt];
\draw[red] (-.1,.3) node{$g(\eta)$} (2.6,0) node[above]{\tiny$D(\eta)\cap(g(\eta)+g(\eta)^\perp)$};
\end{scope}
\begin{scope}
\draw (0,0) circle[radius=.6cm];
\draw[thick,blue,fill] (.42,.42) circle[radius=2pt] (.4,.43) node[above]{$w_2$}
(.42,-.42) circle[radius=2pt] (.4,-.43) node[below]{$-w_1$}
(-.42,.42) circle[radius=2pt] (-.4,.43) node[above]{$w_1$}
(-.42,.-.42) circle[radius=2pt] (-.6,-.43) node[below]{$-w_2$}; 
\draw[thick,red,fill] (.6,0) circle[radius=2pt] (.6,-.1)node[right]{$w_3$}
 (-.6,0) circle[radius=2pt] (-.6,-.1)node[left]{$-w_3$};
\end{scope}
\end{tikzpicture}
    \caption{The affine hyperplane $g(\eta)+g(\eta)^\perp$ with the infinitesimal sphere $S^{n-2}=S^1$ centered at $g(\eta)$ is shown. This structure $\fD_{0,g(\eta)}(H)$ has 6 walls and 6 chambers.}
    \label{fig:my_label}
\end{figure}

\subsection{Regular stability}

In this paper, we are primarily interested in the wall-and-chamber structure $\fD_{0,g(\eta)}(\Lambda)$ for $\Lambda$ a tame hereditary or cluster-tilted algebra. The following gives a precise description of the bricks $X$ with $g(\eta) \in D(X)$.
	
\begin{prop}\label{prop:orthogonal to null}
	Let $H$ be a tame hereditary or cluster-tilted algebra and let $X \in \mods H.$ Then $g(\eta) \in D(X)$ if and only if $X$ is regular. Moreover, if $H$ is tame hereditary then $g(X) \in D(\eta)$ if and only if $g(\eta) \in D(X)$.
\end{prop}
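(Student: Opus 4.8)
The plan is to prove the two equivalences separately, using the structural results about tame hereditary and cluster-tilted algebras recalled above. For the first claim, that $g(\eta) \in D(X)$ if and only if $X$ is regular, I would first reduce to the case where $X$ is indecomposable, since both conditions on $X$ are conjunctions over direct summands: $D(X\oplus Y) = D(X)\cap D(Y)$, and $X$ is regular by definition when all its summands are. For $X$ indecomposable, the ``if'' direction is immediate in the cluster-tilted case since ``regular'' is \emph{defined} as $g(\eta)\in D_\Lambda(X)$ (Definition~\ref{def: regular module}); in the hereditary case, if $X$ is regular then $X$ lies in some tube, and by Lemma~\ref{lem: preprojective maps to homogeneous} and Proposition~\ref{prop:tube info 2} every submodule of $X$ is either preprojective or regular in the same tube, so one checks the semistability inequalities using Lemma~\ref{lem:characterization of D(eta)} (the description of $D(\eta)$ via $v\cdot\eta = 0$ and $v\cdot\undim X' \leq 0$ for preprojective $X'$) together with the fact that $g(\eta)\cdot\undim X' = 0$ for regular $X'$ and $g(\eta)\cdot\undim X' < 0$ for preprojective $X'$ by definition.

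For the ``only if'' direction in the hereditary case, suppose $g(\eta)\in D(X)$ with $X$ indecomposable. Then $g(\eta)\cdot\undim X = 0$, which by the trichotomy (preprojective/regular/preinjective) forces $X$ to be regular. For the cluster-tilted case I would argue similarly: $g(\eta)\cdot\undim X = 0$ is forced by $g(\eta)\in D_\Lambda(X)$, and this is exactly the defining condition of ``regular'' there, so there is essentially nothing to prove beyond unwinding definitions. The slightly delicate point to double-check is whether $g(\eta)\in D(X)$ with $X$ indecomposable and $g(\eta)\cdot\undim X=0$ could fail the submodule inequalities in the hereditary case if $X$ is regular but \emph{not} a brick — but Lemma~\ref{lem: preprojective maps to homogeneous} guarantees submodules of a regular module are preprojective or regular, and the regular ones satisfy $g(\eta)\cdot\undim(-) = 0$ while preprojective ones satisfy $g(\eta)\cdot\undim(-) < 0$, so all inequalities hold automatically.

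For the second claim, that (in the tame hereditary case) $g(X)\in D(\eta)$ if and only if $g(\eta)\in D(X)$, I would use Lemma~\ref{lem:homOrthogonalToEta} applied with the homogeneous brick playing the role of $X$ there. More precisely, let $M_\lambda$ be an indecomposable homogeneous module with $\undim M_\lambda$ a multiple of $\eta$; then $D(\eta) = D(M_\lambda)$ by Notation~\ref{notation 1.4.8}, and $g(M_\lambda) = g(\eta)$. The statement $g(X)\in D(\eta)$ means $g(X)\in D(M_\lambda)$, i.e.\ $M_\lambda$ is $g(X)$-semistable. Now I would want to symmetrize: the condition ``$g(\eta)\in D(X)$'' should, via the first part, be equivalent to ``$X$ regular'', and I would show ``$g(X)\in D(\eta)$'' is also equivalent to ``$X$ regular''. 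By Lemma~\ref{lem:characterization of D(eta)}, $g(X)\in D(\eta)$ iff $g(X)\cdot\eta = 0$ and $g(X)\cdot\undim X' \leq 0$ for all preprojective $X'$. The first condition $g(X)\cdot\eta = 0$ says, using the Euler-Ringel pairing $g(X)\cdot\undim Y = \dim\Hom(X,Y) - \dim\Hom(Y,\tau X)$ applied to $Y = M_\lambda$ homogeneous (so $\tau X$ can be replaced appropriately — actually apply it with the roles arranged so $Y = M_\lambda$, $\tau M_\lambda = M_\lambda$): one gets $g(X)\cdot\eta$ proportional to $\dim\Hom(X,M_\lambda) - \dim\Hom(M_\lambda,X)$, and by the $\tau$-symmetry of $\Hom$ for homogeneous bricks this vanishes iff $X$ is not preinjective and not preprojective, i.e.\ $X$ is regular (using Lemma~\ref{lem: preprojective maps to homogeneous} parts (1)–(4) to detect preprojective/preinjective summands via nonvanishing Hom to/from $M_\lambda$). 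Combined with the first part of the proposition, both sides reduce to ``$X$ is regular'', completing the proof.

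\textbf{Main obstacle.} I expect the subtle step to be the second equivalence, specifically making the symmetry argument between $g(X)\cdot\eta = 0$ and $g(\eta)\cdot\undim X = 0$ fully rigorous — one must be careful that the Euler-Ringel pairing and the self-duality $\dim\Hom(X,M_\lambda) = \dim\Hom(M_\lambda, X)$ (valid because $M_\lambda$ is a homogeneous brick with $\tau M_\lambda\cong M_\lambda$) genuinely give $g(X)\cdot\eta = 0$, and that this, together with the preprojective inequalities from Lemma~\ref{lem:characterization of D(eta)}, is equivalent to regularity of $X$ rather than merely necessary. The cleanest route is probably to observe that \emph{both} conditions are equivalent to ``$\Hom_H(X, M_\lambda) = 0 = \Hom_H(M_\lambda, X)$ for homogeneous $M_\lambda$ in all but finitely many homogeneous tubes'' (or in one generic tube), which by Lemma~\ref{lem: preprojective maps to homogeneous} is equivalent to $X$ having no preprojective and no preinjective indecomposable summands, i.e.\ $X$ regular; this sidesteps checking the submodule inequalities directly on both sides.
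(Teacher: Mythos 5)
Your treatment of the first equivalence is correct and takes a genuinely different route from the paper's. The paper handles both directions in one stroke by applying Lemma~\ref{lem:homOrthogonalToEta} with $M_\lambda$ in the role of the homogeneous brick: $g(\eta)=g(M_\lambda)\in D(X)$ iff $X$ is Hom-orthogonal to $M_\lambda$ or an iterated self-extension of it, which is then identified with regularity. You instead reduce to indecomposable $X$ via $D(X\oplus Y)=D(X)\cap D(Y)$, dispose of the ``only if'' direction in one line via the sign trichotomy on $g(\eta)\cdot\undim X$, and handle ``if'' by the structural fact that submodules of a regular module are preprojective or regular. This avoids the iterated-self-extension case of Lemma~\ref{lem:homOrthogonalToEta} entirely and is arguably cleaner. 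One small slip: for a cluster-tilted algebra, Definition~\ref{def: regular module} defines ``regular'' as $g(\eta)\in D(X)$, not as $g(\eta)\cdot\undim X=0$ (the latter is necessary but not sufficient), but since the claim there is a tautology your conclusion is unaffected.

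For the second equivalence, however, there is a genuine gap, and it is precisely the one you half-flag in your ``Main obstacle'' paragraph. You want to show $g(X)\in D(\eta)$ iff $X$ is regular, and the key step you propose is ``$g(X)\cdot\eta=0$ iff $X$ is regular, detecting preprojective/preinjective summands via nonvanishing Hom to/from $M_\lambda$.'' This is only correct for $X$ \emph{indecomposable}: $g(X)\cdot\eta = \dim\Hom(X,M_\lambda) - \dim\Hom(M_\lambda,X)$ is a difference, and for decomposable $X$ the contributions of a preprojective summand and a preinjective summand can cancel. Concretely, over the Kronecker quiver take $X=P(1)\oplus S(1)$; then $g(X)=(2,-2)=2\,g(\eta)\in D(\eta)$ (one checks the single submodule inequality $v\cdot\undim S(2)\le 0$ directly), yet $X$ is not regular. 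Note also that the reduction to indecomposables you rightly invoke for the first equivalence does \emph{not} transfer here: $g(X\oplus Y)=g(X)+g(Y)$, and a sum of $g$-vectors can land in the cone $D(\eta)$ even when no summand's $g$-vector does, so the condition $g(X)\in D(\eta)$ is not a conjunction over indecomposable summands. The paper's own proof of this part (a quick appeal to $\Hom_H(M_\lambda,\tau X)\cong\Hom_H(M_\lambda,X)$) is equally terse and does not address this point either; to make the ``moreover'' statement rigorous one should restrict $X$ to be indecomposable (or a brick), which suffices for all uses of the proposition in the paper. Your instinct that this is the delicate step was right; the resolution is to state and prove the second equivalence for indecomposable $X$ only, rather than to try to symmetrize via Hom-vanishing for arbitrary $X$.
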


\begin{proof}
	Let $X \in \mods H$ be arbitrary, and let $M_\lambda \in \mods H$ be homogeneous with $\undim M_\lambda = \eta$. By Lemma \ref{lem:homOrthogonalToEta}, $g(\eta) \in D(X)$ if and only if $\Hom_H(M_\lambda,X) = 0 = \Hom_H(X,M_\lambda)$, which is equivalent to $X$ being regular.
	
	Moreover, if $H$ is hereditary then $\Hom_H(M_\lambda,\tau X) \cong \Hom_H(\tau^{-1}M_\lambda,X) \cong \Hom_H(M_\lambda,X)$. This means $g(X) \in D(\eta)$ if and only if $\Hom_H(M_\lambda,X) = 0 = \Hom_H(X,M_\lambda)$.
\end{proof}

\begin{rem}\label{rem:thmA caess}
	In the case that $\Lambda$ is tame hereditary and $v = g(\eta)$, Proposition~\ref{prop:orthogonal to null} shows the cases $v \cdot \undim X' = 0$ and $v\cdot \undim X' < 0$ in the proof of Theorem~\ref{thmA} correspond to the regular and preprojective submodules of $X$, respectively.
\end{rem}

\begin{rem}\label{rem:non-symmetric}
In general, it is not true for a tame cluster-tilted algebra that $g(\eta) \in D(X)$ if and only if $g(X) \in D(\eta)$. For example, consider the cluster-tilted algebra $KQ/I$ of type $\widetilde{A}_3$ with $Q = 1\xrightarrow{\alpha} 3 \xrightarrow{\beta} 4 \xrightarrow{\gamma} 1 \rightarrow 2 \rightarrow 3$ and $I = (\alpha\beta + \beta \gamma + \gamma\alpha)$. Then $\eta = (1,1,1,0)$, $g(\eta) = (1,0,-1,0)$, and $g(S(4)) = (-1,0,0,1)$. Thus $g(S(4)) \notin D(\eta)$ but $g(\eta) \in D(S(4))$.
\end{rem}

In light of Proposition \ref{prop:orthogonal to null}, we restate Definitions \ref{def:infinitesimal stability} and \ref{def:nullPics} in our case of interest.

\begin{defn}\label{def:regular stability}
 Let $H$ be a tame hereditary or cluster-tilted algebra.
 	\begin{enumerate}
		\item Let $X \in \Reg H$. (See Definition \ref{def: regular module}.) Then we refer to the infinitesimal semi-invariant domain of $X$ at $g(\eta)$ as the \emph{regular semi-invariant domain} of $X$, which is given by
			$$D_{reg}(X) := D_{0,g(\eta)}(X) =  \{w \in g(\eta)^\perp \mid w \cdot \undim X = 0, \forall \textnormal{ regular }X' \subseteq X: w \cdot \undim X' \leq 0\}.$$
		\item For $w \in g(\eta)^\perp$, we say $X$ is \emph{$w$-regular semistable} if $w \in D_{reg}(X)$. If $w\cdot \undim X' < 0$ for all regular $0 \neq X' \subsetneq X$, we say $X$ is \emph{$w$-regular stable}.
		\item We refer to the infinitesimal wall-and-chamber structure at $g(\eta)$ as the \emph{regular wall-and-chamber structure} of $H$, which we denote $\fD_{reg}(H):=\fD_{0,g(\eta)}(H)$.
		\item We refer to the infinitesimal semi-invariant picture at $g(\eta)$ as the \emph{regular semi-invariant picture} of $H$. 
	\end{enumerate}
\end{defn}

\begin{rem}
    Define $D_{0,g(\eta)}(\eta):=D_{0,g(\eta)}(M)$ for any homogeneous $M$. Similarly, define $D_{reg}(\eta)\linebreak :=D_{reg}(M)$ for any homogeneous $M$. In particular, Lemma \ref{lem:characterization of D(eta)} implies that $D_{0,g(\eta)}(\eta) = \eta^\perp\cap g(\eta)^\perp = D_{reg}(\eta)$.
\end{rem}


\section{Regular wall-and-chamber structures from self-injective Nakayama algebras}\label{sec:co-amalgamation}
Let $H$ be a tame hereditary algebra. In this section, we associate to each exceptional tube in $\mods H$ of rank $r$ a self-injective Nakayama algebra $\Lambda_r$. We then show that the regular wall-and-chamber structure of $H$ is a co-amalgamated product of the standard wall-and-chamber structures of these self-injective Nakayama algebras.


\subsection{The self-injective Nakayama algebras and their standard wall-and-chamber structures}\label{sec:nakayamaEqns}
We now turn our attention to certain self-injective Nakayama algebras and describe their standard wall-and-chamber structures.

\begin{defn}\label{def:nakayamas}
	 Let $Z_r$ denote the cyclic quiver with $r$ vertices $1,2,\ldots,r$ and $r$ arrows arranged in one oriented cycle:
 	$$Z_r: r \rightarrow r-1 \rightarrow r-2 \rightarrow \cdots \rightarrow 1 \rightarrow r.$$
Let $\Lambda_r$ be $KZ_r$ modulo $rad^{r+1}$; i.e. the composition of any $r+1$ arrows is zero. 
\end{defn}

The AR quiver for the algebra $\Lambda_4$ is shown in Figure \ref{fig:NakayamaAR}. Note also that $\Lambda_1$, which will be relevant only for the Kronecker, is isomorphic to $K[\alpha]/(\alpha^2)$. In the following proposition, we collect several facts about the algebra~$\Lambda_r$.

\begin{figure}
 \begin{center}
 \begin{tikzpicture}[scale=.9]
\draw (-3,2.9) node{...};
\draw (14,2.9) node{...};
\foreach \x/\xtext in {0/1,4/2,8/3,12/4} 
\draw (\x,.5) node{\xtext} ;
\foreach \x/\xtext in {0/$Y_{11}=$,4/$Y_{21}=$,8/$Y_{31}=$,12/$Y_{41}=$} 
\draw (\x,.5) node[left]{\xtext} ;
\foreach \x in {0,4,8} 
\draw[dashed] (\x,.5) +(.4,0) --+(2.6,0);
\foreach \x in {0,4,8} 
\draw[dashed] (\x,2.9) + (.4,0) --+(2.6,0);
\foreach \x in {0,4,8,12} 
\draw[dashed] (\x,4.15) + (-1.65,0) --+(0.6,0);
\foreach \x in {0,4,8,12} 
\draw[dashed] (\x,1.7) + (-1.65,0) --+(0.6,0);
\foreach \x in {0.2,4.2,8.2,12.2} 
\draw[->,thick] (\x,.7)  -- +(.6,.8);
\foreach \x in {0.2,4.2,8.2,12.2} 
\draw[->,thick] (\x,.7)  +(.6,.8) +(-2,.8)-- +(-1.4,0);
\foreach \x/\xtext in {-2/4,2/1,6/2,10/3,14/4} 
\draw (\x,1.5) node{\xtext};
\foreach \x/\xtext in {-2/1,2/2,6/3,10/4,14/1} 
\draw (\x,1.9) node{\xtext};
\foreach \x/\xtext in {-2/$Y_{42}=$,2/$Y_{12}=$,6/$Y_{22}=$,10/$Y_{32}=$,14/$Y_{42}=$} 
\draw (\x,1.7) node[left]{\xtext};
\begin{scope}[xshift=-1cm,yshift=1.2cm]
\foreach \x in {-.8,3.2,7.2,11.2} 
\draw[->,thick] (\x,.7) -- +(.6,.8);
\end{scope}
\begin{scope}[xshift=1cm,yshift=1.2cm]
\foreach \x in {0.2,4.2,8.2,12.2} 
\draw[->,thick] (\x,.7)  +(.6,.8) +(-1,.8)-- +(-.4,0);
\end{scope}
\foreach \x/\xtext in {0/4,4/1,8/2,12/3} 
\draw (\x,2.5) node{\xtext};
\foreach \x/\xtext in {0/1,4/2,8/3,12/4}
\draw (\x,2.9) node{\xtext};
\foreach \x/\xtext in {0/$Y_{43}=$,4/$Y_{13}=$,8/$Y_{23}=$,12/$Y_{33}=$}
\draw (\x,2.9) node[left]{\xtext};
\foreach \x/\xtext in {0/2,4/3,8/4,12/1}
\draw (\x,3.3) node{\xtext};
\foreach \x/\xtext in {-2/3,2/4,6/1,10/2,14/3} 
\draw (\x,3.7) node{\xtext};
\foreach \x/\xtext in {-2/4,2/1,6/2,10/3,14/4} 
\draw(\x,4) node{\xtext};
\foreach \x/\xtext in {-2/$Y_{34}=$,2/$Y_{44}=$,6/$Y_{14}=$,10/$Y_{24}=$,14/$Y_{34}=$} 
\draw(\x,4.15) node[left]{\xtext};
\foreach \x/\xtext in {-2/1,2/2,6/3,10/4,14/1} 
\draw (\x,4.3) node{\xtext};
\foreach \x/\xtext in {-2/2,2/3,6/4,10/1,14/2} 
\draw (\x,4.6) node{\xtext};
\begin{scope}[yshift=2.4cm]
\foreach \x in {0.2,4.2,8.2,12.2} 
\draw[->,thick] (\x,.7) -- +(.6,.8);
\end{scope}
\begin{scope}[yshift=2.4cm]
\foreach \x in {-.8,3.2,7.2,11.2} 
\draw[->,thick] (\x,.7)  +(.6,.8) +(-1,.8)-- +(-.4,0);
\end{scope}
\begin{scope}[yshift=2mm]
\begin{scope}[xshift=-1cm,yshift=3.5cm]
\foreach \x in {-.8,3.2,7.2,11.2} 
\draw[->,thick] (\x,.7) -- +(.6,.8);
\end{scope}
\begin{scope}[xshift=1cm,yshift=3.5cm]
\foreach \x in {0.2,4.2,8.2,12.2} 
\draw[->,thick] (\x,.7)  +(.6,.8) +(-1,.8)-- +(-.4,0);
\end{scope}

\foreach \x/\xtext in {0/$Y_{35}=P(3)$,4/$Y_{45}=P(4)$,8/$Y_{15}=P(1)$,12/$Y_{25}=P(2)$} 
\draw (\x,5.3)+(.5,0) node[left]{\xtext};
\end{scope}
\end{tikzpicture}
\caption{The AR quiver for the algebra $\Lambda_4=KZ_4/rad^5$. $Y_{j\ell}$ is the module with socle $S(j)$ and length $\ell$.}\label{fig:NakayamaAR}
\end{center}
\end{figure}

\begin{prop}\label{prop:Nakayama}
 For all $r\ge1$, $\Lambda_r$ has the following properties.
\begin{enumerate} 
	\item \cite[Prop. 3.8]{ASS_elements} $\Lambda_r$ is self-injective and uniserial.
	\item \cite[Thm. 3.2]{ASS_elements} All indecomposable $\Lambda_r$-modules are uniserial.
	\item \cite[Thm. 3.5]{ASS_elements} There are exactly $r(r+1)$ isomorphism classes of indecomposable $\Lambda_r$-modules, given by $P(i)/rad^kP(i)$ for $i \in \{1,\ldots,r\}$ and $k \in \{1,\ldots,r+1\}$.
	\item An indecomposable $\Lambda_r$-module is a brick if and only if its length is at most $r$.
	\item Every brick in $\mods\Lambda_r$ is determined by any two of its top, socle, and length.
\end{enumerate}
\end{prop}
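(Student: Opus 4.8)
The plan is to reduce everything to the uniserial structure from parts (1)--(3) together with bookkeeping modulo $r$; parts (1)--(3) are cited, so only (4) and (5) require argument. Write $M_{i,k} := P(i)/\rad^k P(i)$ for $1 \le i \le r$ and $1 \le k \le r+1$, so by (2)--(3) these exhaust the indecomposables without repetition. Since $\rad^{r+1} = 0 \ne \rad^r$, the radical layers of $P(i)$ from top to bottom are $S(i), S(i-1), \dots, S(i-r)$ with indices read mod $r$; hence $M_{i,k}$ is uniserial of length $k$ with layers $S(i), \dots, S(i-k+1)$, it has $\operatorname{top} M_{i,k} = S(i)$ and $\soc M_{i,k} = S(i-k+1)$, and the length-$(r+1)$ modules $M_{i,r+1} = P(i)$ are precisely the projective-injectives.

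For (4): if $k \le r$, the layers $S(i), S(i-1), \dots, S(i-k+1)$ are $k$ consecutive residues mod $r$, hence pairwise non-isomorphic. Given a nonzero $f \in \End_{\Lambda_r}(M_{i,k})$, uniseriality makes $\im f$ the unique submodule of some length $\ell \ge 1$, whose top is the $\ell$-th layer from the bottom of $M_{i,k}$, namely $S(i-k+\ell)$; but $\im f \cong M_{i,k}/\ker f$ is a quotient, so its top is $S(i)$. Multiplicity-freeness of the layers forces $\ell = k$, so $f$ is onto and hence an automorphism. Therefore $\End_{\Lambda_r}(M_{i,k})$ is a finite-dimensional division algebra over the algebraically closed field $K$, so it equals $K$ and $M_{i,k}$ is a brick. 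If instead $k = r+1$, then $\operatorname{top} P(i) = S(i) = \soc P(i)$, so the composite $P(i) \twoheadrightarrow \operatorname{top} P(i) \cong \soc P(i) \hookrightarrow P(i)$ is a nonzero endomorphism whose image has length $1 < r+1$; it is not invertible, so $P(i)$ is not a brick. This proves (4).

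For (5): a brick is indecomposable, hence uniserial by (2), and has length $\le r$ by (4). The assignments $(i,k) \mapsto (S(i), k)$ and $(i,k) \mapsto (S(i-k+1), k)$ are injective on the index set $\{1,\dots,r\}\times\{1,\dots,r+1\}$, so (3) shows an indecomposable module is determined up to isomorphism by its top together with its length, and by its socle together with its length. It remains to see that the top and socle determine the length for a brick: if $\operatorname{top} = S(a)$ and $\soc = S(b)$, the layer description forces the length $\ell$ to satisfy $\ell \equiv a - b + 1 \pmod r$, and there is a unique such $\ell$ in $\{1,\dots,r\}$, which is the admissible range by (4). Hence the brick is determined by its top and socle, completing (5).

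The only step here that is not pure bookkeeping is the two-sided reading of $\im f$ in (4) --- simultaneously as a submodule (so its top is a low layer) and as a quotient (so its top is $S(i)$) --- and I expect that comparison to be the crux of the argument; everything else follows from the Nakayama classification recorded in (2)--(3) and elementary arithmetic modulo $r$.
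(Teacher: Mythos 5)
Your argument is correct. Note that the paper gives no proof of (4) and (5) at all: immediately after the proposition it simply remarks that ``(4) is an easy exercise. (5) is an immediate consequence of (4).'' Your write-up supplies exactly the intended exercise, and the crux you identify --- reading $\im f$ both as a submodule (top $= S(i-k+\ell)$, the $\ell$-th layer from the bottom) and as a quotient (top $= S(i)$), then using that the $k \le r$ layers are multiplicity-free --- is the standard way to do it; the derivation of (5) from (3), (4), and arithmetic mod $r$ is also complete. One tiny remark: once you have shown every nonzero endomorphism is invertible you already have that $\End_{\Lambda_r}(M_{i,k})$ is a division algebra, which is the paper's definition of a brick, so the final appeal to algebraic closure of $K$ to get $\End = K$ is harmless but superfluous.
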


(4) is an easy exercise. (5) is an immediate consequence of (4).

\begin{nota}\label{notation:nakayama indexing}
	We denote by $Y_{j,\ell}$ the unique indecomposable $\Lambda_r$-module with socle $S(j)$ and length $\ell$. As when working in the exceptional tubes of $\mods H$, we identify indices mod $r$ when working in $\mods\Lambda_r$. We denote by $\fD(\Lambda_r)$ with its chambers the standard wall-and-chamber structure of $\Lambda_r$ (Definition \ref{def:picture}) and by $D(Y_{j,\ell})$ the semi-invariant domain of $Y_{j,\ell}$. We denote $\overline{1}:= (1,\ldots,1)$.
\end{nota}

It is straightforward to show that $\tau Y_{j,\ell} \cong Y_{j-1,\ell}$ for any brick $Y_{j,\ell} \in \mods\Lambda_r$. 

\begin{prop}\label{prop:NakayamaEqns}
	Let $Y_{j,\ell} \in \mods\Lambda_{r}$ be a brick. Then the semi-invariant domain of $Y_{j,\ell}$ is given by the equation and inequalities
	$$
	D(Y_{j,\ell}) = \left\{v \in \RR^r \ \middle| \ \sum_{k = j}^{j+\ell-1} v\cdot \undim Y_{k,1} = 0, \ \forall \  \ell' < \ell: \sum_{k = j}^{j+\ell' - 1} v\cdot \undim Y_{k,1} \leq 0\right\}.
	$$
\end{prop}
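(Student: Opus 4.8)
The plan is to unwind the definition of the semi-invariant domain $D(Y_{j,\ell})$ (Definition~\ref{def:semistable}) using the combinatorics of the submodule lattice of $Y_{j,\ell}$, which is completely transparent because $Y_{j,\ell}$ is uniserial (Proposition~\ref{prop:Nakayama}(2)). First I would record that a uniserial module of length $\ell$ has exactly one submodule of each length $0,1,\dots,\ell$, namely the $\ell'$-st power $\rad^{\ell-\ell'}$ applied to it, and that this unique submodule of length $\ell'$ is again uniserial with the same socle $S(j)$; in terms of Notation~\ref{notation:nakayama indexing} this submodule is $Y_{j,\ell'}$. So the submodules of $Y_{j,\ell}$ are precisely $Y_{j,0}=0, Y_{j,1},\dots,Y_{j,\ell}=Y_{j,\ell}$, a chain.

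Next I would compute dimension vectors. Since $Y_{j,\ell}$ is uniserial with socle $S(j)$ and length $\ell$, its composition factors (from the socle up) are $S(j),S(j+1),\dots,S(j+\ell-1)$ where indices are read mod $r$; hence $\undim Y_{j,\ell'} = \sum_{k=j}^{j+\ell'-1}\undim S(k) = \sum_{k=j}^{j+\ell'-1}\undim Y_{k,1}$, using that the quasi/length-$1$ module $Y_{k,1}$ is the simple $S(k)$. Plugging the chain of submodules and these dimension-vector identities into Definition~\ref{def:semistable}(2), the condition $v\cdot\undim Y_{j,\ell}=0$ becomes $\sum_{k=j}^{j+\ell-1} v\cdot\undim Y_{k,1}=0$, and the condition $v\cdot\undim Y'\le 0$ for all submodules $Y'\subseteq Y_{j,\ell}$ becomes $\sum_{k=j}^{j+\ell'-1} v\cdot\undim Y_{k,1}\le 0$ for each $\ell'$ with $0\le\ell'\le\ell$. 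The cases $\ell'=0$ (empty sum, trivially $\le 0$) and $\ell'=\ell$ (already forced to be $=0$, hence $\le 0$) are redundant, leaving exactly the equality together with the inequalities for $\ell'<\ell$, which is the claimed description.

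The only genuinely substantive point — and the one I would be most careful about — is the identification of the submodule lattice: that $Y_{j,\ell}$ really is uniserial with socle $S(j)$, that each submodule is itself one of the $Y_{j,\ell'}$, and in particular that the composition series reads off as $S(j),S(j+1),\dots,S(j+\ell-1)$ with the cyclic indexing convention matching Notation~\ref{notation:nakayama indexing} (where $\tau Y_{j,\ell}\cong Y_{j-1,\ell}$ and indices are taken mod $r$). All of this is immediate from Proposition~\ref{prop:Nakayama}(1)--(2) and the structure of $\Lambda_r = KZ_r/\rad^{r+1}$: the projective cover $P(j)$ is uniserial with top $S(j)$, so its quotients $P(j)/\rad^{\ell}P(j)$ exhaust the uniserial modules, and dually (using self-injectivity) the submodule structure of any uniserial module is a chain. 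Once the submodule chain and the dimension-vector formula are in hand, the rest is the direct substitution above; there is no real obstacle beyond bookkeeping with the mod-$r$ indexing.
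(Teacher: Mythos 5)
Your proposal is correct and takes essentially the same route as the paper's proof: identify the submodule lattice of the uniserial module $Y_{j,\ell}$ as the chain $0\subsetneq Y_{j,1}\subsetneq\cdots\subsetneq Y_{j,\ell}$, read off the composition factors $Y_{j,1},\dots,Y_{j+\ell-1,1}$ to get $\undim Y_{j,\ell'}=\sum_{k=j}^{j+\ell'-1}\undim Y_{k,1}$, and substitute into Definition~\ref{def:semistable}. You supply a bit more justification (citing Proposition~\ref{prop:Nakayama} for uniseriality and noting the redundancy of $\ell'=0,\ell$), but the content and structure are the same.
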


\begin{proof}
	By construction, the unique composition series of $Y_{j,\ell}$ is
	$$0 \subsetneq Y_{j,1} \subsetneq \cdots \subsetneq Y_{j,\ell}$$
	and these are all of the submodules of $Y_{j,\ell}$. In addition, 
	the composition factors of $Y_{j,\ell}$ are
	$Y_{j,1}, \ldots, Y_{j + \ell - 1, 1}.$
	The result then follows from $
	\undim Y_{j,\ell} = \sum_{k = j}^{j+\ell-1} \undim Y_{k,1}.$
\end{proof}

\begin{rem}\label{rem:functionals}
	We recall that $\{\undim Y_{j,1}\}_{j = 1}^r$ is the standard basis of $\RR^r$. Thus the coordinate functional $y_j:\RR^r \rightarrow \RR$ can be identified with the functional $(-)\cdot \undim Y_{j,1}$. In particular, this means
	$$
	D(Y_{j,\ell}) = \left\{v \in \RR^r \ \middle| \ \sum_{k = j}^{j+\ell-1} y_k(v) = 0, \ \forall \ \ell' < \ell: \sum_{k = j}^{j+\ell'-1} y_k(v) \leq 0\right\}.
	$$
\end{rem}

We conclude this section with the following observation which will be critical in our proof of Theorem~\ref{thmintro:mainB}.

 Given a sum $\sum_{k = i}^j a_k$, we call a sum of the form $\sum_{k = i}^{j'} a_k$ for some $i \leq j'\leq j$ a \emph{left subsum}. We say that a left subsum is \emph{proper} if $j' < j$.
 
\begin{lem}\label{lem:wholeHyperplane}
	Let $(a_1,\ldots,a_r)$ be an ordered list of real numbers so that $\sum_{k = 1}^r a_k = 0$. Then there exists some index $i$ so that every left subsum of $\sum_{k = i}^{i + r - 1} a_k$ is non-positive (where indices are identified mod $r$).
\end{lem}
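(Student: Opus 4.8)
The plan is to prove this purely combinatorially by a ``cycle lemma'' / Raney-style argument. First I would pass to partial sums: define $s_0 = 0$ and $s_k = a_1 + \cdots + a_k$ for $1 \le k \le r$, so that the hypothesis $\sum_{k=1}^r a_k = 0$ says exactly $s_r = s_0 = 0$. Because the list is cyclic (indices mod $r$) and each full cyclic block $\sum_{k=i}^{i+r-1} a_k$ again sums to $0$, I can extend $s$ to a well-defined function on $\ZZ$ by $s_{k+r} = s_k$, i.e. $s$ is periodic with period $r$. A left subsum $\sum_{k=i}^{j'} a_k$ with $i \le j' \le i + r - 1$ equals $s_{j'} - s_{i-1}$. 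So the claim ``every left subsum of $\sum_{k=i}^{i+r-1} a_k$ is non-positive'' translates to: $s_{j'} \le s_{i-1}$ for all $j'$ with $i - 1 \le j' \le i + r - 1$, which by periodicity is just the statement that $s_{i-1} = \max_{k \in \ZZ} s_k$ (the maximum over one period, which exists since it is a finite set).

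So the whole lemma reduces to: the periodic sequence $(s_k)_{k \in \ZZ}$ attains its maximum, and if it does so at index $i - 1$, then taking that $i$ works. Concretely I would let $i - 1$ be an index in $\{0, 1, \ldots, r-1\}$ at which $s_k$ is maximal over $k \in \{0,\ldots,r-1\}$ (equivalently over all of $\ZZ$, by periodicity), and set $i$ to be that index plus $1$. Then for any $j'$ with $i \le j' \le i + r - 1$ we have $\sum_{k=i}^{j'} a_k = s_{j'} - s_{i-1} \le 0$ since $s_{j'} \le \max s = s_{i-1}$; and this covers every left subsum of $\sum_{k=i}^{i+r-1} a_k$, as the empty-to-full range of $j'$ there is $i - 1$ through $i + r - 1$ (the $j' = i-1$ case giving the empty sum $0$, trivially non-positive). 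That completes the argument.

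I do not expect a serious obstacle here — the content is entirely the translation ``left subsums non-positive $\iff$ partial-sum function maximized at the starting point,'' plus the trivial observation that a periodic integer-indexed real sequence attains its max on one period. The one point to be careful about is bookkeeping with the off-by-one in the indexing (whether the starting index is $i$ or $i-1$, and the precise range of $j'$ defining ``left subsum''), and making sure the periodic extension of $s$ is legitimate, which it is precisely because each full cyclic window sums to $0$. I would also remark that this is an instance of (the dual of) the classical cycle lemma of Dvoretzky–Motzkin, which gives the stronger statement that such an $i$ is essentially unique when the $a_k$ are generic, though uniqueness is not needed for the application to $D(Y_{j,\ell})$ in the proof of Theorem~\ref{thmB}.
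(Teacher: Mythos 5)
Your proof is correct. You reduce the statement to the observation that the periodically extended partial-sum sequence $s_k$ attains its maximum, and that starting the cyclic window immediately after a maximizer makes every left subsum $s_{j'}-s_{i-1}\le 0$. The periodic extension is legitimate exactly as you say, because each full cyclic window sums to zero, and your index bookkeeping is consistent with the paper's definition of left subsum (the paper's left subsums are nonempty, i.e.\ $j'\ge i$, so the empty-sum case you mention is not even needed).

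This is, however, a genuinely different argument from the one in the paper. The paper argues by contradiction and iteration: it normalizes so that $a_1<0$, assumes $i=1$ fails, locates the first positive left subsum, skips past the subsequent run of nonnegative entries, and repeats, eventually producing a starting index $n_m$ and then verifying directly via a partition of the total sum that the window starting at $n_m$ has no positive left subsum. Your route via the maximum of the partial-sum walk is the standard proof of the Dvoretzky--Motzkin cycle lemma; it is shorter, avoids the case analysis and re-indexing, and makes the choice of $i$ canonical (any argmax of $s$ works). The paper's greedy construction buys nothing extra here beyond being self-contained in elementary terms; both proofs yield the same conclusion, and neither uses uniqueness of $i$, which as you note is not needed for the application in Proposition~\ref{prop:wholeHyperplane} and Theorem~\ref{thmB}.
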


\begin{proof}
	If the $a_k$ are identically zero, we are done. Thus assume this is not the case and cyclically re-index the list so that $a_1 < 0$.
	
	Assume the claim does not hold for $i = 1$ and let $p_1$ be the smallest index so that the sum $\sum_{k = 1}^{p_1} a_k > 0$. By construction, every proper left subsum of this sum is non-positive and satisfies $\sum_{k = 1 + p_1}^{r} a_k <  0$. We denote by $n_1$ the smallest index with $n_1 > p_1$ and $a_{n_1} < 0$.
	
	We now consider the sum $\sum_{k = n_1}^{r} a_k$. If this sum has a positive left subsum, we define $p_2$ and $n_2$ as before. Iterating this construction, we eventually have some $n_m$ for which $\sum_{k = n_m}^{r} a_k$ has no positive left subsum.
	
	We can then partition our sum as follows:
	$$0 = \sum_{k = 1}^r a_k = \sum_{k = 1}^{p_1} a_k + \sum_{k = 1+p_1}^{-1+n_1}a_k + \sum_{k = n_1}^{p_2} a_k + \cdots + \sum_{k = 1+p_m}^{-1 + n_m} a_k + \sum_{k = n_m}^r a_k.$$
	By construction, all of the sums on the right hand side of this equation are positive except the last one. Moreover, sums of the form $\sum_{k = 1 + p_i}^{-1 + n_i} a_k$ have all positive summands and all other sums on the right hand side have no positive proper left subsums. It follows that $\sum_{k = n_m}^{n_m + r - 1} a_k$ has no positive left subsum (where indices are identified mod $r$).
\end{proof}

\begin{prop}\label{prop:wholeHyperplane}
	The union of the semi-invariant domains of the bricks (in $\mods\Lambda_r$) of dimension vector $\overline{1} = (1,\ldots,1)$ is a hyperplane. More precisely, $\bigcup_{j = 1}^r D(Y_{j,r}) = \ker \sum_{k = 1}^r y_{k}.$ 
\end{prop}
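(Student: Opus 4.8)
The statement asserts that $\bigcup_{j=1}^r D(Y_{j,r}) = \ker(\sum_{k=1}^r y_k)$ as subsets of $\RR^r$. One inclusion is immediate: by Proposition~\ref{prop:NakayamaEqns} (or Remark~\ref{rem:functionals}), each $D(Y_{j,r})$ is cut out by the single equation $\sum_{k=1}^r y_k(v) = 0$ together with inequalities, so $D(Y_{j,r}) \subseteq \ker(\sum_{k=1}^r y_k)$ for every $j$, and hence the union is contained in the hyperplane. The content is the reverse inclusion: every $v$ with $\sum_{k=1}^r y_k(v) = 0$ lies in $D(Y_{j,r})$ for \emph{some} $j$.

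**The key step.** The reverse inclusion is exactly what Lemma~\ref{lem:wholeHyperplane} provides. Given $v \in \ker(\sum_{k=1}^r y_k)$, set $a_k := y_k(v)$ for $k = 1,\ldots,r$; then $\sum_{k=1}^r a_k = 0$, so by Lemma~\ref{lem:wholeHyperplane} there is an index $i$ such that every left subsum of $\sum_{k=i}^{i+r-1} a_k$ is non-positive (indices mod $r$). I would then check that this $i$ is the desired $j$: the defining conditions of $D(Y_{i,r})$ from Remark~\ref{rem:functionals} are $\sum_{k=i}^{i+r-1} y_k(v) = 0$ (which holds since it is the full cyclic sum, equal to $\sum_{k=1}^r a_k = 0$) and, for each $\ell' < r$, $\sum_{k=i}^{i+\ell'-1} y_k(v) \le 0$ — but $\sum_{k=i}^{i+\ell'-1} a_k$ is precisely a left subsum of $\sum_{k=i}^{i+r-1} a_k$, hence non-positive by the choice of $i$. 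Thus $v \in D(Y_{i,r})$, completing the reverse inclusion.

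**The main obstacle.** Honestly, there is very little obstacle here: all the combinatorial difficulty has been isolated in Lemma~\ref{lem:wholeHyperplane}, whose proof is already given, and the present proposition is essentially a translation of that lemma into the language of semi-invariant domains via Proposition~\ref{prop:NakayamaEqns} and Remark~\ref{rem:functionals}. The one small point requiring care is the bookkeeping of cyclic indices: one must confirm that ``left subsum of $\sum_{k=i}^{i+r-1} a_k$'' corresponds exactly to the partial sums $\sum_{k=i}^{i+\ell'-1} y_k(v)$ appearing in the description of $D(Y_{i,r})$ — i.e. that the proper submodules of $Y_{i,r}$ are $Y_{i,1} \subsetneq \cdots \subsetneq Y_{i,r-1}$, so the relevant inequalities range over $\ell' = 1,\ldots,r-1$, matching the nontrivial left subsums. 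I would also remark that $Y_{i,r}$ is indeed a brick (its length equals $r$, so Proposition~\ref{prop:Nakayama}(4) applies), so that Proposition~\ref{prop:NakayamaEqns} is legitimately applicable to it. With those verifications the proof is complete in a few lines.
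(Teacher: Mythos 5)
Your proof is correct and takes essentially the same approach as the paper: the easy inclusion follows from Proposition~\ref{prop:NakayamaEqns}/Remark~\ref{rem:functionals}, and the reverse inclusion is obtained by applying Lemma~\ref{lem:wholeHyperplane} to $a_k = y_k(v)$ and reading the resulting index back through Remark~\ref{rem:functionals}. The only additions you make — checking that $Y_{i,r}$ is a brick and that the left subsums match the submodule inequalities — are harmless verifications that the paper leaves implicit.
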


We deduce this proposition from the above lemma.

\begin{proof}
	By Remark \ref{rem:functionals}, we have that $D(Y_{j,r}) \subseteq \ker\sum_{k = 1}^r y_{k}$ for all $j$. Conversely, let $v \in \ker\sum_{k = 1}^r y_{k}$. By Lemma \ref{lem:wholeHyperplane}, there exists an index $i$ so that $\sum_{k = i}^{i'} v_k \leq 0$ for all $i \leq i' \leq i + r$. Remark \ref{rem:functionals} then implies $v \in D(Y_{i,r})$.
\end{proof}

\subsection{Regular wall-and-chamber structures as co-amalgamations}\label{sec:thmA}
We now give explicit equations and inequalities defining the cones in the wall-and-chamber structure given by $\fD_{reg}(H)$ (Def. \ref{def:regular stability}) using $X^i_{j,\ell}$ (Notation \ref{notation:indexing}).

\begin{prop}\label{prop:tameEqns}
	Let $H$ be a tame hereditary algebra. Let $X_{j,\ell}^i\in \Reg H$ be an indecomposable module in the tube $\cT_i$. Then the regular semi-invariant domain of $X_{j,\ell}^i$ is given by the equation and inequalities
	$$D_{reg}(X_{j,\ell}^i) = \left\{v \in g(\eta)^\perp \ \middle| \ \sum_{k = j}^{j+\ell-1} v\cdot \undim X_{k,1}^i = 0, \ \forall \ \ell' < \ell: \sum_{k = j}^{j+\ell'-1} v\cdot \undim X_{k,1}^i \leq 0\right\},$$
	where indices are considered mod $r_i$.
\end{prop}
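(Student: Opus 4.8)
The plan is to follow the proof of Proposition~\ref{prop:NakayamaEqns} almost verbatim, replacing ``submodule'' by ``regular submodule'' and the ordinary semi-invariant domain by the regular semi-invariant domain of Definition~\ref{def:regular stability}. The only step that is genuinely new compared with the Nakayama computation is the description of the regular submodules of $X_{j,\ell}^i$, so I would establish that first and then simply substitute into the definition.

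The first step is to show that the regular submodules of $X_{j,\ell}^i$ are precisely $0 \subsetneq X_{j,1}^i \subsetneq X_{j,2}^i \subsetneq \cdots \subsetneq X_{j,\ell}^i$. Each $X_{j,\ell'}^i$ is regular, so one direction is clear. Conversely, suppose $X' \subseteq X_{j,\ell}^i$ is a regular submodule. Applying Proposition~\ref{prop:tube info 2}(2) to the nonzero inclusion of each indecomposable summand of $X'$ into $X_{j,\ell}^i$, every such summand must lie in the tube $\cT_i$. Since $X_{j,\ell}^i$ is quasi uniserial (Proposition~\ref{prop:tube info}(1)), its submodules lying in $\cT_i$ are totally ordered by inclusion and are exactly the terms of its unique quasi composition series; the term of quasi length $\ell'$ is pinned down as $X_{j,\ell'}^i$ by the fact that its quasi socle must equal the quasi socle $X_{j,1}^i$ of $X_{j,\ell}^i$ (Proposition~\ref{prop:tube info}(4)). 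Hence $X'$ is one of the $X_{j,\ell'}^i$.

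Next I would record the dimension-vector identity $\undim X_{j,\ell'}^i = \sum_{k=j}^{j+\ell'-1} \undim X_{k,1}^i$ for all $0 \le \ell' \le \ell$: the quasi composition factors of $X_{j,\ell'}^i$ are $X_{j,1}^i, X_{j+1,1}^i, \ldots, X_{j+\ell'-1,1}^i$, which is a standard feature of the tube $\cT_i$ and can also be seen by induction from a short exact sequence $0 \to X_{j,1}^i \to X_{j,\ell'}^i \to X_{j+1,\ell'-1}^i \to 0$. Finally I would substitute into Definition~\ref{def:regular stability}: a vector $v \in g(\eta)^\perp$ lies in $D_{reg}(X_{j,\ell}^i)$ if and only if $v \cdot \undim X_{j,\ell}^i = 0$ and $v \cdot \undim X' \le 0$ for every regular submodule $X' \subseteq X_{j,\ell}^i$. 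The submodule $0$ contributes nothing, the submodule $X' = X_{j,\ell}^i$ gives the displayed equation (and makes the associated inequality automatic), and the submodule $X' = X_{j,\ell'}^i$ for $1 \le \ell' < \ell$ gives $\sum_{k=j}^{j+\ell'-1} v\cdot\undim X_{k,1}^i \le 0$; these are exactly the equation and inequalities in the statement, with indices read mod $r_i$.

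I expect the only real obstacle to be the second step — verifying that a regular submodule of $X_{j,\ell}^i$ must live in the single tube $\cT_i$ and is therefore forced to be a term of the unique quasi composition series. Once this is in place, the dimension-vector identity and the substitution are as routine as in the Nakayama case of Proposition~\ref{prop:NakayamaEqns}.
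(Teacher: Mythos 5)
Your proof is correct and takes the same approach as the paper: identify the regular submodules of $X_{j,\ell}^i$ as the terms of its unique quasi composition series, compute the dimension vector via the regular composition factors, and substitute into Definition~\ref{def:regular stability}. The only difference is that the paper asserts the identification of regular submodules ``by construction,'' whereas you spell it out using Propositions~\ref{prop:tube info 2}(2) and \ref{prop:tube info}(1),(4).
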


\begin{proof}
	By construction, the unique quasi composition series of $X_{j,\ell}^i$ is
	$$0 \subsetneq X_{j,1}^i \subsetneq \cdots \subsetneq X_{j,\ell}^i$$
	and these are all of the regular submodules of $X_{j,\ell}^i$. In addition, 
	the regular composition factors of $X_{j,\ell}^i$ are
	$X_{j,1}^i, \ldots, X_{j + \ell - 1, 1}^i.$
	This means $$\undim X_{j,\ell}^i = \sum_{k = j}^{j+\ell-1} \undim X_{k,1}^i.$$
	The result then follows immediately.
\end{proof}

Our next result will allow us to recover the regular semi-invariant domain $D_{reg}(\eta)$ from those considered in Proposition~\ref{prop:tameEqns}, i.e., $D_{reg}(X_{j,\ell}^i)$.

\begin{lem}\label{lem:decompose D(eta)}
	Let $H$ be a tame hereditary algebra. Then for each exceptional tube $\cT_i$ with rank $r_i$ we have a decomposition
	$$D_{reg}(\eta) = \bigcup_{j = 1}^{r_i} D_{reg}(X_{j,r_i}^i).$$
\end{lem}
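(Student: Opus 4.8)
The plan is to prove the lemma by comparing defining equations, exploiting the fact established in Proposition~\ref{prop:regular linear algebra} that $g(\eta)^\perp$ has dimension $n-1$ and is spanned by the dimension vectors $\undim X_{k,1}^i$ of the quasi simple modules in the exceptional tube $\cT_i$, together with the single relation $\sum_{k=1}^{r_i}\undim X_{k,1}^i = \eta$. First I would recall from Proposition~\ref{prop:tameEqns} that for each $j$,
$$D_{reg}(X_{j,r_i}^i) = \left\{v \in g(\eta)^\perp \ \middle| \ \sum_{k=j}^{j+r_i-1} v\cdot\undim X_{k,1}^i = 0,\ \forall\,\ell'<r_i:\ \sum_{k=j}^{j+\ell'-1} v\cdot\undim X_{k,1}^i \le 0\right\}.$$
Since the full sum $\sum_{k=j}^{j+r_i-1}\undim X_{k,1}^i$ equals $\eta$ (a cyclic reindexing of Proposition~\ref{prop:regular linear algebra}(2)), the equality constraint $\sum_{k=j}^{j+r_i-1} v\cdot\undim X_{k,1}^i = 0$ is exactly $v\cdot\eta = 0$, which holds automatically for all $v\in g(\eta)^\perp$. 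So $D_{reg}(X_{j,r_i}^i)$ is cut out inside $g(\eta)^\perp$ purely by the inequalities $\sum_{k=j}^{j+\ell'-1} v\cdot\undim X_{k,1}^i \le 0$ for $1\le \ell' < r_i$.

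Next I would recall that $D_{reg}(\eta)$ is, by the remark following Definition~\ref{def:regular stability}, equal to $\eta^\perp \cap g(\eta)^\perp$; since $\eta$ itself lies in $g(\eta)^\perp$ (it satisfies $g(\eta)\cdot\eta = 0$), this is the hyperplane $\{v\in g(\eta)^\perp \mid v\cdot\eta = 0\}$ inside $g(\eta)^\perp$. Now the containment $\bigcup_{j=1}^{r_i} D_{reg}(X_{j,r_i}^i) \subseteq D_{reg}(\eta)$ is immediate: any $v\in D_{reg}(X_{j,r_i}^i)$ satisfies $v\cdot\eta = \sum_{k=j}^{j+r_i-1} v\cdot\undim X_{k,1}^i = 0$. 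For the reverse containment, let $v\in D_{reg}(\eta)$, so $v\cdot\eta = 0$, and set $a_k := v\cdot\undim X_{k,1}^i$ for $k = 1,\ldots,r_i$. Then $\sum_{k=1}^{r_i} a_k = v\cdot\eta = 0$, so Lemma~\ref{lem:wholeHyperplane} applies: there exists an index $j$ so that every left subsum of $\sum_{k=j}^{j+r_i-1} a_k$ (indices mod $r_i$) is non-positive. This says precisely that $\sum_{k=j}^{j+\ell'-1} v\cdot\undim X_{k,1}^i \le 0$ for all $1\le \ell'\le r_i$, hence $v\in D_{reg}(X_{j,r_i}^i)$.

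This argument is essentially the tube-analogue of Proposition~\ref{prop:wholeHyperplane}, which proves the identical statement for the self-injective Nakayama algebra $\Lambda_{r_i}$; indeed the whole point is that the regular modules in a rank-$r_i$ tube behave like the modules of $\Lambda_{r_i}$, and I would make this parallel explicit in the writeup. I do not anticipate a serious obstacle: the only subtlety is checking that the equality constraint in $D_{reg}(X_{j,r_i}^i)$ collapses to $v\cdot\eta = 0$, which requires the cyclic form of Proposition~\ref{prop:regular linear algebra}(2), and that $D_{reg}(\eta)$ is correctly identified with $\eta^\perp\cap g(\eta)^\perp$; both are already recorded in the excerpt. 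If one wanted to avoid invoking Lemma~\ref{lem:wholeHyperplane} directly one could instead cite Proposition~\ref{prop:wholeHyperplane} and transport it along the linear identification of $\mathrm{span}(\undim X_{1,1}^i,\ldots,\undim X_{r_i,1}^i) = g(\eta)^\perp$ with $\RR^{r_i}$ sending $\undim X_{k,1}^i$ to the $k$-th standard basis vector, under which $D_{reg}(X_{j,r_i}^i)$ corresponds to $D(Y_{j,r_i})$ and $D_{reg}(\eta)$ to $\ker\sum_k y_k$; but the direct route via Lemma~\ref{lem:wholeHyperplane} is cleaner.
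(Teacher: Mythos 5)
Your proof is correct and takes essentially the same route as the paper: the paper's proof of this lemma says, verbatim, to "follow the same argument as for the proof of Proposition~\ref{prop:wholeHyperplane}" with $\undim Y_{j,1}^i$ replaced by $\undim X_{j,1}^i$, and Proposition~\ref{prop:wholeHyperplane} is exactly the consequence of Lemma~\ref{lem:wholeHyperplane} that you invoke. So you have simply written out what the paper leaves implicit, and your closing paragraph is precisely the paper's own proof.

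One assertion in the middle of your write-up is false and worth flagging, even though you never actually use it: you claim that $v\cdot\eta=0$ "holds automatically for all $v\in g(\eta)^\perp$", and conclude that $D_{reg}(X_{j,r_i}^i)$ is cut out inside $g(\eta)^\perp$ "purely by the inequalities." This would require $\eta\in\mathrm{span}(g(\eta))$, which fails in general: $g(\eta)^\perp$ is an $(n-1)$-dimensional subspace, $\eta^\perp\cap g(\eta)^\perp = D_{reg}(\eta)$ is $(n-2)$-dimensional, and indeed Remark~\ref{rem:non-symmetric} records a case with $\eta=(1,1,1,0)$ and $g(\eta)=(1,0,-1,0)$, which are not parallel. (The fact that $\eta\in g(\eta)^\perp$, i.e., $g(\eta)\cdot\eta=0$, is correct, but does not imply $g(\eta)^\perp\subseteq\eta^\perp$.) Fortunately your two containment arguments do not lean on the faulty claim: the forward inclusion explicitly uses the equality constraint of $D_{reg}(X_{j,r_i}^i)$ to obtain $v\cdot\eta=0$, and the reverse inclusion starts from $v\cdot\eta=0$ (part of $D_{reg}(\eta)$) and supplies the inequalities via Lemma~\ref{lem:wholeHyperplane}, so both the equality and inequality conditions of $D_{reg}(X_{j,r_i}^i)$ are verified. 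Just delete or repair those two sentences; the rest stands.
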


\begin{proof}
	This follows using the same argument as for the proof of Proposition \ref{prop:wholeHyperplane} by replacing each $v_j = y_j(v) = \undim Y_{j,1}^i\cdot v$ with $\undim X_{j,1}^i \cdot v$ and replacing $\ker \sum_{k = 1}^{r_i}y_k$ with $\ker\sum_{k = 1}^{r_i}(-)\cdot\undim X_{k,1}^i$.
\end{proof}

We now strengthen this result to the standard semi-invariant domain of $\eta$.

\begin{cor}\label{cor:decompose D(eta)}
	Let $H$ be a tame hereditary algebra and let $\cT_i$ be an exceptional tube with rank $r_i$. Then there is a decomposition
	$$D(\eta) = \bigcup_{j = 1}^{r_i} D(X_{j,r_i}^i).$$
\end{cor}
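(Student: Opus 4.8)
The plan is to prove the two inclusions directly, by arguments parallel to those for Lemma~\ref{lem:characterization of D(eta)} and Proposition~\ref{prop:wholeHyperplane}; note that Lemma~\ref{lem:decompose D(eta)} only describes $D(X_{j,r_i}^i)$ after intersecting with $g(\eta)^\perp$, which does not see the ``preprojective'' inequalities cutting out $D(X_{j,r_i}^i)$ and so is not enough on its own. First I would record that $\undim X_{j,r_i}^i = \sum_{k=j}^{j+r_i-1}\undim X_{k,1}^i = \eta$ by Proposition~\ref{prop:regular linear algebra}(2) (the quasi-composition factors of $X_{j,r_i}^i$ are precisely the $r_i$ quasi-simples of $\cT_i$), so that $D(X_{j,r_i}^i)\subseteq \eta^\perp$ and its defining equation is $v\cdot\eta = 0$.

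The core of the proof is the identity, valid for every $j$,
$$
D(X_{j,r_i}^i) \;=\; D(\eta)\ \cap\ \left\{v\in\RR^n \ \middle|\ \sum_{k=j}^{j+\ell-1} v\cdot\undim X_{k,1}^i\le 0 \text{ for all } 1\le \ell\le r_i-1\right\}.
$$
To prove it I would use two facts. First, every submodule $Y\subseteq X_{j,r_i}^i$ splits as $Y\cong X_{j,\ell}^i\oplus Y^{pp}$ for some $0\le\ell\le r_i$ and some preprojective $Y^{pp}$: by the remark following Lemma~\ref{lem: preprojective maps to homogeneous} each indecomposable summand of $Y$ is regular or preprojective, the regular summands together form a regular submodule of $X_{j,r_i}^i$ which, since $X_{j,r_i}^i$ is quasi-uniserial (Proposition~\ref{prop:tube info}(1)), is one of $X_{j,0}^i\subsetneq\cdots\subsetneq X_{j,r_i}^i$, and the chain property forces there to be at most one regular summand. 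Second, $\Hom_H(X,X_{j,r_i}^i)\ne 0$ for every indecomposable preprojective $X$: since $\undim X_{j,r_i}^i=\eta=\undim M_\lambda$ for a homogeneous $M_\lambda$, the Euler--Ringel pairing gives $\dim_K\Hom_H(X,X_{j,r_i}^i)-\dim_K\Ext^1_H(X,X_{j,r_i}^i)=g(X)\cdot\eta=\dim_K\Hom_H(X,M_\lambda)-\dim_K\Ext^1_H(X,M_\lambda)$, and the latter equals $\dim_K\Hom_H(X,M_\lambda)>0$ by Lemma~\ref{lem: preprojective maps to homogeneous}(1), using that $\Ext^1_H(X,M_\lambda)\cong D\Hom_H(M_\lambda,\tau X)=0$ by Lemma~\ref{lem: preprojective maps to homogeneous}(4), as $\tau X$ is preprojective or $0$.

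Granting these, the inclusion $\subseteq$ follows because the submodules $X_{j,\ell}^i$ force the left-subsum inequalities, while running the kernel--image argument of Lemma~\ref{lem:characterization of D(eta)} with $X_{j,r_i}^i$ in place of $M$ (take a minimal-length preprojective $X$ with $v\cdot\undim X>0$, a nonzero $f\colon X\to X_{j,r_i}^i$ from the second fact, and note $\ker f$ is again preprojective since submodules of preprojectives are preprojective) yields $v\cdot\undim X\le 0$ for all preprojective $X$, hence $v\in D(\eta)$ by Lemma~\ref{lem:characterization of D(eta)}. For $\supseteq$, given $v$ in the right-hand side and any submodule $Y\subseteq X_{j,r_i}^i$, the splitting gives $v\cdot\undim Y=v\cdot\undim X_{j,\ell}^i+v\cdot\undim Y^{pp}\le 0$, so $v\in D(X_{j,r_i}^i)$. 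Finally, taking the union of the identity over $j$ gives $\bigcup_{j} D(X_{j,r_i}^i)=D(\eta)\cap\bigcup_j\{v\in\RR^n: \sum_{k=j}^{j+\ell-1}v\cdot\undim X_{k,1}^i\le 0 \text{ for } 1\le\ell\le r_i-1\}$; and for $v\in D(\eta)$ the list $a_k:=v\cdot\undim X_{k,1}^i$ has $\sum_{k=1}^{r_i}a_k=v\cdot\eta=0$, so Lemma~\ref{lem:wholeHyperplane} provides an index $j$ making all left subsums non-positive. Thus $D(\eta)$ is contained in that union and the right-hand side collapses to $D(\eta)$. I expect the only real friction to be the bookkeeping in the two facts above — verifying that the regular part of a submodule of $X_{j,r_i}^i$ is indecomposable and lies in the quasi-composition chain, and keeping the vanishing $\Ext^1_H(X,M_\lambda)=0$ straight — with the rest being a transcription of arguments already in the text.
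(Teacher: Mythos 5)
Your proof is correct and, in substance, follows the same route as the paper's: both rest on (i) the fact that a submodule of $X^i_{j,r_i}$ splits into a regular part lying in the quasi-composition chain and a preprojective part, (ii) the characterization of $D(\eta)$ from Lemma \ref{lem:characterization of D(eta)}, and (iii) the cyclic left-subsum Lemma \ref{lem:wholeHyperplane} to cover the relevant hyperplane conditions by the $r_i$ regions. The paper compresses all of this into the chain $D(\eta)\supseteq\bigcup_j D(X^i_{j,r_i})\supseteq\bigl(\bigcup_j D_{reg}(X^i_{j,r_i})\bigr)\cap D(\eta)=D_{reg}(\eta)\cap D(\eta)=D(\eta)$, invoking Lemma \ref{lem:decompose D(eta)} for the penultimate equality. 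The one point where you genuinely add something is the first inclusion $D(X^i_{j,r_i})\subseteq D(\eta)$, which the paper asserts without detail: as you observe, it requires knowing $\Hom_H(X,X^i_{j,r_i})\neq 0$ for every indecomposable preprojective $X$ (your Euler--Ringel comparison with $M_\lambda$) so that the kernel--image induction of Lemma \ref{lem:characterization of D(eta)} can be run against $X^i_{j,r_i}$; your write-up supplies exactly that step, and the rest is a transcription of arguments already in the text, as you predicted.
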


\begin{proof}
	Let $\cT_i$ be an exceptional tube and let $X_{j,r_i}^i$ be of dimension $\eta$. This means that every submodule of $X_{j,r_i}^i$ is either regular or preprojective. Moreover, we recall from Lemma~\ref{lem:characterization of D(eta)} and Notation \ref{notation 1.4.8} that $v \in D(\eta)$ if and only if (a) $v\cdot \eta = 0$ and (b) for all preprojective $X \in \mods H$ we have $v\cdot \undim X \leq 0$. This means
		$$D(\eta)\supseteq \bigcup_{j = 1}^{r_i}D(X_{j,r_i}^i) \supseteq \bigcup_{j = 1}^{r_i}D_{reg}(X_{j,r_i}^i)\bigcap D(\eta)
		= D_{reg}(\eta)\bigcap D(\eta) = D(\eta).$$
\end{proof}

Corollary~\ref{cor:decompose D(eta)} is closely related to a result of \cite{IKTW_periodic} in type $\widetilde{A}_{n-1}$, where $D(\eta)$ is shown to be the union of certain ``$D_{ab}(\eta)$''.

\begin{rem}
    Note that both Lemma~\ref{lem:decompose D(eta)} and Corollary~\ref{cor:decompose D(eta)} hold for the Kronecker by our convention to consider one of the homogeneous tubes as ``exceptional''.
\end{rem}

The next result follows immediately from comparing Notations~\ref{notation:indexing} and~\ref{notation:nakayama indexing}.

\begin{prop}\label{prop:same homs}
	Let $H$ be a tame hereditary algebra and let $\cT_i$ be an exceptional tube and let $\Lambda_{r_i}$ be the Nakayama algebra associated to $\cT_i$. Then for any $j,j',\ell,\ell'$ with $\ell,\ell' \leq r_i$, we have	
	\begin{eqnarray*}
		\Hom_H(X_{j,\ell}^i,X_{j',\ell'}^{i}) &\cong& \Hom_{\Lambda_{r_i}}(Y_{j,\ell},Y_{j',\ell'})\\
		\Hom_H(X_{j,\ell}^i,\tau X_{j',\ell'}^{i}) &\cong& \Hom_{\Lambda_{r_i}}(Y_{j,\ell},\tau Y_{j',\ell'}).
	\end{eqnarray*}
	Moreover, if $X_{j'',\ell''}^i$ is the kernel (resp. cokernel) of a morphism $X_{j,\ell}^i \rightarrow X_{j',\ell'}^i$ then $Y_{j'',\ell''}$ is the kernel (resp. cokernel) of the corresponding morphism $Y_{j,\ell} \rightarrow Y_{j',\ell'}$.
\end{prop}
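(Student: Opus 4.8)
The plan is to exploit the fact, recorded in Proposition~\ref{prop:tube info} and Proposition~\ref{prop:Nakayama}, that both $\cT_i$ (for quasi length $\leq r_i$) and $\mods\Lambda_{r_i}$ (for length $\leq r_i$) are categories of uniserial modules in which an indecomposable is pinned down by any two of its (quasi) top, (quasi) socle, and (quasi) length, with $\tau$ acting by shifting the socle index by $1$. Concretely, I would observe that the assignment $X^i_{j,\ell}\mapsto Y_{j,\ell}$ (for $1\le \ell\le r_i$) is a bijection between the indecomposable bricks of $\cT_i$ and the indecomposable bricks of $\mods\Lambda_{r_i}$ that matches quasi composition series with composition series: the unique quasi composition series $0\subsetneq X^i_{j,1}\subsetneq\cdots\subsetneq X^i_{j,\ell}$ on the $H$-side corresponds termwise to the unique composition series $0\subsetneq Y_{j,1}\subsetneq\cdots\subsetneq Y_{j,\ell}$ on the $\Lambda_{r_i}$-side, with matching composition factors $X^i_{j,1},\ldots,X^i_{j+\ell-1,1}$ versus $Y_{j,1},\ldots,Y_{j+\ell-1,1}$.

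\textbf{Key steps.} First I would establish the Hom-space isomorphism $\Hom_H(X^i_{j,\ell},X^i_{j',\ell'})\cong\Hom_{\Lambda_{r_i}}(Y_{j,\ell},Y_{j',\ell'})$. Since all modules involved are uniserial with uniquely determined quasi radical/socle series, a nonzero morphism $X^i_{j,\ell}\to X^i_{j',\ell'}$ is determined by the quasi submodule of $X^i_{j',\ell'}$ that is its image; such morphisms exist precisely when the socle of $X^i_{j,\ell}$ matches a quasi composition factor of $X^i_{j',\ell'}$ appearing at an admissible spot (a standard combinatorial condition on the indices $j,j',\ell,\ell'$ modulo $r_i$), and the dimension of the Hom space is the number of such spots. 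Exactly the same count governs $\Hom_{\Lambda_{r_i}}(Y_{j,\ell},Y_{j',\ell'})$ because $\Lambda_{r_i}$ is uniserial with the same cyclic index structure modulo $r_i$; so the two dimensions agree. For the second isomorphism, I would simply apply the first one after noting $\tau X^i_{j',\ell'}\cong X^i_{j'-1,\ell'}$ (Notation~\ref{notation:indexing}) and $\tau Y_{j',\ell'}\cong Y_{j'-1,\ell'}$ (the remark following Notation~\ref{notation:nakayama indexing}), so both sides reduce to the already-established case with $j'$ replaced by $j'-1$.

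\textbf{Kernels and cokernels.} For the last sentence, given a nonzero morphism $f\colon X^i_{j,\ell}\to X^i_{j',\ell}$ with kernel $X^i_{j'',\ell''}$ and cokernel $X^i_{a,b}$, I would argue that $\ell''$ and the socle index $j''$ are determined purely by the combinatorial data $(j,j',\ell)$ (indices mod $r_i$): uniseriality forces $\im f$ to be the quasi submodule of $X^i_{j',\ell}$ of quasi length $\ell-\ell''$ with the appropriate quasi socle, and then $\ker f$ is the quasi submodule of $X^i_{j,\ell}$ of quasi length $\ell''$, with $j''=j$. Because the corresponding morphism $Y_{j,\ell}\to Y_{j',\ell}$ (under the Hom-isomorphism above) is governed by the identical index bookkeeping in the uniserial category $\mods\Lambda_{r_i}$, its kernel is $Y_{j'',\ell''}$ and its cokernel $Y_{a,b}$ with the same indices. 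I would phrase this as: the short exact sequence $0\to X^i_{j'',\ell''}\to X^i_{j,\ell}\to \im f\to 0$ and the inclusion $\im f\hookrightarrow X^i_{j',\ell}$ have combinatorial data that transfer verbatim to $\Lambda_{r_i}$.

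\textbf{Main obstacle.} The only genuinely delicate point is verifying that the Hom-space \emph{dimensions} (not just their vanishing or non-vanishing) coincide on the two sides, since a priori a brick of quasi length $\le r_i$ could receive morphisms from several shifts of another brick. This requires carefully tracking the cyclic wrap-around modulo $r_i$ and confirming that the counting of admissible ``overlap positions'' of one uniserial module inside another is literally the same formula in $\cT_i$ and in $\mods\Lambda_{r_i}$; once one writes out this count it is transparently identical, but it is the step that needs care rather than the formal $\tau$-manipulation or the kernel/cokernel bookkeeping. (In the hereditary tube this is classical, e.g.\ from \cite{DR_indecomposable} or \cite{crawley_lectures}, and for $\Lambda_{r_i}$ it follows from Proposition~\ref{prop:Nakayama}, so one may also simply cite these.)
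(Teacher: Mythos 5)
Your proposal is correct and matches the paper's approach: the paper gives no detailed argument, stating only that the result ``follows immediately from comparing'' Notations~\ref{notation:indexing} and~\ref{notation:nakayama indexing}, i.e., from the fact that $\cT_i$ (in quasi length $\le r_i$) and $\mods\Lambda_{r_i}$ (in length $\le r_i$) are uniserial categories with identical cyclic index bookkeeping, which is exactly what you spell out. Your reduction of the $\tau$-statement via $\tau X^i_{j',\ell'}\cong X^i_{j'-1,\ell'}$ and $\tau Y_{j',\ell'}\cong Y_{j'-1,\ell'}$, and the transfer of kernel/cokernel data, are the intended details.
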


We are now ready to prove Theorem~\ref{thmintro:mainB}, which uses co-amalgamation (see Definition~\ref{def:co-amalgamation}) to describe $\fD_{reg}(H)$ in terms of the $\fD(\Lambda_{r_i})$.

\begin{thm}[Theorem~\ref{thmintro:mainB}]\label{thmB}
	Let $H$ be a tame hereditary algebra. For each exceptional tube $\cT_i$, let $\fD(\Lambda_{r_i})$ (with its chambers) be the standard wall-and-chamber structure of the self-injective Nakayama algebra $\Lambda_{r_i}$, and let $\phi_i:\RR^{r_i}\rightarrow \RR$ be given by $(-)\cdot\overline{1}$. Then the co-amalgamated product
	$\ominus_{i = 1}^m \fD(\Lambda_{r_i})^{\phi_i}$
	is (piecewise-) linearly isomorphic to $\fD_{reg}(H) = \fD_{0,g(\eta)}(H)$.
\end{thm}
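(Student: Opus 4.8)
The plan is to exhibit an explicit linear isomorphism between the two ambient spaces and then verify the conditions of Definition~\ref{def:fan_isom}. Recall that $\ominus_{i=1}^m\fD(\Lambda_{r_i})^{\phi_i}$ lives in $\Delta:=\{(v_1,\dots,v_m)\in\RR^{r_1}\times\dots\times\RR^{r_m}\mid\phi_1(v_1)=\dots=\phi_m(v_m)\}$; since $\sum_{i=1}^m(r_i-1)=n-2$, this space has dimension $\sum_i r_i-(m-1)=n-1=\dim g(\eta)^\perp$. First I would define $\Phi\colon g(\eta)^\perp\to\Delta$ by declaring the $k$-th coordinate of the $i$-th block of $\Phi(v)$ to be $v\cdot\undim X^i_{k,1}$. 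By Proposition~\ref{prop:regular linear algebra}(1) the functionals $(-)\cdot\undim X^i_{k,1}$ span the dual of $g(\eta)^\perp$, so $\Phi$ is injective; and because $\sum_{k}v\cdot\undim X^i_{k,1}=v\cdot\eta$ is independent of $i$, the image of $\Phi$ lies in $\Delta$, hence (by the dimension count) equals $\Delta$. Thus $\Phi$ is a linear isomorphism; coordinatizing both $g(\eta)^\perp$ and $\Delta$ as $\RR^{n-1}$, it becomes a linear automorphism of the kind appearing in Definition~\ref{def:fan_isom}.

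Next I would set up the dictionary on generating cones by comparing defining (in)equalities. If $\Phi(v)=(v_1,\dots,v_m)$, then $v\cdot\undim X^i_{k,1}=y_k(v_i)$, so Proposition~\ref{prop:tameEqns} and Remark~\ref{rem:functionals} show that, for $1\le\ell\le r_i$, the equation and inequalities cutting out $D_{reg}(X^i_{j,\ell})$ become exactly those cutting out $D(Y_{j,\ell})\subseteq\RR^{r_i}$; hence $\Phi\big(D_{reg}(X^i_{j,\ell})\big)=\widetilde{D(Y_{j,\ell})}$, a cone of the co-amalgamated product, and $\Phi^{-1}$ carries these back. For the null root, $D_{reg}(\eta)=\eta^\perp\cap g(\eta)^\perp$ is carried by $\Phi$ to $\{(v_1,\dots,v_m)\in\Delta\mid\phi_i(v_i)=0\}$, which by Proposition~\ref{prop:wholeHyperplane} (applied in $\mods\Lambda_{r_i}$) equals $\bigcup_{j=1}^{r_i}\widetilde{D(Y_{j,r_i})}$ --- a \emph{finite union} of cones of $\ominus_{i=1}^m\fD(\Lambda_{r_i})^{\phi_i}$, but not in general a single one. (This is $\Phi$ applied to Lemma~\ref{lem:decompose D(eta)}.)

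I would then check that both cone complexes are generated under intersection by the cones just described. On the hereditary side, $D_{reg}(X\oplus Y)=D_{reg}(X)\cap D_{reg}(Y)$ reduces matters to indecomposable regular modules; an indecomposable homogeneous module has regular semi-invariant domain $D_{reg}(\eta)$, and for an indecomposable in $\cT_i$ of quasi-length $\ell>r_i$ one uses $v\cdot\eta=0$ (valid on $g(\eta)^\perp$, via the total-cycle relation of Proposition~\ref{prop:regular linear algebra}(2)) to collapse the wrap-around sums in the analogue of Proposition~\ref{prop:tameEqns}; the resulting domain equals $D_{reg}(X^i_{j,r_i})$ when $r_i\mid\ell$ and $D_{reg}(X^i_{j,s})\cap D_{reg}(X^i_{j,r_i})$ with $s\equiv\ell\bmod r_i$ otherwise. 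Dually, since $V\mapsto\widetilde V$ commutes with intersection, the iterated co-amalgamated product (Proposition~\ref{prop:co-amalg facts}) is generated by the $\widetilde V$ with $V$ running over the $\fD(\Lambda_{r_i})$, hence by the $\widetilde{D(Y_{j,\ell})}$ with $1\le\ell\le r_i+1$; and a short inequality computation gives $D(Y_{j,r_i+1})=D(Y_{j,1})\cap D(Y_{j,r_i})$ for the projective-injective module $Y_{j,r_i+1}$, so the bricks ($\ell\le r_i$) already suffice. In summary, every cone of $\fD_{reg}(H)$ is a finite intersection of copies of $D_{reg}(\eta)$ and the $D_{reg}(X^i_{j,\ell})$ ($\ell\le r_i$), and every cone of the co-amalgamated product is a finite intersection of the $\widetilde{D(Y_{j,\ell})}$ ($\ell\le r_i$).

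Finally I would verify Definition~\ref{def:fan_isom}. Since $\Phi$ is globally linear, one takes $S$ to be the whole space; given a cone $C$ of $\fD_{reg}(H)$, write it as a finite intersection of generators and distribute the (at most one) occurrence of $\Phi(D_{reg}(\eta))=\bigcup_j\widetilde{D(Y_{j,r_i})}$ to get $\Phi(C)=\bigcup_\ell Z_\ell$, a finite union of cones of the co-amalgamated product. As each $D_{reg}(X)$ --- hence $C$ --- is convex, $\Phi(C)$ is a convex polyhedral cone of dimension $d:=\dim C$; a finite union of polyhedral cones of dimension $<d$ is nowhere dense in the span of $\Phi(C)$, so $\Phi(C)$ is covered by $\{Z_\ell\mid\dim Z_\ell=d\}$, a finite set of $d$-dimensional cones. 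Conversely each cone of the co-amalgamated product pulls back under $\Phi^{-1}$ to a single cone $D_{reg}\big(\bigoplus_\ell X^{i_\ell}_{j_\ell,\ell_\ell}\big)$ of $\fD_{reg}(H)$, of the correct dimension. These are precisely the two conditions of Definition~\ref{def:fan_isom}, so $\Phi$ (equivalently $\Phi^{-1}$) is a piecewise-linear --- indeed linear --- isomorphism. The step I expect to be the main obstacle is exactly this last reconciliation: the wall $D_{reg}(\eta)$ is a single element of $\fD_{reg}(H)$ but is genuinely subdivided, into the $r_i$ cones $\widetilde{D(Y_{j,r_i})}$, inside the co-amalgamated product, so $\Phi$ is \emph{not} an isomorphism of the two sets of cones on the nose; promoting the near-match to a piecewise-linear isomorphism is where the convexity of regular semi-invariant domains together with Lemma~\ref{lem:decompose D(eta)} and Proposition~\ref{prop:wholeHyperplane} are used.
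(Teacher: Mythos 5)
Your proof is correct and follows essentially the same route as the paper's: a linear isomorphism between $g(\eta)^\perp$ and $\Delta^{\phi_1,\ldots,\phi_m}$ under which the defining (in)equalities of Proposition~\ref{prop:tameEqns} match those of Proposition~\ref{prop:NakayamaEqns}, with Lemma~\ref{lem:decompose D(eta)} accounting for the fact that the single wall $D_{reg}(\eta)$ becomes a union of $r_i$ cones on the Nakayama side. The only real difference is cosmetic: your $\Phi$ (defined by $v\mapsto(v\cdot\undim X^i_{k,1})_{i,k}$) is the transpose of the paper's $\Psi$ (which sends $e^i_j\mapsto\undim X^i_{j,1}$), and is arguably the cleaner choice since it makes the inequality matching immediate; your explicit treatment of the generation-under-intersection step and of the conditions in Definition~\ref{def:fan_isom} fills in details the paper dispatches as ``analogous.''
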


\begin{proof}
    Throughout the proof, we identify $\RR^{n-1}$ with $g(\eta)^\perp \subseteq \RR^n$. Moreover, we denote by
    $$\{e_1^1,\ldots,e_{r_1}^1,\ldots,e_1^m,\ldots,e_{r_m}^m\}$$
    the standard basis of $\RR^{r_1 + \cdots + r_m}$. Likewise, we denote by
    $$\Delta^{\phi_1,\ldots,\phi_m} = \{(v_1,\ldots,v_m) \in \RR^{r_1 + \cdots + r_m} \mid \phi_1(v_1) = \cdots = \phi_m(v_m)\}.$$
    
    Now let $\Psi:\RR^{r_1+\cdots+r_m}\rightarrow g(\eta)^\perp$ be the linear map which sends $e_j^i$ to $\undim X_{j,1}^i$ for each $i \in \{1,\ldots,m\}$ and $j \in \{1,\ldots,r_i\}$. We claim that $\Psi|_{\Delta^{\phi_1,\ldots,\phi_m}}$ is an isomorphism from $\ominus_{i = 1}^m \fD(\Lambda_{r_i})^{\phi_i}$ to $\fD_{reg}(H)$.
    
    We first note that $\Psi|_{\Delta^{\phi_1,\ldots,\phi_m}}$ is a linear isomorphism of vector spaces. Indeed, by Proposition~\ref{prop:regular linear algebra}, we have that $\Psi$ is surjective and $$\dim \Delta^{\phi_1,\ldots,\phi_m} = 1 + (r_1-1) + \cdots + (r_m - 1) = n-1 = \dim g(\eta)^\perp.$$
    
    We now show that $\Phi|_{\Delta^{\phi_1,\ldots,\phi_m}}$ sends cones to (unions of) cones. For each $i \in \{1,\ldots,m\}$ and each indecomposable $Y_{j,\ell}^i \in \mods \Lambda_{r_i}$, denote
     $$
     V_{Y_{j,\ell}^i} := \{(v_1,\ldots,v_m) \in \Delta^{\phi_1,\ldots,\phi_m} \subseteq \RR^{r_1+\cdots+r_m} \mid v_i \in D(Y_{j,\ell}^i)\}.
     $$
     It then follows immediately from Propositions~\ref{prop:NakayamaEqns} and~\ref{prop:tameEqns} that, for each such $Y_{j,\ell}^i$, we have $\Psi|_{\Delta^{\phi_1,\ldots,\phi_m}}(V_{Y_{j,\ell}^i}) = D_{reg}(X_{j,\ell}^i)$. Since every cone in $\ominus_{i = 1}^m \fD(\Lambda_{r_i})^{\phi_i}$ can be written as the intersection of sets of the form $V_{Y_{j,\ell}^i}$, we conclude that the image of any cone in $\ominus_{i = 1}^m \fD(\Lambda_{r_i})^{\phi_i}$ is a cone in $\fD_{reg}(H)$ of the same dimension.
     
     It remains to show that $(\Psi|_{\Delta^{\phi_1,\ldots,\phi_m}})^{-1}$ sends cones to unions of cones. To see this, we note that Lemma~\ref{lem:decompose D(eta)} can be used to restrict to cones of the form $D_{reg}(X_{j,r_i}^i)$. The result then follows analogously as for $\Psi|_{\Delta^{\phi_1,\ldots,\phi_m}}$.
\end{proof}


\section{Support regular rigid objects}\label{sec:regular rigid}
Let $H$ be a tame hereditary algebra. In this section, we define \emph{support regular rigid objects} and \emph{support regular clusters}, meant to be regular analogues of support rigid objects and clusters. We use these notions to describe a polyhedral fan (Definition \ref{def:polyhedral fan}) which is closely related to the regular wall-and-chamber structure (Definition \ref{def:regular stability}).

We begin by defining \emph{projective vectors} which take the place of projective modules in the category of regular modules.

\subsection{Definitions and basic properties}

\begin{defn}\label{def:projective vector}
	Let $H$ be a tame hereditary algebra. Let $\cX = \{X_{j_1,1}^1,\ldots,X_{j_m,1}^m\}$ be a choice of one quasi simple module from each exceptional tube in $\Reg H$. (Recall that our convention is to consider one of the tubes over the Kronecker to be exceptional.) Consider the intersection of the infinitesimal semi-invariant domains of the other quasi simple modules:
	$$
	\bigcap_{X_{j,1}^i \notin \cX} D_{0,g(\eta)}(X_{j,1}^i)=g(\eta)^\perp\cap \bigcap_{X_{j,1}^i \notin \cX} (\undim X_{j,1}^i)^\perp.
	$$
	By Proposition \ref{prop:regular linear algebra}, this intersection is a 1-dimensional (linear) subspace of $g(\eta)^\perp$. Moreover, by the same proposition, $\eta$ together with $\{\undim X_{j,1}^i \mid X_{j,1}^i \notin \cX\}$ is a basis of $g(\eta)^\perp$. We conclude that there exists a unique vector $p(\cX)$ in this intersection satisfying $p(\cX)\cdot \eta = 1$. We refer to $p(\cX)$ as the \emph{projective vector} of $\cX$.
\end{defn}

\begin{rem}
    A construction similar to our projective vectors also appears in \cite[Proposition~6.2]{IPT_semi-stable}. Indeed, they consider the convex cone in $g(\eta)^\perp$ spanned by the dimension vectors of the quasi simple modules and $\eta$. They then show that the sets $\mathcal{X}$ as in Definition~\ref{def:projective vector} correspond bijectively with the boundary facets of this cone. A key difference in this approach compared to ours is that \cite{IPT_semi-stable} considers the linear span of (the dimension vectors of the) quasi simple modules which are not contained in $\mathcal{X}$, while we consider the intersection of their orthogonal spaces.
\end{rem}

\begin{prop}\label{prop:multiplicity}
	Let $\cX = \{X_{j_1,1}^1,\ldots,X_{j_m,1}^m\}$ be as in Definition \ref{def:projective vector}. Let $X_{j,\ell}^i$ be an indecomposable regular module in the tube $\cT_i$. Then $p(\cX)\cdot\undim X_{j,\ell}^i$ is the multiplicity of $X_{j_i,1}^i$ in the regular composition series of $X_{j,\ell}^i$.
\end{prop}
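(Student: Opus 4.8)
The plan is to reduce the computation of $p(\cX)\cdot\undim X_{j,\ell}^i$ to a count of composition factors, using only the defining properties of $p(\cX)$ and the linear-algebra facts of Proposition~\ref{prop:regular linear algebra}. First I would recall, as in the proof of Proposition~\ref{prop:tameEqns}, that the regular composition series of $X_{j,\ell}^i$ has quasi-composition factors $X_{j,1}^i, X_{j+1,1}^i,\ldots, X_{j+\ell-1,1}^i$ (indices mod $r_i$), so that
$$\undim X_{j,\ell}^i = \sum_{k = j}^{j+\ell-1}\undim X_{k,1}^i.$$
By linearity of the pairing with $p(\cX)$, it therefore suffices to evaluate $p(\cX)\cdot\undim X_{k,1}^i$ on each quasi simple module in the tube $\cT_i$.

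The key step is the claim that $p(\cX)\cdot\undim X_{k,1}^i$ equals $1$ if $X_{k,1}^i \in \cX$ (i.e.\ $k \equiv j_i \pmod{r_i}$) and $0$ otherwise. The ``otherwise'' case is immediate from the definition of the projective vector: if $X_{k,1}^i \notin \cX$ then $p(\cX)$ lies in $(\undim X_{k,1}^i)^\perp$. For the remaining case, I would use Proposition~\ref{prop:regular linear algebra}(2), namely $\sum_{k=1}^{r_i}\undim X_{k,1}^i = \eta$. Pairing both sides with $p(\cX)$ and discarding the $r_i - 1$ vanishing terms (the quasi simples in $\cT_i$ other than $X_{j_i,1}^i$), we obtain $p(\cX)\cdot\undim X_{j_i,1}^i = p(\cX)\cdot\eta = 1$, the last equality being the normalization built into the definition of $p(\cX)$.

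Combining these, $p(\cX)\cdot\undim X_{j,\ell}^i = \sum_{k=j}^{j+\ell-1} p(\cX)\cdot\undim X_{k,1}^i$ counts exactly the indices $k$ in $\{j,\ldots,j+\ell-1\}$ with $k \equiv j_i \pmod{r_i}$, which is precisely the multiplicity of $X_{j_i,1}^i$ among the quasi-composition factors of $X_{j,\ell}^i$. I do not expect a genuine obstacle here; the only point requiring care is the bookkeeping of indices modulo $r_i$ and making sure the normalization $p(\cX)\cdot\eta = 1$ is invoked at the right moment.
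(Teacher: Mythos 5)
Your proposal is correct and follows essentially the same route as the paper: both arguments show $p(\cX)\cdot\undim X_{k,1}^i$ is $0$ for $k\neq j_i$ by definition and $1$ for $k=j_i$ via the relation $\sum_{k=1}^{r_i}\undim X_{k,1}^i=\eta$ together with the normalization $p(\cX)\cdot\eta=1$, then conclude using $\undim X_{j,\ell}^i=\sum_{k=j}^{j+\ell-1}\undim X_{k,1}^i$ from Proposition~\ref{prop:tameEqns}. No gaps.
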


\begin{proof}
	By definition, $p(\cX)\cdot \undim X_{k,1}^i = 0$ if $k \neq j_i$. Thus since $p(\cX)\cdot \eta =1$, we have $p(\cX)\cdot \undim X_{j_i,1}^i = 1$. The result then follows immediately from Proposition \ref{prop:tameEqns}.
\end{proof}

As an immediate corollary, we have the following.

\begin{cor}\label{cor:multiplicity}
	Let $\cX = \{X_{j_1,1}^1,\ldots,X_{j_m,1}^m\}$ be as in Definition \ref{def:projective vector} and let $M \in \Reg H$ have no homogeneous direct summand. Then $p(\cX)\cdot\undim M$ is the sum of the multiplicities of the $X_{j_i,1}^i$ in the regular composition series of $M$.
\end{cor}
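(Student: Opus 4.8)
The plan is to deduce Corollary~\ref{cor:multiplicity} directly from Proposition~\ref{prop:multiplicity} by decomposing $M$ into indecomposable regular summands and exploiting the fact that both the dot product $p(\cX)\cdot(-)$ and the notion of ``multiplicity of $X^i_{j_i,1}$ in the regular composition series'' are additive over direct sums.

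First I would write $M \cong \bigoplus_{s} M_s$, where each $M_s$ is indecomposable and regular. Since $M$ has no homogeneous direct summand, each $M_s$ is an indecomposable non-homogeneous regular module, hence lies in one of the exceptional tubes $\cT_{i(s)}$; thus $M_s = X^{i(s)}_{j(s),\ell(s)}$ in the notation of~\ref{notation:indexing}. By linearity of the dot product, $p(\cX)\cdot \undim M = \sum_s p(\cX)\cdot \undim M_s$. Applying Proposition~\ref{prop:multiplicity} to each summand, $p(\cX)\cdot \undim M_s$ equals the multiplicity of $X^{i(s)}_{j_{i(s)},1}$ in the regular composition series of $M_s$.

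Next I would observe that the regular composition series of $M$ (as a multiset of regular composition factors) is the disjoint union of the regular composition series of the $M_s$, because $\Reg H$ is an abelian (indeed exact) category in which direct sums behave as expected, so the multiplicity of any quasi-simple module in the regular composition series of $M$ is the sum of its multiplicities across the $M_s$. In particular, summing over $s$ and regrouping by tube, $\sum_s p(\cX)\cdot\undim M_s$ equals the sum over $i$ of the total multiplicity of $X^i_{j_i,1}$ in the regular composition series of $M$, which is precisely the sum of the multiplicities of the $X^i_{j_i,1}$ in the regular composition series of $M$. This completes the argument.

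There is no real obstacle here: the only thing to be slightly careful about is that the statement is about the composition series \emph{in the category} $\Reg H$ (so that ``composition factors'' means quasi-simple regular modules, and the series exists and has well-defined multiplicities by Proposition~\ref{prop:tube info}(1)), and that summands lying in homogeneous tubes are excluded by hypothesis so that each $M_s$ genuinely sits in some exceptional tube and Proposition~\ref{prop:multiplicity} applies. Given these remarks the proof is a one-line additivity argument, which is presumably why the authors call it an ``immediate corollary.''
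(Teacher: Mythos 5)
Your argument is correct and is exactly the intended one: the paper states this as an immediate corollary of Proposition~\ref{prop:multiplicity}, relying precisely on the decomposition of $M$ into indecomposable summands in exceptional tubes and the additivity of both $p(\cX)\cdot(-)$ and regular composition multiplicities. Your attention to why the no-homogeneous-summand hypothesis is needed matches the paper's setup.
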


\begin{rem}\label{rem:always nonnegative}
	If $M$ is homogeneous then $\undim M = k\eta$ for some $k$. Then by definition, $p(\cX)\cdot\undim M = k$. In particular, this and Proposition \ref{prop:multiplicity} mean $p(\cX)\cdot \undim N \geq 0$ for all regular $N$.
\end{rem}

\begin{rem}\label{rem:projective vectors}
	Recall that for an arbitrary $M \in \mods H$, we have that $g(P(j))\cdot \undim M$ is equal to the multiplicity of $S(j)$ in the composition series of $M$. Thus Proposition \ref{prop:multiplicity} and Corollary \ref{cor:multiplicity} give our rationale for calling the vectors $p(\cX)$ projective vectors.
\end{rem}

\begin{eg}\label{eg:projective vectors}
	Consider the quiver $1 \rightarrow 2 \rightarrow 3 \leftarrow 4 \leftarrow 1$ of type $\widetilde{A}_3$. There are two tubes of rank 2, with quasi simples $X_{1,1}^1 = 4, X_{2,1}^1 = 123, X_{1,1}^2 = 2$, and $X_{2,1}^2 = 143$. This quiver has $\eta = (1,1,1,1)$ and $g(\eta) = (1,0,-1,0)$. The four projective vectors are:
	\begin{eqnarray*}
		p(2,4) &=& (-1/2,1,-1/2,1) \in D_{reg}(143)\cap D_{reg}(123) \cap g(\eta)^\perp\\
		p(2,123) &=& (0,1,0,0) \in D_{reg}(143)\cap D_{reg}(4) \cap g(\eta)^\perp\\
		p(143,4) &=& (0,0,0,1) \in D_{reg}(2)\cap D_{reg}(123) \cap g(\eta)^\perp\\
		p(143,123) &=& (1/2,0,1/2,0) \in D_{reg}(2)\cap D_{reg}(4) \cap g(\eta)^\perp
	\end{eqnarray*}
\end{eg}

Even though there are no projectives in the category of regular modules, we can still relate $p(X)\cdot \undim M$ to the dimension of a hom-space.

\begin{prop}\label{prop:long homs}
	Let $\cX = \{X_{j_1,1}^1\ldots,X_{j_m,1}^m\}$ be as in Definition \ref{def:projective vector} and let $\ell > 0$ be a positive integer.  Let $L_{\cX,\ell}$ be the direct sum (over $i$) of the modules $X_{j_i-\ell+1,\ell}^i$. (Note that $X_{j_i-\ell+1,\ell}^i$ has quasi length $\ell$ and quasi top $X_{j_i,1}^i$). Now let $M \in \Reg H$ have no homogeneous direct summand and suppose every direct summand has quasi-length at most $\ell$.
	Then $p(\cX)\cdot\undim M = \dim_K\Hom_H(L_{\cX,\ell},M)$.
\end{prop}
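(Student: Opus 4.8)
The plan is to reduce the statement to a counting problem about the regular composition series of $M$ and then match it against a known description of $\Hom$-spaces in exceptional tubes. First I would invoke Corollary~\ref{cor:multiplicity}: since $M \in \Reg H$ has no homogeneous summand, $p(\cX)\cdot\undim M$ equals the total multiplicity of the quasi simples $X_{j_i,1}^i$ (one from each exceptional tube $\cT_i$) appearing in the regular composition series of $M$. Because distinct tubes are $\Hom$- and $\Ext$-orthogonal (Proposition~\ref{prop:tube info 2}) and $L_{\cX,\ell}=\bigoplus_i X_{j_i-\ell+1,\ell}^i$ with each summand lying in $\cT_i$, both $p(\cX)\cdot\undim M$ and $\dim_K\Hom_H(L_{\cX,\ell},M)$ split as sums over the tubes. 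So it suffices to prove, for a single exceptional tube $\cT_i$ of rank $r_i$ and a module $M$ all of whose indecomposable summands lie in $\cT_i$ with quasi-length at most $\ell$, that the multiplicity of $X_{j_i,1}^i$ in the regular composition series of $M$ equals $\dim_K\Hom_H\!\big(X_{j_i-\ell+1,\ell}^i, M\big)$. By additivity in $M$ we may further assume $M = X_{j,\ell'}^i$ is indecomposable with $\ell' \le \ell$.

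Next I would use Proposition~\ref{prop:same homs} to transport the problem into the self-injective Nakayama algebra $\Lambda_{r_i}$: since $\ell,\ell' \le r_i$, we have $\Hom_H(X_{j_i-\ell+1,\ell}^i, X_{j,\ell'}^i) \cong \Hom_{\Lambda_{r_i}}(Y_{j_i-\ell+1,\ell}, Y_{j,\ell'})$, and the regular composition factors of $X_{j,\ell'}^i$ correspond to the composition factors of $Y_{j,\ell'}$. So the claim becomes a purely combinatorial statement in $\mods\Lambda_{r_i}$: the multiplicity of the simple $S(j_i)$ in the composition series of the uniserial module $Y_{j,\ell'}$ (which, by the proof of Proposition~\ref{prop:NakayamaEqns}, has composition factors $S(j), S(j+1), \ldots, S(j+\ell'-1)$, indices mod $r_i$) equals $\dim_K\Hom_{\Lambda_{r_i}}(Y_{j_i-\ell+1,\ell}, Y_{j,\ell'})$. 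Here $Y_{j_i-\ell+1,\ell}$ is the uniserial module with top $S(j_i)$ and length $\ell$.

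The core computation is then the $\Hom$-space between two uniserial modules over the uniserial algebra $\Lambda_{r_i}$. A nonzero map $Y_{j_i-\ell+1,\ell}\to Y_{j,\ell'}$ factors through a submodule of $Y_{j,\ell'}$ isomorphic to a quotient of $Y_{j_i-\ell+1,\ell}$; since every indecomposable here is uniserial, such a map is determined by choosing a submodule of $Y_{j,\ell'}$ whose top is $S(j_i)$ and whose length is at most $\ell$ — equivalently, choosing a position in the composition series of $Y_{j,\ell'}$ where the factor $S(j_i)$ occurs and is "within reach" of length $\ell$ from the top of the source. Because $\ell' \le \ell$, the length constraint is automatically satisfied for every occurrence of $S(j_i)$ in $Y_{j,\ell'}$, and the space of such maps is one-dimensional per occurrence (as $\End$ of each simple is $K$). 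Hence $\dim_K\Hom_{\Lambda_{r_i}}(Y_{j_i-\ell+1,\ell}, Y_{j,\ell'})$ equals the number of times $S(j_i)$ appears among $S(j),\ldots,S(j+\ell'-1)$, which is exactly the multiplicity we wanted. Summing over the summands of $M$ and over the tubes $i$ completes the proof.

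The step I expect to be the main obstacle is the last one: verifying carefully that the length bound "$\ell' \le \ell$" really guarantees that every occurrence of $S(j_i)$ in $Y_{j,\ell'}$ is hit by exactly one (up to scalar) homomorphism from $Y_{j_i-\ell+1,\ell}$, with no extra maps and no coincidences forcing some occurrences to be missed. This amounts to a precise bookkeeping of submodule lengths modulo $r_i$ and the $rad^{r_i+1}=0$ relation; it is elementary but is where the hypothesis on quasi-lengths is genuinely used, and it is the only place where one must be attentive to the cyclic indexing. Everything else — the reduction to one tube, the reduction to indecomposable $M$, and the translation to $\Lambda_{r_i}$ — is formal given the cited results.
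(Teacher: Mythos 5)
Your proof follows the same overall structure as the paper's: reduce tube-by-tube via $\Hom$-orthogonality, reduce to indecomposable $M=X_{j,\ell'}^i$, show $\dim_K\Hom_H\bigl(X_{j_i-\ell+1,\ell}^i,X_{j,\ell'}^i\bigr)$ equals the multiplicity of $X_{j_i,1}^i$ in the regular composition series of $X_{j,\ell'}^i$, and finish with Corollary~\ref{cor:multiplicity}. The paper simply asserts that Hom-count-equals-multiplicity step; you fill it in with the uniserial bookkeeping, which is the right justification.

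There is, however, a genuine (if small) gap in the route you chose: you invoke Proposition~\ref{prop:same homs} to transport the computation into $\mods\Lambda_{r_i}$ and write ``since $\ell,\ell'\le r_i$'' --- but that bound is not among the hypotheses of the proposition. The statement only requires every summand of $M$ to have quasi-length at most $\ell$; nothing caps $\ell$ or $\ell'$ by $r_i$, and in fact the paper later applies this proposition (in the proof of Proposition~\ref{def:srr to str}) with $\ell=r_i+1$, where the summand $X_{j_i-\ell+1,\ell}^i$ of $L_{\cX,\ell}$ lies outside the range on which Proposition~\ref{prop:same homs} is stated. So as written your argument does not cover all cases in which the result is used. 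Fortunately the detour is inessential: the uniserial argument you give (nonzero maps correspond to regular submodules of the target with quasi-top $X_{j_i,1}^i$, and $\ell'\le\ell$ ensures every such submodule arises) works directly inside the tube $\cT_i$, using only that regular modules are quasi-uniserial (Proposition~\ref{prop:tube info}(1)) and that each indecomposable is determined by its quasi-top and quasi-length. If you delete the appeal to Proposition~\ref{prop:same homs} and run the same counting in $\cT_i$ itself, the proof is complete and matches the paper's.
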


\begin{proof}
	Let $X_{j,\ell'}^i$ be an indecomposable direct summand of $M$. Since the only direct summand of $L_{\cX,\ell}$ in $\cT_i$ is $X_{j_i-\ell+1,\ell}^i$, we have
	$$\dim_K\Hom_H(L_{\cX,\ell},X_{j,\ell'}^i) = \dim_K\Hom_H(X_{j_i-\ell+1,\ell}^i,X_{j,\ell'}^i).$$
	Since $\ell \geq \ell'$ and the quasi top of $X_{j_i-\ell+1,\ell}^i$ is $X_{j_i,1}^i$, this is equal to the multiplicity of $X_{j_i,1}^i$ in the regular composition series of $X_{j,\ell'}^i$. The result then follows from Corollary \ref{cor:multiplicity}.
\end{proof}

We are now ready to define support regular rigid objects.

\begin{defn}\label{def:regular rigid}
	Let $H$ be a tame hereditary algebra. Let $M \in \Reg H$ be basic and let $\cP^+,\cP^-$ be sets of projective vectors. We call the triple $(M,\cP^+,\cP^-)$ \emph{support regular rigid} if
	\begin{enumerate}
		\item $\Hom_H(M,\tau M) = 0$.
		\item For all $p \in \cP^+$, $p\cdot\undim \tau M = 0$.
		\item For all $p \in \cP^-$, $p\cdot\undim M = 0$.
		\item At least one of $\cP^+$ and $\cP^-$ is empty.
	\end{enumerate}
	If in addition $(M,\cP^+,\cP^-)$ is maximal (see Remark \ref{rem:contains} below), we call it a \emph{support regular cluster}. We say $(M,\cP^+,\cP^-)$ is \emph{null-nonnegative} if $\cP^- = \emptyset$ and \emph{null-nonpositive} if $\cP^+ = \emptyset$.
\end{defn}

\begin{rem}\label{rem:contains}
	We say the support regular rigid object $(N,\cQ^+,\cQ^-)$ contains $(M,\cP^+,\cP^-)$ if $M$ is a direct summand of $N$, $\cP^+\subseteq \cQ^+$, and $\cP^-\subseteq \cQ^-$. The condition that $(M,\cP^+,\cP^-)$ is maximal means that if $(N,\cQ^+,\cQ^-)$ contains $(M,\cP^+,\cP^-)$ then $(N,\cQ^+,\cQ^-) = (M,\cP^+,\cP^-)$.
\end{rem}

\begin{rem}\label{rem:regular rigid}
	Definition \ref{def:regular rigid} is meant to serve as a regular analogue of the definition of a support $\tau$-rigid object. Since projective vectors are taking the place of projective modules, conditions (1) and (2) together are analogous to specifying that a module which potentially includes projective direct summands is $\tau$-rigid. Likewise, conditions (3) and (4) take the place of the condition that $\Hom_H(P,M) = 0$ when $M\oplus P[1]$ is support $\tau$-rigid. Further justification for why condition (4) is necessary is given in Proposition \ref{prop:one projective}.
\end{rem}

\begin{rem}
    Note that there are precisely three support regular rigid objects in the case of the Kronecker: $(0,\emptyset,\emptyset)$, $(0,\{p(X_{1,1}^1)\},\emptyset)$, and $(0,\emptyset,\{p(X_{1,1}^1)\})$.
\end{rem}

\begin{nota}\label{nota:proj sets}\
    \begin{enumerate}
        \item Let $\cP$ be a set of projective vectors. We denote by
    $$\tp(\cP) := \bigcup_{p(\cX) \in \cP}\cX.$$
         \item Let $\cY$ be a set of quasi-simple objects in $\Reg H$. We denote by
            $$\pc(\cY) := \left\{p(\cX) \mid \cX \subseteq \cY \text{ and } \forall \ i \in \{1,\ldots,m\}: |\cX \cap \cT_i| = 1\right\}.$$
            That is, given $\cX$ as in Definition~\ref{def:projective vector}, we have $p(\cX) \in \pc(\cY)$ if and only if $\cX \subseteq \cY$.
    \end{enumerate}
\end{nota}

\begin{rem}\label{rem:pctp}
    We emphasize that, in Notation~\ref{nota:proj sets}, both $\cP$ and $\pc(\cY)$ are sets of (projective) vectors while both $\tp(\cP)$ and $\cY$ are sets of quasi-simple modules. In light of Corollary~\ref{cor:multiplicity}, the set $\tp(\cP)$ can be thought of as the set of quasi-simple modules which are included in the ``top'' of some $p(\cX) \in \cP$. It is also possible to think of $\pc(\cY)$ as the set of ``projective covers'' of the modules in $\cY$; however, some caution is warranted. Indeed, $\tp(\pc(\cY)) = \cY$, if and only if either (a) $\cY \cap \cT_i \neq \emptyset$ for all $i$, or (b) $\cY = \emptyset$.  Otherwise, one has $\tp(\pc(\cY)) = \emptyset \neq \cY$. These conditions (a) and (b) will correspond to the conditions (a) and (b) in Proposition~\ref{prop:composition}(1,2), where they will be used to relate support regular rigid objects over $\Lambda$ to support $\tau$-rigid objects over the self-injective Nakayama algebras $\Lambda_{r_i}$. Similarly, it may be the case that $\cP \subsetneq \pc(\tp(\cP))$, as demonstrated by the following example.
\end{rem}

\begin{eg}
    Suppose $H$ has two exceptional tubes of rank 2. Denote
    \begin{eqnarray*}
        \cP_1 &=& \{p(Y_{1,1}^1,Y_{1,1}^2), p(Y_{2,1}^1,Y_{2,1}^2)\}\\
        \cP_2 &=& \{p(Y_{1,1}^1,Y_{2,1}^2), p(Y_{2,1}^1,Y_{1,1}^2)\}.
    \end{eqnarray*}
    Then $\tp(\cP_1) = \tp(\cP_2) = \{Y_{1,1}^1,Y_{2,1}^1,Y_{1,1}^2,Y_{2,1}^2\}$ and $\pc(\tp(\cP_1)) = \pc(\tp(\cP_2)) = \cP_1 \cup \cP_2$.
\end{eg}

Based on this example, we give the following definition.

\begin{defn}\label{def:projectively closed}
	Let $(M,\cP^+,\cP^-)$ be support regular rigid. We say that $(M,\cP^+,\cP^-)$ is \emph{projectively closed} if $\cP^+ = \pc(\tp(\cP^+))$ and $\cP^- = \pc(\tp(\cP^-))$.
\end{defn}

For $(M,\cP^+,\cP^-)$, denote $\cQ^+ = \pc(\tp(\cP^+))$ and $\cQ^- = \pc(\tp(\cP^-))$. It is straightforward to show that $(M,\cQ^+,\cQ^-)$ is a projectively closed support regular rigid object. Moreover, any projectively closed regular rigid object which contains $(M,\cP^+,\cP^-)$ will also contain $(M,\cQ^+,\cQ^-)$. This motivates the following definition.

\begin{defn}\label{def:projective closure}
    Let $(M,\cP^+,\cP^-)$ be support regular rigid, and denote $\cQ^+ = \pc(\tp(\cP^+))$ and $\cQ^- = \pc(\tp(\cP^-))$, Then we call $(M,\cQ^+,\cQ^-)$ the \emph{projective closure} of $(M,\cP^+,\cP^-)$.
\end{defn}

\begin{rem}\label{rem:preserve null side}
	We observe that the projective closure of $(M,\cP^+,\cP^-)$ is null-nonnegative (resp. null-nonpositive) if and only if $(M,\cP^+,\cP^-)$ is null-nonnegative (resp. null-nonpositive).
\end{rem}

\begin{nota}\label{notation:regular rigid}
	We denote by $\mathsf{srr}(H)$ the set of (isoclasses of) support regular rigid objects for $H$. We denote by $\mathsf{srr}_c$, $\mathsf{srr}^+$ and $\mathsf{srr}^-$ the sets of projectively closed, null-nonnegative, and null-nonpositive support regular rigid objects, respectively. $\mathsf{srr}_c^+$ and $\mathsf{srr}_c^-$ are defined in the natural way.
\end{nota}


\subsection{Relationship to support $\tau$-rigid objects for self-injective Nakayama algebras}\label{sec:combining chambers}
In this section, we relate support regular rigid objects to certain collections of support $\tau$-rigid objects for the self-injective Nakayama algebras $\Lambda_r$. We begin by proving several results about the support $\tau$-rigid objects for $\Lambda_r$.

\begin{prop}\label{prop:one projective}
	Let $M\oplus P[1] \in \mathsf{str}(\Lambda_r)$. Then $P = 0$ or $M$ has no projective direct summand.
\end{prop}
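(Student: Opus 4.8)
The plan is to argue by contradiction using the structure of $\Lambda_r = KZ_r/\rad^{r+1}$ as a self-injective uniserial algebra. Suppose $M \oplus P[1] \in \mathsf{str}(\Lambda_r)$ with $P \neq 0$ and $M$ having a projective (hence injective, since $\Lambda_r$ is self-injective) indecomposable direct summand $Q' = P(k)$ for some $k$; note $P(k) = Y_{k+1,r+1}$ in the notation of Notation~\ref{notation:nakayama indexing}, since the indecomposable projective $P(k)$ has length $r+1$. The goal is to contradict the $\tau$-rigidity of $M$, specifically the requirement $\Hom_{\Lambda_r}(M, \tau M) = 0$. Since $P \neq 0$, write $P(j)$ for an indecomposable summand of $P$; then support $\tau$-rigidity forces $\Hom_{\Lambda_r}(P(j), M) = 0$, so in particular $\Hom_{\Lambda_r}(P(j), P(k)) = 0$.

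First I would record the relevant $\Hom$-computations in $\mods\Lambda_r$. Because every indecomposable is uniserial and $\Lambda_r$ is self-injective with all indecomposable projectives of length $r+1$, one has $\Hom_{\Lambda_r}(P(j), P(k)) \neq 0$ for every pair $j,k$: indeed $P(j)$ has simple socle $S(j)$ and the map that collapses $P(j)$ onto its socle, followed by the inclusion of $S(j) = \soc P(k')$ into $P(k')$ for the appropriate $k'$, is nonzero; more to the point, $\soc P(j) = S(j)$ embeds in $P(k)$ precisely when $S(j)$ occurs in $P(k)$, which it always does since $P(k)$ has length $r+1 > r$ and therefore has every simple as a composition factor. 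Hence $\Hom_{\Lambda_r}(P(j), P(k)) \neq 0$ for all $j, k$, directly contradicting $\Hom_{\Lambda_r}(P(j), M) = 0$ once $P(k)$ is a summand of $M$. This already rules out the configuration.

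Let me restate this cleanly as the key step: if $P \neq 0$, pick an indecomposable summand $P(j)$ of $P$; support $\tau$-rigidity gives $\Hom_{\Lambda_r}(P(j), M) = 0$; but if $M$ had an indecomposable projective summand $P(k)$, then $\Hom_{\Lambda_r}(P(j), P(k)) \neq 0$ because $P(k)$ is a uniserial module of length $r+1$ (so $S(j)$ is a composition factor, and in fact one can factor a nonzero map through the appropriate quotient-then-submodule), which is the desired contradiction. Symmetrically, I should also double-check the degenerate reading where the ambient claim is that a \emph{non-projective} $M$ with $P \neq 0$ is fine — that is automatically consistent, so there is nothing to prove there.

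The main obstacle is not conceptual but bookkeeping: one must be careful about the $\tau$-translate indexing ($\tau Y_{j,\ell} \cong Y_{j-1,\ell}$, with $\tau$ acting trivially-ish on projectives in the stable category but $\tau P(k) = 0$), and about which simple sits in the socle versus the top of $P(j)$ under the orientation $Z_r: r \to r-1 \to \cdots \to 1 \to r$ fixed in Definition~\ref{def:nakayamas}. Concretely I would verify that $\Hom_{\Lambda_r}(P(j), P(k)) \neq 0$ by exhibiting the explicit nonzero composite $P(j) \twoheadrightarrow \soc P(j) = S(j) \hookrightarrow P(k)$ — the second arrow exists and is nonzero because $S(j)$ appears as a composition factor of the length-$(r+1)$ uniserial module $P(k)$, and any occurrence of $S(j)$ as a \emph{submodule} would suffice; since $\soc P(k) = S(k)$ is simple, I instead use that $S(j)$ occurs as a quotient of a submodule, equivalently that there is a nonzero map $P(j) \to P(k)$ detecting this composition factor, which is standard for uniserial algebras. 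Once this $\Hom$-nonvanishing is in hand, the proposition follows in one line.
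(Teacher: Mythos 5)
Your proof is correct and takes essentially the same route as the paper: if $P\neq 0$ and $P(k)$ is a projective summand of $M$, then $P(k)$ is uniserial of length $r+1$ and so supported at every vertex, whence $\Hom_{\Lambda_r}(P(j),P(k))\neq 0$ for any summand $P(j)$ of $P$, contradicting $\Hom_{\Lambda_r}(P,M)=0$. One small caveat: the middle of your write-up momentarily conflates ``composition factor'' with ``submodule'' (a simple $S(j)$ embeds in the uniserial $P(k)$ only when $j=k$), but you catch this and correctly recover via $\dim\Hom_{\Lambda_r}(P(j),-)=(\undim{-})_j$, and there is a harmless off-by-one in your identification $P(k)=Y_{k+1,r+1}$ (under the paper's Notation~\ref{notation:nakayama indexing} one has $P(k)=Y_{k,r+1}$).
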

\begin{proof}
	Suppose $P \neq 0$ and there exists an indecomposable projective direct summand $Q$ of $M$. It follows that the length of $Q$ is $r+1$ and $Q$ is supported at every vertex of $Z_r$. Thus there exists a nonzero morphism $P \rightarrow Q$, a contradiction.
\end{proof}

\begin{rem}\label{rem:one projective}
	Proposition \ref{prop:one projective} is our main justification for condition (4) in Definition~\ref{def:regular rigid}.
\end{rem}

\begin{defn}\label{def:null-nonnegative}
	Let $M\oplus P[1]\in\mathsf{str}(\Lambda_r)$. We say $M\oplus P[1]$ is \emph{null-nonnegative} if $(g(M)-g(P))\cdot \overline{1} \geq 0$. Likewise, we say $M\oplus P[1]$ is \emph{null-nonpositive} if $(g(M)-g(P))\cdot \overline{1} \leq 0$. We denote by $\mathsf{str}^+\Lambda_r$ and $\mathsf{str}^-\Lambda_r$ the sets of null-nonnegative and null-nonpositive support $\tau$-rigid objects for $\Lambda_r$, respectively, and similarly for $\mathsf{stt}^+(\Lambda_r)$ and $\mathsf{stt}^-(\Lambda_r)$.
\end{defn}

\begin{prop}\label{prop:null-nonnegative}
	Let $M\oplus P[1]\in\mathsf{str}(\Lambda_r)$. Then
	\begin{enumerate}
		\item $M\oplus P[1]$ is null-nonnegative if and only if $P = 0$.
		\item $M\oplus P[1]$ is null-nonpositive if and only if $M$ contains no projective direct summand.
	\end{enumerate}
\end{prop}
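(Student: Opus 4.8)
The plan is to analyze the $g$-vector of a support $\tau$-rigid object $M\oplus P[1]$ for $\Lambda_r$ by bounding the quantities $g(N)\cdot\overline 1$ and $g(Q)\cdot\overline 1$ for an arbitrary indecomposable summand $N$ of $M$ and indecomposable summand $Q[1]$ of $P[1]$. The key elementary input is that, since $\Lambda_r = KZ_r/\rad^{r+1}$ is self-injective and uniserial, each indecomposable module $Y_{j,\ell}$ (for $1\le\ell\le r+1$) has a minimal projective presentation of a very explicit form: for $\ell\le r$ one has $0\to P(j+\ell)\to P(j)\to Y_{j,\ell}\to 0$ (using the conventions of Notation~\ref{notation:nakayama indexing}, indices mod $r$), and for $\ell = r+1$ the module $Y_{j,r+1} = P(j)$ is projective with $g$-vector $e_j$. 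Consequently $g(Y_{j,\ell}) = e_j - e_{j+\ell}$ when $\ell\le r$, so $g(Y_{j,\ell})\cdot\overline 1 = 0$, while $g(Y_{j,r+1})\cdot\overline 1 = e_j\cdot\overline 1 = 1$. Meanwhile for a negative projective summand $Q[1] = P(k)[1]$ we have $g(P(k)[1])\cdot\overline 1 = -e_k\cdot\overline 1 = -1$. Summing over all indecomposable summands, we obtain
$$(g(M)-g(P))\cdot\overline 1 = (\text{number of projective summands of }M) - (\text{number of indecomposable summands of }P).$$

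With this computation in hand, each statement follows directly. For (1): the right side above is $\ge 0$ precisely when $M$ has at least as many projective summands as $P$ has indecomposable summands; but by Proposition~\ref{prop:one projective} either $P = 0$ or $M$ has no projective summand. If $P = 0$ the count is $\ge 0$ (indeed equals the number of projective summands of $M$), so $M\oplus P[1]$ is null-nonnegative. Conversely, if $P\neq 0$ then $M$ has no projective summand, so the count equals $-(\text{number of summands of }P) < 0$, hence $M\oplus P[1]$ is not null-nonnegative. For (2): the count is $\le 0$ precisely when $M$ has no more projective summands than $P$ has indecomposable summands. If $M$ has no projective summand this is $\le 0$, so $M\oplus P[1]$ is null-nonpositive. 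Conversely, again by Proposition~\ref{prop:one projective}, if $M$ has a projective summand then $P = 0$, so the count equals the (positive) number of projective summands of $M$, hence $M\oplus P[1]$ is not null-nonpositive (note that here we must observe that a support $\tau$-rigid module with a projective direct summand cannot be zero, so the count is strictly positive).

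The only mild subtlety — and the step I would be most careful about — is the identification of the minimal projective presentation of $Y_{j,\ell}$ and hence the formula $g(Y_{j,\ell}) = e_j - e_{j+\ell}$ for $\ell\le r$; this rests on the uniserial structure from Proposition~\ref{prop:Nakayama}(1)--(2) and the fact that $\rad P(j) \cong Y_{j+1,r}$ has projective cover $P(j+1)$, so that the kernel of $P(j)\twoheadrightarrow Y_{j,\ell}$ is $Y_{j+\ell,r+1-\ell} = \rad^\ell P(j)$, which has projective cover $P(j+\ell)$. Everything else is bookkeeping with the two cases supplied by Proposition~\ref{prop:one projective}.
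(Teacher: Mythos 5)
Your proof is correct and follows essentially the same route as the paper's: compute the $g$-vectors of the indecomposable summands, observe that $g(N)\cdot\overline 1$ equals $1$ for a projective, $0$ for a non-projective indecomposable, and $-1$ for a shifted projective, and then invoke Proposition~\ref{prop:one projective} for both directions. One small caveat: your explicit presentation $0\to P(j+\ell)\to P(j)\to Y_{j,\ell}\to 0$ uses a top-indexed convention, whereas under Notation~\ref{notation:nakayama indexing} (where $Y_{j,\ell}$ has \emph{socle} $S(j)$) the minimal presentation is $0\to P(j-1)\to P(j+\ell-1)\to Y_{j,\ell}\to 0$; this does not affect the argument, since either way $g(Y_{j,\ell})$ is a difference of two standard basis vectors and hence pairs to $0$ with $\overline 1$.
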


\begin{proof}
	Let $N \in \mods\Lambda_r$ be indecomposable $\tau$-rigid. Let $Y_{j,1}$ be the top of $N$, and let $Y_{j',1}$ be the socle of $N$. Direct computation then shows 
	$$g(M) = \begin{cases} e_{j} & M \text{ is projective}\\e_{j}-e_{j'-1} & \text{otherwise.}\end{cases}$$
	Thus $g(N)\cdot\overline{1} = 1$ if $N$ is projective and $g(N)\cdot\overline{1} = 0$ otherwise. The result then follows from Proposition \ref{prop:one projective}.
\end{proof}

\begin{lem}\label{lem:complete all positive}
	Let $M \in \mathsf{str}(\Lambda_r)$. Then there exists a vertex $v \in Z_r$ on which $\tau M$ is not supported. By symmetry, if $M$ contains no projective direct summand, then there exists a vertex $v' \in Z_r$ on which $M$ is not supported.
\end{lem}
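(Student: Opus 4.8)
The plan is to prove the second statement (the one with the no‑projective‑summand hypothesis) first, treating $M$ as its module part; the first statement then follows formally. Indeed, for an arbitrary $M\in\mathsf{str}(\Lambda_r)$ write the module part as $N\oplus P$ with $P$ projective and $N$ without projective summands; since $\tau$ kills projectives, $\tau M=\tau N$, and since each non‑projective indecomposable $Y_{j,\ell}$ is a brick with $\tau Y_{j,\ell}\cong Y_{j-1,\ell}$, passing from $N$ to $\tau N$ simply rotates the support of each summand one step around $Z_r$. Hence a vertex missing from the support of $N$ gives, after rotation, a vertex missing from the support of $\tau M$. So it suffices to show that a $\tau$-rigid $\Lambda_r$-module $M$ with no projective summand is not supported at every vertex of $Z_r$.

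The key tool will be an elementary description of Hom-spaces between the ``short'' indecomposables: for $\ell,\ell'\le r$, the modules $Y_{j,\ell},Y_{j',\ell'}$ are bricks (Proposition~\ref{prop:Nakayama}(4)) and $\Hom_{\Lambda_r}(Y_{j,\ell},Y_{j',\ell'})\neq 0$ if and only if $\soc Y_{j',\ell'}=S(j')$ is a composition factor of $Y_{j,\ell}$ and $\mathrm{top}\,Y_{j,\ell}=S(j+\ell-1)$ is a composition factor of $Y_{j',\ell'}$. This follows from the fact that every $\Lambda_r$-module is uniserial, so a nonzero morphism factors as a surjection onto a top-quotient of the source followed by the inclusion of a socle-submodule of the target, and for bricks the relevant lengths are forced. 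Feeding $\tau Y_{j,\ell}=Y_{j-1,\ell}$ into this criterion shows that $Y_{j,\ell}$ is $\tau$-rigid for $\ell\le r-1$ and is not $\tau$-rigid for $\ell=r$; since the only indecomposables of length $r+1$ are the projective modules (Proposition~\ref{prop:Nakayama}(3)), every indecomposable summand of $M$ has length at most $r-1$ and so is supported on a proper arc of $Z_r$.

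Now suppose, for contradiction, that $M$ is supported at every vertex. As no single summand is sincere, $M$ has at least two indecomposable summands; choose one, $Y_{j_1,\ell_1}$, of maximal length, and set $b:=j_1+\ell_1$. Since $\ell_1\le r-1$, the vertex $b$ follows immediately after the top $j_1+\ell_1-1$ of $Y_{j_1,\ell_1}$ and is not in its support, so by sincerity some summand $Y_{j_k,\ell_k}$ (necessarily not $Y_{j_1,\ell_1}$) has $b$ in its support. Then $b-1=\mathrm{top}\,Y_{j_1,\ell_1}$ lies in the support of $\tau Y_{j_k,\ell_k}=Y_{j_k-1,\ell_k}$, i.e.\ $\mathrm{top}\,Y_{j_1,\ell_1}$ is a composition factor of $\tau Y_{j_k,\ell_k}$. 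As $M$ is $\tau$-rigid we have $\Hom_{\Lambda_r}(Y_{j_1,\ell_1},\tau Y_{j_k,\ell_k})=0$, so the Hom criterion forces $\soc(\tau Y_{j_k,\ell_k})=S(j_k-1)$ to not be a composition factor of $Y_{j_1,\ell_1}$; equivalently, $j_k$ lies outside the arc $\{j_1+1,\ldots,b\}$ obtained by rotating the support of $Y_{j_1,\ell_1}$ forward by one step, so $j_k\in\{b+1,\ldots,j_1\}$. Reading clockwise from $j_k$, the support of $Y_{j_k,\ell_k}$ must therefore contain all of $\{j_1,j_1+1,\ldots,b-1,b\}$ in order to reach $b$, which forces $\ell_k\ge\ell_1+1$ and contradicts the maximality of $\ell_1$.

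I expect the main obstacle to be the bookkeeping in this last step, since the whole argument takes place on the cyclic vertex set of $Z_r$: one has to be careful about the clockwise order, about what it means for one arc to contain another, and about the degenerate boundary cases $j_k=j_1$ and $\ell_1=r-1$ (where the complementary arc of $Y_{j_1,\ell_1}$ is a single vertex). Verifying the Hom criterion from uniseriality is routine but also requires care with indices modulo $r$.
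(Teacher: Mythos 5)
Your proof is correct, and it rests on the same ideas as the paper's: choose a non-projective indecomposable summand of maximal length and use uniseriality of $\Lambda_r$-modules together with $\tau$-rigidity to show that the vertex at (or just past) its top cannot be covered. The paper argues directly --- the top of that maximal-length summand is exhibited as the vertex missing from the support of $\tau M$, with the second statement obtained ``by symmetry'' --- whereas you prove the second statement by contradiction and deduce the first from it; your arc bookkeeping and the Hom criterion for bricks are both handled correctly, so the reorganization is only cosmetic.
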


\begin{proof}
	Let $Y_{j,\ell}$ be an indecomposable non-projective direct summand of $M$ of maximal length. We note that since $Y_{j,\ell}$ is not projective, $\ell \leq r-1$ by Proposition \ref{prop:Nakayama}. Recall that $S(j+\ell-1)$ is the top of $Y_{j,\ell}$. Now let $Y_{j',\ell'}$ be an indecomposable direct summand of $M$. Then either $\tau Y_{j',\ell} = 0$ or $\ell' \leq \ell$. In either case, the fact that $M$ is $\tau$-rigid means that $\tau Y_{j',\ell'}$ cannot be supported at vertex $j+\ell-1$. Indeed, otherwise we would have a nonzero morphism $Y_{j,\ell} \rightarrow \tau Y_{j',\ell'}$.
\end{proof}

As an immediate consequence of Lemma \ref{lem:complete all positive} we have the following.

\begin{prop}\label{prop:complete all positive}\
\begin{enumerate}
    \item Let $M\in\mathsf{str}(\Lambda_r)$, and suppose that $M$ contains no projective direct summand. Then there exist nonzero projectives $P,Q$ so that both $M \oplus P$ and $M \oplus Q[1]$ are support $\tau$-rigid. In particular, $M$ is contained in both a null-nonnegative support $\tau$-tilting object and a null-nonpositive support $\tau$-tilting object.
    \item Let $M \oplus P[1] \in \mathsf{stt}(\Lambda_r)$. Then $P = 0$ if and only if $M$ has a projective direct summand. That is, $\mathsf{stt}^+(\Lambda_r) \cap \mathsf{stt}^-(\Lambda_r) = \emptyset$.
    \item For $M \in \mathsf{str}(\Lambda_r)$ containing no projective direct summands, denote by $P_M^+$ and $P_M^-$ the direct sums of all indecomposable projectives $Q$ which satisfy $\Hom_{\Lambda_r}(Q,\tau M)$ and $\Hom_{\Lambda_r}(Q,M)$, respectfully. Then there is a bijection $\mathsf{stt}^+(\Lambda_r) \rightarrow \mathsf{stt}^-(\Lambda_r)$ given by $M \oplus P_M^+ \mapsto M \oplus P_M^-[1]$.
\end{enumerate}
\end{prop}

\begin{proof}
    (1) Let $P(v)$ and $P(v')$ be the projective covers of the simples at the vertices $v$ and $v'$ in the notation of Lemma~\ref{lem:complete all positive}. Then both $M \oplus P(v)$ and $M \oplus P(v')[1]$ are support $\tau$-rigid. Now any support $\tau$-tilting object containing $M\oplus P(v)$ will be null-nonpositive and any support $\tau$-tilting object containing $M \oplus P(v')[1]$ will be null-nonnegative by Propositions~\ref{prop:one projective} and~\ref{prop:null-nonnegative}.
    
    (2) Suppose $M \oplus P[1] \in \mathsf{stt}^+(\Lambda_r) \cap \mathsf{stt}^-(\Lambda_r)$. Then $P = 0$ and $M$ has no projective direct summand by Proposition~\ref{prop:null-nonnegative}. But then $M$ cannot be $\tau$-tilting by (1).
    
    (3) Let $M \in \mathsf{str}(\Lambda_r)$, and suppose that $M$ contains no projective direct summand. Denote $P_M^+$ and $P_M^-$ as in the statement, and note that each of these is uniquely determined by $M$. Now by the remark preceeding Proposition~\ref{prop:NakayamaEqns}, we have that $M$ and $\tau M$ are supported on the same number of vertices. Thus $P_M^+$ and $P_N^-$ have the same number of direct summands, and so $M \oplus P_M^+ \in \mathrm{stt}^+(\Lambda_r)$ if and only if $M \oplus P_M^-[1] \in \mathrm{stt}^-(\Lambda_r)$ by Proposition~\ref{prop:null-nonnegative}. This implies the result.
\end{proof}

We now define maps which allow us to move between support regular rigid objects for $H$ and support $\tau$-rigid objects for the $\Lambda_r$.

\begin{prop}\label{def:srr to str}
	Let $H$ be a tame hereditary algebra with exceptional tubes $\cT_1,\ldots,\cT_m$. For $1 \leq i \leq m$, there is a map $\rho_i:\mathsf{srr}(H)\rightarrow \mathsf{str}(\Lambda_{r_i})$ given as follows:
	
	Let $(M,\cP^+,\cP^-)$ be a support regular rigid object. Let $\cM_i$ be the set of indecomposable direct summands of $M$ in the tube $\cT_i$. Then $\rho_i(M,\cP^+,\cP^-) = N \sqcup Q[1]$, where
	\begin{eqnarray*}
		N &=& \left(\bigoplus_{X_{j,\ell}^i \in \cM_i}Y_{j,\ell}^i\right)\bigoplus\left(\bigoplus_{X_{j,1}^i\in\cT_i \cap \tp(\cP^+)}Y_{j,r_i+1}^i\right)\\
		Q &=& \bigoplus_{X_{j,1}^i\in\cT_i \cap \tp(\cP^-)}Y_{j,r_i+1}^i
	\end{eqnarray*}
\end{prop}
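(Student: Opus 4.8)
The statement to prove is that the assignment $\rho_i$ given in Proposition~\ref{def:srr to str} is well-defined, i.e., that for each support regular rigid object $(M,\cP^+,\cP^-)$ the object $N\sqcup Q[1]$ lands in $\mathsf{str}(\Lambda_{r_i})$. The plan is to verify the two defining conditions of a support $\tau$-rigid object: first that $N$ is $\tau$-rigid in $\mods\Lambda_{r_i}$, and second that $\Hom_{\Lambda_{r_i}}(Q,N)=0$. Before that, one should record the input/output bookkeeping: every summand $X^i_{j,\ell}$ of $M$ in $\cT_i$ has quasi length $\ell\le r_i$ (it is $\tau$-rigid in the tube, so $\ell<r_i$ by Proposition~\ref{prop:tube info}(3)), hence the corresponding $Y^i_{j,\ell}\in\mods\Lambda_{r_i}$ is a genuine brick of length $\le r_i-1$; meanwhile the summands $Y^i_{j,r_i+1}$ are exactly the indecomposable projective-injective modules $P(j)$ of $\Lambda_{r_i}$, matching Proposition~\ref{prop:Nakayama}. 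So $N$ is a direct sum of non-projective $\tau$-rigid uniserials and projectives, and $Q$ is a direct sum of projectives.

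The $\tau$-rigidity of $N$ splits into three checks. (i) For the non-projective part $N':=\bigoplus_{X^i_{j,\ell}\in\cM_i}Y^i_{j,\ell}$: by Proposition~\ref{prop:same homs} we have $\Hom_{\Lambda_{r_i}}(Y^i_{j,\ell},\tau Y^i_{j',\ell'})\cong\Hom_H(X^i_{j,\ell},\tau X^i_{j',\ell'})$ for all summands, and the right-hand side vanishes because $\Hom_H(M,\tau M)=0$ (condition (1) of Definition~\ref{def:regular rigid}) — note $\tau$ commutes with the tube decomposition, so only the $\cT_i$-summands of $\tau M$ matter. (ii) For the projective part $P_N:=\bigoplus_{X^i_{j,1}\in\cT_i\cap\tp(\cP^+)}P(j)$: since $\Lambda_{r_i}$ is self-injective, $\tau P(j)=0$ for every projective, so $\Hom(P_N,\tau N)=0$ and $\Hom(-,\tau P_N)=0$ trivially; thus $P_N$ contributes nothing. (iii) The cross-term $\Hom_{\Lambda_{r_i}}(P_N,\tau N')=0$ is immediate from (ii). The only genuinely substantive piece is showing $\Hom_{\Lambda_{r_i}}(N',\tau P_N)=0$, which again is trivial because $\tau P_N=0$. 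So in fact $\tau$-rigidity of $N$ reduces entirely to (i), i.e., to transporting condition (1) of Definition~\ref{def:regular rigid} through Proposition~\ref{prop:same homs}.

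The second condition, $\Hom_{\Lambda_{r_i}}(Q,N)=0$, is where conditions (2)--(4) of Definition~\ref{def:regular rigid} enter, and I expect this to be the main obstacle to write cleanly. Here $Q=\bigoplus_{X^i_{j,1}\in\cT_i\cap\tp(\cP^-)}P(j)$. First observe that by condition (4), at least one of $\cP^+,\cP^-$ is empty; if $\cP^-=\emptyset$ then $Q=0$ and there is nothing to prove, so assume $\cP^-\ne\emptyset$ and hence $\cP^+=\emptyset$, so $N=N'$ has no projective summand. Now $\Hom_{\Lambda_{r_i}}(P(j),Y)$ counts the multiplicity of $S(j)$ as a composition factor of $Y$ (projective covers), so $\Hom_{\Lambda_{r_i}}(Q,N)=0$ is equivalent to: for every $p(\cX)\in\cP^-$ with $\cX\cap\cT_i=\{X^i_{j,1}\}$, the simple $S(j)$ does not appear in any $Y^i_{j',\ell'}$ arising from a summand of $M$. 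Translating back via Proposition~\ref{prop:same homs} (which matches composition factors in the tube with composition factors in $\Lambda_{r_i}$), this says the multiplicity of $X^i_{j,1}$ in the regular composition series of each $\cT_i$-summand of $M$ is zero; summing over summands and invoking Corollary~\ref{cor:multiplicity} (or directly Proposition~\ref{prop:multiplicity}), this multiplicity equals $p(\cX)\cdot\undim M$, which is exactly $0$ by condition (3) of Definition~\ref{def:regular rigid}. The care needed is in the bookkeeping: condition (3) gives $p(\cX)\cdot\undim M=0$ as a statement about the \emph{whole} module $M$, and one must note that summands of $M$ in tubes $\cT_k$ with $k\ne i$, as well as homogeneous summands, contribute non-negatively to $p(\cX)\cdot\undim(-)$ by Remarks~\ref{rem:always nonnegative} and Proposition~\ref{prop:multiplicity}, so each individual contribution — in particular the total contribution from $\cT_i$-summands — must vanish. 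This non-negativity-forces-vanishing argument is the one delicate point; everything else is formal.
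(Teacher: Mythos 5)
There is a genuine gap in your step (ii), and the symptom is that your argument never actually uses condition~(2) of Definition~\ref{def:regular rigid}, whereas the paper's proof needs all four conditions. You claim $\Hom_{\Lambda_{r_i}}(P_N,\tau N)=0$ ``since $\tau P(j)=0$ for every projective,'' but this is a non sequitur: $\tau P(j)=0$ controls $\tau$ applied to $P(j)$, not maps \emph{out} of $P(j)$. Since $\tau N=\tau N'$, the space $\Hom_{\Lambda_{r_i}}(P(j),\tau N')\cong (\tau N')_j$ is the multiplicity of $S(j)$ as a composition factor of $\tau N'$, which need not vanish. Your (iii) then refers back to (ii), so the cross-term $\Hom_{\Lambda_{r_i}}(P_N,\tau N')$ is left unaddressed, and your conclusion that $\tau$-rigidity of $N$ ``reduces entirely to (i)'' (hence to condition~(1) alone) is false.

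What is actually needed for $\Hom_{\Lambda_{r_i}}(P_N,\tau N')=0$ is condition~(2), $p(\cX)\cdot\undim\tau M=0$ for all $p(\cX)\in\cP^+$, together with the same non-negativity-forces-vanishing argument you deploy correctly on the $\cP^-$ side: by Proposition~\ref{prop:multiplicity}/Corollary~\ref{cor:multiplicity} each tube contributes a non-negative multiplicity to $p(\cX)\cdot\undim\tau M$, so every contribution vanishes; in particular the multiplicity of $X^i_{j,1}\in\tp(\cP^+)$ in the regular composition series of the $\cT_i$-part of $\tau M$ is zero, and transporting through Proposition~\ref{prop:same homs} gives $\Hom_{\Lambda_{r_i}}(P(j),\tau N')=0$. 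The paper phrases this by applying Proposition~\ref{prop:long homs} to $X^i_{j,r_i+1}$ (whose quasi-top is $X^i_{j,1}$) to get $\Hom_H(X^i_{j,r_i+1},\tau M)=0$, then invoking Proposition~\ref{prop:same homs}. Your treatment of $\Hom(Q,N)=0$ via conditions~(3)--(4) is correct and is essentially the paper's argument; you need to apply the parallel reasoning with $\cP^+$ and $\tau M$ to close the gap in the $\tau$-rigidity check.
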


\begin{proof}
	Let $(M,\cP^+,\cP^-)$ be a support regular rigid object. We first recall that $Y_{j,r_i+1}^i$ is projective for all $j$. Moreover, for all $X_{j,1}^i \in \cT_i \cap \tp(\cP^+)$, we have $\Hom(X_{j,r_i+1}^i, \tau M) = 0$ by Proposition \ref{prop:long homs}. Proposition \ref{prop:same homs} then implies that $\Hom(N,\tau N) = 0$. Finally, if $\cP^- \neq \emptyset$, then $\cP^+ = \emptyset$ and $\Hom(Q,N) = 0$ by Propositions \ref{prop:long homs} and \ref{prop:same homs}. We conclude that the map $\rho_i$ is well-defined.
\end{proof}

\begin{rem}
    The formulas for $N$ and $Q$ in Proposition~\ref{def:srr to str} above say that (a) a non-projective $\tau$-rigid module $Y_{j,\ell}^i \in \mods\Lambda_{r_i}$ is a direct summand of $\rho_i(M,\cP^+,\cP^-)$ if and only if $X_{j,\ell}^i$ is a direct summand of $M$, (b) a projective $Y_{j,r_i+1}^i \in \mods\Lambda_{r_i}$ is a direct summand of $\rho_i(M,\cP^+,\cP^-)$ if and only if $X_{j,1}^i \in \tp(\cP^+)$, and (c) a ``shifted'' projective $Y_{j,r_i+1}^i[1] \in \mods\Lambda_{r_i}[1]$ is a direct summand of $\rho_i(M,\cP^+,\cP^-)$ if and only if $X_{j,1}^i \in \tp(\cP^-)$. A consequence is that 
    $$\sum_{i = 1}^m |\rho_i(M,\cP^+,\cP^-)| = |M| + |\tp(\cP^+)| + |\tp(\cP^-)|.$$

\end{rem}

\begin{rem}\label{rem:srr to str}
	Let $H$ be a tame hereditary algebra and let $\cT_i$ be an exceptional tube in $\mods H$. Then by Proposition \ref{prop:one projective}, $\rho_i$ induces maps $\rho_i^+:\mathsf{srr}^+H \rightarrow \mathsf{str}^+\Lambda_{r_i}$ and $\rho_i^-:\mathsf{srr}^-H \rightarrow \mathsf{str}^-\Lambda_{r_i}$.
\end{rem}

\begin{prop}\label{def:str to srr}
	Let $H$ be a tame hereditary algebra with exceptional tubes $\cT_1,\ldots,\cT_m$. Then there are maps
	\begin{eqnarray*}
		\iota^+&:& \mathsf{str}^+(\Lambda_{r_1})\times\cdots\times\mathsf{str}^+(\Lambda_{r_m}) \rightarrow \mathsf{srr}_c^+H\\
		\iota^-&:& \mathsf{str}^-(\Lambda_{r_1})\times\cdots\times\mathsf{str}^-(\Lambda_{r_m}) \rightarrow \mathsf{srr}_c^-H
	\end{eqnarray*}
	given as follows.
	\begin{enumerate}
		\item Let $(M_1,\ldots,M_m) \in \mathsf{str}^+(\Lambda_{r_1})\times\cdots\times\mathsf{str}^+(\Lambda_{r_m})$. For each $i$, let $\cM_i$ be the set of non-projective indecomposable direct summands of $M_i$. Denote
		    $$\cY = \bigcup_{i = 1}^m \left\{X_{j,1}^i \mid Y_{j,r_i + 1}^i \text{ is a direct summand of }M_i\right\}.$$
    Then $\iota^+(M_1,\ldots,M_m) = (M,\pc(\cY),\emptyset)$, where
		$$M :=\bigoplus_{Y_{j,\ell}^i \in \cM_i} X_{j,\ell}^i.$$
		
		\item Let $(M_1\oplus P_1[1],\ldots,M_m\oplus P_m[1]) \in \mathsf{str}^-(\Lambda_{r_1})\times\cdots\times\mathsf{str}^-(\Lambda_{r_m})$. Let $\cM_i$ be the set of (non-projective) direct summands of $M_i$. Denote
		    $$\cY = \bigcup_{i = 1}^m \left\{X_{j,1}^i \mid Y_{j,r_i + 1}^i \text{ is a direct summand of }P_i\right\}.$$
    Then $\iota^-(M_1\oplus P_1[1],\ldots,M_m\oplus P_m[1]) = (M,\emptyset,\pc(\cY))$, where
    $$M :=\bigoplus_{Y_{j,\ell}^i \in \cM_i} X_{j,\ell}^i.$$
	\end{enumerate}
\end{prop}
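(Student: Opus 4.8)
The statement is a well-definedness claim, so the plan is to check that the triples produced by $\iota^+$ and $\iota^-$ satisfy Definitions~\ref{def:regular rigid} and~\ref{def:projectively closed} and lie on the correct null side. I will spell out $\iota^+$ and indicate the dual changes for $\iota^-$ at the end. Write $M^{(i)}:=\bigoplus_{Y_{j,\ell}^i\in\cM_i}X_{j,\ell}^i$, so that $M=\bigoplus_{i=1}^m M^{(i)}$ with $M^{(i)}$ supported in $\cT_i$. Since each $M_i$ is basic and distinct tubes share no indecomposable, $M$ is basic and has no homogeneous summand. By Proposition~\ref{prop:null-nonnegative}(1) the negative part of each $M_i\in\mathsf{str}^+(\Lambda_{r_i})$ is zero, so the indecomposable summands of $M_i$ are exactly those in $\cM_i$ together with the projectives $Y_{j,r_i+1}^i=P(j)$ for which $X_{j,1}^i\in\cY$; moreover $\tau$-rigidity of $M_i$ forces every $Y_{j,\ell}^i\in\cM_i$ to have $\ell\le r_i-1$, so Proposition~\ref{prop:same homs} applies to all of its summands.

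Next I would verify the four conditions of Definition~\ref{def:regular rigid}. For condition~(1): $\Hom_H(M^{(i)},\tau M^{(i')})=0$ for $i\ne i'$ by Proposition~\ref{prop:tube info 2}, while $\Hom_H(M^{(i)},\tau M^{(i)})\cong\Hom_{\Lambda_{r_i}}\!\big(N_i,\tau N_i\big)$ by Proposition~\ref{prop:same homs}, where $N_i:=\bigoplus_{Y_{j,\ell}^i\in\cM_i}Y_{j,\ell}^i$ is a direct summand of the $\tau$-rigid module $M_i$; hence this vanishes. For condition~(2): fix $p(\cX)\in\pc(\cY)$ and write $\cX\cap\cT_i=\{X_{j_i,1}^i\}$. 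Since each $\tau X_{j,\ell}^i$ is again regular, Proposition~\ref{prop:multiplicity} expresses $p(\cX)\cdot\undim\tau M$ as the sum, over $i$ and over $Y_{j,\ell}^i\in\cM_i$, of the multiplicity of $X_{j_i,1}^i$ as a regular composition factor of $\tau X_{j,\ell}^i$; transporting these regular composition series to $\mods\Lambda_{r_i}$ via Proposition~\ref{prop:same homs} and using $Y_{j_i,1}^i=S(j_i)$ together with $Y_{j_i,r_i+1}^i=P(j_i)$, this sum equals
$$\sum_{i=1}^m\dim_K\Hom_{\Lambda_{r_i}}\!\big(Y_{j_i,r_i+1}^i,\ \tau M_i\big).$$
Because $X_{j_i,1}^i\in\cX\subseteq\cY$, the projective $Y_{j_i,r_i+1}^i$ is a summand of $M_i$, so each term vanishes by $\tau$-rigidity of $M_i$; hence $p(\cX)\cdot\undim\tau M=0$. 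Conditions~(3) and~(4) are immediate since $\cP^-=\emptyset$, which also exhibits the triple as null positive.

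It remains to check projective closedness, i.e.\ $\cP^+=\pc(\tp(\cP^+))$ and $\cP^-=\pc(\tp(\cP^-))$; the second holds since $\tp(\emptyset)=\emptyset$, and the first follows from the identity $\pc(\tp(\pc(\cY)))=\pc(\cY)$, which is immediate from Notation~\ref{nota:proj sets}: setting $\cY':=\tp(\pc(\cY))=\bigcup\{\cX\mid \cX\subseteq\cY,\ |\cX\cap\cT_i|=1\ \forall i\}$, the inclusion $\pc(\cY')\subseteq\pc(\cY)$ holds because $\cY'\subseteq\cY$, and $\pc(\cY)\subseteq\pc(\cY')$ because each relevant $\cX$ lies inside $\cY'$ by construction. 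This shows $\iota^+(M_1,\dots,M_m)\in\mathsf{srr}_c^+H$. The argument for $\iota^-$ is the same after replacing $\tau M$ by $M$ throughout, replacing ``$M_i$ is $\tau$-rigid'' by ``$\Hom_{\Lambda_{r_i}}(P_i,M_i)=0$'' (part of the support $\tau$-rigidity of $M_i\oplus P_i[1]$), invoking Proposition~\ref{prop:null-nonnegative}(2) in place of~(1), and reading $\cY$ off of the summands of $P_i$ rather than of $M_i$. I expect the only real obstacle to be the bookkeeping in condition~(2) — matching regular composition multiplicities in $\cT_i\subseteq\Reg H$ with ordinary composition multiplicities in $\mods\Lambda_{r_i}$ and recognizing the outcome as a Hom-space over $\Lambda_{r_i}$; everything else is a routine transfer of structure licensed by Propositions~\ref{prop:same homs} and~\ref{prop:multiplicity}.
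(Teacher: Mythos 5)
Your proof is correct and follows essentially the same route as the paper's: both reduce everything to $\tau$-rigidity (resp.\ $\Hom(P_i,M_i)=0$) over the Nakayama algebras via Proposition~\ref{prop:same homs}, the only cosmetic difference being that you convert $p(\cX)\cdot\undim\tau M$ into $\sum_i\dim_K\Hom_{\Lambda_{r_i}}(Y^i_{j_i,r_i+1},\tau M_i)$ directly through Proposition~\ref{prop:multiplicity}, where the paper routes through the long modules $L_{\cX,\ell}$ of Proposition~\ref{prop:long homs}. Your explicit verification of $\pc(\tp(\pc(\cY)))=\pc(\cY)$ supplies a detail the paper dismisses as clear, and it is accurate.
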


\begin{proof}
	We prove only (1) as the proof of (2) is similar. Let $(M_1,\ldots,M_m) \in \mathsf{str}^+(\Lambda_{r_1}) \times\cdots\times \mathsf{str}^+(\Lambda_{r_m})$. Then $\Hom(M,\tau M) = 0$ as a result of Proposition \ref{prop:same homs}. Likewise, we have $p(\cX) \cdot \undim \tau M = 0$ for $\cX \in \pc(\cY)$ by Propositions \ref{prop:long homs} and \ref{prop:same homs}. We conclude that $(M,\pc(\cY), \emptyset)$ is support regular rigid. It is clear that $(M,\pc(\cY),\emptyset)$ is projectively closed.
\end{proof}

The remainder of this section uses the maps $\iota^+,\iota^-$, and $\rho_i$ to determine when a support regular rigid object is a support regular cluster. Note in particular that conditions (a) and (b) Proposition~\ref{prop:composition}(1,2) below correspond to the conditions (a) and (b) in Remark~\ref{rem:pctp}.

\begin{prop}\label{prop:composition} Let $H$ be a tame hereditary algebra.
	\begin{enumerate}
		\item Let $(M_1,\ldots,M_m) \in \mathsf{str}^+(\Lambda_{r_1})\times\cdots\times\mathsf{str}^+(\Lambda_{r_m})$. If either (a) each $M_i$ contains a projective direct summand or (b) no $M_i$ contains a projective direct summand,  then $M_i = \rho^+_i\circ \iota^+(M_1\,\ldots,M_m)$. Otherwise, $\rho^+_i\circ \iota^+(M_1\,\ldots,M_m)$ is the direct summand of $M_i$ obtained by deleting all projective direct summands.
		\item Let $(M_1\oplus P_1[1],\ldots,M_m\oplus P_m[1]) \in \mathsf{str}^-(\Lambda_{r_1})\times\cdots\times\mathsf{str}^-(\Lambda_{r_m})$. If either (a) each $P_i$ is nonzero or (b) no $P_i$ is nonzero, then $M_i\oplus P_i[1] = \rho^-_i\circ \iota^-(M_1\oplus P_1[1],\ldots,M_m\oplus P_m[1])$. Otherwise, $\rho^-_i\circ \iota^-(M_1\oplus P_1[1],\ldots,M_m\oplus P_m[1]) = M_i$.
		\item Let $(M,\cP^+,\emptyset) \in \mathsf{srr}^+H$ (possibly with $\cP^+ = \emptyset$). Then $\iota^+ \circ (\rho_1, \ldots, \rho_m)(M,\cP^+,\emptyset)$ is the projective closure of $(M,\cP^+,\emptyset)$.
		
		\item Let $(M,\emptyset,\cP^-) \in \mathsf{srr}^-H$ (possibly with $\cP^- = \emptyset$). Then $\iota^- \circ (\rho_1, \ldots, \rho_m)(M,\emptyset,\cP^-)$ is the projective closure of $(M,\emptyset,\cP^-)$. 
	\end{enumerate}
\end{prop}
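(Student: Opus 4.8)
The plan is to verify each of the four assertions by tracing through the explicit definitions of the maps $\rho_i, \rho_i^\pm$ (Proposition~\ref{def:srr to str}) and $\iota^\pm$ (Proposition~\ref{def:str to srr}), using the dictionary of Proposition~\ref{prop:same homs} to translate hom-spaces between the tube $\cT_i$ and $\mods\Lambda_{r_i}$, together with Propositions~\ref{prop:null-nonnegative} and~\ref{prop:one projective} to control which side (positive/negative) the projective vectors live on. I will treat (1) and (3) in detail; (2) and (4) follow by the symmetric argument (replacing $\tau M$ by $M$, positive by negative, and exchanging the roles of the projective summands).

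For (1): start with $(M_1,\ldots,M_m)$ and compute $\iota^+(M_1,\ldots,M_m) = (M,\pc(\cY),\emptyset)$ where $M = \bigoplus_{Y_{j,\ell}^i \in \cM_i} X_{j,\ell}^i$ and $\cY = \bigcup_i \{X_{j,1}^i \mid Y_{j,r_i+1}^i \text{ a summand of } M_i\}$. Then apply $\rho_i^+$. By definition of $\rho_i$, the non-projective part $N$ of $\rho_i^+(M,\pc(\cY),\emptyset)$ is $\bigoplus_{X_{j,\ell}^i \in \cM_i} Y_{j,\ell}^i$ (recovering the non-projective summands of $M_i$ verbatim) direct-summed with $\bigoplus_{X_{j,1}^i \in \cT_i \cap \tp(\pc(\cY))} Y_{j,r_i+1}^i$. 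The key point is that $\tp(\pc(\cY)) = \cY$ (this identity is stated in the discussion following Notation~\ref{nota:proj sets}), and $\cY \cap \cT_i = \{X_{j,1}^i \mid Y_{j,r_i+1}^i \text{ a summand of } M_i\}$, so the second direct sum recovers exactly the projective summands of $M_i$. Since $M_i \in \mathsf{str}^+(\Lambda_{r_i})$, Proposition~\ref{prop:null-nonnegative}(1) gives that $M_i$ has no $[1]$-part, so $M_i$ equals its non-projective part plus its projective part, i.e.\ $M_i = \rho_i^+ \circ \iota^+(M_1,\ldots,M_m)$. Here the hypothesis (a)/(b) — all $M_i$ have a projective summand, or none does — is what guarantees $\iota^+(M_1,\ldots,M_m)$ actually lands in $\mathsf{srr}^+$ (it does, since $\cP^- = \emptyset$ always), and more importantly ensures the composite is well-defined on the nose; I will want to check that without (a)/(b) one could only conclude equality up to the projective-closure ambiguity discussed below.

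For (3): start with $(M,\cP^+,\emptyset) \in \mathsf{srr}^+ H$ and apply $(\rho_1,\ldots,\rho_m)$, then $\iota^+$. For each $i$, $\rho_i(M,\cP^+,\emptyset) = N_i \sqcup Q_i[1]$ with $Q_i = \bigoplus_{X_{j,1}^i \in \cT_i \cap \tp(\cP^-)} Y_{j,r_i+1}^i = 0$ since $\cP^- = \emptyset$, and $N_i$ as above; note $N_i \in \mathsf{str}^+(\Lambda_{r_i})$ by Proposition~\ref{prop:null-nonnegative}(1) since it has no $[1]$-part. Now $\iota^+(N_1,\ldots,N_m) = (M',\pc(\cY'),\emptyset)$ where $M'$ is built from the non-projective summands of the $N_i$ — which are precisely the $X_{j,\ell}^i$ with $X_{j,\ell}^i$ a summand of $M$ — so $M' = M$; and $\cY' = \bigcup_i \{X_{j,1}^i \mid Y_{j,r_i+1}^i \text{ a summand of } N_i\} = \bigcup_i (\cT_i \cap \tp(\cP^+)) = \tp(\cP^+)$. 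Hence $\iota^+ \circ (\rho_1,\ldots,\rho_m)(M,\cP^+,\emptyset) = (M,\pc(\tp(\cP^+)),\emptyset)$, which is exactly the projective closure of $(M,\cP^+,\emptyset)$ by Definition~\ref{def:projective closure} (with $\cP^- = \emptyset$ so its closure is $\pc(\tp(\emptyset)) = \emptyset$). The one place to be careful is the bookkeeping identity $\bigcup_i (\cT_i \cap \tp(\cP^+)) = \tp(\cP^+)$: this holds because every $p(\cX) \in \cP^+$ picks exactly one quasi-simple from each tube, so $\tp(\cP^+)$ is already a union over tubes of its tube-slices.

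**Main obstacle.** The genuine content is not any single computation but keeping straight the two non-invertible passages: $\tp$ and $\pc$ are not mutually inverse in general (as the example before Definition~\ref{def:projectively closed} shows, one only has $\tp \circ \pc = \mathrm{id}$ while $\pc \circ \tp$ can strictly enlarge), and $\rho_i$ followed by $\iota^+$ inherently performs a $\pc\circ\tp$, which is why (3) produces the projective \emph{closure} rather than the original object. I expect the bulk of the care to go into verifying that in (1)–(2) the hypotheses (a)/(b) rule out any such enlargement (so one gets equality on the nose), and that the four parts are genuinely symmetric under the $+\leftrightarrow-$, $M \leftrightarrow \tau M$, summand-of-$M$ $\leftrightarrow$ summand-of-$P$ swap — in particular that Proposition~\ref{prop:null-nonnegative}(2) plays the role for (2),(4) that part~(1) plays for (1),(3).
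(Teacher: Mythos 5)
Your proof is correct and takes the same route as the paper, whose proof is a one-liner asserting that the claims ``follow immediately from Proposition~\ref{prop:same homs} and the definitions of the maps''; you are simply filling in those definitional unwindings. One thing worth making explicit, which would resolve the hedge at the end of your treatment of (1): the identity $\tp(\pc(\cY)) = \cY$ from the remark after Notation~\ref{nota:proj sets} does \emph{not} hold for arbitrary $\cY$ --- it fails exactly when $\cY$ is nonempty but $\cY \cap \cT_i = \emptyset$ for some $i$, in which case $\pc(\cY) = \emptyset$ --- and hypotheses (a)/(b) are precisely the dichotomy (either $\cY$ meets every tube, or $\cY = \emptyset$) under which the identity is valid, so that is the exact point where (a)/(b) is consumed.
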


\begin{proof}
	All four claims follow immediately from Proposition \ref{prop:same homs} and the definitions of the maps $\iota^+,\iota^-$, and $\rho_i$.
\end{proof}

\begin{rem}\label{rem:compositions}
    Note that Proposition~\ref{prop:composition}(1,3) together imply that there is a bijection
    $$(\rho_1^+,\ldots,\rho_m^+): \mathsf{srr}_c^+(H) \rightarrow \{(M_1,\ldots,M_m) \in \mathsf{str}^+(\Lambda_{r_1}) \times \cdots \times \mathsf{str}^+(\Lambda_{r_m}) \mid \text{(a) or (b)}\},$$
    where (a) and (b) are the conditions in Proposition~\ref{prop:composition}(1). Similarly, Proposition~\ref{prop:composition}(2,4) together imply that there is a bijection
    $$(\rho_1^-,\ldots,\rho^-_m): \mathsf{srr}_c^-(H) \rightarrow \{(M_1,\ldots,M_m) \in \mathsf{str}^-(\Lambda_{r_1}) \times \cdots \times \mathsf{str}^-(\Lambda_{r_m}) \mid \text{(a) or (b)}\},$$
    where (a) and (b) are the conditions in Proposition~\ref{prop:composition}(2).
\end{rem}

\begin{prop}\label{prop:inclusions}
	Let $H$ be a tame hereditary algebra. 
	\begin{enumerate}
		\item Let $(M,\cP^+,\emptyset)$ be a null-nonnegative support regular rigid object. For each $i$, let $M_i$ be a null-nonnegative support $\tau$-rigid object for $\Lambda_{r_i}$. Then $\iota^+(M_1,\ldots,M_m)$ contains $(M,\cP^+,\emptyset)$ if and only if $\rho_i(M,\cP^+,\emptyset)$ is contained in $M_i$ for all $i$.
		\item Let $(M,\emptyset,\cP^-)$ be a null-nonpositive support regular rigid object. For each $i$, let $M_i\oplus P_i[1]$ be a null-nonpositive support $\tau$-rigid object for $\Lambda_{r_i}$. Then $\iota^-(M_1\oplus P_1[1],\ldots,M_m\oplus P_m[1])$ contains $(M,\emptyset,\cP^-)$ if and only if $\rho_i(M,\emptyset,\cP^-)$ is contained in $M_i\oplus P_i[1]$ for all $i$.
		\item Let $(M_1,\ldots,M_m) \in \mathsf{str}^+ \Lambda_{r_1}\times\cdots\times \mathsf{str}^+\Lambda_{r_m}$ and let $(M,\cP^+,\emptyset)$ be a null-nonnegative support regular object containing $\iota^+(M_1,\ldots,M_m)$. If either (a) each $M_i$ contains a projective direct summand or (b) no $M_i$ contains a projective direct summand, then $\rho_i(M,\cP^+,\emptyset)$ contains $M_i$ for all $i$.
		\item Let $(M_1\oplus P_1[1],\ldots,M_m\oplus P_m[1]) \in \mathsf{str}^-\Lambda_{r_1}\times\cdots\times \mathsf{str}^-\Lambda_{r_m}$ and let $(M,\emptyset,\cP^-)$ be a null-nonpositive support regular object containing $\iota^-(M_1\oplus P_1[1],\ldots,M_m\oplus P_m[1])$. If either (a) each $P_i$ is nonzero or (b) no $P_i$ is nonzero, then $\rho_i(M,\emptyset,\cP^-)$ contains $M_i\oplus P_i[1]$ for all $i$.
	\end{enumerate}
\end{prop}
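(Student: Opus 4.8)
The plan is to deduce all four statements from two monotonicity properties of the maps $\rho_i$, $\iota^+$, $\iota^-$, combined with the compatibility identities of Proposition~\ref{prop:composition}. First I would record the monotonicity: if the support regular rigid object $(N,\cQ^+,\cQ^-)$ contains $(M,\cP^+,\cP^-)$, then $\rho_i(N,\cQ^+,\cQ^-)$ contains $\rho_i(M,\cP^+,\cP^-)$ for every $i$; dually, if $M_i$ is contained in $M_i'$ in $\mathsf{str}^+\Lambda_{r_i}$ (resp.\ $\mathsf{str}^-\Lambda_{r_i}$) for every $i$, then $\iota^+(M_1,\ldots,M_m)$ is contained in $\iota^+(M_1',\ldots,M_m')$ (resp.\ the analogue for $\iota^-$). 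Both are immediate from the explicit formulas in Propositions~\ref{def:srr to str} and~\ref{def:str to srr}, since the module summands, the (negative) projective summands, and the auxiliary sets $\tp(\cP^+)$, $\tp(\cP^-)$, $\cY$, $\pc(\cY)$ all depend monotonically on the input data (using that $\tp$ and $\pc$ preserve inclusions). I would also note the elementary fact that every support regular rigid object is contained in its projective closure, because $\cP^+\subseteq\pc(\tp(\cP^+))$ and $\cP^-\subseteq\pc(\tp(\cP^-))$ always hold.

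For part~(1), suppose $\iota^+(M_1,\ldots,M_m)$ contains $(M,\cP^+,\emptyset)$. Then monotonicity of $\rho_i$ gives that $\rho_i\bigl(\iota^+(M_1,\ldots,M_m)\bigr)$ contains $\rho_i(M,\cP^+,\emptyset)$; a direct inspection of the formulas (using $\tp(\pc(\cY))\subseteq\cY$) shows that $\rho_i\bigl(\iota^+(M_1,\ldots,M_m)\bigr)$ is obtained from $M_i$ by deleting at most some projective summands, hence is contained in $M_i$, and transitivity of containment finishes this direction. Conversely, if $\rho_i(M,\cP^+,\emptyset)$ is contained in $M_i$ for all $i$, then monotonicity of $\iota^+$ together with Proposition~\ref{prop:composition}(3) shows that the projective closure of $(M,\cP^+,\emptyset)$, which equals $\iota^+\circ(\rho_1,\ldots,\rho_m)(M,\cP^+,\emptyset)$, is contained in $\iota^+(M_1,\ldots,M_m)$; since $(M,\cP^+,\emptyset)$ is contained in its projective closure, it is contained in $\iota^+(M_1,\ldots,M_m)$. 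Part~(2) is identical with $+$ replaced by $-$, $\iota^+$ by $\iota^-$, and Proposition~\ref{prop:composition}(3) by~(4).

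For parts~(3) and~(4) the argument is shorter. Under hypothesis~(a) or~(b), Proposition~\ref{prop:composition}(1) (resp.\ (2)) gives $M_i=\rho_i^+\circ\iota^+(M_1,\ldots,M_m)$ (resp.\ $M_i\oplus P_i[1]=\rho_i^-\circ\iota^-(M_1\oplus P_1[1],\ldots,M_m\oplus P_m[1])$), where $\rho_i^+$ and $\rho_i^-$ are the restrictions of $\rho_i$. Since $(M,\cP^+,\emptyset)$ contains $\iota^+(M_1,\ldots,M_m)$, monotonicity of $\rho_i$ yields that $\rho_i(M,\cP^+,\emptyset)$ contains $\rho_i\circ\iota^+(M_1,\ldots,M_m)=M_i$, which is the claim; the proof of~(4) is the same.

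The step I expect to require the most care is the bookkeeping around $\tp$ and $\pc$ — in particular the fact that $\pc(\cY)$ collapses to $\emptyset$ as soon as $\cY$ omits a quasi-simple module from some exceptional tube. This ``mixed tube'' phenomenon is precisely why hypotheses~(a)/(b) are needed in~(3) and~(4): without them, $(M,\cP^+,\emptyset)$ can contain $\iota^+(M_1,\ldots,M_m)$ with $\cP^+$ too small to record the projective summands of those $M_i$ that have one, so that $\rho_i(M,\cP^+,\emptyset)$ need not contain $M_i$. Checking carefully that the forward direction of~(1)/(2) is immune to this issue (there only a one-sided inclusion is needed, which does go through) and verifying the elementary claim that $\rho_i\circ\iota^+$ deletes at most projective summands are the only genuinely delicate points; everything else is routine unwinding of the definitions together with Proposition~\ref{prop:same homs}.
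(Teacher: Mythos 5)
Your proof is correct and takes essentially the same route as the paper, which simply asserts that all four claims follow from the definitions of $\rho_i$, $\iota^{\pm}$ and Proposition~\ref{prop:same homs}; you supply the definition-chase the paper omits, and your monotonicity observations, the inclusion $\tp(\pc(\cY))\subseteq\cY$, and the identification of why hypotheses (a)/(b) are needed in parts (3) and (4) are all accurate. (You also implicitly read part (2) as the exact mirror of part (1), which is the intended reading of the statement.)
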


\begin{proof}
    We prove only (2) and (4), as the proofs of (1) and (3) are similar.

    (2) Recall from Remark~\ref{rem:srr to str} that $\rho_i(M,\emptyset,\mathcal{P}^-)\in \mathsf{str}^-(\Lambda_{r_i})$ for all $i$. By the definitions of the maps $\iota^-$ and $\rho_i$, one therefore obtains equivalences $(2a) \iff (2b) \iff (2c)$ for the following conditions.
    \begin{itemize}
        \item[(2a)] $\iota^-(M_1\oplus P_1[1],\ldots,M_m \oplus P_m[1])$ contains $(M,\emptyset,\mathcal{P}^-)$.
        \item[(2b)]For all $i, j, \ell$ both (a) if $X_{j,\ell}^i$ is a direct summand of $M$ then $Y_{j,\ell}^i$ is a direct summand of $M_i$, and (b) if $\ell = 1$ and $X_{j,1}^i \in \tp(\mathcal{P}^-)$, then $Y_{j,r_i+1}^i$ is a direct summand of $P_i$.
        \item[(2c)] $\rho_i(M,\emptyset,\mathcal{P}^-)$ is contained in $M_i \oplus P_i[1]$ for all $i$.
    \end{itemize}

    (4) Let $(M,\emptyset,\mathcal{P}^-)$ be a null-nonpositive support regular rigid object which contains $\iota^-(M_1\oplus P_1[1],\ldots,M_m\oplus P_m[1])$. From the definition of $\iota^-$, it follows that, for all $i, j, \ell$, one has that if $Y_{j,\ell}^i$ is a direct summand of $M_i$ then $X_{j,\ell}^i$ is a direct summand of $M$. (Note that we have implicitly used the fact that $Y_{j,r_i + 1}^i$ cannot be a direct summand of $M_i$ by Proposition~\ref{prop:null-nonnegative}.) By the definition of $\rho_i$, this implies that $\rho_i(M,\emptyset,\mathcal{P}^-)$ contains $M_i$ for all $i$.
    
    It remains to show that $\rho_i(M,\emptyset,\mathcal{P}^-)$ contains $P_i[1]$ for all $i$. If we are in case (b) of the statement there is nothing to show, so suppose we are in case (a). From the definition of $\iota^-$, it follows that, for all $i, j$, if $Y_{j,r_i+1}^i$ is a direct summand of $P_i$, then $X_{j,1}^i \in \tp(\mathcal{P}^-)$. By the definition of $\rho_i$, this implies the result.
\end{proof}

We now characterize support regular clusters in terms of the maps $\rho_i$.

\begin{thm}\label{thm:maximal} Let $H$ be a tame hereditary algebra.
	\begin{enumerate}
		\item A null-nonnegative regular rigid object $(M,\cP^+,\emptyset)$ is a support regular cluster if and only if the following hold.
		\begin{enumerate}
			\item $\rho_i^+(M,\cP^+,\emptyset)$ is support $\tau$-tilting for all $1 \leq i \leq m$.
			\item $(M,\cP^+,\emptyset)$ is projectively closed.
		\end{enumerate}
		\item A null-nonpositive regular rigid object $(M,\emptyset,\cP^-)$ is a support regular cluster if and only if the following hold.
		\begin{enumerate}
			\item $\rho_i^-(M,\emptyset,\cP^-)$ is support $\tau$-tilting for all $1 \leq i \leq m$.
			\item $(M,\emptyset,\cP^-)$ is projectively closed.
		\end{enumerate}
	\end{enumerate}
\end{thm}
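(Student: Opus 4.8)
The plan is to prove part (1) and obtain part (2) by the evident dual argument: replace $\iota^+,\rho_i^+,\mathsf{str}^+(\Lambda_{r_i})$ by $\iota^-,\rho_i^-,\mathsf{str}^-(\Lambda_{r_i})$, replace the phrase ``projective direct summand of the module part'' by ``nonzero negative part'', and use Propositions~\ref{prop:composition}(2),(4) and~\ref{prop:inclusions}(2),(4) in place of their positive analogues. The whole point is that the maps $\rho_i^+$ and $\iota^+$, together with Propositions~\ref{prop:composition} and~\ref{prop:inclusions}, let us translate maximality of $(M,\cP^+,\emptyset)$ among support regular rigid objects into statements about the self-injective Nakayama pieces $\rho_i^+(M,\cP^+,\emptyset)$. (We may assume there is at least one exceptional tube; the Kronecker case is degenerate and handled directly.)

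\textbf{Forward direction.} Assume $(M,\cP^+,\emptyset)$ is maximal. For (b): the projective closure of $(M,\cP^+,\emptyset)$ (Definition~\ref{def:projective closure}) contains $(M,\cP^+,\emptyset)$ and, by Remark~\ref{rem:preserve null side}, is again null-nonnegative, so maximality forces the two to coincide, i.e.\ $(M,\cP^+,\emptyset)$ is projectively closed. Next I would observe that $\cP^+\neq\emptyset$: if $\cP^+=\emptyset$, Proposition~\ref{prop:complete all positive} gives for each $i$ a nonzero projective $P_i$ with $\rho_i(M,\emptyset,\emptyset)\oplus P_i$ support $\tau$-rigid, and applying $\iota^+$ to this tuple produces (Proposition~\ref{def:str to srr}) a support regular rigid object with module part $M$ but with nonempty positive projective set $\pc(\cY)$ (nonempty because each $P_i\neq 0$, so $\cY$ meets every tube), contradicting maximality. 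Finally, for (a), suppose some $\rho_{i_0}^+(M,\cP^+,\emptyset)$ is not support $\tau$-tilting. Since $\cP^+\neq\emptyset$, $\tp(\cP^+)$ meets every exceptional tube, so each $\rho_i^+(M,\cP^+,\emptyset)$ contains a projective summand $Y_{j,r_i+1}^i$, which is sincere on $Z_{r_i}$. Hence any support $\tau$-tilting object $T_i$ containing $\rho_i^+(M,\cP^+,\emptyset)$ can have no negative summand (a negative summand would force a hom-space out of a projective to vanish on a module that contains the sincere $Y_{j,r_i+1}^i$) and still contains a projective summand, so $T_i\in\mathsf{str}^+(\Lambda_{r_i})$. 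Choosing such $T_i$ for all $i$ with $T_{i_0}$ strictly larger, Proposition~\ref{prop:inclusions}(1) shows $\iota^+(T_1,\dots,T_m)$ contains $(M,\cP^+,\emptyset)$, and Proposition~\ref{prop:composition}(1) gives $\rho_{i_0}^+\iota^+(T_1,\dots,T_m)=T_{i_0}\neq\rho_{i_0}^+(M,\cP^+,\emptyset)$, so the containment is proper---contradicting maximality. Thus (a) holds.

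\textbf{Converse direction.} Assume (a) and (b). First I would note $\cP^+\neq\emptyset$: otherwise each $\rho_i^+(M,\emptyset,\emptyset)$ would, by (a), be a support $\tau$-tilting module for $\Lambda_{r_i}$, hence sincere, hence (by Lemma~\ref{lem:complete all positive}) possess a projective summand, which it does not. Now let $(N,\cQ^+,\cQ^-)$ be any support regular rigid object containing $(M,\cP^+,\emptyset)$. Since $\emptyset\neq\cP^+\subseteq\cQ^+$, condition~(4) of Definition~\ref{def:regular rigid} forces $\cQ^-=\emptyset$. By (b) and Proposition~\ref{prop:composition}(3), $(M,\cP^+,\emptyset)=\iota^+\big((\rho_1,\dots,\rho_m)(M,\cP^+,\emptyset)\big)$, so $(N,\cQ^+,\emptyset)$ contains this object; since each $\rho_i(M,\cP^+,\emptyset)$ has a projective summand (as $\cP^+\neq\emptyset$), Proposition~\ref{prop:inclusions}(3) yields $\rho_i(N,\cQ^+,\emptyset)\supseteq\rho_i(M,\cP^+,\emptyset)$ for all $i$. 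By (a) the right-hand side is support $\tau$-tilting, hence maximal among support $\tau$-rigid objects, so $\rho_i(N,\cQ^+,\emptyset)=\rho_i(M,\cP^+,\emptyset)$ for every $i$. Applying Proposition~\ref{prop:composition}(3) to $(N,\cQ^+,\emptyset)$, its projective closure equals $\iota^+\big((\rho_1,\dots,\rho_m)(N,\cQ^+,\emptyset)\big)=\iota^+\big((\rho_1,\dots,\rho_m)(M,\cP^+,\emptyset)\big)=(M,\cP^+,\emptyset)$. Therefore $N=M$ and $\pc(\tp(\cQ^+))=\cP^+$; combined with $\cP^+\subseteq\cQ^+\subseteq\pc(\tp(\cQ^+))$ this gives $\cQ^+=\cP^+$, so $(N,\cQ^+,\cQ^-)=(M,\cP^+,\emptyset)$ and $(M,\cP^+,\emptyset)$ is a support regular cluster.

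\textbf{Main obstacle.} The delicate point, used repeatedly in the forward direction, is ensuring that the support $\tau$-tilting completions $T_i$ remain null-nonnegative; this is exactly why one must exploit the sincere projective summand $Y_{j,r_i+1}^i$ of $\rho_i^+(M,\cP^+,\emptyset)$ (equivalently, the fact that a null-nonnegative support $\tau$-tilting object over a self-injective Nakayama algebra always has a projective direct summand, since it is a $\tau$-tilting module and hence sincere). Once that is in hand, the remainder is bookkeeping with the maps $\iota^\pm$ and $\rho_i^\pm$ via Propositions~\ref{prop:composition} and~\ref{prop:inclusions}, together with the elementary fact that support $\tau$-tilting objects are maximal support $\tau$-rigid.
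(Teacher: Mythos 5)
Your proposal is correct and follows essentially the same route as the paper's proof: it translates maximality through the maps $\rho_i^{\pm}$ and $\iota^{\pm}$ using Propositions~\ref{prop:composition} and~\ref{prop:inclusions}, with the projective-closure observation handling condition (b). You in fact supply slightly more detail than the paper at two points it leaves implicit — verifying that $\cP^+\neq\emptyset$ (so that the "each piece has a projective summand" hypotheses of Propositions~\ref{prop:composition}(1) and~\ref{prop:inclusions}(3) apply) and that the support $\tau$-tilting completions $T_i$ can be taken null-nonnegative even when $\rho_i^+(M,\cP^+,\emptyset)$ already has a projective summand — both of which are sound.
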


\begin{proof}
	We prove (1) as the proof of (2) is similar. First suppose $(M,\cP^+,\emptyset)$ is a support regular cluster. Since it is contained in its projective closure, it must be projectively closed. Now for each $i$, by Proposition \ref{prop:complete all positive}(1), there exists $M_i$ a null-nonnegative support $\tau$-tilting object containing $\rho_i(M,\cP^+,\emptyset)$. Then by Proposition \ref{prop:inclusions}(1,3) we have that $\iota^+(M_1,\ldots,M_m) = (M,\cP^+,\emptyset)$ and hence $\rho_i(M,\cP^+,\emptyset) = M_i$. 
	
	Now suppose conditions (a) and (b) hold and let $(N,\cQ^+,\emptyset)$ contain $(M,\cP^+,\emptyset)$. By condition (b) and Proposition \ref{prop:inclusions}(3), we then have that $\rho_i(N,\cQ^+,\emptyset) = \rho_i(M,\cP^+,\emptyset)$. By Proposition \ref{prop:inclusions}(1), this means $(M,\cP^+,\emptyset)$ contains $(N,\cQ^+,\emptyset)$; so, they are equal and $(M,\cP^+,\emptyset)$ is a support regular cluster.
\end{proof}

As an immediate corollary, we have the following analog of Proposition~\ref{prop:complete all positive}.

\begin{cor}\label{cor:complete} Let $H$ be a tame hereditary algebra.
    \begin{enumerate}
        \item Let $(M,\emptyset,\emptyset)$ be support regular rigid. Then there exist nonempty sets of projective vectors $\mathcal{P}^+$, $\mathcal{P}^-$ such that both $(M,\mathcal{P}^+,\emptyset)$ and $(M,\emptyset,\mathcal{P}^-)$ are support regular rigid. In particular, $(M,\emptyset, \emptyset)$ is contained in both a null-nonnegative support regular cluster and a null-nonpositive support regular cluster.
        \item Let $(M,\mathcal{P}^+,\mathcal{P}^-)$ be a support regular cluster. Then (exactly) one of $\mathcal{P}^+ \neq \emptyset$ and $\mathcal{P}^- \neq \emptyset$. In particular, every support regular cluster is either null-nonnegative or null-nonpositive, but not both.
        \item Let $(M,\emptyset,\emptyset)$ be support regular rigid. Let $\mathcal{P}_M^+$ and $\mathcal{P}_M^-$ be the sets of all projective vectors $p$ which satisfy $p\cdot \undim \tau M = 0$, and $p\cdot \undim M = 0$, respectively. Then there is a bijection from the set of null-nonnegative support regular cluster to the set of null-nonpositive support regular clusters given by $(M,\mathcal{P}^+,\emptyset) \mapsto (M,\emptyset,\mathcal{P}^-)$.
    \end{enumerate}

	Let $(M,\cP^+,\cP^-)$ be support regular rigid. Then there exists a support regular cluster which contains $(M,\cP^+,\cP^-)$.
\end{cor}

\begin{proof}
    (1) We prove the statement in the null-nonpositive case, as the proof in the null-nonnegative case is similar. For each $i$, denote $M_i = \rho_i(M,\emptyset,\emptyset)$. Then Proposition~\ref{prop:complete all positive}(1) implies that there exist a null-nonpositive support $\tau$-tilting pair of the form $M_i \oplus P_i[1]$. Moreover, we have $P_i \neq 0$ for all $i$ by Proposition~\ref{prop:complete all positive}(2).
    
    Now denote $(M,\emptyset,\mathcal{P}^-) = \iota^-(M_1\oplus P_1[1],\ldots,M_k \oplus P_k[1])$. (Note that the module part is $M$ by construction.) Then $\rho_i(N,\emptyset,\mathcal{Q}^-) = M_i \oplus P_i$ by Proposition~\ref{prop:composition}(2) and $\mathcal{P}^-$ is projective closed by the definition of $\iota^-$. Theorem~\ref{thm:maximal}(2) therefore implies the result.
    
    (2) This is an immediate consequence of (1). 
    
    (3) Let $(M,\emptyset,\emptyset) \in \mathsf{srr}(H)$. Denote $\mathcal{P}_M^+$ and $\mathcal{P}_M^-$ as in the statement, and note that each of these is uniquely determined by $M$. By their definition, we know that $\mathcal{P}_M^+$ and $\mathcal{P}_M^-$ must both be nonempty and projective closed. In particular, $\mathcal{P}_M^+ \cap \cT_i \neq \emptyset$ for all $i$, and likewise for $\mathcal{P}_M^-$. Now for each $i$ denote $\rho_i(M,\mathcal{P}_M^+,\emptyset) = M_i \oplus P_i$ and $\rho_i( M,\emptyset,\mathcal{P}_M^-) = M_i \oplus Q_i[1]$. Likewise, consider $M_i \oplus P_{M_i}^+$ and $M_i \oplus P_{M_i}^-[1]$ as in Proposition~\ref{prop:complete all positive}(3). Then in particular we have that $P_i$ is a direct summand of $P_{M_i}^+$ and that $Q_i$ is a direct summand of $P_{M_i}^-$ for all $i$. Then $\iota^+(M_1\oplus P_{M_1}^+,\ldots,M_m \oplus P_{M_m}^+) = : (M,\mathcal{Q},\emptyset)$ contains $(M,\mathcal{P}^+_M,\emptyset)$ by Proposition~\ref{prop:inclusions}(1). (Note that the module part of $(M,\mathcal{Q},\emptyset)$ is indeed $M$ by construction.) It follows that $\mathcal{Q} = \mathcal{P}^+_M$, and thus that $P_i = P_{M_i}^+$ by Proposition~\ref{prop:composition}(1). Thus $(M,\mathcal{P}_M^+,\emptyset)$ is a support regular cluster if and only if $M_i \oplus P_{M_i}^+$ is support $\tau$-rigid for all $i$ by Theorem~\ref{thm:maximal}(1). A similar argument then shows that $(M,\emptyset,\mathcal{P}_M^-)$ is a support regular cluster if and only if $M_i \oplus P_{M_i}^-[1]$ is support $\tau$-tilting for all $i$. The result then follows from Proposition~\ref{prop:complete all positive}(3).
\end{proof}

\subsection{The polyhedral fan of support regular rigid objects}\label{sec:stables}

In this section, we associate to each support regular rigid object a cone in $\RR^{n-1}$ and show that the union of these cones is a polyhedral fan. This can be seen as a regular analogue of the $g$-vector fan (see Definition \ref{def:g fan} and Proposition \ref{prop:g fan}).

We begin by modifying our $g$-vectors.

\begin{defn}\label{def:null g}
For a regular module $M \in \Reg H$ we denote $g_0(M) := g(M) - \frac{g(M)\cdot g(\eta)}{|g(\eta)|^2}g(\eta)$, the projection of $g(M)$ onto $g(\eta)^\perp$.
\end{defn}

Since $g(\eta)\cdot\undim N = 0$ for all regular modules $N$, we immediately obtain the following.

\begin{prop}\label{prop:null g}
	Let $M, N$ be regular modules. Then $g_0(M)\cdot\undim N = g(M)\cdot\undim N$.
\end{prop}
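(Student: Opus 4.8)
The plan is simply to expand the definition of $g_0(M)$ from Definition~\ref{def:null g} and invoke the characterization of regular modules. Writing out the pairing,
$$g_0(M)\cdot\undim N = g(M)\cdot\undim N - \frac{g(M)\cdot g(\eta)}{|g(\eta)|^2}\,\bigl(g(\eta)\cdot\undim N\bigr),$$
so it suffices to show that the correction term vanishes, i.e.\ that $g(\eta)\cdot\undim N = 0$.

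For this, recall from Section~\ref{sec:tame hereditary} that an indecomposable module $X \in \mods H$ is regular exactly when $g(\eta)\cdot\undim X = 0$. Since $N \in \Reg H$ is by definition a direct sum of indecomposable regular modules, and since both $g(\eta)\cdot(-)$ and $\undim(-)$ are additive over direct sums, we conclude $g(\eta)\cdot\undim N = 0$. Substituting this into the display yields $g_0(M)\cdot\undim N = g(M)\cdot\undim N$, as claimed. There is no genuine obstacle: the statement is an immediate consequence of the definition of $g_0$ as the orthogonal projection onto $g(\eta)^\perp$ together with the fact that $\undim N \in g(\eta)^\perp$ for every regular $N$.
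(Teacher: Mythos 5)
Your proof is correct and is exactly the paper's argument: the paper derives the proposition "immediately" from the observation that $g(\eta)\cdot\undim N = 0$ for all regular $N$, which kills the correction term in the definition of $g_0(M)$ as the projection onto $g(\eta)^\perp$. Your additional remark that regularity of an arbitrary module reduces to its indecomposable summands, where it is defined by $g(\eta)\cdot\undim X = 0$, is a harmless bit of extra care.
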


\begin{defn}\label{def:cones}
Let $(M,\cP^+,\cP^-)$ be support regular rigid. We denote by $C(M,\cP^+,\cP^-)$ the polyhedral cone which is the non-negative span of:
	\begin{itemize}
		\item $g_0(M_j)$ for $M_j$ an indecomposable direct summand of $M$.
		\item $\{p(\cX): p(\cX) \in \cP^+\}$.
		\item $\{-p(\cX): p(\cX) \in \cP^-\}$.
	\end{itemize}
So, $C(M,\cP^+,\cP^-)\subseteq g(\eta)^\perp$. We denote by $\fC(M,\cP^+,\cP^-)$ the relative interior of $C(M,\cP^+,\cP^-)$.
\end{defn}

We observe that there are often linear dependencies amongst the vectors in $\cP := \cP^+\cup \cP^-$. In particular, we have the following.

\begin{lem}\label{lem:cone}
	Let $\cP$ be a set of projective vectors. Let 
		\begin{eqnarray*}
			v &=& \sum_{p(\cX) \in \cP}\lambda_{\cX}\cdot p(\cX)\\
			v' &=& \sum_{p(\cX) \in \cP}\lambda'_{\cX}\cdot p(\cX).
		\end{eqnarray*}
		be two vectors where the coefficients $\lambda_{\cX}, \lambda'_{\cX}$ are arbitrary. For all quasi simple $X_{j,1}^i \in \tp(\cP)$, we denote
		\begin{eqnarray}
			\lambda_{i,j} &:=& \sum_{p(\cX) \in \cP:X_{j,1}^i \in \cX}\lambda_{\cX}\label{eqn:tube sum}\\
			\lambda'_{i,j} &:=& \sum_{p(\cX) \in \cP:X_{j,1}^i \in \cX}\lambda'_{\cX}\nonumber
		\end{eqnarray}
Then $v = v'$ if and only if $\lambda_{i,j} = \lambda'_{i,j}$ for all $X_{j,1}^i \in \tp(\cP)$.
\end{lem}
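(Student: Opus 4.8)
The plan is to reduce the statement to a linear-algebra claim about the vectors $p(\cX)$ and then invoke the defining property of projective vectors together with Proposition~\ref{prop:regular linear algebra}. First I would observe that it suffices to prove the special case $v' = 0$: indeed, setting $\mu_{\cX} := \lambda_{\cX} - \lambda'_{\cX}$, we have $v - v' = \sum_{p(\cX)\in\cP}\mu_{\cX}\cdot p(\cX)$, and the quantities attached to $X_{j,1}^i$ in \eqref{eqn:tube sum} are likewise additive, so $\lambda_{i,j} = \lambda'_{i,j}$ for all $X_{j,1}^i \in \tp(\cP)$ if and only if $\mu_{i,j} := \sum_{p(\cX)\in\cP:\, X_{j,1}^i\in\cX}\mu_{\cX}$ vanishes for all such $X_{j,1}^i$. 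Thus the lemma is equivalent to: for arbitrary coefficients $\mu_{\cX}$, one has $\sum_{p(\cX)\in\cP}\mu_{\cX}\cdot p(\cX) = 0$ if and only if $\mu_{i,j} = 0$ for all $X_{j,1}^i \in \tp(\cP)$.

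For the ``$\Leftarrow$'' direction, I would pair the vector $w := \sum_{p(\cX)\in\cP}\mu_{\cX}\cdot p(\cX)$ against a spanning set of $g(\eta)^\perp$. By Proposition~\ref{prop:regular linear algebra}(1), $g(\eta)^\perp$ is spanned by $\eta$ together with the dimension vectors $\undim X_{j,1}^i$ of all quasi-simple modules in the exceptional tubes; so it is enough to check $w\cdot\eta = 0$ and $w\cdot\undim X_{j,1}^i = 0$ for every quasi-simple $X_{j,1}^i$. Now $p(\cX)\cdot\eta = 1$ for every $p(\cX)$, and by definition $p(\cX)\cdot\undim X_{j,1}^i$ equals $1$ if $X_{j,1}^i \in \cX$ and $0$ otherwise (Definition~\ref{def:projective vector} and Proposition~\ref{prop:multiplicity}). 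Hence for a fixed quasi-simple $X_{j,1}^i$ we get $w\cdot\undim X_{j,1}^i = \sum_{p(\cX)\in\cP:\,X_{j,1}^i\in\cX}\mu_{\cX} = \mu_{i,j}$ (with the convention $\mu_{i,j} = 0$ when $X_{j,1}^i\notin\tp(\cP)$, which holds automatically since then no term contributes). If all $\mu_{i,j} = 0$, then $w\cdot\undim X_{j,1}^i = 0$ for all quasi-simple modules. To finish I must also handle $w\cdot\eta$: since each $\cX$ contains exactly one quasi-simple from each exceptional tube (there are $m$ tubes), the relation $\sum_{j=1}^{r_i}\undim X_{j,1}^i = \eta$ from Proposition~\ref{prop:regular linear algebra}(2) gives $w\cdot\eta = \sum_{j=1}^{r_i} w\cdot\undim X_{j,1}^i = \sum_{j=1}^{r_i}\mu_{i,j} = 0$ for any single tube $\cT_i$; so $w\cdot\eta = 0$ as well, and therefore $w = 0$.

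For the ``$\Rightarrow$'' direction, suppose $w = 0$. Pairing with $\undim X_{j,1}^i$ exactly as above yields $0 = w\cdot\undim X_{j,1}^i = \mu_{i,j}$ for every quasi-simple $X_{j,1}^i$, in particular for all $X_{j,1}^i\in\tp(\cP)$. This gives the claim. I do not expect any serious obstacle here: the only point requiring care is bookkeeping the indexing of $\cP$ (a set of projective vectors, each of the form $p(\cX)$ for a distinct $\cX$) versus the indexing of quasi-simples, and making sure the "one quasi-simple per tube" condition in Notation~\ref{nota:proj sets} is used when converting a pairing against $\eta$ into a sum over a single tube. The mild subtlety worth flagging explicitly in the writeup is that distinct elements of $\cP$ correspond to distinct sets $\cX$, so the sums defining $\mu_{i,j}$ are well-indexed and the reduction to $v'=0$ is legitimate.
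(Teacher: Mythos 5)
Your proof is correct and follows essentially the same route as the paper's: compute $v\cdot\undim X_{j,1}^i=\lambda_{i,j}$ via Proposition~\ref{prop:multiplicity} and conclude using the fact that the quasi-simple dimension vectors span $g(\eta)^\perp$ (your reduction to $v'=0$ and the separate check of $w\cdot\eta=0$ are harmless, the latter being redundant since $\eta$ already lies in the span of the quasi-simple dimension vectors). Your explicit verification that $w\cdot\undim X_{j,1}^i=0$ also for quasi-simples outside $\tp(\cP)$ is a small detail the paper leaves implicit, and is worth keeping.
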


\begin{proof}
	Suppose $v = v'$ and let $X_{j,1}^i \in \tp(\cP)$. Then by Proposition \ref{prop:multiplicity}, we have 
	$$\lambda_{i,j} = v\cdot\undim X_{j,1}^i = v'\cdot\undim X_{j,1}^i = \lambda'_{i,j}.$$
	Likewise, if $\lambda_{i,j} = \lambda'_{i,j}$ for all $i,j$, then we have $v\cdot\undim X_{j,1}^i = v'\cdot\undim X_{j,1}^i$ for all quasi simple $X_{j,1}^i \in \tp(\cP)$. As the dimension vectors of the quasi-simples span $\RR^{n-1}$, we conclude that $v = v'$.
\end{proof}

As a result, we can give an alternative description of $C(M,\cP^+,\cP^-)$.

\begin{prop}\label{prop:cone}
	Let $(M,\cP^+,\cP^-)$ be support regular rigid. Decompose $M \cong \oplus_{k = 1}^t M_k$ into a direct sum of indecomposable modules. Then $C(M,\cP^+,\cP^-)$ is equal to the set of vectors that can be written in the form
	$$v = \sum_{k = 1}^t \lambda_k\cdot g_0(M_k) + \sum_{p(\cX) \in \cP^+}\lambda_{\cX}\cdot p(\cX) - \sum_{p(\cX)\in \cP^-}\lambda_{\cX}\cdot p(\cX)$$
	such that
	\begin{enumerate}
		\item For all $k \in \{1,\ldots,t\}$, $\lambda_k \geq 0$.
		\item $\lambda_{i,j} \geq 0$ (where $\lambda_{i,j}$ is as defined in Equation \ref{eqn:tube sum}) for all $X_{j,1}^i \in \tp(\cP^+ \cup \cP^-)$.
	\end{enumerate}
\end{prop}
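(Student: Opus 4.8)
The plan is to show the two descriptions of $C(M,\cP^+,\cP^-)$ agree by showing mutual containment, using Lemma~\ref{lem:cone} to handle the linear dependencies among the projective vectors. Write $C$ for the cone of Definition~\ref{def:cones} and $C'$ for the set described in the statement.

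First I would show $C \subseteq C'$. An arbitrary element of $C$ is a nonnegative combination $v = \sum_k \lambda_k g_0(M_k) + \sum_{p(\cX)\in\cP^+}\mu_{\cX}p(\cX) + \sum_{p(\cX)\in\cP^-}\nu_{\cX}(-p(\cX))$ with all $\lambda_k,\mu_{\cX},\nu_{\cX}\geq 0$. Setting $\lambda_{\cX} := \mu_{\cX}$ for $p(\cX)\in\cP^+$ and $\lambda_{\cX}:=\nu_{\cX}$ for $p(\cX)\in\cP^-$ gives the required expression, and condition (1) is immediate. For condition (2), for any quasi-simple $X_{j,1}^i\in\tp(\cP^+\cup\cP^-)$, the quantity $\lambda_{i,j}$ is a sum of terms $\lambda_{\cX}$ over those $\cX$ (in $\cP^+$ or $\cP^-$) containing $X_{j,1}^i$; since each $\lambda_{\cX}\geq 0$, we get $\lambda_{i,j}\geq 0$. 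Hence $v\in C'$.

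For the reverse containment $C'\subseteq C$, suppose $v$ is written as in the statement with $\lambda_k\geq 0$ and all $\lambda_{i,j}\geq 0$, but the individual $\lambda_{\cX}$ need not be nonnegative. Here is where projective closure and Lemma~\ref{lem:cone} enter: the vector $w := \sum_{p(\cX)\in\cP^+}\lambda_{\cX}p(\cX) - \sum_{p(\cX)\in\cP^-}\lambda_{\cX}p(\cX)$ is determined, by Lemma~\ref{lem:cone} applied to the set $\cP := \cP^+\cup\cP^-$ (noting that one of $\cP^+,\cP^-$ is empty so there is no cancellation between the two groups — this uses condition (4) of Definition~\ref{def:regular rigid}), entirely by the tube-sums $\lambda_{i,j}=w\cdot\undim X_{j,1}^i$ for $X_{j,1}^i\in\tp(\cP)$. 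Because $(M,\cP^+,\cP^-)$ is projectively closed, $\cP^+ = \pc(\tp(\cP^+))$ and $\cP^- = \pc(\tp(\cP^-))$, so \emph{every} $p(\cX)$ with $\cX\subseteq\tp(\cP^+)$ (one quasi-simple per tube) already lies in $\cP^+$, and similarly for $\cP^-$. This lets me re-expand $w$ as a genuinely \emph{nonnegative} combination of the $p(\cX)$: concretely, for the null-positive case ($\cP^- = \emptyset$), pick, for each tube $\cT_i$, one $X_{j_i,1}^i\in\tp(\cP^+)\cap\cT_i$ realizing $\lambda_{i,j_i}>0$, form $p(\cX)$ with $\cX = \{X_{j_1,1}^1,\dots,X_{j_m,1}^m\}\in\pc(\tp(\cP^+))=\cP^+$, and peel off a suitable positive multiple; iterating (the tube-sums $\lambda_{i,j}$ strictly decrease at each step and stay $\geq 0$) exhausts $w$ as a nonnegative sum of vectors in $\cP^+$. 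Combined with the $\lambda_k g_0(M_k)$ terms (already nonnegatively combined), this exhibits $v\in C$.

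I expect the main obstacle to be making the re-expansion argument in the reverse inclusion fully rigorous — precisely, showing that the nonnegative integer/real matrix whose rows are indexed by quasi-simples in $\tp(\cP^+)$ and whose columns are indexed by elements of $\pc(\tp(\cP^+))$ (with a $1$ whenever the quasi-simple lies in the corresponding $\cX$) has the property that every nonnegative target vector of tube-sums is a nonnegative combination of its columns. This is essentially a statement that a product of simplices has the expected facet structure, and it is exactly where projective closure is indispensable; I would phrase it as the observation that choosing one quasi-simple per tube from $\tp(\cP^+)$ is an unconstrained choice, so the relevant polytope decomposes as a product over tubes and the greedy peeling terminates. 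The homogeneous direct summands of $M$ (if any) contribute $g_0(M_k)$ which is a nonnegative multiple of $\pm g_0(\eta)$ and causes no difficulty; all the other bookkeeping is routine given Lemma~\ref{lem:cone} and Proposition~\ref{prop:multiplicity}.
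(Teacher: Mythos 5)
Your proposal is correct and follows the paper's strategy up to the last step: the easy inclusion, the reduction via Lemma~\ref{lem:cone} to finding \emph{nonnegative} coefficients $\lambda'_{\cX}$ with the prescribed tube-sums $\lambda_{i,j}$, and the observation that projective closure makes every $p(\cX)$ with $\cX\subseteq\tp(\cP^+)$ available are all exactly as in the paper. Where you diverge is in solving the resulting feasibility problem. The paper identifies the solution set as an $m$-index transportation polytope and writes down an explicit point in it, namely $\lambda'_{\cX}=S^{1-m}\prod_{X^i_{j,1}\in\cX}\lambda_{i,j}$ with $S=\sum_j\lambda_{i,j}$; you instead run a greedy peeling algorithm. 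Your algorithm does work, but the step you should make explicit is the one that makes each peeling step possible: since every $\cX$ contains exactly one quasi-simple per tube, the marginal $\sum_j\lambda_{i,j}=\sum_{\cX}\lambda_{\cX}=S$ is \emph{independent of $i$}, so whenever $S>0$ every tube simultaneously has a positive entry and a full tuple $\cX$ can be chosen; each peel reduces $S$ and kills at least one entry, so the process terminates. (This equality of marginals is precisely the nonemptiness criterion the paper cites for the transportation polytope.) The paper's closed-form solution is shorter and avoids an induction/termination argument; your greedy version is more elementary and makes the role of projective closure more visible. One small aside: the case of homogeneous direct summands of $M$ that you mention does not actually arise, since $\Hom_H(N,\tau N)=\Hom_H(N,N)\neq 0$ for $N$ homogeneous, so condition (1) of Definition~\ref{def:regular rigid} excludes them.
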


\begin{proof}
	Let $C'(M,\cP^+,\cP^-)$ be the set of vectors described in the proposition. We assume $\cP^- = \emptyset$, as the case where $\cP^+ = \emptyset$ is analogous. It is clear that $C(M,\cP^+,\cP^-) \subset C'(M,\cP^+,\cP^-)$. Thus let $v \in C'(M,\cP^+,\cP^-)$ and write
	$$v = \sum_{k = 1}^t \lambda_k\cdot g_0(M_k) + \sum_{p(\cX) \in \cP^+}\lambda_{\cX}\cdot p(\cX).$$
	By Lemma \ref{lem:cone} above, we need only show that there exist coefficients $\lambda'_{\cX}\geq 0$ for each $p(\cX) \in \cP^+$ so that $\lambda_{i,j} = \lambda'_{i,j}$ for all $X_{j,1}^i \in \tp(\cP^+)$.  In general, the set of possible $\{\lambda'_{\cX}\}_{p(\cX) \in \cP^+}$ satisfying this property is an \emph{$m$-index transportation polytope} (see e.g. \cite[Section 1]{KL_transportation}). Since for $i$ fixed we have $\sum_{j} \lambda_{i,j} = \sum_{p(\cX) \in \cP^+}\lambda_{\cX} \geq 0$, which does not depend on $i$, this solution space is nonempty.
	Indeed, if $\sum_j \lambda_{i,j} = 0$, one can take the $\lambda_{\cX}'$ to be identically zero. Otherwise, one explicit solution is
	\[
	\lambda'_\cX=S^{1-m}\prod_{X^i_{j,1}\in\cX} \lambda_{i,j},
	\]
	where $S=\sum_j \lambda_{i,j} $.
\end{proof}

We now turn our attention to determining the subcategory of semi-stable modules for the interiors of each cone $C(M,\cP^+,\cP^-)$. The following can be seen as a strengthening of Lemma \ref{lem:cone}.

\begin{prop}\label{prop:defined projection}
	Let $(M,\cP^+,\cP^-)$ be support regular rigid. Let $v \in C(M,\cP^+,\cP^-)$ and write 
	$$v = \sum_{k = 1}^t \lambda_k\cdot g_0(X_{j_k,\ell_k}^{i_k}) + \sum_{p(\cX) \in \cP^+} \lambda_{\cX}\cdot p(\cX) - \sum_{p(\cX) \in \cP^-} \lambda_{\cX}\cdot p(\cX).$$
	For each $i$, let $\cM_i$ be the set of indecomposable direct summands of $M$ in the tube $\cT_i$. Denote
	$$v(i) := \sum_{X_{j_k,\ell_k}^i \in \cM_i} \lambda_k\cdot g(Y_{j_k,\ell_k}^i) + \sum_{X_{j,1}^i \in \cT_i \cap \tp(\cP^+)}\lambda_{i,j}\cdot g(Y_{j,r_i + 1}^i) - \sum_{X_{j,1}^i \in \cT_i \cap \tp(\cP^-)}\lambda_{i,j}\cdot g(Y_{j,r_i + 1}^i).$$
	Then the following hold.
	\begin{enumerate}
		\item The association $v \mapsto v(i)$ is a well-defined linear map $C(M,\cP^+,\cP^-) \rightarrow C(\rho_i(M,\cP^+,\cP^-))$.
		\item The vector $v$ uniquely determines the coefficients $\lambda_k$ and $\lambda_{i,j}$.
		\item Let $X_{j,\ell}^i \in \Reg H$ be a brick, or equivalently be such that $\ell \leq r_i$. Then $v\cdot\undim X_{j,\ell}^i = v(i)\cdot\undim Y_{j,\ell}^i$. In particular, $X_{j,\ell}^i$ is $v$-regular semistable if and only if $Y_{j,\ell}^i$ is $v(i)$-semistable.
	\end{enumerate}
\end{prop}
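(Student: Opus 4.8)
The plan is to reduce all three claims to a single computation: that pairing the vector $v(i)\in\RR^{r_i}$ against the dimension vectors of the quasi-simple $\Lambda_{r_i}$-modules recovers the intrinsic quantities $v\cdot\undim X_{j,1}^i$. By condition~(4) of Definition~\ref{def:regular rigid} we may assume $\cP^-=\emptyset$, the case $\cP^+=\emptyset$ being symmetric. Fix $i$ and a quasi-simple $X_{j,1}^i$ in $\cT_i$, and expand $v\cdot\undim X_{j,1}^i$ using the given expression for $v$: by Proposition~\ref{prop:null g} each $g_0(X_{j_k,\ell_k}^{i_k})$ may be replaced by $g(X_{j_k,\ell_k}^{i_k})$; by Proposition~\ref{prop:tube info 2} the summands of $M$ lying in tubes other than $\cT_i$ contribute $0$; for a summand $X_{j_k,\ell_k}^i$ in $\cT_i$ the Euler--Ringel pairing together with Proposition~\ref{prop:same homs} (applicable because $\tau$-rigid summands of $M$ have quasi-length $<r_i$ by Proposition~\ref{prop:tube info}) gives $g(X_{j_k,\ell_k}^i)\cdot\undim X_{j,1}^i=g(Y_{j_k,\ell_k}^i)\cdot\undim Y_{j,1}^i$; and by Proposition~\ref{prop:multiplicity} each $p(\cX)\cdot\undim X_{j,1}^i$ is the indicator of $X_{j,1}^i\in\cX$, so the projective-vector terms sum to $\lambda_{i,j}$ in the notation of Equation~\ref{eqn:tube sum}. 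Comparing with the defining formula for $v(i)$ and using that $g(Y_{j,r_i+1}^i)$ is the $g$-vector $e_j$ of the projective $P(j)$ of $\Lambda_{r_i}$ (as in the proof of Proposition~\ref{prop:null-nonnegative}), one obtains $v(i)\cdot\undim Y_{j,1}^i=v\cdot\undim X_{j,1}^i$. Running the identical argument with an arbitrary brick $X_{j,\ell}^i$ ($\ell\le r_i$ by Proposition~\ref{prop:tube info}) in place of $X_{j,1}^i$ — now invoking Proposition~\ref{prop:same homs} for the Homs and identifying the multiplicity of $X_{j',1}^i$ in the regular composition series of $X_{j,\ell}^i$ with the multiplicity of the simple $S(j')$ in $Y_{j,\ell}^i$ — yields the identity $v\cdot\undim X_{j,\ell}^i=v(i)\cdot\undim Y_{j,\ell}^i$ for all bricks.

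With this in hand, (1) follows quickly. Since $\{\undim Y_{j,1}^i\}_{j=1}^{r_i}$ is the standard basis of $\RR^{r_i}$ (Remark~\ref{rem:functionals}), the identity above shows that $v(i)$ is the \emph{unique} vector of $\RR^{r_i}$ whose pairings with that basis are the $v\cdot\undim X_{j,1}^i$; in particular $v(i)$ does not depend on the chosen expression of $v$, so the assignment is well defined, and it is linear because it is the composite of $v\mapsto(v\cdot\undim X_{j,1}^i)_{j}$ with a fixed linear isomorphism $\RR^{r_i}\to\RR^{r_i}$. For membership in the target: starting from an expression of $v$ with all $\lambda_k\ge0$ and all $\lambda_{\cX}\ge0$, all $\lambda_{i,j}\ge0$ as well, and the defining formula exhibits $v(i)$ as a nonnegative combination of $g(Y_{j_k,\ell_k}^i)$ for $X_{j_k,\ell_k}^i\in\cM_i$ and $g(Y_{j,r_i+1}^i)$ for $X_{j,1}^i\in\cT_i\cap\tp(\cP^+)$ (and, in the symmetric case, of the negative projective $g$-vectors $-g(Y_{j,r_i+1}^i)$); by Proposition~\ref{def:srr to str} and Definition~\ref{def:cones} these are precisely the generators of $C(\rho_i(M,\cP^+,\cP^-))$, so $v(i)$ lies in that cone.

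For (2), I would use that $\rho_i(M,\cP^+,\cP^-)\in\mathsf{str}(\Lambda_{r_i})$, so by Proposition~\ref{prop:g fan}(2) the $g$-vectors of its indecomposable summands are linearly independent. Since $v(i)$ is determined by $v$, the coefficients in $v(i)=\sum_{\cM_i}\lambda_k\, g(Y_{j_k,\ell_k}^i)+\sum\lambda_{i,j}\,g(Y_{j,r_i+1}^i)$ are uniquely determined by $v$; as every indecomposable summand of $M$ lies in exactly one tube and each $\lambda_{i,j}$ occurs in exactly one such expression, ranging over $i=1,\dots,m$ shows that $v$ determines all the $\lambda_k$ and all the $\lambda_{i,j}$. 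Finally, for the semistability statement in (3): by Proposition~\ref{prop:tameEqns} the module $X_{j,\ell}^i$ is $v$-regular semistable iff $v\cdot\undim X_{j,\ell'}^i\le0$ for all $\ell'\le\ell$ with equality at $\ell'=\ell$, while by Proposition~\ref{prop:NakayamaEqns} the module $Y_{j,\ell}^i$ is $v(i)$-semistable iff $v(i)\cdot\undim Y_{j,\ell'}^i\le0$ for all $\ell'\le\ell$ with equality at $\ell'=\ell$; these conditions coincide by the identity $v\cdot\undim X_{j,\ell'}^i=v(i)\cdot\undim Y_{j,\ell'}^i$ proved above.

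The main obstacle is the bookkeeping in the first paragraph: one must check that Proposition~\ref{prop:same homs} is legitimately applicable at every step, i.e.\ that all quasi-lengths appearing (the $\ell_k$ of the summands of $M$ and the $\ell$ of the test brick) are $\le r_i$ — this is exactly where $\tau$-rigidity of $M$ and the brick hypothesis on $X_{j,\ell}^i$ are used — and one must keep the $\cP^+$ and $\cP^-$ contributions cleanly separated using condition~(4) of Definition~\ref{def:regular rigid}. Once the identity $v(i)\cdot\undim Y_{j,1}^i=v\cdot\undim X_{j,1}^i$ is secured, well-definedness, linearity, landing in the right cone, and uniqueness of coefficients (via linear independence of the $g$-vectors of a $\tau$-rigid object) are essentially formal.
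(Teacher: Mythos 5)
Your proof is correct and follows essentially the same route as the paper's: establish the key pairing identity $v(i)\cdot\undim Y_{j,1}^i = v\cdot\undim X_{j,1}^i$ using Propositions~\ref{prop:null g}, \ref{prop:same homs}, and \ref{prop:multiplicity}, and then deduce well-definedness of $v\mapsto v(i)$ from the fact that $\{\undim Y_{j,1}^i\}_j$ is a basis, uniqueness of coefficients from linear independence of the $g$-vectors of a support $\tau$-rigid object, and statement~(3) by the same computation applied to arbitrary bricks $X_{j,\ell}^i$. You spell out the individual steps (cross-tube vanishing via Proposition~\ref{prop:tube info 2}, the Euler--Ringel pairing, the quasi-length bounds that make Proposition~\ref{prop:same homs} applicable, and the explicit check that $v(i)$ lands in $C(\rho_i(M,\cP^+,\cP^-))$) more explicitly than the paper, which states these consequences tersely, but the underlying argument is the same.
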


\begin{proof}
	(1) Let
	\begin{eqnarray*}
		v &=& \sum_{k = 1}^t \lambda_k\cdot g_0(X_{j_k,\ell_k}^{i_k}) + \sum_{p(\cX) \in \cP^+} \lambda_{\cX}\cdot p(\cX) - \sum_{p(\cX) \in \cP^-} \lambda_{\cX}\cdot p(\cX)\\
		v' &=& \sum_{k = 1}^t \lambda'_k\cdot g_0(X_{j_k,\ell_k}^{i_k}) + \sum_{p(\cX) \in \cP^+} \lambda'_{\cX}\cdot p(\cX) - \sum_{p(\cX) \in \cP^-} \lambda'_{\cX}\cdot p(\cX)
	\end{eqnarray*}
	and suppose $v = v'$. Let $X_{j,1}^i$ be a quasi simple. Then by Propositions \ref{prop:same homs}, \ref{prop:multiplicity}, and \ref{prop:null g}, we have that
	$$v(i)\cdot \undim Y_{j,1}^i = v\cdot \undim X_{j,1}^i = v'\cdot \undim X_{j,1}^i = v'(i)\cdot \undim Y_{j,1}^i.$$
	Since $\{\undim Y_{j,1}^i\}_{j = 1}^{r_i}$ is a basis of $\RR^{r_i}$, this implies that $v(i) = v'(i)$.
	
	(2) Let $v, v'$ be as in (1), and suppose $v = v'$. We observe that the $g$-vectors appearing in the definition of $v(i)$ are precisely those of the direct summands of $\rho_i(M,\cP^+,\cP^-)$. In particular, this means they are linearly independent. This implies that $\lambda_k = \lambda'_k$ for all $k$. It then follows from Lemma~\ref{lem:cone} that also $\lambda_{i,j} = \lambda'_{i,j}$ for all $i$ and $j$.
	
	(3) analogously to (1), this follows directly from Propositions \ref{prop:same homs}, \ref{prop:multiplicity}, and \ref{prop:null g}.
\end{proof}

We have the following consequences when $(M,\cP^+,\cP^-)$ is projectively closed.

\begin{cor}\label{cor:int}
	Let $(M,\cP^+,\cP^-)$ be projectively closed and let
	$$v = \sum_{k = 1}^t \lambda_k\cdot g_0(X_{j_k,\ell_k}^{i_k}) + \sum_{p(\cX) \in \cP^+} \lambda_{\cX}\cdot p(\cX) - \sum_{p(\cX) \in \cP^-} \lambda_{\cX}\cdot p(\cX)$$
	be a vector in $C(M,\cP^+,\cP^-)$. Then $v \in \fC(M,\cP^+,\cP^-)$ if and only if the coefficients $\lambda_k, \lambda_{i,j}$ are all positive.
\end{cor}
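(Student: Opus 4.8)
The plan is to reduce the statement to the standard fact that the relative interior of a finitely generated cone $C(w_1,\dots,w_N)$ is precisely the set of combinations $\sum_s c_s w_s$ with every $c_s>0$ (see \cite{rockafellar_convex}), and then to translate back and forth between such "generator coefficients" and the intrinsic coefficients $\lambda_k,\lambda_{i,j}$. As a preliminary step I would invoke condition~(4) of Definition~\ref{def:regular rigid} to assume without loss of generality that $\cP^-=\emptyset$ (the case $\cP^+=\emptyset$ being symmetric), and recall from Proposition~\ref{prop:defined projection}(2) that $\lambda_k$ and $\lambda_{i,j}$ are determined by $v$ alone, independently of the chosen representation, so the statement is well-posed. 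I then apply the cited description of the relative interior to $C(M,\cP^+,\emptyset)$, whose generators are the $g_0(M_k)$ and the $p(\cX)$ for $\cX\in\cP^+$.

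For the forward direction, suppose $v\in\fC(M,\cP^+,\emptyset)$. Then $v=\sum_k c_k\,g_0(M_k)+\sum_{\cX\in\cP^+}c_\cX\,p(\cX)$ with all $c_k>0$ and all $c_\cX>0$. Reading this as a representation of the form appearing in Proposition~\ref{prop:defined projection}, the uniqueness in part~(2) gives $\lambda_k=c_k>0$; and for each quasi-simple $X^i_{j,1}\in\tp(\cP^+)$ the index set $\{\cX\in\cP^+: X^i_{j,1}\in\cX\}$ is nonempty by definition of $\tp$, so $\lambda_{i,j}=\sum_{\cX\ni X^i_{j,1}}c_\cX$ is a sum of positive numbers, hence positive.

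For the converse, assuming all $\lambda_k,\lambda_{i,j}>0$ I would reconstruct an explicitly strictly-positive representation of $v$. Set $c_k:=\lambda_k>0$, and for the projective-vector coefficients reuse the transportation-polytope solution from the proof of Proposition~\ref{prop:cone}: since $(M,\cP^+,\emptyset)$ is projectively closed, the choice $c_\cX:=S^{1-m}\prod_{X^i_{j,1}\in\cX}\lambda_{i,j}$ (with $S$ the common value $\sum_{X^i_{j,1}\in\tp(\cP^+)\cap\cT_i}\lambda_{i,j}$, independent of $i$) satisfies $\sum_{\cX\ni X^i_{j,1}}c_\cX=\lambda_{i,j}$ for every $X^i_{j,1}\in\tp(\cP^+)$ and is strictly positive because each factor $\lambda_{i,j}$ is. The one genuinely delicate point — the step I expect to be the main obstacle — is then checking that the vector $v':=\sum_k c_k\,g_0(M_k)+\sum_{\cX\in\cP^+}c_\cX\,p(\cX)$ assembled from these coefficients actually equals the original $v$, rather than merely lying in the same cone: its $M$-summand agrees with that of $v$ because $c_k=\lambda_k$, and its projective summand agrees with that of $v$ by Lemma~\ref{lem:cone}, whose hypothesis is exactly the matching of all the $\lambda_{i,j}$. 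Once $v'=v$ is established, $v$ is exhibited as a strictly positive combination of the generators of $C(M,\cP^+,\emptyset)$, so $v\in\fC(M,\cP^+,\emptyset)$, which finishes the argument.
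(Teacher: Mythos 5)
Your proposal is correct. The one external ingredient you rely on --- that the relative interior of a finitely generated cone $C(w_1,\ldots,w_N)$ is exactly the set of combinations with all generator coefficients strictly positive --- is indeed standard (it follows from Rockafellar's theorem on relative interiors of convex hulls of unions, applied to the rays $\RR_{\geq 0}w_s$), and the rest of your argument is the coefficient bookkeeping that any proof of this corollary must do. The paper takes a more hands-on route to the same end: for sufficiency it shows directly that when all $\lambda_k,\lambda_{i,j}$ are positive one has $v+\varepsilon w\in C(M,\cP^+,\cP^-)$ for every $w$ in $\pm$(the generating set) and small $\varepsilon$, and for necessity it exhibits, for each vanishing coefficient, an explicit direction ($-g_0(X^{i_k}_{j_k,\ell_k})$, or $\mp p(\cX)$) along which one immediately leaves the cone, citing Proposition~\ref{prop:defined projection} in place of your appeal to Lemma~\ref{lem:cone}. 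The two proofs use the same underlying facts (the uniqueness of the intrinsic coefficients from Proposition~\ref{prop:defined projection}(2), the description of $C(M,\cP^+,\cP^-)$ via the $\lambda_{i,j}$ from Proposition~\ref{prop:cone}, and the strictly positive transportation-polytope solution, which is only available because $(M,\cP^+,\cP^-)$ is projectively closed); the difference is that you outsource the topology to the positive-combination characterization of the relative interior, which makes the forward implication a direct argument rather than a contrapositive, while the paper's perturbation argument stays self-contained. You correctly flag the genuinely delicate step --- verifying that the reassembled vector $v'$ equals $v$ rather than merely lying in the same cone --- and resolve it with Lemma~\ref{lem:cone}; no gap there.
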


\begin{proof}
	Let $\cS$ be the set of $g_0$-vectors and projective vectors defining $C(M,\cP^+,\cP^-)$. Suppose $v$ has all of its coefficients $\lambda_k$ and $\lambda_{i,j}$ nonzero. Then for all $w \in \pm\cS$ and for all $\varepsilon > 0$ sufficiently small, the vector $v + \varepsilon w \in C(M,\cP^+,\cP^-)$. This means $v \in \fC(M,\cP^+,\cP^-)$. If $v$ has a coefficient $\lambda_k = 0$, then for all $\varepsilon > 0$, the vector $v - \varepsilon g_0(X_{j_k,\ell_k}^{i_k}) \notin C(M,\cP^+,\cP^-)$ by Proposition \ref{prop:defined projection}. Likewise, if $v$ has a coefficient $\lambda_{i,j} = 0$, then for all $p(\cX) \in \cP^+$ (resp for all $p(\cX) \in \cP^-$) and for all $\varepsilon > 0$, the vector $v - \varepsilon p(\cX) \notin C(M,\cP^+,\cP^-)$ (resp. the vector $v + \varepsilon p(\cX) \notin C(M,\cP^+,\cP^-)$) by Proposition \ref{prop:defined projection}. Thus in either case, $v \notin \fC(M,\cP^+,\cP^-)$.
\end{proof}

\begin{cor}\label{cor:cone dim}
	Let $(M,\cP^+,\cP^-)$ be projectively closed.
	\begin{enumerate}
		\item If $\cP^+\cup \cP^- = \emptyset$, then the dimension of $C(M,\cP^+,\cP^-)$ is equal to $|M|$.
		\item If $\cP^+\cup \cP^- \neq \emptyset$, then the dimension of $C(M,\cP^+,\cP^-)$ is equal to $$1 - m + \sum_{i = 1}^m |\rho_i(M,\cP^+,\cP^-)|.$$
	\end{enumerate}
\end{cor}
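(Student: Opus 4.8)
The plan is to compute $\dim C(M,\cP^+,\cP^-)$ (which, being a polyhedral cone, equals the dimension of its linear span) by transporting it, via Proposition~\ref{prop:defined projection}, into a product of the cones $C(\rho_i(M,\cP^+,\cP^-))$. Throughout I use that $\dim C(\rho_i(M,\cP^+,\cP^-)) = |\rho_i(M,\cP^+,\cP^-)|$, since the $g$-vectors of the indecomposable summands of a support $\tau$-rigid object are linearly independent (Proposition~\ref{prop:g fan}(2)), and that, directly from the description of $\rho_i$ in Proposition~\ref{def:srr to str}, one has $\sum_{i=1}^m|\rho_i(M,\cP^+,\cP^-)| = |M| + |\tp(\cP^+\cup\cP^-)|$ (recall that one of $\cP^+,\cP^-$ is empty). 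First I would bundle the maps $v\mapsto v(i)$ of Proposition~\ref{prop:defined projection} into a single linear map $\Pi=(\pi_1,\dots,\pi_m)\colon \operatorname{span}C(M,\cP^+,\cP^-)\to \prod_{i=1}^m\operatorname{span}C(\rho_i(M,\cP^+,\cP^-))$, where $\pi_i$ is the linear extension of $v\mapsto v(i)$.

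The map $\Pi$ is injective, which is essentially Proposition~\ref{prop:defined projection}(2): writing $v=\sum_k\lambda_k g_0(M_k)+\sum_{p(\cX)\in\cP^+}\lambda_\cX p(\cX)-\sum_{p(\cX)\in\cP^-}\lambda_\cX p(\cX)$, each $v(i)$ is a combination of the $g$-vectors of the summands of $\rho_i(M,\cP^+,\cP^-)$ with coefficients drawn from the $\lambda_k$ and the $\lambda_{i,j}$ of Equation~\eqref{eqn:tube sum}; as those $g$-vectors are linearly independent, $\Pi(v)=0$ forces every $\lambda_k$ and every $\lambda_{i,j}$ to vanish, and then $v=0$ by Lemma~\ref{lem:cone}. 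Hence $\dim C(M,\cP^+,\cP^-)$ equals the dimension of the image of $\Pi$, and the remaining work is to identify that image.

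If $\cP^+\cup\cP^-=\emptyset$, then $\rho_i(M,\emptyset,\emptyset)$ is just the direct sum of the summands $Y^i_{j,\ell}$ coming from $M$; given any tuple $(w_1,\dots,w_m)$ with $w_i\in\operatorname{span}C(\rho_i(M,\emptyset,\emptyset))$, one reads off coefficients $\lambda_k$ from the $w_i$ and takes $v=\sum_k\lambda_k g_0(M_k)$, so $\Pi$ is onto and $\dim C(M,\emptyset,\emptyset)=\sum_i|\rho_i(M,\emptyset,\emptyset)|=|M|$, proving (1). If instead $\cP^+\cup\cP^-\neq\emptyset$, I may assume $\cP^-=\emptyset$ and $\cP^+\neq\emptyset$ (the other case being symmetric). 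Since $p(\cX)\in\cP^+$ forces $\cX$ to meet every tube, each $\rho_i(M,\cP^+,\emptyset)$ has a projective summand $Y^i_{j,r_i+1}$; as in the proof of Proposition~\ref{prop:null-nonnegative}, $g(Y^i_{j,r_i+1})\cdot\overline1=1$ while all other summands of $\rho_i(M,\cP^+,\emptyset)$ have $g$-vector orthogonal to $\overline1$ (the non-projective ones have quasi-length $<r_i$ because $M$ is $\tau$-rigid). Thus $w_i\mapsto w_i\cdot\overline1$ surjects $\operatorname{span}C(\rho_i(M,\cP^+,\emptyset))$ onto $\RR$, and for $\Pi(v)=(w_1,\dots,w_m)$ one computes $w_i\cdot\overline1=\sum_j\lambda_{i,j}=\sum_{p(\cX)\in\cP^+}\lambda_\cX$, which is independent of $i$. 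So the image of $\Pi$ lies in $\{(w_i)\in\prod_i\operatorname{span}C(\rho_i(M,\cP^+,\emptyset)):w_i\cdot\overline1\text{ independent of }i\}$.

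It remains to show $\Pi$ maps onto this constrained product. Given such a tuple $(w_i)$, expand each $w_i$ in the (linearly independent) $g$-vectors of the summands of $\rho_i(M,\cP^+,\emptyset)$ to extract candidate coefficients $\lambda_k$ (unconstrained) and $\lambda_{i,j}$; one then needs real numbers $\lambda_\cX$ for $p(\cX)\in\cP^+$ with $\sum_{X^i_{j,1}\in\cX}\lambda_\cX=\lambda_{i,j}$. Because $(M,\cP^+,\emptyset)$ is projectively closed, $\cP^+=\pc(\tp(\cP^+))$ is indexed by $\prod_i(\cT_i\cap\tp(\cP^+))$, so this is exactly an $m$-index transportation system, and it is solvable precisely because $\sum_j\lambda_{i,j}=w_i\cdot\overline1$ does not depend on $i$ — e.g.\ by the explicit formula recorded in the proof of Proposition~\ref{prop:cone}, where no nonnegativity is needed. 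Hence the image of $\Pi$ is the displayed subspace, of dimension $\sum_i|\rho_i(M,\cP^+,\emptyset)|-(m-1)=1-m+\sum_i|\rho_i(M,\cP^+,\emptyset)|$, which gives (2). I expect this last surjectivity (the transportation-polytope realization of the prescribed $\lambda_{i,j}$, and the bookkeeping that the only link between the $\rho_i$'s is the common value $w_i\cdot\overline1$) to be the only step requiring genuine care; the rest is a direct application of Propositions~\ref{prop:same homs}, \ref{prop:defined projection}, and~\ref{prop:g fan}.
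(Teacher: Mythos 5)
Your argument is correct and is essentially the paper's own proof: both consider the injective linear map into $\prod_i \operatorname{span} C(\rho_i(M,\cP^+,\cP^-))$ given by $v\mapsto (v(1),\dots,v(m))$ (injectivity from Proposition~\ref{prop:defined projection}), and both identify the image as the full product in case (1) and as the subspace cut out by the $m-1$ relations ``$\sum_j\lambda_{i,j}$ independent of $i$'' in case (2), with surjectivity onto that subspace coming from the transportation-system solvability recorded in the proof of Proposition~\ref{prop:cone}. Your write-up just makes explicit a few points the paper leaves implicit (e.g.\ that the common value of $\sum_j\lambda_{i,j}$ is $w_i\cdot\overline{1}$, detected by the unique projective summand of each $\rho_i$).
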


\begin{proof}
	In both cases, we consider the linear map 
	$$F:C(M,\cP^+,\cP^-) \rightarrow C(\rho_1(M,\cP^+,\cP^-))\times\cdots\times C(\rho_m(M,\cP^+,\cP^-)).$$
	This map is injective by Proposition \ref{prop:defined projection}. Thus let
	$$(v_1,\ldots,v_m) \in C(\rho_1(M,\cP^+,\cP^-))\times\cdots\times C(\rho_m(M,\cP^+,\cP^-)).$$
	For each $i$, we can uniquely write
	$$v_i = \sum_{X_{j_k,\ell_k}^i \in \cM_i} \lambda_k\cdot g(Y_{j_k,\ell_k}^i) + \sum_{X_{j,1}^i \in \cT_i\cap \tp(\cP^+)}\lambda_{i,j}\cdot g(Y_{j,r_i + 1}^i) - \sum_{X_{j,+1}^i \in \cT_i \cap \tp(\cP^-)}\lambda_{i,j}\cdot g(Y_{j,r_i + 1}^i).$$
	By Proposition \ref{prop:cone} and its proof, there exists $v \in C(M,\cP^+,\cP^-)$ with $F(v) = (v_1,\ldots,v_m)$ if and only if the sums
	$$s_i := \sum_{j}\lambda_{i,j}$$
	are equal for all tubes $\cT_i$. In case (1), this is satisfied automatically, so
	$$\dim C(M,\cP^+,\cP^-) = \dim\Im F = \sum_{j = 1}^m |\rho_i(M,\cP^+,\cP^-)| = |M|.$$
	In case (2), this means there are $m-1$ linear relations defining $\Im F$, so
	$$\dim C(M,\cP^+,\cP^-) = \dim\Im F = 1-m + \sum_{i = 1}^m |\rho_i(M,\cP^+,\cP^-)|.$$
\end{proof}

We conclude this section by defining the polyhedral fan of support regular rigid objects.

\begin{prop}\label{prop:proj closed fan}
	Let $\mathfrak{X}$ be the set of cones $C(M,\cP^+,\cP^-)$ for $(M,\cP^+,\cP^-)$ support regular rigid and projectively closed. Then $\mathfrak{X}$ is a polyhedral fan.
\end{prop}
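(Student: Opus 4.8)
The plan is to check the three requirements of Definition~\ref{def:polyhedral fan}: that each cone $C(M,\cP^+,\cP^-)$ with $(M,\cP^+,\cP^-)$ projectively closed is a strictly convex rational polyhedral cone, that $\mathfrak X$ is closed under faces, and that the intersection of any two members of $\mathfrak X$ is a common face of both (countability being automatic, since every $M$ occurring in a support regular rigid object is a regular module with no homogeneous summand, built from the finitely many $\tau$-rigid indecomposables in the exceptional tubes, and there are only finitely many projective vectors, so $\mathsf{srr}(H)$ is finite). Rationality is immediate: $g(\eta)$ is rational (Notation~\ref{def:g(eta)}), hence so is each $g_0(M_j)$ (Definition~\ref{def:null g}), and each $p(\cX)$ solves a rational linear system (Definition~\ref{def:projective vector}). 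For strict convexity I would invoke Proposition~\ref{prop:defined projection}(1),(2): the map $v\mapsto(v(1),\dots,v(m))$ is a linear injection of $C(M,\cP^+,\cP^-)$ into $\prod_{i=1}^m C(\rho_i(M,\cP^+,\cP^-))$, and the latter is a product of cones in the simplicial $g$-vector fans of the $\Lambda_{r_i}$ (Proposition~\ref{prop:g fan}), hence strictly convex; so $C(M,\cP^+,\cP^-)$ is too.

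For closure under faces, by the symmetry exchanging $\cP^+$ and $\cP^-$ (which negates projective vectors) it suffices to treat a projectively closed $(M,\cP^+,\emptyset)$ with $M=\bigoplus_{k=1}^t M_k$ and $\cY=\tp(\cP^+)$. By Proposition~\ref{prop:cone} the cone is carved out inside its linear span by the inequalities $\lambda_k\ge0$ and $\lambda_{i,j}\ge0$ (notation of Equation~\ref{eqn:tube sum}), which are linear functions of $v$ by Proposition~\ref{prop:defined projection}(2), and by Corollary~\ref{cor:int} the relative interior is where all of these are strictly positive. Hence each facet is obtained by turning one inequality into an equality: imposing $\lambda_k=0$ gives $C\bigl(\bigoplus_{k'\ne k}M_{k'},\cP^+,\emptyset\bigr)$, while imposing $\lambda_{i,j}=0$ for $X_{j,1}^i\in\cY$ gives $C(M,\pc(\cY\setminus\{X_{j,1}^i\}),\emptyset)$ when $|\cY\cap\cT_i|\ge2$, and $C(M,\emptyset,\emptyset)$ when $\cY\cap\cT_i=\{X_{j,1}^i\}$ (since then $\lambda_{i,j}=\sum_{p(\cX)\in\cP^+}\lambda_\cX$ vanishes only when every $\lambda_\cX$ does). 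I would then verify that each of these is again projectively closed support regular rigid: condition (1) of Definition~\ref{def:regular rigid} passes to summands; for condition (2), if $M'$ is a summand of $M$ with complement $M''$, then $p\cdot\undim\tau M'$ and $p\cdot\undim\tau M''$ are nonnegative (Remark~\ref{rem:always nonnegative}) and sum to $p\cdot\undim\tau M=0$, so both vanish; and projective closedness follows from $\tp(\pc(\cZ))=\cZ$ when $\cZ$ meets every exceptional tube and $\pc(\emptyset)=\emptyset$ otherwise. Since every face of a pointed polyhedral cone is an intersection of facets, iterating shows each face of $C(M,\cP^+,\emptyset)$ has the form $C(M',\pc(\cZ),\emptyset)$ for a projectively closed support regular rigid triple, hence lies in $\mathfrak X$.

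For the intersection property I plan to globalize Proposition~\ref{prop:defined projection}. Its proof shows $v(i)$ is determined by $v$ alone through $v(i)\cdot\undim Y_{j,1}^i=v\cdot\undim X_{j,1}^i$ for all quasi-simples, so there is a well-defined linear map $G=(v\mapsto(v(1),\dots,v(m)))\colon g(\eta)^\perp\to\bigoplus_i\RR^{r_i}$, injective because the quasi-simple dimension vectors span $g(\eta)^\perp$ (Proposition~\ref{prop:regular linear algebra}), and carrying $g(\eta)^\perp$ isomorphically onto $L:=\{(v_1,\dots,v_m)\mid v_1\cdot\overline1=\cdots=v_m\cdot\overline1\}$ because $v(i)\cdot\overline1=v\cdot\eta$ for every $i$. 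Proposition~\ref{prop:cone} together with the transportation-polytope argument in its proof then gives, for a projectively closed null-nonnegative $(M,\cP^+,\emptyset)$,
\[
G\bigl(C(M,\cP^+,\emptyset)\bigr)=\Bigl(\textstyle\prod_{i=1}^m C(\rho_i(M,\cP^+,\emptyset))\Bigr)\cap L ,
\]
and symmetrically in the null-nonpositive case. For two projectively closed null-nonnegative objects, injectivity of $G$ yields
\[
G\bigl(C(M,\cP^+,\emptyset)\cap C(N,\cQ^+,\emptyset)\bigr)=\Bigl(\textstyle\prod_{i=1}^m\bigl(C(\rho_i(M,\cP^+,\emptyset))\cap C(\rho_i(N,\cQ^+,\emptyset))\bigr)\Bigr)\cap L .
\]
Since each $g$-vector fan is a polyhedral fan (Proposition~\ref{prop:g fan}), each factor is a face of $C(\rho_i(M,\cP^+,\emptyset))$, so the product is a face of $\prod_i C(\rho_i(M,\cP^+,\emptyset))$; and intersecting a face $P\cap\{f=0\}$ of a cone $P$ with the subspace $L$ gives $(P\cap L)\cap\{f|_L=0\}$, a face of $P\cap L$. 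Pulling back along the isomorphism $G$ shows $C(M,\cP^+,\emptyset)\cap C(N,\cQ^+,\emptyset)$ is a face of $C(M,\cP^+,\emptyset)$, and likewise of $C(N,\cQ^+,\emptyset)$; the two null-nonpositive case is symmetric. In the mixed case, with $(M,\cP^+,\emptyset)$ null-nonnegative and $(N,\emptyset,\cQ^-)$ null-nonpositive, the functional $(-)\cdot\eta$ is nonnegative on the first cone (as $g_0(M_k)\cdot\eta=0$ and $p(\cX)\cdot\eta=1$) and nonpositive on the second, hence zero on their intersection; its zero locus is the face $C(M,\emptyset,\emptyset)$ of the first cone and the face $C(N,\emptyset,\emptyset)$ of the second, reducing to the previous case via ``a face of a face is a face.''

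The main obstacle will be the face analysis of the second paragraph: making precise that the listed facets form a complete list, and — after the bookkeeping with $\pc$ and $\tp$ — checking that each face is a genuine projectively closed support regular rigid object, in particular handling the degenerate situation where deleting quasi-simples empties an exceptional tube and forces all projective-vector coefficients to vanish. The strict-convexity and intersection parts, by contrast, should follow routinely once the map $G$ and the identification $G(C(M,\cP^+,\emptyset))=\bigl(\prod_i C(\rho_i(M,\cP^+,\emptyset))\bigr)\cap L$ are in hand.
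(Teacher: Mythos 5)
Your proof is correct, and it splits into a half that mirrors the paper and a half that does not. The face-closure argument is essentially the paper's: setting one defining inequality $\lambda_k\ge 0$ or $\lambda_{i,j}\ge 0$ from Proposition~\ref{prop:cone} to an equality and identifying the result, via Corollaries~\ref{cor:int} and \ref{cor:cone dim}, is exactly the content of the paper's Claims 1--3 (the paper phrases it in terms of deleting generators from the set $\cS$ rather than saturating inequalities, but the dichotomy you isolate --- $|\cY\cap\cT_i|\ge 2$ versus the degenerate singleton case where the whole projective part collapses and the face is $C(M,\emptyset,\emptyset)$ --- is precisely the paper's Claim~2/Claim~3 case split). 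Where you genuinely diverge is the intersection axiom: the paper disposes of it in a single sentence, asserting $C(M_1,\cP_1^+,\cP_1^-)\cap C(M_2,\cP_2^+,\cP_2^-)=C(N,\cQ^+,\cQ^-)$ with $N$ the common summands and $\cQ^\pm=\cP_1^\pm\cap\cP_2^\pm$, whereas you transport everything along the global isomorphism $G\colon g(\eta)^\perp\to L$ onto the product of the $g$-vector fans of the $\Lambda_{r_i}$, invoke Proposition~\ref{prop:g fan}, and handle the mixed-sign case with the functional $(-)\cdot\eta$. Your route costs some bookkeeping with $L$ but actually proves the inclusion $\subseteq$ and the ``face of both'' property that the paper leaves implicit, and it also supplies the strict-convexity, rationality, and countability checks that the paper omits. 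One small imprecision worth fixing: in the singleton case $\cY\cap\cT_i=\{X_{j,1}^i\}$, the individual coefficients $\lambda_\cX$ in Proposition~\ref{prop:cone} are not constrained to be nonnegative, so $\lambda_{i,j}=\sum_\cX\lambda_\cX=0$ does not literally force each $\lambda_\cX=0$; what it forces is $\lambda_{i',j'}=0$ for every $i',j'$ (since each is nonnegative and $\sum_{j'}\lambda_{i',j'}=\sum_\cX\lambda_\cX$ for every tube), and then Lemma~\ref{lem:cone} shows the projective part of $v$ vanishes, which is the conclusion you need.
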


\begin{proof}
Let $(M,\cP^+,\cP^-)$ be projectively closed. We must show that every codimension 1 face of $C(M,\cP^+,\cP^-)$ can be written in the form $C(N,\cQ^+,\cQ^-)$ for some projectively closed $(N,\cQ^+,\cQ^-)$. The result then follows for lower dimensional faces as each is contained in a codimension 1 face. We will assume that $\cP^- = \emptyset$ as the result for $\cP^+ = \emptyset$ follows analogously.

We identify each support regular rigid object with the set of vectors defining its cone. In particular, $(M,\cP^+,\cP^-)$ is identified with
$$\cS = \{g_0(X_{j_1,\ell_1}^{i_1}),\ldots,g_0(X_{j_t,\ell_t}^{i_t}),p(\cX_1),\ldots,p(\cX_s)\}.$$
For $\cS' \subseteq \cS$, we denote by $\overline{\cS'}$ the projective closure of $\cS'$.

Claim 1: Suppose $\cS'$ is obtained from $\cS$ by deleting one $g_0$-vector. Then $\cS' = \overline{\cS'}$ corresponds to a codimension 1 face of $C(M,\cP^+,\cP^-)$. Indeed, suppose $g_0(X_{j,\ell}^i)$ was deleted. We see that $\dim C(\cS') = \dim C(\cS) - 1$ by Corollary \ref{cor:cone dim}. Moreover, $C(\cS')$ is contained in the relative boundary of $C(M,\cP^+,\cP^-)$ by Corollary \ref{cor:int}. Finally, there does not exist $\cS' \subsetneq \cS'' \subsetneq \cS$, meaning $C(\cS')$ is a face of $C(M,\cP^+,\cP^-)$.

Claim 2: Suppose $\cS'$ is obtained from $\cS$ be choosing $X_{j,1}^i \in \tp(\cP^+ \cup \cP^-)$ and deleting all projective vectors $p(\cX)$ so that $X_{j,1}^i \in \cX$. If either $|\cP^+\cup\cP^-| = 1$ or there exists $j' \neq j$ with $X_{j',1}^i \in \tp(\cP^+ \cup \cP^-)$, then $\cS' = \overline{\cS'}$ corresponds to a codimension 1 face of $C(M,\cP^+,\cP^-)$. Indeed, we see that $\dim C(\cS') = \dim C(\cS) - 1$ by Corollary \ref{cor:cone dim}. Finally, given $\cS' \subsetneq \cS'' \subsetneq \cS$, there exists $p(\cX) \in \cS''$ with $X_{j,1}^i \in \cX$. By Corollary \ref{cor:int}, the sum of all of the vectors in $\cS''$ thus lies in $\fC(M,\cP^+,\cP^-)$ and $\dim C(\cS'') = \dim C(M,\cP^+,\cP^-)$. This means $C(\cS')$ is a face of $C(M,\cP^+,\cP^-)$ as claimed.

Claim 3: There are no codimension 1 faces of $C(M,\cP^+,\cP^-)$ other than those in Claims (1) and (2). Indeed, let $\cS' \subseteq \cS$. If there exists $g_0(X_{j,\ell}^i)\in\cS \setminus \cS'$, then $C(\cS')$ is contained in $C(\cS\setminus \{g_0(X_{j,\ell}^i)\})$, which is a face by Claim 1. Likewise, if there exists $X_{j,1}^i \in \tp(\cP^+ \cup \cP^-)$ so that $\cS \setminus \cS'$ contains a projective vector $p(\cX)$ with $X_{j,1}^i$, then the fact that $(M,\cP^+,\cP^-)$ is projective closed implies that $\cS \setminus \cS'$ contains all projective vectors $p(\cX) \in \cS$ for which $X_{j,1}^i \in \cX$, and so $C(\cS')$ is contained in $C(\cS\setminus \{p(\cX): X_{j,1}^i \in \cX\})$. If either $|\cP^+\cup\cP^-| = 1$ or there exists $j' \neq j$ with $X_{j',1}^i \in \tp(\cP^+ \cup \cP^-)$, then this is a codimension 1 face by Claim 2. Otherwise, $X_{j,1}^i \in \cX'$ for all $\cX' \in \cP^+ \cup \cP^-$ and there exists some $X_{j'',1}^{i''} \in \tp(\cP^+ \cup \cP^-)$ for which this is not the case. This means $C(\cS')$ is contained in $C(\cS\setminus \{p(\cX): X_{j'',1}^{i''} \in \cX\})$, which is a codimension 1 face by Claim 2. The only case that remains is when $\cS \setminus \cS'$ contains only projective vectors and for all $X_{j,\ell}^i \in \tp(\cP^+\cup\cP^-)$, there exists $p(\cX') \in \cS'$ with $X_{j,\ell}^i \in \cX'$. Corollary \ref{cor:int} then implies that the sum of all of the vectors in $\cS'$ lies in $\fC(M,\cP^+,\cP^-)$, so $C(\cS')$ is not a codimension 1 face of $C(M,\cP^+,\cP^-)$.

Now given $(M_1,\cP_1^+,\cP_1^-)$ and $(M_2,\cP_2^+,\cP_2^-)$ two projectively closed support regular rigid objects, let $N$ be the direct sum of the common indecomposable direct summands of $M_1$ and $M_2$, let $\cQ^+ = \cP_1^+ \cap \cP_2^+$, and let $\cQ^- = \cP_1^- \cap \cP_2^-$. Then $C(M_1,\cP_1^+,\cP_1^-) \cap C(M_2,\cP_2^+,\cP_2^-) = C(N,\cQ^+,\cQ^-)$ and $(N,\cQ^+,\cQ^-)$ is projectively closed.
\end{proof}

\begin{rem}
	The reason we consider only support regular rigid objects which are projectively closed is two-fold: Using all support regular rigid objects does not result in a polyhedral fan and does not accurately describe the regular wall-and-chamber structure. For example, in Figure \ref{fig:regularPics2}, the cone $C(p(2,123), p(143,4))$ is not a face of the cone spanned by all four projective vectors, nor is it included in a wall.
\end{rem}

\subsection{Walls and chambers from support regular rigid objects}\label{sec:chambers}

The goal of this section is to relate the polyhedral fan of support regular rigid objects to the regular wall-and-chamber structures. In particular, we show that chambers in the regular wall-and-chamber structure correspond to support regular clusters.

The following lemma and its proof can be seen as a ``regular analogue'' of \cite[Lemma 3.12]{BST_wall}. 

\begin{lem}\label{lem:trace}
	Let $(M,\cP^+,\cP^-)$ be support regular rigid and let $v \in \fC(M,\cP^+,\cP^-)$. Let $N$ be a regular module. Let $\ell$ be the larger of the regular length of $N$ and the regular length of $M$, and let $L$ be the direct sum of the (isoclass representatives of the) indecomposable regular modules with quasi length $\ell$ and
	quasi top in $\tp(\cP^+)$ (in particular, $L=0$ if $\cP^+=\emptyset$). Define
	$$tN = \sum_{f\in \rad\Hom_H(M,N)}\Im(f) + \sum_{f \in \rad\Hom_H(L,N)}\Im(f).$$
	Then $v \cdot \undim tN \geq 0$, with equality if and only if $tN = 0$.
\end{lem}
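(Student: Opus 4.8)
The plan is to reduce everything to the self-injective Nakayama algebras via the projection maps $v \mapsto v(i)$ of Proposition~\ref{prop:defined projection} and then invoke the analogue of \cite[Lemma 3.12]{BST_wall} for each $\Lambda_{r_i}$. First I would observe that since $v \in \fC(M,\cP^+,\cP^-)$ we have $v \cdot \undim N' \leq 0$ for every regular submodule $N' \subseteq N$ (this is because $\fC(M,\cP^+,\cP^-)$ is contained in the intersection of the relevant $D_{reg}$, via Propositions~\ref{prop:tameEqns} and the definition of the cone — more precisely, $v \in D_{reg}(M_k)$ for each summand $M_k$ of $M$, hence $v$ is $v$-regular semistable on each such module, and $v \cdot \undim X_{j,1}^i \geq 0$ for the relevant quasi-simples). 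Then since every $\Im(f)$ for $f \in \rad\Hom_H(M,N) \cup \rad\Hom_H(L,N)$ is a regular submodule of $N$, and $tN$ is a sum (hence an extension-closed quotient-and-sub situation — actually just a submodule, being a sum of submodules) of such images, we get $v \cdot \undim tN \leq 0$. So the content is the reverse inequality $v \cdot \undim tN \geq 0$, which forces $v \cdot \undim tN = 0$, and then the ``equality iff $tN=0$'' claim.

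The key step is to transfer the computation tube-by-tube. Since $N$ decomposes as a direct sum of indecomposables each lying in a single tube (homogeneous or exceptional), and since homs between different tubes vanish (Proposition~\ref{prop:tube info 2}), we may assume $N$ is indecomposable. If $N$ is homogeneous, then $\undim N \in \ZZ^+\eta$, so $v \cdot \undim N = 0$ because $v \in g(\eta)^\perp$; moreover any map from a regular module of the exceptional tubes (namely $M$ or $L$, which have no homogeneous summands by construction and the standing hypotheses) into a homogeneous module... here I would use Lemma~\ref{lem: preprojective maps to homogeneous} and Proposition~\ref{prop:tube info 2} to see that such maps can only be nonzero if the source is in the same homogeneous tube — but $M, L$ live in exceptional tubes, so $\Hom_H(M,N) = 0 = \Hom_H(L,N)$ and $tN = 0$, giving $v\cdot\undim tN = 0 = v \cdot \undim tN$ trivially. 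So assume $N$ lies in an exceptional tube $\cT_i$. Then all of $\Im(f)$ for the relevant $f$ also lie in $\cT_i$ (maps out of $\cT_{i'}$ for $i' \neq i$ vanish into $N$), so $tN$ lies in $\cT_i$, and by Proposition~\ref{prop:same homs} the whole configuration — $M$ restricted to $\cT_i$, the relevant part of $L$, the module $N$, and $tN$ — corresponds under the equivalence with $\mods\Lambda_{r_i}$ to a configuration of $\Lambda_{r_i}$-modules. Here $\rho_i(M,\cP^+,\cP^-) = N_i \sqcup Q_i[1]$ and the part of $L$ with quasi-top in $\tp(\cP^+)$ corresponds precisely to (the non-projective-in-disguise direct summands corresponding to) $Y_{j,r_i+1}^i$'s; one checks that $\rad\Hom$ is preserved since the correspondence is the hom-preserving bijection of Proposition~\ref{prop:same homs} and radical maps are exactly non-isomorphisms between indecomposables. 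Thus $\undim(tN) $ maps to $\undim(t' N')$ where $t'N'$ is the Nakayama-side trace, and by Proposition~\ref{prop:defined projection}(3) we get $v \cdot \undim tN = v(i) \cdot \undim (t'N')$.

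At this point I would apply the Nakayama analogue of \cite[Lemma 3.12]{BST_wall}: since $v(i) \in C(\rho_i(M,\cP^+,\cP^-))$ and in fact lies in its interior when $v \in \fC(M,\cP^+,\cP^-)$ (this interior-to-interior statement is Corollary~\ref{cor:int} combined with Proposition~\ref{prop:defined projection}), the standard Brüstle–Smith–Treffinger result for $\Lambda_{r_i}$ gives $v(i) \cdot \undim(t'N') \geq 0$ with equality iff $t'N' = 0$. Combining with the observation above that $v \cdot \undim tN \leq 0$, we conclude $v \cdot \undim tN = 0$, and $tN = 0 \iff t'N' = 0 \iff tN = 0$ (the correspondence being a bijection on isoclasses, $tN$ vanishes iff its image does). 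The main obstacle I anticipate is the bookkeeping needed to verify that the Nakayama-side trace $t'N'$ really is computed with respect to $\rho_i(M,\cP^+,\cP^-)$ and the correct ``$L$'', i.e., that $\rad\Hom_H(L,N)$ in the tube $\cT_i$ matches $\rad\Hom_{\Lambda_{r_i}}$ from the projective modules $Y_{j,r_i+1}^i$ (for $X_{j,1}^i \in \cT_i \cap \tp(\cP^+)$) into the corresponding $\Lambda_{r_i}$-module; this requires knowing that a module of quasi-length $\ell \geq$ (regular length of $N$) with quasi-top $X_{j,1}^i$ has the same image-space of homs into $N$ as the projective $Y_{j,r_i+1}^i$ does on the Nakayama side, which follows from the proof of Proposition~\ref{prop:long homs}. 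A secondary subtlety is making sure the sign bookkeeping for $\cP^-$ works: when $\cP^- \neq \emptyset$ we have $\cP^+ = \emptyset$ so $L = 0$ and $tN = \sum_{f \in \rad\Hom_H(M,N)}\Im(f)$, and $\rho_i(M,\emptyset,\cP^-) = N_i \sqcup Q_i[1]$ with $Q_i$ contributing negated $g$-vectors; the Nakayama-side statement still applies verbatim to the support $\tau$-rigid object $N_i \oplus Q_i[1]$, so the argument goes through symmetrically.
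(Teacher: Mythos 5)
There is a genuine error at the very first step, and it propagates. You claim that $v \in \fC(M,\cP^+,\cP^-)$ implies $v\cdot\undim N'\le 0$ for every regular submodule $N'\subseteq N$. This would require $N$ to be $v$-regular semistable, i.e.\ $v\in D_{reg}(N)$, which is not a hypothesis of the lemma: $N$ is an \emph{arbitrary} regular module, and $\fC(M,\cP^+,\cP^-)$ is an (open) chamber-like cone, not contained in any $D_{reg}(M_k)$. Concretely, take $\cP^\pm=\emptyset$, $M=X^i_{j,1}$ quasi-simple, $N=X^i_{j,2}$, and $v=g_0(M)\in\fC(M,\emptyset,\emptyset)$. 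The inclusion $M\hookrightarrow N$ is a radical map, so $tN=M\neq 0$, and $v\cdot\undim tN=g(M)\cdot\undim M=\dim_K\End(M)-\dim_K\Hom(M,\tau M)=1>0$. This contradicts your claimed ``$\le 0$'' and hence your conclusion that $v\cdot\undim tN=0$ and $tN=0$ always. Indeed, the fact that your argument ``proves'' $tN=0$ for every $N$ should have been a red flag: the lemma's ``equality iff $tN=0$'' clause only has content because $tN$ is frequently nonzero, with $v\cdot\undim tN$ then strictly positive.

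There is a second, independent gap in the transfer to the Nakayama side. The correspondence $X^i_{j,\ell}\leftrightarrow Y^i_{j,\ell}$ of Proposition~\ref{prop:same homs} is not an equivalence between the tube $\cT_i$ and $\mods\Lambda_{r_i}$: the tube contains indecomposables of arbitrarily large quasi-length, while $\Lambda_{r_i}$-modules have length at most $r_i+1$, and Propositions~\ref{prop:same homs} and~\ref{prop:defined projection}(3) are only available for quasi-length at most $r_i$ (bricks). So for $N$ (or $tN$) of large quasi-length there is no Nakayama-side counterpart $t'N'$ to which you could apply \cite[Lemma~3.12]{BST_wall}. The paper's proof sidesteps both problems by working entirely on the $H$-side: using the epimorphism $(M\oplus L)^k\twoheadrightarrow tN$ it first shows $\Hom_H(tN,\tau M)=0$ and $p(\cX)\cdot\undim tN=0$ for all $p(\cX)\in\cP^-$, and then expands $v\cdot\undim tN$ via the Euler--Ringel pairing $g(M_k)\cdot\undim tN=\dim_K\Hom_H(M_k,tN)-\dim_K\Hom_H(tN,\tau M_k)$ together with Remark~\ref{rem:always nonnegative}, obtaining a sum of visibly nonnegative terms whose vanishing (using the strict positivity of the coefficients from Corollary~\ref{cor:int}) forces $\Hom_H(M,tN)=0=\Hom_H(L,tN)$ and hence $tN=0$. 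If you want to keep a reduction-to-$\Lambda_{r_i}$ strategy, you would at minimum need to drop the false ``$\le 0$'' step and extend the hom-comparison to modules of arbitrary quasi-length, which is precisely what the paper's direct computation avoids.
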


\begin{proof}
	Note that $\fC(M,\cP^+,\cP^-)$ is contained in the relative interior of the cone corresponding to the projective closure of $(M,\cP^+,\cP^-)$ by the proof of Proposition \ref{prop:proj closed fan}. Moreover, the set $\tp(\cP^+)$ does not change upon taking projective closure. Thus, we will assume that $(M,\cP^+,\cP^-)$ is projectively closed.
	
	Now observe that $tN$ is regular, since the image of any map between regular modules is necessarily regular. Moreover, there exists a positive integer $k$ and an epimorphism $q: (M\oplus L)^k \twoheadrightarrow tN$.
	
    Claim 1: $\Hom(tN,\tau M)=0$. Suppose for a contradiction there exists a nonzero map $tN \rightarrow \tau M$. Composing with $q$ then gives a nonzero morphism $(M\oplus L)^k \rightarrow \tau M$. Since $\Hom_H(M,\tau M) = 0$, this means there is a nonzero morphism $L\rightarrow \tau M$. By Proposition \ref{prop:long homs}, this means there exists $\cX \in \cP^+$ such that $p(\cX)\cdot \undim \tau M \neq 0$, a contradiction.
	
    Claim 2: $p(\cX)\cdot\undim tN=0$ for all $p(\cX)\in \cP^-$.	Now suppose for a contradiction that there exists $p(\cX) \in \cP^-$ so that $p(\cX)\cdot \undim tN \neq 0$ (and hence $p(\cX)\cdot \undim tN > 0$ by Remark \ref{rem:always nonnegative}). In particular, this means $\cP^+ = \emptyset$ and thus $L=0$.
    Moreover, there exists an indecomposable direct summand $N'$ of $tN$ so that $p(\cX)\cdot \undim N' \neq 0$. Write $\cX = \{X_{j_1,1}^1,\ldots,X_{j_m,1}^m\}$. Now if $N'$ is contained in an exceptional tube $\cT_i$, Proposition \ref{prop:multiplicity} implies that $X_{j_i,1}^i$ appears in the regular composition series of $N'$. Thus there exists an indecomposable direct summand of $M^k$ which contains $X_{j_i,1}^i$ in its regular composition series, contradicting that $p(\cX)\in \cP^-$.
    If $N'$ is not contained in an exceptional tube $\cT_i$, then $M^k$ and hence $M$ contains a homogeneous direct summand, a contradiction.
	
	Now write $v = \sum \lambda_k\cdot g_0(M_k) + \sum \lambda_{\cX}\cdot p(\cX) - \sum \lambda_{\cY}\cdot p(\cY)$ in the form in Definition \ref{def:cones}. Then by the preceding paragraphs, we have
	\begin{eqnarray*}
		v \cdot \undim tN &=& \sum \lambda_k(\dim_K\Hom_H(M_k,tN)-\dim_K\Hom_H(tN,\tau M_k))\\
		&& + \sum \lambda_{\cX}(p(\cX)\cdot \undim tN) - \sum \lambda_{\cY}(p(\cY)\cdot \undim tN)\\
		&=& \sum\lambda_k \dim_K\Hom_H(M_k,tN) + \sum\lambda_{\cX}p(\cX)\cdot \undim tN\\
		&\geq& 0.
	\end{eqnarray*}
	
	Now suppose that $tN \neq 0$. It remains to show that $v\cdot \undim tN \neq 0$ in this case. It suffices to prove this in the case that $tN$ is indecomposable. As in Equation~\ref{eqn:tube sum}, for all quasi simple $X_{j,1}^i \in \tp(\cP^+)$, denote $\lambda_{i,j} = \sum_{p(\cX) \in \cP: X_{j,1}^i \in \cX} \lambda_{\cX}$.
	
	Suppose first that $tN$ is homogeneous. Then $\Hom_H(M,tN) = 0$. Moreover, $\undim tN = c \cdot \eta$ for some $c > 0$, and so $p(\cX) \cdot \undim tN = c$ for all $p(\cX) \in \cP^+$. It follows (for any choice of $i$) that $$v\cdot \undim tN = \sum_{p(\cX) \in \cP^+} c = \sum_{X_{j,1}^i \in \cT_i \cap \tp(\cP^+)} c\lambda_{i,j}.$$ Since each coefficient $\lambda_{i,j}$ is positive (Corollary~\ref{cor:int}), this implies the result.
	
	Since $tN$ is assumed to be indecomposable, it remains to consider the case where $tN$ lies in some exceptional tube, say $\cT_i$. Treating $i$ as fixed, for $p(\cX) \in \cP^+$, consider the unique $X_{j,1}^i \in \cX$. Then Proposition~\ref{prop:multiplicity} implies that $p(\cX) \cdot \undim tN =: m_j$ is the multiplicity of $X_{j,1}^i$ in a regular composition series of $tN$. Since $m_j$ depends only of $X_{j,1}^i$, and not on all of $\cX$, it follows that
	$$v\cdot \undim tN = \sum_k \lambda_k \dim_K \Hom_H(M_k,tN) + \sum_{j} m_j\lambda_{i,j}.$$
	Now if $\Hom_H(M,tN) \neq 0$, then the fact that the coefficients $\lambda_k$ and $\lambda_{i,j}$ are all positive (Corollary~\ref{cor:int}) implies the result. Thus suppose $\Hom_H(M,tN) = 0$. From the definition of $tN$, this means that $\Hom_H(L,tN) \neq 0$. Then there exists $p(\cX) \in \cP^+$ such that $p(\cX) \cdot \undim tN > 0$ by Proposition~\ref{prop:long homs}. But then $m_j\lambda_{i,j} \neq 0$, where $j$ is such that $X_{j,1}^i \in \cX$. Again since the coefficients $\lambda_k$ and $\lambda_{i,j}$ are all positive (Corollary~\ref{cor:int}), this implies the result.
\end{proof}

\begin{lem}\label{lem:when regular semistable}
	Let $(M,\cP^+,\cP^-)$ be support regular rigid and let $v \in \fC(M,\cP^+,\cP^-)$. Let $N$ be a regular module. Then $N$ is $v$-regular semistable if and only if all of the following hold:
	\begin{enumerate}
		\item $\Hom_H(M,N) = 0$.
		\item $\Hom_H(N,\tau M) = 0$.
		\item $p(\cX) \cdot \undim N = 0$ for all $p(\cX) \in \cP^+\cup \cP^-$.
	\end{enumerate}
\end{lem}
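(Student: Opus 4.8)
The plan is to prove the two implications separately. Throughout I fix an expansion
$$v = \sum_{k=1}^t \lambda_k\, g_0(M_k) + \sum_{p(\cX)\in\cP^+}\lambda_{\cX}\, p(\cX) - \sum_{p(\cY)\in\cP^-}\lambda_{\cY}\, p(\cY)$$
with nonnegative coefficients, as in Definition~\ref{def:cones}, where $M_1,\dots,M_t$ are the indecomposable summands of $M$. I will use repeatedly the Euler--Ringel identity $g(M_k)\cdot\undim Y=\dim_K\Hom_H(M_k,Y)-\dim_K\Hom_H(Y,\tau M_k)$, the identity $g_0(M_k)\cdot\undim Y=g(M_k)\cdot\undim Y$ for regular $Y$ (Proposition~\ref{prop:null g}), and the inequality $p(\cX)\cdot\undim Y\ge 0$ for regular $Y$ (Remark~\ref{rem:always nonnegative}).

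\emph{The implication ($\Leftarrow$).} Assume (1)--(3). For each $k$ we get $g_0(M_k)\cdot\undim N=\dim_K\Hom_H(M_k,N)-\dim_K\Hom_H(N,\tau M_k)=0$ by (1) and (2), while $p(\cX)\cdot\undim N=0$ by (3); hence $v\cdot\undim N=0$. Given a regular submodule $N'\subseteq N$, post-composition with the inclusion embeds $\Hom_H(M_k,N')$ into $\Hom_H(M_k,N)=0$, so $g_0(M_k)\cdot\undim N'=-\dim_K\Hom_H(N',\tau M_k)\le 0$. Moreover $N/N'$ is again regular (a quotient of a regular module, hence regular or preinjective summand-wise, and $g(\eta)\cdot\undim(N/N')=0$ rules out preinjective summands), so applying Remark~\ref{rem:always nonnegative} to $0\to N'\to N\to N/N'\to 0$ shows that the nonnegative numbers $p(\cX)\cdot\undim N'$ and $p(\cX)\cdot\undim(N/N')$ sum to $p(\cX)\cdot\undim N=0$, whence $p(\cX)\cdot\undim N'=0$ for every $p(\cX)\in\cP^+\cup\cP^-$. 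Therefore $v\cdot\undim N'\le 0$, and $N$ is $v$-regular semistable.

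\emph{The implication ($\Rightarrow$).} Assume $N$ is $v$-regular semistable. By the proof of Proposition~\ref{prop:proj closed fan} we may assume $(M,\cP^+,\cP^-)$ is projectively closed, so that Corollary~\ref{cor:int} lets us take all coefficients above positive. Writing $N=N^{\mathrm{hom}}\oplus N^{\mathrm{exc}}$ according to the decomposition of $\Reg H$ into tube categories, both summands are regular submodules of $N$ with nonpositive $v$-pairing, hence each has $v$-pairing $0$; but $v\cdot\undim N^{\mathrm{hom}}=k_{\mathrm{hom}}\big(\sum_{\cP^+}\lambda_\cX-\sum_{\cP^-}\lambda_\cY\big)$ where $\undim N^{\mathrm{hom}}=k_{\mathrm{hom}}\eta$, and the bracket is nonzero by Corollary~\ref{cor:int} whenever $\cP^+\cup\cP^-\ne\emptyset$, forcing $k_{\mathrm{hom}}=0$ in that case; when $\cP^+\cup\cP^-=\emptyset$ the homogeneous part is irrelevant to (1)--(3). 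So we may assume $N$ has no homogeneous direct summand. Now the trace $tN$ of Lemma~\ref{lem:trace} is a regular submodule of $N$, so $v\cdot\undim tN\le 0$; the equality clause of Lemma~\ref{lem:trace} then forces $tN=0$, i.e. $\rad\Hom_H(M,N)=0$ and $\rad\Hom_H(L,N)=0$. To upgrade these to $\Hom_H(M,N)=0$ (condition (1)) and, via Proposition~\ref{prop:long homs}, to $p(\cX)\cdot\undim N=0$ for all $p(\cX)\in\cP^+$ (the $\cP^+$-part of (3)), it suffices to prove that $N$ has no indecomposable direct summand in common with $M$, resp. with $L$. For $M$: a common summand $E$ is a $\tau$-rigid brick, so $g_0(E)\cdot\undim E=1$; since $E$ is a summand of the $v$-regular semistable module $N$ we have $v\cdot\undim E=0$, and expanding this, using $\Hom_H(E,\tau M_k)=0$, Remark~\ref{rem:always nonnegative}, positivity of $\lambda_E$, and --- when $\cP^-\ne\emptyset$ --- the defining equality $p(\cY)\cdot\undim M=0$ of Definition~\ref{def:regular rigid}(3) with Proposition~\ref{prop:multiplicity}, produces a contradiction. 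The case of $L$ is analogous, using that every indecomposable summand of $L$ has quasi length equal to the maximum of the regular lengths of $M$ and $N$ and quasi top in $\tp(\cP^+)$, so that a common summand forces some $p(\cX)\cdot\undim E>0$ and hence again contradicts $v\cdot\undim E=0$. Finally, having (1) and (when $\cP^+\ne\emptyset$) the $\cP^+$-part of (3), the identity $0=v\cdot\undim N$ expresses $0$ as a sum of the nonpositive terms $-\lambda_k\dim_K\Hom_H(N,\tau M_k)$ and $-\lambda_\cY\,p(\cY)\cdot\undim N$, so each vanishes, giving condition (2) and the $\cP^-$-part of (3).

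\emph{Main obstacle.} The substantive difficulty lies entirely in ($\Rightarrow$), and within it in ruling out a common indecomposable summand of $N$ with the auxiliary module $L$ when the regular length of $N$ is at least that of $M$; this is the step where the positivity of the coefficients of $v$ in the relative interior $\fC(M,\cP^+,\cP^-)$ must be played against the hom-structure of the exceptional tubes (equivalently, against the Nakayama algebras $\Lambda_{r_i}$ via Proposition~\ref{prop:defined projection}), and it is the regular analogue of the argument in \cite[Lemma~3.12]{BST_wall}. The remaining steps are routine manipulations with the Euler--Ringel pairing and the nonnegativity statements established earlier in this section.
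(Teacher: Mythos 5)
Your proof is correct and follows essentially the same route as the paper: Lemma~\ref{lem:trace} forces $tN=0$ for the forward implication, and the backward implication is the same composition-series computation with the Euler--Ringel pairing and Remark~\ref{rem:always nonnegative}. You are in fact somewhat more careful than the published argument in upgrading $\rad\Hom_H(M,N)=0=\rad\Hom_H(L,N)$ to the full Hom-vanishing by excluding common direct summands, and in disposing of homogeneous summands of $N$ before invoking Proposition~\ref{prop:long homs}.
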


\begin{proof}
	Suppose first that $N$ is $v$-regular semistable and let $tN$ be as in Lemma \ref{lem:trace}. Then since $tN$ is a regular submodule of $N$, we have $v\cdot \undim tN \leq 0$. Lemma \ref{lem:trace} then implies that $tN = 0$. This means $\Hom_H(M,N) = 0$ and, by Proposition \ref{prop:long homs}, $p(\cX)\cdot\undim N = 0$ for all $p(\cX) \in \cP^+$. As $v\cdot\undim N = 0$, this means that $\Hom_H(N,\tau M) = 0$ and so, by Remark \ref{rem:always nonnegative}, $p(\cX')\cdot\undim N = 0$ for all $p(\cX') \in \cP^-$.
	
	Now suppose all three conditions hold and let $N'$ be a regular submodule of $N$. Condition (1) then implies that $\Hom_H(M,N') = 0$. Thus suppose $\cP^+$ is nonempty and let $N''$ be an indecomposable direct summand of $N'$. By condition (3), there does not exists $\cX \in \cP^+$ and $X_{j,\ell}^i \in \cX$ which appears in the regular composition series of $N''$. Writing $v$ as in Definition \ref{def:cones}, this implies $v\cdot\undim N' \leq 0$. Moreover, by Proposition~\ref{prop:null g}, for any indecomposable direct summand $M'$ of $M$, we have $$g_0(M') \cdot \undim N = g(M') \cdot \undim N = \dim_k \Hom_H(M',N) - \dim_k\Hom_H(N,\tau M') = 0.$$ Together with condition (3), this implies that $v \cdot \undim N = 0.$ We conclude that $N$ is $v$-regular semistable.
\end{proof}

\begin{prop}\label{prop:exists semistable}
	Let $(M,\cP^+,\cP^-)$ be support regular rigid. Let $v \in \fC(M,\cP^+,\cP^-)$.
	If there exists a nonzero regular module which is $v$-regular semistable, then $(M,\cP,\cP^-)$ is not a support regular cluster.
\end{prop}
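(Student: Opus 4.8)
The plan is to transport the problem, via the projection maps $\rho_i$ and the linear maps $v\mapsto v(i)$ of Proposition~\ref{prop:defined projection}, into the standard wall-and-chamber structures of the self-injective Nakayama algebras $\Lambda_{r_i}$, where the dictionary between support $\tau$-tilting objects and chambers (Theorem~\ref{thm:g fan = wall chamber}) is available, and then to play this off against our description of support regular clusters in Theorem~\ref{thm:maximal}. Throughout I will assume there is at least one exceptional tube; if $m=0$ then $H$ is the Kronecker algebra, every regular module is homogeneous (so $\Hom_H(M,\tau M)=0$ forces $M=0$), and the statement is checked directly.

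First I would reduce the semistable witness to a convenient form. Given a nonzero $v$-regular semistable regular module $N$, decomposing $N$ into indecomposables and using that each summand is a regular submodule of $N$ (so pairs non-positively with $v$, while these pairings sum to $v\cdot\undim N=0$) shows each summand is again $v$-regular semistable; so I may take $N$ indecomposable. If $N$ is homogeneous, or lies in an exceptional tube with quasi-length exceeding $r_i$, a short computation (writing $\undim N$ as a nonnegative multiple of $\eta$ plus a shorter quasi-uniserial piece, and using Proposition~\ref{prop:tube info}(1)) forces $v\cdot\eta=0$; then $v\in\eta^\perp\cap g(\eta)^\perp=D_{reg}(\eta)=\bigcup_j D_{reg}(X^1_{j,r_1})$ by Lemma~\ref{lem:decompose D(eta)}, so I may replace $N$ by a suitable $X^1_{j,r_1}$. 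After this reduction $N=X^i_{j,\ell}$ with $\ell\le r_i$ lies in an exceptional tube $\cT_i$, and in particular $N$ is a brick (Proposition~\ref{prop:tube info}(2)).

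Next, suppose for contradiction that $(M,\cP^+,\cP^-)$ is a support regular cluster. By Theorem~\ref{thm:maximal} (part (1) if $\cP^-=\emptyset$, part (2) if $\cP^+=\emptyset$; one of these holds by Definition~\ref{def:regular rigid}(4)), it is projectively closed and $\rho_i(M,\cP^+,\cP^-)$ is support $\tau$-tilting, so by Theorem~\ref{thm:g fan = wall chamber}(1) the relative interior of $C(\rho_i(M,\cP^+,\cP^-))$ is a chamber of $\fD(\Lambda_{r_i})$. On the other hand, Corollary~\ref{cor:int} (using projective-closedness) shows that in the expression for $v\in\fC(M,\cP^+,\cP^-)$ every coefficient $\lambda_k,\lambda_{i,j}$ is positive; these are exactly the coefficients of $v(i)$ against the $g$-vectors of the indecomposable summands of $\rho_i(M,\cP^+,\cP^-)$, which are linearly independent by Proposition~\ref{prop:g fan}(2), so $v(i)$ lies in that relative interior, hence in a chamber of $\fD(\Lambda_{r_i})$. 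But $N=X^i_{j,\ell}$ being $v$-regular semistable gives, via Proposition~\ref{prop:defined projection}(3), that $Y^i_{j,\ell}$ is $v(i)$-semistable, i.e. $v(i)\in D(Y^i_{j,\ell})\subseteq\overline{\bigcup_{0\ne Z}D(Z)}$, which lies in no chamber. This contradiction finishes the proof.

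The step I expect to require the most care is the reduction in the second paragraph: Proposition~\ref{prop:defined projection}(3) only compares $v$-regular semistability with $v(i)$-semistability for \emph{bricks}, so one genuinely must maneuver $N$ into an exceptional tube with quasi-length at most $r_i$, and the homogeneous case is precisely where Lemma~\ref{lem:decompose D(eta)} does the work. The only other point demanding attention is confirming that $v(i)$ lands in the \emph{relative interior} of $C(\rho_i(M,\cP^+,\cP^-))$, which is where projective-closedness and Corollary~\ref{cor:int} enter.
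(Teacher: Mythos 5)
Your proof is correct and rests on the same mechanism as the paper's: reduce the semistable witness to a brick in an exceptional tube, push both $v$ and the brick through the maps $v\mapsto v(i)$ and $\rho_i$ of Proposition~\ref{prop:defined projection} into $\mods\Lambda_{r_i}$, and play off the resulting $v(i)$-semistable brick against the chamber/wall dichotomy for $\Lambda_{r_i}$. The differences are organizational rather than conceptual. The paper argues directly: it shows $\rho_i$ fails to be support $\tau$-tilting and then cites Theorem~\ref{thm:maximal}, splitting into $\cP^+\cup\cP^-\neq\emptyset$ (where the $v(i)$-semistable module and \cite[Theorem~3.14]{BST_wall} do the work) and $\cP^+\cup\cP^-=\emptyset$ (where Proposition~\ref{prop:complete all positive} alone suffices, without the semistable module). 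You instead argue by contradiction, which treats both cases uniformly: if it were a support regular cluster, $v(i)$ would sit in a chamber of $\fD(\Lambda_{r_i})$ by the positivity of interior coefficients from Corollary~\ref{cor:int} and linear independence from Proposition~\ref{prop:g fan}(2), while the projected brick $Y^i_{j,\ell}$ forces $v(i)$ onto a wall. You also spell out the reduction of $N$ to an exceptional-tube brick with quasi-length at most $r_i$ in more detail (via $v\cdot\eta=0$ and Lemma~\ref{lem:decompose D(eta)}), where the paper's invocation of Lemma~\ref{lem:when regular semistable} is terser and implicitly relies on the same kind of minimal-submodule argument as Lemma~\ref{lem: Asai}. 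Both proofs are valid; yours replaces the external citation \cite[Theorem~3.14]{BST_wall} with Theorem~\ref{thm:g fan = wall chamber}(1), making the argument slightly more self-contained, at the cost of the explicit (and in your own estimate, most delicate) preliminary reduction.
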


\begin{proof}
	Assume without loss of generality that $(M,\cP^+,\cP^-)$ is projectively closed and let $N \neq 0$ be $v$-regular semistable. By Lemma \ref{lem:when regular semistable}, we can assume $N$ is indecomposable and a brick. If $\cP^+\cup \cP^- \neq \emptyset$, we can further assume that $N$ is not homogeneous and lies in some exceptional tube $\cT_i$. By Proposition \ref{prop:defined projection}, this means there exists a $v(i)$-semistable module in $\mods\Lambda_{r_i}$. By \cite[Theorem 3.14]{BST_wall}, this means $\rho_i(M,\cP^+,\cP^-)$ is not support $\tau$-tilting and thus $(M,\cP^+,\cP^-)$ is not a support regular cluster by Theorem \ref{thm:maximal}.
	
	If $\cP^+\cup \cP^- = \emptyset$, then $\rho_i(M,\cP^+,\cP^-)$ is not support $\tau$-tilting by Proposition \ref{prop:complete all positive}(2). Again, this means $(M,\cP^+,\cP^-)$ is not a support regular cluster by Theorem \ref{thm:maximal}.
\end{proof}

\begin{thm}\label{thmCa}
	Let $(M,\cP^+,\cP^-)$ be a support regular cluster. Then $\fC(M,\cP^+,\cP^-)$ is a chamber in the regular wall-and-chamber structure $\fD_{reg}(H)$.
\end{thm}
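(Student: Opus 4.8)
The plan is to establish two facts about the cone $C := C(M,\cP^+,\cP^-)$ and its relative interior $\fC := \fC(M,\cP^+,\cP^-) \subseteq g(\eta)^\perp$: that $\fC$ is a nonempty open subset of $g(\eta)^\perp$ disjoint from the union of the walls $D_{reg}(X)$ (hence from its closure), and that the relative boundary $\partial\fC$ is \emph{contained} in that union. An open set that is also connected (here $\fC$ is convex) with these two properties is automatically a connected component of the complement of the wall-union in $g(\eta)^\perp$, i.e.\ a chamber: it lies inside some chamber $U$, and if the inclusion were proper then, $\fC$ being open and $U$ open and connected, a point of $\partial\fC$ would lie in $U$, contradicting $\partial\fC \subseteq \overline{\bigcup_X D_{reg}(X)}$ together with $U \cap \overline{\bigcup_X D_{reg}(X)} = \emptyset$.

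For openness I would first show $\dim C = n-1$. Since $(M,\cP^+,\cP^-)$ is a support regular cluster, Theorem~\ref{thm:maximal} tells us it is projectively closed and each $\rho_i(M,\cP^+,\cP^-)$ is support $\tau$-tilting for $\Lambda_{r_i}$, so $|\rho_i(M,\cP^+,\cP^-)| = r_i$. Feeding this into Corollary~\ref{cor:cone dim} and using $\sum_{i=1}^m (r_i - 1) = n-2$ gives $\dim C = 1 - m + \sum_i r_i = n-1$ when $\cP^+ \cup \cP^- \neq \emptyset$; the case $\cP^+ \cup \cP^- = \emptyset$ is handled separately, noting that then $M$ has no homogeneous summand, that $|M| = \sum_i r_i$, and that $\dim C = |M| \leq n-1$ forces $m \leq 1$ (with $m = 0$ ruled out because $(0,\emptyset,\emptyset)$ could be enlarged by any projective vector, contradicting maximality). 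For the disjointness, Proposition~\ref{prop:exists semistable} in contrapositive form says that no nonzero regular module is $v$-regular semistable for any $v \in \fC$; since by definition $v \in D_{reg}(X)$ iff $X$ is $v$-regular semistable (Definition~\ref{def:regular stability}), $\fC$ misses every $D_{reg}(X)$, and an open set missing a set also misses its closure. So $\fC$ lies inside a chamber $U$.

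The heart of the argument is showing that each codimension-$1$ face of $C$ lies in a wall; since $C$ is full-dimensional, $\partial\fC$ is the union of these faces, so this gives $\partial\fC \subseteq \overline{\bigcup_X D_{reg}(X)}$. By Proposition~\ref{prop:proj closed fan} a codimension-$1$ face is $C(N,\cQ^+,\cQ^-)$ for a projectively closed support regular rigid object, of dimension $n-2$. Applying Corollary~\ref{cor:cone dim} again forces $\sum_i |\rho_i(N,\cQ^+,\cQ^-)| = \sum_i r_i - 1$, so some $\rho_{i_0}(N,\cQ^+,\cQ^-)$ is support $\tau$-rigid for $\Lambda_{r_{i_0}}$ with exactly $r_{i_0}-1$ indecomposable summands. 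As $\Lambda_{r_{i_0}}$ has only finitely many indecomposables (Proposition~\ref{prop:Nakayama}) it is $\tau$-tilting finite, so by \cite[Corollary~3.16]{BST_wall} the cone $C(\rho_{i_0}(N,\cQ^+,\cQ^-))$ lies in a wall $D(Y^{i_0}_{j,\ell})$ of $\fD(\Lambda_{r_{i_0}})$ for some brick $Y^{i_0}_{j,\ell}$. Using the linear projection $v \mapsto v(i_0)$ of Proposition~\ref{prop:defined projection}, which sends $C(N,\cQ^+,\cQ^-)$ into $C(\rho_{i_0}(N,\cQ^+,\cQ^-))$ and translates $v$-regular semistability of $X^{i_0}_{j,\ell}$ into $v(i_0)$-semistability of $Y^{i_0}_{j,\ell}$, I would conclude $C(N,\cQ^+,\cQ^-) \subseteq D_{reg}(X^{i_0}_{j,\ell})$ for the corresponding regular brick. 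Finally, $D_{reg}(X^{i_0}_{j,\ell}) \subseteq (\undim X^{i_0}_{j,\ell})^\perp \cap g(\eta)^\perp$ has dimension $n-2$ and contains the $(n-2)$-dimensional face, so it is genuinely a wall of $\fD_{reg}(H)$.

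The step I expect to be the main obstacle is exactly this certification that a codimension-$1$ face of the chamber cone is a wall: it requires reconciling the dimension bookkeeping of Corollary~\ref{cor:cone dim} with the $\tau$-tilting-finite wall description \cite[Corollary~3.16]{BST_wall} and then pushing everything through the projection of Proposition~\ref{prop:defined projection}, all while keeping track of the degenerate cases — in particular that $\cQ^+ \cup \cQ^-$ can be empty even when $\cP^+ \cup \cP^-$ is not, and the small cases with a single exceptional tube (and, throughout, the harmless symmetry interchanging the two null sides, so one may assume $\cP^- = \emptyset$). The surrounding point-set topology (open-and-disjoint implies disjoint-from-closure; an open proper subset of an open connected set touches its own boundary inside that set) is routine but should be spelled out so that the ``contained in a chamber $\Rightarrow$ equals a chamber'' step is airtight.
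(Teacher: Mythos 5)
Your proof is correct, and its skeleton (show $\fC$ is open, full-dimensional and disjoint from the walls, hence inside a chamber; show its relative boundary lies in the union of the walls; conclude by connectedness) matches the paper's. The execution of the boundary step is genuinely different, though. The paper argues pointwise: for $v\in\partial C(M,\cP^+,\cP^-)$ it invokes Corollary \ref{cor:int} to find a vanishing coefficient $\lambda_k$ or $\lambda_{i,j}$, then cites \cite[Theorem 3.14]{BST_wall} to produce a $v(i)$-semistable $\Lambda_{r_i}$-module and pulls it back through Proposition \ref{prop:defined projection}(3). You instead work facet-by-facet: Proposition \ref{prop:proj closed fan} identifies each codimension-one face as a cone $C(N,\cQ^+,\cQ^-)$, Corollary \ref{cor:cone dim} forces some $\rho_{i_0}(N,\cQ^+,\cQ^-)$ to have $r_{i_0}-1$ summands, and \cite[Corollary 3.16]{BST_wall} places $C(\rho_{i_0}(N,\cQ^+,\cQ^-))$ in a wall before pulling back. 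Your route is essentially an inlined form of the paper's subsequent Theorem \ref{thm:walls}; it costs you the full strength of Proposition \ref{prop:proj closed fan} but buys more explicit bookkeeping, and you are more careful than the paper about the disjointness of $\fC$ from the wall closure (via the contrapositive of Proposition \ref{prop:exists semistable}) and about the final point-set step. Two small remarks: in the facet analysis with $\cQ^+\cup\cQ^-=\emptyset$ and $m\ge 2$, the count $\sum_i\lvert\rho_i\rvert=n-2$ does not by itself single out one tube, but since $\lvert\rho_i\rvert\le r_i-1$ there, every tube works, so your conclusion stands; and your top-level case $\cP^+\cup\cP^-=\emptyset$ is in fact vacuous for support regular clusters (Proposition \ref{prop:complete all positive} shows $\rho_i(M,\emptyset,\emptyset)$ is never support $\tau$-tilting), so the slightly awkward $m\le 1$ discussion can be dropped.
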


\begin{proof}
	Let $(M,\cP^+,\cP^-)$ be a support regular cluster. Then by Corollary \ref{cor:cone dim}, $\fC(M,\cP^+,\cP^-)$ is open in $\RR^{n-1}$ and thus is contained in a chamber since it is connected.
	
	Now let $v \in \partial C(M,\cP^+,\cP^-)$ and write $$v = \sum_{k = 1}^t \lambda_j\cdot g_0(X_{j_k,\ell_k}^{i_k}) + \sum_{p(\cX) \in \cP^+} \lambda_{\cX}\cdot p(\cX) -  \sum_{p(\cX') \in \cP^-} \lambda_{\cX'}\cdot p(\cX').$$
	By Corollary \ref{cor:int}, we know that either there is some $\lambda_k = 0$ or there exists a quasi simple $X_{j,1}^i \in \tp(\cP^+ \cup \cP^-)$ with $\lambda_{i,j} = 0$. By \cite[Theorem 3.14]{BST_wall}, in the first case there exists a nonzero $v(i_k)$-semistable module (in $\mods\Lambda_{r_{i_k}}$). Likewise, in the second there exists a nonzero $v(i)$-semistable module (in $\mods\Lambda_{r_i}$) by the same result. Proposition \ref{prop:defined projection} then implies there exists a nonzero $v$-regular semistable module (in $\mods H$), and thus $v$ is contained in a wall. This shows that $\fC(M,\cP^+,\cP^-)$ is a chamber.
\end{proof}

We now focus on proving that the association in Theorem \ref{thmCa} is a bijection.

Let $\cS, \cF \subseteq \mods\Lambda_{r}$. We recall that $(\cS,\cF)$ is called a \emph{torsion pair} if $\cS^\perp = \cF$ and $\cS = \ ^\perp\cF$. In this case, $\cS$ is called a \emph{torsion class}. It is well-known that a full subcategory $\cS \subseteq \mods\Lambda_r$ is a torsion class if and only if it is closed under quotients and extensions. Moreover, since the algebra $\mods\Lambda_r$ is $\tau$-tilting finite, there is a bijection between support $\tau$-tilting objects for $\Lambda_r$ and torsion classes in $\mods\Lambda_r$ given by $M\oplus P[1] \mapsto \mathsf{Fac} M$, where $\mathsf{Fac}M$ is the subcategory of factors of finite direct sums of $M$ (see \cite{AIR_tilting}).

We now associate to each $v \in \RR^{n-1}$ a collection of $m$ torsion classes, motivated by \cite[Section 3.4]{BST_wall} and \cite[Section 6]{bridgeland_scattering}.

\begin{prop}\label{prop:split torsion classes}
	Let $v \in \RR^{n-1}$ and let $\cS(v) = \{M \in \Reg H\mid \forall (M \twoheadrightarrow M')\in \Reg H: v\cdot\undim M' \geq 0\}$. For each exceptional tube $\cT_i$, let $\cS_i(v)$ be the additive closure of $\{Y_{j,\ell}^i\mid X_{j,\ell}^i \in \cS\}$. Then $\cS_i(v)$ is a torsion class. Moreover, either for all $i$ there exists $j_i$ with $Y_{j_i,r_i}^i \in \cS_i(v)$ or no $\cS_i(v)$ contains a brick of dimension vector $\overline{1}$.
\end{prop}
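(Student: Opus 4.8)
The proposition has two parts, and I would treat them separately: that each $\cS_i(v)$ is a torsion class, and the "all-or-nothing" dichotomy at the end.

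\emph{The torsion-class claim.} The plan is to recognize $\cS_i(v)$ as the torsion class of $\Lambda_{r_i}$ attached to a linear functional, and then invoke the standard fact that such subcategories are torsion classes (cf. \cite[Section~3.4]{BST_wall}, \cite[Section~6]{bridgeland_scattering}). Let $w_i\in\RR^{r_i}$ be the vector whose $j$-th coordinate is $v\cdot\undim X_{j,1}^i$. By Remark~\ref{rem:functionals} and the dimension-vector formulas in the proofs of Propositions~\ref{prop:NakayamaEqns} and~\ref{prop:tameEqns}, $w_i\cdot\undim Y_{j,\ell}^i = v\cdot\undim X_{j,\ell}^i$ for every indecomposable $Y_{j,\ell}^i\in\mods\Lambda_{r_i}$. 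Using Proposition~\ref{prop:same homs} and the quasi-uniseriality of the $X_{j,\ell}^i$ (Proposition~\ref{prop:tube info}), the regular quotients of $X_{j,\ell}^i$ inside $\Reg H$ correspond to the quotients of $Y_{j,\ell}^i$ in $\mods\Lambda_{r_i}$ in a dimension-vector-preserving way. Hence $X_{j,\ell}^i\in\cS(v)$ if and only if $w_i\cdot\undim Y'\ge 0$ for every quotient $Y'$ of $Y_{j,\ell}^i$, so $\cS_i(v)=\{Y\in\mods\Lambda_{r_i}\mid w_i\cdot\undim Y'\ge 0\text{ for all quotients }Y'\text{ of }Y\}$. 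This is clearly quotient-closed, and extension-closed because a quotient of an extension is an extension of a quotient of the cokernel by a quotient of the subobject; so it is a torsion class. (An alternative packaging: verify directly that $\cS(v)$ is a torsion class in the abelian category $\Reg H$, restrict to the Serre subcategory $\cT_i$, and intersect with $\mods\Lambda_{r_i}\subseteq\cT_i$, which is closed under subquotients and under its own extensions.)

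\emph{The dichotomy.} Here the crucial input is Proposition~\ref{prop:regular linear algebra}(2): $\sum_{k=1}^{r_i}\undim X_{k,1}^i=\eta$ for \emph{every} exceptional tube, so $w_i\cdot\overline{1}=v\cdot\eta=:s$ does not depend on $i$. Next, by Proposition~\ref{prop:Nakayama} the indecomposable $\Lambda_{r_i}$-modules of dimension vector $\overline{1}$ are exactly $Y_{1,r_i}^i,\dots,Y_{r_i,r_i}^i$ (they have length $r_i$, hence are bricks), and since a brick is necessarily indecomposable these are precisely the bricks of dimension $\overline{1}$ in $\mods\Lambda_{r_i}$. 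By the first part, "$\cS_i(v)$ contains a brick of dimension $\overline{1}$" is equivalent to "$X_{j,r_i}^i\in\cS(v)$ for some $j$", so the dichotomy reduces to: if $s<0$ then no $X_{j,r_i}^i$ lies in $\cS(v)$ for any $i$, while if $s\ge 0$ then for each $i$ some $X_{j_i,r_i}^i$ lies in $\cS(v)$. The case $s<0$ is immediate: $X_{j,r_i}^i$ has dimension $\eta$ and is a regular quotient of itself with $v\cdot\undim X_{j,r_i}^i=s<0$, so it is never in $\cS(v)$. For $s\ge 0$, I would unwind the condition: the regular quotients of $X_{j,r_i}^i$ are the $X_{j+t,r_i-t}^i$ for $0\le t<r_i$, whose dimension vectors are the cyclic suffix sums of $w_i$ along the window starting at index $j$; thus $X_{j,r_i}^i\in\cS(v)$ iff every such suffix sum is $\ge 0$, equivalently iff the reversed, negated window starting at $j$ has all prefix sums $\le 0$. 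Since that window has total $-s\le 0$, the existence of a suitable $j=j_i$ follows from Lemma~\ref{lem:wholeHyperplane} after a mild extension allowing the total to be $\le 0$ instead of $=0$: append one extra entry equal to $s\ge 0$ to make the total $0$, apply Lemma~\ref{lem:wholeHyperplane}, and delete that entry (the chosen window cannot begin with it unless $s=0$, which is exactly the hypothesis of Lemma~\ref{lem:wholeHyperplane}). Running this over all $i$ gives the first alternative.

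\emph{Main obstacle.} I expect the delicate step to be the identification in the first part — matching $\cS_i(v)$, which is cut out inside $\Reg H$ and then transported to the \emph{bounded} algebra $\Lambda_{r_i}$, with an honest torsion class of $\Lambda_{r_i}$ — since one must be careful that quotients and extensions computed in $\mods\Lambda_{r_i}$ correspond correctly under the quasi-length $\le r_i+1$ comparison of the tube $\cT_i$ with $\mods\Lambda_{r_i}$ (for instance, that extensions in $\mods\Lambda_{r_i}$ do not need to be "untruncated" in the tube). The combinatorial dichotomy is then routine given Proposition~\ref{prop:regular linear algebra}(2) and the lightly extended Lemma~\ref{lem:wholeHyperplane}; en route one may also record that a $w$-semistable module lies in the associated torsion class (a one-line consequence of the sequence $0\to Y'\to Y\to Y''\to 0$), which recovers the $s=0$ subcase directly from Proposition~\ref{prop:wholeHyperplane}.
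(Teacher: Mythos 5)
Your proposal is correct and takes essentially the same route as the paper, whose proof is only a two-sentence sketch: it deduces the torsion-class claim ``immediately'' from Proposition~\ref{prop:same homs} and the dichotomy ``by an argument analogous to the proof of Proposition~\ref{prop:wholeHyperplane}.'' You supply exactly the details that sketch omits, and in particular you correctly flag that Lemma~\ref{lem:wholeHyperplane} needs a mild extension (your append-an-entry trick, or equivalently a maximal-prefix-sum argument) to handle the case $v\cdot\eta>0$, a point the paper glosses over.
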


\begin{proof}
	The fact that $\cS_i(v)$ is a torsion class follows immediately from Proposition \ref{prop:same homs}. Moreover, if there exists $M \in \cS(v)$ with $\undim M = \eta$, then for all $i$, there exists $j_i$ so that $X_{j_i,r_i}^i \in \cS(v)$ (and hence $Y_{j_i,r_i}^i \in \cS_i(v)$) by an argument analogous to the proof of Proposition \ref{prop:wholeHyperplane}.
\end{proof}

\begin{prop}\label{prop:glue torsion classes}
	For each $i$, let $\cS_i \subseteq \mods\Lambda_{r_i}$ be a torsion class. If either (a) for all $i$, there exists $j_i$ so that $Y_{j_i,r_i}^i \in \cS_i$ or (b) no $\cS_i$ contains a brick of dimension vector $\overline{1}$, then there exists $v \in \RR^{n-1}$ with $\cS_i = \cS_i(v)$ for all $i$.
\end{prop}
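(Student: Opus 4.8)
The plan is to realize each $\cS_i$ as the torsion class cut out by a point in the interior of the corresponding chamber of $\fD(\Lambda_{r_i})$, and then to glue these points into a single $v\in g(\eta)^\perp$ using the linear algebra from the proof of Theorem~\ref{thmB}. Since $\Lambda_{r_i}$ is $\tau$-tilting finite, $\cS_i=\mathsf{Fac}(M_i)$ for a support $\tau$-tilting pair $M_i\oplus P_i[1]$, and it is standard (\cite[Section~3.4]{BST_wall}, \cite[Section~6]{bridgeland_scattering}) that for every $\theta$ in the interior of the chamber $C(M_i\oplus P_i[1])$ one has $\cS_i=\{N\in\mods\Lambda_{r_i}\mid \theta\cdot\undim N'\ge0\text{ for every quotient }N'\text{ of }N\}$. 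Now, given $v\in g(\eta)^\perp$, set $\theta_i=(v\cdot\undim X_{1,1}^i,\dots,v\cdot\undim X_{r_i,1}^i)\in\RR^{r_i}$. Using $\undim X_{j,\ell}^i=\sum_{k=j}^{j+\ell-1}\undim X_{k,1}^i$ for $\ell\le r_i$ and $\undim X_{j,r_i+1}^i=\eta+\undim X_{j,1}^i$, together with $v\cdot\eta=\sum_k(\theta_i)_k$, a short computation gives $v\cdot\undim X_{j,\ell}^i=\theta_i\cdot\undim Y_{j,\ell}^i$ for all $1\le\ell\le r_i+1$; combined with Proposition~\ref{prop:same homs} (matching the regular quotients of $X_{j,\ell}^i$ with the quotients of $Y_{j,\ell}^i$), this identifies $\cS_i(v)$ with $\{N\in\mods\Lambda_{r_i}\mid\theta_i\cdot\undim N'\ge0\text{ for every quotient }N'\}$. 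Hence it suffices to produce $v\in g(\eta)^\perp$ whose induced $\theta_i$ lies in the interior of $C(M_i\oplus P_i[1])$ for every $i$.

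I would then determine on which side of the hyperplane $\ker\phi_i=\bigcup_{j}D(Y_{j,r_i}^i)$ (Proposition~\ref{prop:wholeHyperplane}) the chamber of $\cS_i$ lies, and tie this to hypotheses (a) and (b). As $\ker\phi_i$ is a union of walls of $\fD(\Lambda_{r_i})$, the connected wall-avoiding set that is the interior of $C(M_i\oplus P_i[1])$ lies in one open half-space $\{\phi_i>0\}$ or $\{\phi_i<0\}$; being a full-dimensional cone on which $\phi_i$ neither vanishes identically nor changes sign, its interior is mapped by $\phi_i$ onto $(0,\infty)$ or onto $(-\infty,0)$ accordingly. I claim the chamber lies in $\{\phi_i>0\}$ exactly when $\cS_i$ contains a brick of dimension vector $\overline{1}$. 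Evaluating at the interior point $g(M_i\oplus P_i[1])$ and using $g(N)\cdot\overline{1}\in\{0,1\}$ for indecomposable $\tau$-rigid $N$ (from the proof of Proposition~\ref{prop:null-nonnegative}), one gets $\phi_i(g(M_i\oplus P_i[1]))=\#\{\text{projective summands of }M_i\}-|P_i|$, which by Proposition~\ref{prop:one projective} is positive iff $P_i=0$ and negative iff $P_i\ne0$; the case $P_i=0$ with $M_i$ projective-summand-free is impossible, since Proposition~\ref{prop:complete all positive} would then give $Q\ne0$ with $M_i\oplus Q[1]$ support $\tau$-rigid having more than $r_i$ indecomposable summands. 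Finally, if $P_i=0$ then $M_i$ has an indecomposable projective summand $Y_{j,r_i+1}^i$, whose length-$r_i$ quotient is a brick of dimension vector $\overline{1}$ lying in $\mathsf{Fac}(M_i)=\cS_i$; conversely, if $P_i\ne0$ then $M_i$ has no projective summand, so Lemma~\ref{lem:complete all positive} yields a vertex of $Z_{r_i}$ supporting no summand of $M_i$, hence no object of $\mathsf{Fac}(M_i)$, so $\cS_i$ has no module of dimension vector $\overline{1}$.

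Assembling this: in case (a) every chamber of $\fD(\Lambda_{r_i})$ in question lies in $\{\phi_i>0\}$, and in case (b) every one lies in $\{\phi_i<0\}$. Fix a single value $c$ — any $c>0$ in case (a), any $c<0$ in case (b) — and, using the previous paragraph, choose for each $i$ a point $\theta_i$ in the interior of $C(M_i\oplus P_i[1])$ with $\phi_i(\theta_i)=c$. Since $\phi_i(\theta_i)=c$ is independent of $i$, the tuple $(\theta_1,\dots,\theta_m)$ lies in $\Delta^{\phi_1,\dots,\phi_m}$; by Proposition~\ref{prop:regular linear algebra} (the vectors $\undim X_{j,1}^i$ span $g(\eta)^\perp$ with sole relations $\sum_j\undim X_{j,1}^i=\eta$) together with the nondegeneracy of the standard inner product on $g(\eta)^\perp$, there is a unique $v\in g(\eta)^\perp$ with $v\cdot\undim X_{j,1}^i=(\theta_i)_j$ for all $i,j$, i.e.\ the tuple induced by $v$ is the chosen one. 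By the first paragraph, $\cS_i(v)=\cS_i$ for every $i$.

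The main obstacle, I expect, is the middle paragraph: establishing ``$\cS_i$ contains a brick of dimension vector $\overline{1}$'' $\iff$ ``$P_i=0$'', which has to weave together Propositions~\ref{prop:one projective}, \ref{prop:null-nonnegative}, and \ref{prop:complete all positive} and Lemma~\ref{lem:complete all positive}, and in particular must rule out a chamber of $\fD(\Lambda_{r_i})$ meeting $\ker\phi_i$. A secondary point needing care is the identity $v\cdot\undim X_{j,\ell}^i=\theta_i\cdot\undim Y_{j,\ell}^i$ in the boundary case $\ell=r_i+1$, where $Y_{j,r_i+1}^i$ is the projective-injective module of $\Lambda_{r_i}$.
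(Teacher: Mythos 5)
Your proof is correct, and while the overall strategy agrees with the paper's (pass to the support $\tau$-tilting pair $M_i\oplus P_i[1]$ with $\cS_i=\mathsf{Fac}\,M_i$, show that hypotheses (a)/(b) force ``all $P_i=0$''/``all $M_i$ projective-summand-free,'' and then glue across tubes), the gluing step itself is implemented quite differently. The paper feeds $(M_1\oplus P_1[1],\dots,M_m\oplus P_m[1])$ into the map $\iota^+$ or $\iota^-$ of Proposition~\ref{def:str to srr} to produce a support regular rigid object $(M,\cP^+,\cP^-)$, takes $v\in C(M,\cP^+,\cP^-)$, and invokes Proposition~\ref{prop:defined projection} together with \cite[Remark~3.25]{BST_wall} to recover each $\cS_i(v)$. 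You bypass $\iota^\pm$, $\rho_i$, and the cones $C(M,\cP^+,\cP^-)$ entirely and work directly with the linear identification $g(\eta)^\perp\cong\Delta^{\phi_1,\dots,\phi_m}$ from the proof of Theorem~\ref{thmB}: you verify the pairing identity $v\cdot\undim X_{j,\ell}^i=\theta_i\cdot\undim Y_{j,\ell}^i$ (the $\ell=r_i+1$ case checks out via $\phi_i(\theta_i)=v\cdot\eta$), detect which side of $\ker\phi_i$ each chamber lies on by evaluating $\phi_i$ at $g(M_i\oplus P_i[1])$ and using Propositions~\ref{prop:one projective}, \ref{prop:null-nonnegative}, and \ref{prop:complete all positive}, and then choose interior points $\theta_i$ at a common level $\phi_i(\theta_i)=c$. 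Your version makes it visibly necessary that all the chambers sit on the same side of their null walls — exactly what (a) and (b) buy — and is more self-contained, at the cost of redoing by hand the bookkeeping that the paper packages into $\iota^\pm$ and $\rho_i$. The two ``main obstacles'' you flagged in your last paragraph are both resolved adequately by the arguments you already supplied in the body: the sign dichotomy is exactly the equivalence $P_i=0\iff\cS_i$ contains a length-$r_i$ brick (with the degenerate $\phi_i=0$ case ruled out because a chamber is a wall-avoiding open cone), and the $\ell=r_i+1$ identity reduces to $\sum_j(\theta_i)_j=v\cdot\eta$.
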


\begin{proof}
	For each $i$, let $M_i\oplus P_i[1]$ be the support $\tau$-tilting object for which $\cS_i = \mathsf{Fac} M_i$. We observe that if there exists $j_i$ so that $Y_{j_i,r_i}^i \in \cS_i$, then $Y_{j_i,r_i+1}^i$ must be a direct summand of $M_i$. This means $P_i = 0$ and $M_i\oplus P_i[1]$ is null-nonnegative. Otherwise, no $M_i$ contains a projective direct summand and $M_i\oplus P_i[1]$ is null-nonpositive. Thus in either case, one of 
	\begin{eqnarray*}
		\iota^+(M_1\oplus P_1[1],\ldots,M_m\oplus P[1])\\
		\iota^-(M_1\oplus P_1[1],\ldots,M_m\oplus P[1])
	\end{eqnarray*}
	exists and is support regular rigid. We denote this support regular rigid object by $(M,\cP^+,\cP^-)$ and let $v \in C(M,\cP^+,\cP^-)$. Then $v(i) \in C(M_i\oplus P_i[1])$ and $\cS_i = \cS_i(v)$ by Proposition \ref{prop:defined projection} and \cite[Remark 3.28]{BST_wall}.
\end{proof}

We are now ready to prove that the association between support regular clusters and chambers is a bijection.

\begin{thm}[Theorem~\ref{thmintro:mainC}, Part 1]\label{thmC}
	Let $H$ be a tame hereditary algebra. Then the association $(M,\cP^+,\cP^-)\mapsto \fC(M,\cP^+,\cP^-)$ is a bijection between support regular clusters and chambers in the regular wall-and-chamber structure $\fD_{reg}(H)$.
\end{thm}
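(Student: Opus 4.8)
By Theorem~\ref{thmCa}, we already know that the association $(M,\cP^+,\cP^-)\mapsto \fC(M,\cP^+,\cP^-)$ sends each support regular cluster to a chamber of $\fD_{reg}(H)$, so it remains to prove this map is both injective and surjective. For injectivity, I would argue that a support regular cluster $(M,\cP^+,\cP^-)$ can be recovered from $\fC(M,\cP^+,\cP^-)$: by Theorem~\ref{thm:maximal} the data is equivalent to the tuple of support $\tau$-tilting objects $\rho_i(M,\cP^+,\cP^-)$ (plus the information of which null side we are on), and by Proposition~\ref{prop:defined projection} the linear projection $v\mapsto v(i)$ carries $\fC(M,\cP^+,\cP^-)$ into $C(\rho_i(M,\cP^+,\cP^-))$; combined with \cite[Theorem~1.4]{asai} (Theorem~\ref{thm:g fan = wall chamber}(1)) applied to $\Lambda_{r_i}$, this shows the open cone $\fC(M,\cP^+,\cP^-)$ determines each $\rho_i(M,\cP^+,\cP^-)$, and hence determines the cluster. (One must also check that two support regular clusters on opposite null sides give different chambers — but a cone meeting the interior of $D_{reg}(\eta)=\eta^\perp\cap g(\eta)^\perp$ from the positive side is distinguished from one meeting it from the negative side, or more simply their $\rho_i$-images land in disjoint sets $\mathsf{str}^+$ versus $\mathsf{str}^-$.)

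For surjectivity, let $U$ be an arbitrary chamber of $\fD_{reg}(H)$ and pick $v \in U$. The strategy is to produce a support regular cluster whose open cone contains $v$. Using Proposition~\ref{prop:split torsion classes}, $v$ determines torsion classes $\cS_i(v)\subseteq\mods\Lambda_{r_i}$ satisfying the compatibility alternative (all $\cS_i$ contain a brick of dimension $\overline 1$, or none do). Let $M_i\oplus P_i[1]$ be the support $\tau$-tilting object with $\cS_i(v)=\mathsf{Fac}\,M_i$; since $v\in U$ lies in a chamber, $v(i)$ should lie in a chamber of $\fD(\Lambda_{r_i})$, so $v(i)\in\fC(M_i\oplus P_i[1])$ by Theorem~\ref{thm:g fan = wall chamber}(1) and \cite[Remark~3.25]{BST_wall}. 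Then applying $\iota^+$ or $\iota^-$ (whichever the null-side alternative permits) produces a projectively closed support regular rigid object $(M,\cP^+,\cP^-)$ with $\rho_i(M,\cP^+,\cP^-)=M_i\oplus P_i[1]$ support $\tau$-tilting for all $i$; by Theorem~\ref{thm:maximal} this is a support regular cluster. It remains to check $v\in\fC(M,\cP^+,\cP^-)$: by Proposition~\ref{prop:defined projection}, $v$ lies in $C(M,\cP^+,\cP^-)$ because each $v(i)\in C(M_i\oplus P_i[1])$ and the sums $s_i=\sum_j\lambda_{i,j}$ automatically agree (they all equal $v\cdot g(\eta)$, up to the normalization built into projective vectors); and $v\in\fC(M,\cP^+,\cP^-)$ rather than the relative boundary because each $v(i)$ is in the \emph{open} cone $\fC(M_i\oplus P_i[1])$, so by Corollary~\ref{cor:int} the coefficients $\lambda_k,\lambda_{i,j}$ are all positive. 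Finally, since $\fC(M,\cP^+,\cP^-)$ is itself a chamber (Theorem~\ref{thmCa}) and contains the point $v$ of the chamber $U$, the two chambers coincide.

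The main obstacle I expect is the bookkeeping in the surjectivity argument: one must verify carefully that the torsion-class data extracted from $v$ does reassemble (via $\iota^\pm$) into a single well-defined support regular rigid object with the right $\rho_i$-images, and in particular that the null-side alternative in Proposition~\ref{prop:split torsion classes} exactly matches the domain requirements of $\iota^+$ versus $\iota^-$. A subtler point is showing the matching-sums condition $s_i=s_{i'}$ holds for the vector $v$ we started with — this should follow because $v\in g(\eta)^\perp$ forces a common value of the relevant linear functional across all tubes, but it needs to be stated precisely in terms of Proposition~\ref{prop:cone} and its proof. Once these compatibilities are in hand, the passage from "$v$ lies in the cone" to "$v$ lies in the open cone" is immediate from Corollary~\ref{cor:int}, and the identification of chambers is formal.
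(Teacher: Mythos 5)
Your overall strategy is sound and runs parallel to the paper's: both proofs reduce to the Nakayama algebras $\Lambda_{r_i}$ via the torsion classes $\cS_i(v)$ of Proposition~\ref{prop:split torsion classes}, reassemble a support regular cluster with $\iota^\pm$, and invoke Theorem~\ref{thmCa}. The organization of surjectivity differs: the paper introduces the locus $\fC(\cS) = \{w \mid \cS(w)=\cS\}$ minus the walls, proves by a direct convexity argument that it is contained in a single chamber, and then sandwiches $\fC(M,\cP^+,\cP^-) \subseteq \fC(\cS) \subseteq U$; you instead track the chosen point $v$ through the coefficient analysis of Proposition~\ref{prop:cone} and Corollary~\ref{cor:int} to place it in the open cone. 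Your injectivity argument (recover each $\rho_i(M,\cP^+,\cP^-)$ from the chamber via Asai's theorem for $\Lambda_{r_i}$, then use Proposition~\ref{prop:composition}) is also a legitimate alternative to the paper's one-line observation that distinct clusters yield distinct $\cS(w)$.

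Two points need shoring up. First, the map $v \mapsto v(i)$ and the semistability transfer of Proposition~\ref{prop:defined projection}(3) are only defined/stated for $v$ lying in some cone $C(M,\cP^+,\cP^-)$, whereas your surjectivity argument applies them to an arbitrary point of an arbitrary chamber before any cone has been identified. To make this legitimate you must define $v(i)$ globally as $(\Psi^{-1}(v))_i$ using the linear isomorphism $\Psi$ of Theorem~\ref{thmB} (equivalently, as the unique vector with $v(i)\cdot\undim Y^i_{j,1} = v\cdot\undim X^i_{j,1}$ for all $j$), and then deduce ``$v$ in a chamber of $\fD_{reg}(H)$ implies $v(i)$ in a chamber of $\fD(\Lambda_{r_i})$'' from the matching of semi-invariant domains in Propositions~\ref{prop:NakayamaEqns} and~\ref{prop:tameEqns}; one must also check that this global map agrees with $F$ on $C(M,\cP^+,\cP^-)$ and is injective, so that the preimage supplied by the matching-sums criterion is $v$ itself. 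Second, the common value of the sums $s_i$ is $v\cdot\eta$ (each projective vector pairs to $1$ with $\eta$ and each $g_0(M_k)$ pairs to $0$), not $v\cdot g(\eta)$, which vanishes identically on $g(\eta)^\perp$; as written that parenthetical proves nothing. Both repairs are routine, so the proposal is correct in substance.
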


\begin{proof}
	Let $v \in \RR^{n-1}$ be in the complement of the union of the walls in $\fD_{reg}(H)$. Denote by $\cS = \cS(v)$ and $\cS_i = \cS_i(v)$ for each $i$. Define
	$$\fC(\cS) := \{w \in \RR^{n-1} \mid \cS(w) = \cS\} \setminus \bigcup_{M \in \Reg H} D_{reg}(M).$$
	We claim that $\fC(\cS)$ is contained in a chamber of $\fD_{reg}(H)$. Indeed, let $w_1,w_2 \in \fC(\cS)$ and let $M \in \Reg H$. If $M \in \cS$, then for all regular $M'$ with $M \rightarrow M'$ we must have $w_1\cdot \undim M' > 0$ and $w_2\cdot \undim M' > 0$. The inequalities are strict because otherwise there would exist a $w_1$- or $w_2$-regular semistable module. Thus for any $t \in [0,1]$, we have that $(tw_1 + (1-t)w_2)\cdot \undim M' > 0$. This means $M \in \cS(tw_1 + (1-t)w_2)$ and is not $(tw_1 + (1-t)w_2)$-regular semistable. If $M \notin \cS$, then there exist regular $M_1$ and $M_2$ with $M\twoheadrightarrow M_k$ and $w_k\cdot \undim M_k < 0$ for $k \in \{1,2\}$. This means $M_1,M_2 \notin \cS$. Iterating this argument, we conclude there is a common (regular) quotient $M \twoheadrightarrow N$ for which both $w_1\cdot\undim N < 0$ and $w_2 \cdot \undim N < 0$. Thus for $t \in [0,1]$, $M \notin \cS(tw_1 + (1-t)w_2)$ and $M$ is not $(tw_1+(1-t)w_2)$-regular semistable. We conclude that $\fC(\cS)$ is connected and is therefore contained in a chamber.
	
	Now for each $i$, let $M_i \oplus P_i[1]$ be the unique support $\tau$-tilting object for which $\cS_i = \mathsf{Fac}M_i$. By the proof of Proposition \ref{prop:glue torsion classes} the support regular cluster $(M,\cP^+,\cP^-) := \iota^+(M_1\oplus P_1[1],\ldots,M_m\oplus P_m[1])$ or $(M,\cP^+,\cP^-) := \iota^-(M_1\oplus P_1[1],\ldots,M_m\oplus P_m[1])$, whichever is defined, satisfies \linebreak $\fC(M,\cP^+,\cP^-) \subseteq \fC(\cS)$. This means $\fC(\cS) = \fC(M,\cP^+,\cP^-)$ and the association between support regular clusters and chambers is surjective. The injectivity follows immediately since if $w_1 \in \fC(M,\cP^+,\cP^-)$ and $w_2 \in \fC(N,\cQ^+,\cQ^-)$ with $(M,\cP^+,\cP^-) \neq (N,\cQ^+,\cQ^-)$, then $\cS(w_1) \neq \cS(w_2)$.
\end{proof}

\begin{eg}\label{eg:pics}\
\begin{enumerate}
	\item The quiver $1 \rightarrow 2 \rightarrow 3 \rightarrow 4 \leftarrow 1$ of type $\widetilde{A}_3$ has a single exceptional tube (of rank 3). Thus support regular clusters correspond precisely to support $\tau$-tilting objects for $\Lambda_{3}$. This also means the regular semi-invariant picture of $KQ$ is isomorphic to the semi-invariant picture of $\Lambda_3$. This regular semi-invariant picture is shown in Figure \ref{fig:regularPics}.
	\item The quiver $1 \rightarrow 2 \rightarrow 3 \leftarrow 4 \leftarrow 1$ of type $\widetilde{A}_3$ has two exceptional tubes (each of rank 2). The regular semi-invariant picture for $KQ$ is shown in Figure \ref{fig:regularPics2}.
	\end{enumerate}
\end{eg}

\begin{center}
{\small
\begin{figure}
\begin{tikzpicture}[scale=1.3] 

\begin{scope}\clip (-6, -4.75) rectangle (6,4);
	\draw[very thick, color=black,dashed] (0,0) circle [radius=2.2cm]; 
		\begin{scope} 
		\draw[very thick,color=black] (1.73,1) circle [radius=2.73cm];
		\clip (0,-2) rectangle (-4,4);
		\draw[very thick,color=black,dotted] (0,1) ellipse [x radius=2.8cm,y radius=2.1cm];
		\end{scope} 
		\begin{scope}[rotate=120] 
		\draw[very thick,color=black] (1.73,1) circle [radius=2.73cm];
		\clip (0,-2) rectangle (-4,4);
		\draw[very thick,color=black,dotted] (0,1) ellipse [x radius=2.8cm,y radius=2.1cm];
		\end{scope} 
		\begin{scope}[rotate=-120] 
		\draw [very thick,color=black] (1.73,1) circle [radius=2.73cm];
		\clip (0,-2) rectangle (-4,4);
		\draw[very thick,color=black,dotted] (0,1) ellipse [x radius=2.8cm,y radius=2.1cm];
		\end{scope} 
%

\end{scope}

\node at (3.5,3) [draw,fill=white,white]{\color{black}$D_{0}(3)$};
\node at (-3.5,3) [draw,fill=white,white]{\color{black}$D_{0}(2)$};
\node at (-2,2.5) [draw,fill=white,white]{\color{black}$D_{0}(23)$};
\node at (2.25,2) [draw,fill=white,white]{\color{black}$D_{0}(341)$};
\node at (-1,-3) [draw,fill=white,white]{\color{black}$D_{0}(412)$};
\node at (0,-4.7) [draw,fill=white,white]{\color{black}$D_{0}(41)$};

\node at (0,-2.125) [draw,fill=white,white]{\color{black}$D_{0}(4123)$};
\node at (2.125,0.75) [draw,fill=white,white]{\color{black}$D_{0}(3412)$};
\node at (-1.875,1) [draw,fill=white,white]{\color{black}$D_{0}(2341)$};

\node at (0.8,2.05) [draw,fill,circle,scale=0.6,label=75:$g_0(3)$]{};
\node at (1.35,-1.7) [draw,fill,circle,scale=0.6,label=-45:$g_0(41)$]{};
\node at (-2.15,-0.35) [draw,fill,circle,scale=0.6,label=180:$g_0(2)$]{};

\node at (-0.8,2.05) [draw,fill,circle,scale=0.6,label=45:$g_0(23)$]{};
\node at (-1.35,-1.7) [draw,fill,circle,scale=0.6,label=0:$g_0(412)$]{};
\node at (2.15,-0.35) [draw,fill,circle,scale=0.6,label=45:$g_0(341)$]{};

\node at (0,-1.1) [draw,fill,circle,scale=0.6,label=0:$p(41)$]{};
\node at (0,3.1) [draw,fill,circle,scale=0.6,label=0:$-p(41)$]{};
\node at (-0.95,0.55) [draw,fill,circle,scale=0.6,label=-45:$p(2)$]{};
\node at (2.7,-1.55) [draw,fill,circle,scale=0.6,label=0:$-p(2)$]{};
\node at (0.95,0.55) [draw,fill,circle,scale=0.6,label=-135:$p(3)$]{};
\node at (-2.7,-1.55) [draw,fill,circle,scale=0.6,label=180:$-p(3)$]{};

\begin{scope}[xshift=-45mm,yshift=-43mm]
\coordinate (A) at (0,1.2);
\coordinate (B) at (0.6,.9);
\coordinate (Bm) at (0.5,.95);
\coordinate (C) at (.6,.3);
\coordinate (Cm) at (.6,.45);
\coordinate (D) at (0,0);
\coordinate (Dm) at (0,0.15);
\coordinate (Dr) at (0.1,0.05);
\draw[thick,->] (A)--(Bm);
\draw[thick,->] (B)--(Cm);
\draw[thick,->] (C)--(Dr);
\draw[thick,->] (A)--(Dm);
\foreach \x/\y in {A/1,B/2,C/3,D/4}
\draw[fill,white] (\x) circle[radius=4pt];
\foreach \x/\y in {A/1,B/2,C/3,D/4}
\draw (\x) node{$\y$};
\end{scope}

\end{tikzpicture}
\caption{The regular semi-invariant picture for the quiver $1\rightarrow 2 \rightarrow 3 \rightarrow 4 \leftarrow 1$. The union of the dashed walls $D_0(2341), D_0(3412)$, and $D_0(4123)$ is the null wall $D_0(\eta)$, where we use $D_0(-)$ as shorthand for $D_{0,g(\eta)^{\perp}}(-)$. Support regular clusters correspond to sets of three points labeling the corners of a region/chamber.}\label{fig:regularPics}
\end{figure}
}

{\small
\begin{figure}
\begin{tikzpicture}[scale=1.5] 
	\draw[very thick, color=black,dashed] (0,0) circle [radius=1.4cm]; 
%
		\begin{scope} 
		\draw[thick,color=black] (1,1) circle [radius=2cm];
		\end{scope} 
		\begin{scope}[rotate=90] 
		\draw[thick,color=black] (1,1) circle [radius=2cm];
		\end{scope} 
		\begin{scope}[rotate=180] 
		\draw[thick,color=black] (1,1) circle [radius=2cm];
		\end{scope} 
		
		\begin{scope}[rotate=270] 
		\draw[thick,color=black] (1,1) circle [radius=2cm];
		\end{scope} 
		
\node at (2.33,2.5) [draw,fill=white,white]{\color{black}$D_{0}(2)$};
\node at (2.33,-2.5) [draw,fill=white,white]{\color{black}$D_{0}(4)$};
\node at (-2.33,-2.5) [draw,fill=white,white]{\color{black}$D_{0}(143)$};
\node at (-2.33,2.5) [draw,fill=white,white]{\color{black}$D_{0}(123)$};

\node at (0,1.35) [draw,fill=white,white]{\color{black}$D_{0}(\eta)$};
\node at (0,-1.35) [draw,fill=white,white]{\color{black}$D_{0}(\eta)$};
\node at (1.5,0) [draw,fill=white,white]{\color{black}$D_{0}(\eta)$};
\node at (-1.5,0) [draw,fill=white,white]{\color{black}$D_{0}(\eta)$};

\node at (0,0.75) [draw,fill,circle,scale=0.6,label={90:$p(2,123)$}]{};
\node at (0,-2.75) [draw,fill,circle,scale=0.6,label={-90:$-p(2,123)$}]{};
\node at (0,-0.75) [draw,fill,circle,scale=0.6,label={-90:$p(143,4)$}]{};
\node at (0,2.75) [draw,fill,circle,scale=0.6,label={90:$-p(143,4)$}]{};
\node at (-0.75,0) [draw,fill,circle,scale=0.6]{};
\node at (0,0.2) {$p(143,123)$};
\node at (2.75,0) [draw,fill,circle,scale=0.6,label={0:$-p(143,123)$}]{};
\node at (0.75,0) [draw,fill,circle,scale=0.6]{};
\node at (0.3,-0.1) {$p(2,4)$};
\node at (-2.75,0) [draw,fill,circle,scale=0.6,label={180:$-p(2,4)$}]{};

\node at (1,1) [draw,fill,circle,scale=0.6,label={45:$g_0(2)$}]{};
\node at (1,-1) [draw,fill,circle,scale=0.6,label={-45:$g_0(4)$}]{};
\node at (-1,-1) [draw,fill,circle,scale=0.6,label={-135:$g_0(143)$}]{};
\node at (-1,1) [draw,fill,circle,scale=0.6,label=135:$g_0(123)$]{};

\begin{scope}[xshift=-45mm,yshift=-28mm]
\coordinate (A) at (.5,1);
\coordinate (B) at (0,.5);
\coordinate (Bm) at (0.1,.6);
\coordinate (C) at (1,.5);
\coordinate (Cm) at (.9,.6);
\coordinate (D) at (0.5,0);
\coordinate (Dm) at (0.4,0.1);
\coordinate (Dr) at (0.6,0.1);
\draw[thick,->] (A)--(Bm);
\draw[thick,->] (B)--(Dm);
\draw[thick,->] (C)--(Dr);
\draw[thick,->] (A)--(Cm);
\foreach \x/\y in {A/1,B/4,C/2,D/3}
\draw[fill,white] (\x) circle[radius=4pt];
\foreach \x/\y in {A/1,B/4,C/2,D/3}
\draw (\x) node{$\y$};
\end{scope}

%
\end{tikzpicture}
\caption{The regular semi-invariant picture for the quiver $1 \rightarrow 2 \rightarrow 3 \leftarrow 4 \leftarrow 1$. The top left half of the null wall (dashed) is $D_0(1234)$ and the bottom right half is $D_0(4123)$. Likewise, the top right half of the null wall is $D_0(2143)$ and the bottom left half is $D_0(1432)$. Support regular clusters correspond to sets of 3 or 4 points labeling the corners of a region/chamber.}
\label{fig:regularPics2}
\end{figure}
}
\end{center}

As a corollary of Theorem \ref{thmC}, we observe the following.

\begin{cor}\label{cor:chambers}
	Let $H$ be a tame hereditary algebra with exceptional tubes $\cT_1,\ldots,\cT_m$. Then the chambers of $\fD_{reg}(H)$ are in one-to-one correspondence with sets of chambers in the $\fD(\Lambda_{r_i})$ which all lie on the same side of the walls $D(\overline{1})$.
\end{cor}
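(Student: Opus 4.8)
The plan is to chain Theorem~\ref{thmC} together with the structural results of Section~\ref{sec:combining chambers}, translating everything through the maps $\rho_i^{\pm}$ and $\iota^{\pm}$. By Theorem~\ref{thmC}, the chambers of $\fD_{reg}(H)$ are in bijection with support regular clusters, so it suffices to biject support regular clusters with $m$-tuples of chambers $(C_1,\dots,C_m)$, one $C_i\in\fD(\Lambda_{r_i})$, all lying on a common side of the hyperplane $D(\overline{1})$. By Definition~\ref{def:regular rigid}(4) every support regular cluster is null positive or null negative, so I would treat the two classes in parallel; the null negative case is symmetric, so I describe only the null positive one.

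For a null positive support regular cluster $(M,\cP^+,\emptyset)$, Theorem~\ref{thm:maximal}(1) says it is projectively closed with $\rho_i^{+}(M,\cP^+,\emptyset)$ support $\tau$-tilting for every $i$, and Proposition~\ref{prop:composition}(1,3) shows $\iota^{+}$ and $(\rho_1^{+},\dots,\rho_m^{+})$ are mutually inverse. The case hypotheses ``(a) each summand is projective / (b) none is'' appearing in those statements are automatic here, since a null-nonnegative support $\tau$-tilting object for $\Lambda_{r_i}$ has no shifted summand by Proposition~\ref{prop:null-nonnegative}(1), so we are always in case~(b). Hence null positive support regular clusters biject with $m$-tuples $(M_1,\dots,M_m)$ in which each $M_i$ is a null-nonnegative support $\tau$-tilting object for $\Lambda_{r_i}$; symmetrically, null negative support regular clusters biject with $m$-tuples of null-nonpositive support $\tau$-tilting objects.

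It remains to pass from support $\tau$-tilting objects to chambers and to read off the side of $D(\overline{1})$. By Theorem~\ref{thm:g fan = wall chamber}(1), support $\tau$-tilting objects for $\Lambda_{r_i}$ correspond bijectively to chambers of $\fD(\Lambda_{r_i})$, namely the interiors of the cones $C(N\oplus P[1])$. By Proposition~\ref{prop:wholeHyperplane}, $D(\overline{1})=\ker\sum_{k=1}^{r_i} y_k=H(\overline{1})$ is a hyperplane contained in the union of the walls of $\fD(\Lambda_{r_i})$, hence disjoint from every chamber, so each chamber lies in one of the two open half-spaces $H(\overline{1})^{\pm}$. The $g$-vector computation in the proof of Proposition~\ref{prop:null-nonnegative} gives $g(N)\cdot\overline{1}\in\{0,1\}$ for $N$ an indecomposable $\tau$-rigid module, while an indecomposable shifted projective $P(j)[1]$ has $g$-vector $-e_j$, pairing to $-1$ with $\overline{1}$; together with Proposition~\ref{prop:one projective} this shows that the interior of $C(N\oplus P[1])$ lies in $H(\overline{1})^{+}$ exactly when $P=0$ and in $H(\overline{1})^{-}$ exactly when $P\neq 0$. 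Thus null-nonnegative support $\tau$-tilting objects correspond to chambers on the nonnegative side of $D(\overline{1})$ and null-nonpositive ones to chambers on the nonpositive side, and assembling the previous paragraph gives the asserted correspondence.

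The argument is essentially bookkeeping, and I expect the only points needing care to be the disjointness of the two families and one degenerate algebra. If a support regular cluster lay in both classes it would have $\cP^+=\cP^-=\emptyset$, whence by Theorem~\ref{thm:maximal} each $\rho_i(M,\emptyset,\emptyset)$ would be a support $\tau$-tilting object for $\Lambda_{r_i}$ with no projective and no shifted summand; its $r_i$ linearly independent $g$-vectors (Proposition~\ref{prop:g fan}) would all lie in the $(r_i-1)$-dimensional space $\ker\sum_k y_k$, which is impossible. So the null positive and null negative support regular clusters are genuinely disjoint, matching the two-sided choice on the Nakayama side. Finally, the Kronecker algebra ($m=0$) must be checked by hand: there $\fD_{reg}(H)$ has the two rays of $g(\eta)^{\perp}$ as chambers, matching exactly the two support regular clusters $(0,\{p(\emptyset)\},\emptyset)$ and $(0,\emptyset,\{p(\emptyset)\})$, once one reads an empty $m$-tuple of chambers together with a choice of side.
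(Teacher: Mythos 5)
Your overall route is the same as the paper's: the paper disposes of this corollary in one line (``This follows from Theorem \ref{thmC} and the definitions of the maps $\rho_i,\iota^+,\iota^-$''), and what you have done is to spell that line out, which you do essentially correctly. In particular your observations that $D(\overline{1})$ is disjoint from every chamber of $\fD(\Lambda_{r_i})$ (via Proposition \ref{prop:wholeHyperplane}), that the side of a chamber is read off from whether the support $\tau$-tilting object has a shifted summand, and that no support $\tau$-tilting object for $\Lambda_{r_i}$ can lack both a projective and a shifted summand (your linear-independence argument), are exactly the points that make the correspondence work.

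One assertion in your second paragraph is wrong as stated, although the proof survives. You claim that the case hypotheses of Proposition \ref{prop:composition}(1) are automatic because a null-nonnegative support $\tau$-tilting object has no \emph{shifted} summand, ``so we are always in case (b).'' But cases (a) and (b) there concern whether the modules $M_i$ contain \emph{projective direct summands}, not whether $P_i=0$; Proposition \ref{prop:null-nonnegative}(1) says nothing about the former. In fact the truth is the opposite of what you wrote: by your own dimension count at the end (a support $\tau$-tilting object with neither a projective nor a shifted summand would have $r_i$ linearly independent $g$-vectors inside the hyperplane $\ker\sum_k y_k$), every null-nonnegative support $\tau$-tilting object for $\Lambda_{r_i}$ \emph{must} contain a projective direct summand, so every tuple you consider is in case (a). Since either case (a) or case (b) suffices for Propositions \ref{prop:composition} and \ref{prop:inclusions} to apply, the bijection you assert still holds; but you should correct the justification (and note that, consistently with this, the corresponding support regular clusters on the positive side all have $\cP^+\neq\emptyset$, which is what makes $\rho_i\circ\iota^+$ recover the projective summands).
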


\begin{proof}
	This follows from Theorem \ref{thmC} and the definitions of the maps $\rho_i, \iota^+$, and $\iota^-$.
\end{proof}

We now turn to describing the labels of the walls in terms of support regular rigid objects.

\begin{thm}\label{thm:walls}
	Let $(M,\cP^+,\cP^-)$ be support regular rigid and projectively closed.
	\begin{enumerate}
		\item If $\cP^+\cup \cP^- \neq \emptyset$, then the following are equivalent.
		\begin{enumerate}
			\item There exists a unique $i \in \{1,\ldots,m\}$ so that $\rho_i(M,\cP^+,\cP^-)$ contains $(r_i-1)$ indecomposable direct summands. All other $\rho_{i'}(M,\cP^+,\cP^-)$ are support $\tau$-tilting.
			\item The cone $C(M,\cP^+,\cP^-)$ has dimension $(n-2)$ and is included in a wall different from $D_{reg}(\eta)$ in the regular wall-and-chamber structure.
		\end{enumerate}
		Moreover, given (a) and (b), the brick labeling the wall containing $C(M,\cP^+,\cP^-)$ can be constructed as follows: Let $M_1\oplus P_1[1]$ and $M_2\oplus P_2[1]$ be the two support $\tau$-tilting objects containing $\rho_i(M,\cP^+,\cP^-)$ and suppose $\mathsf{Fac} M_1 \subset \mathsf{Fac} M_2$. Then the cokernel of the right $\add M_1$-approximation of $M_2$ is a brick $Y_{j,\ell}^i$ and $C(M,\cP^+,\cP^-) \subseteq D_{reg}(X_{j,\ell}^i)$.
		\item If $\cP^+\cup \cP^- = \emptyset$, then the following are equivalent.
		\begin{enumerate}
			\item For all $i \in \{1,\ldots,m\}$, $\rho_i(M,\cP^+,\cP^-)$ contains $(r_i-1)$ indecomposable direct summands.
			\item The cone $C(M,\cP^+,\cP^-)$ has dimension $(n-2)$ and is included in the wall $D_{reg}(\eta)$ in the regular wall-and-chamber structure.
		\end{enumerate}
		Moreover, given (a) and (b), the portion of the null wall containing $C(M,\cP^+,\cP^-)$ can be described as follows: For each tube $\cT_i$, let $M_i = \rho_i(M,\cP^+,\cP^-)$ and observe that $M_i$ has no projective direct summands. Thus there exists an indecomposable projective $Y_{j_i,r_i+1}^i$ so that $M_i\oplus Y_{j_i,r_i+1}^i[1]$ is support $\tau$-tilting by Proposition \ref{prop:complete all positive}(1) and $$C(M,\cP^+,\cP^-) = \bigcap_{i = 1}^m D_{reg}(X_{j_i,r_i}^i) \subset D_{reg}(\eta).$$
	\end{enumerate}
\end{thm}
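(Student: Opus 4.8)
The plan is to transfer both statements to the self-injective Nakayama algebras $\Lambda_{r_i}$ via the projection maps $v \mapsto v(i)$ of Proposition~\ref{prop:defined projection}, exploiting that each $\Lambda_{r_i}$ is $\tau$-tilting finite so its standard wall-and-chamber structure is controlled by \cite{BST_wall,asai}, and to read off all relevant dimensions from Corollary~\ref{cor:cone dim}.

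First I would establish the dimension dichotomy: in both parts, condition (a) is equivalent to $\dim C(M,\cP^+,\cP^-) = n-2$. When $\cP^+\cup\cP^-=\emptyset$, Corollary~\ref{cor:cone dim}(1) gives $\dim C(M,\emptyset,\emptyset)=|M|=\sum_{i=1}^m|\rho_i(M,\emptyset,\emptyset)|$, since a regular rigid module has no homogeneous summand; each $\rho_i(M,\emptyset,\emptyset)$ is $\tau$-rigid with no projective summand, hence not support $\tau$-tilting by Proposition~\ref{prop:complete all positive}, so $|\rho_i(M,\emptyset,\emptyset)|\le r_i-1$, and because $\sum_i(r_i-1)=n-2$ one has $\dim C=n-2$ exactly when every $|\rho_i(M,\emptyset,\emptyset)|=r_i-1$, i.e.\ (2)(a). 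When $\cP^+\cup\cP^-\ne\emptyset$, Corollary~\ref{cor:cone dim}(2) gives $\dim C=1-m+\sum_i|\rho_i(M,\cP^+,\cP^-)|$, and since $|\rho_i|\le r_i$ with equality precisely when $\rho_i$ is support $\tau$-tilting, $\dim C=n-2$ exactly when $\sum_i|\rho_i|=(\sum_ir_i)-1$, i.e.\ when exactly one $\rho_i$ has $r_i-1$ summands and all the others are support $\tau$-tilting, i.e.\ (1)(a). Since clause (b) in each part includes $\dim C=n-2$, this already yields (b)$\Rightarrow$(a), and it remains to prove (a)$\Rightarrow$(b).

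Next I would show that $\dim C=n-2$ forces $C(M,\cP^+,\cP^-)$ into the wall named in (b). Choose $v$ in the relative interior $\fC(M,\cP^+,\cP^-)$, so $v(i)\in C(\rho_i(M,\cP^+,\cP^-))$ for each $i$ by Proposition~\ref{prop:defined projection}(1). In part~(1), for the distinguished tube $\cT_i$ the cone $C(\rho_i(M,\cP^+,\cP^-))$ lies in a wall of $\fD(\Lambda_{r_i})$ by \cite[Corollary~3.16]{BST_wall}, and by Asai's Lemma~\ref{lem: Asai} that wall is contained in $D(Y_{j,\ell}^i)$ for some brick $Y_{j,\ell}^i\in\mods\Lambda_{r_i}$ (so $\ell\le r_i$). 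Hence $Y_{j,\ell}^i$ is $v(i)$-semistable, so $X_{j,\ell}^i$ is $v$-regular semistable by Proposition~\ref{prop:defined projection}(3); since this holds at every point of $\fC(M,\cP^+,\cP^-)$, one gets $C(M,\cP^+,\cP^-)\subseteq D_{reg}(X_{j,\ell}^i)$, a subset of $(\undim X_{j,\ell}^i)^\perp\cap g(\eta)^\perp$ of dimension $\le n-2$, hence a wall. Finally, $C(M,\cP^+,\cP^-)$ has a spanning vector $\pm p(\cX)$ with $p(\cX)\cdot\eta=1\ne0$, so $C\not\subseteq\eta^\perp$; since $D_{reg}(\eta)=\eta^\perp\cap g(\eta)^\perp$ (Lemma~\ref{lem:characterization of D(eta)}), this wall differs from $D_{reg}(\eta)$, and in particular $\ell<r_i$. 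In part~(2), every spanning vector of $C(M,\emptyset,\emptyset)$ is a $g_0(M_k)$ with $M_k$ regular, and $g_0(M_k)\in\eta^\perp$ because $g(M_k)\in D(\eta)\subseteq\eta^\perp$ by Proposition~\ref{prop:orthogonal to null}; hence $C(M,\emptyset,\emptyset)\subseteq\eta^\perp\cap g(\eta)^\perp=D_{reg}(\eta)$, which together with $\dim C=n-2$ realizes $C$ inside the wall $D_{reg}(\eta)$. This settles (a)$\Leftrightarrow$(b) in both parts.

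Finally I would pin down the explicit descriptions. For part~(1) under (a), $\rho_i(M,\cP^+,\cP^-)$ is an almost-complete support $\tau$-rigid object for $\Lambda_{r_i}$, hence contained in exactly two support $\tau$-tilting objects $M_1\oplus P_1[1]$, $M_2\oplus P_2[1]$ by \cite{AIR_tilting}, whose chambers in $\fD(\Lambda_{r_i})$ are separated by the wall $C(\rho_i(M,\cP^+,\cP^-))$; ordering them so $\mathsf{Fac}M_1\subsetneq\mathsf{Fac}M_2$, the brick labelling this wall is the cokernel $Y_{j,\ell}^i$ of the minimal right $\add M_1$-approximation of $M_2$ by the brick-labelling of walls in \cite{BST_wall} (see also \cite{asai}), and then $C(M,\cP^+,\cP^-)\subseteq D_{reg}(X_{j,\ell}^i)$ exactly as above. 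For part~(2) under (a), each $M_i:=\rho_i(M,\emptyset,\emptyset)$ has $r_i-1$ summands and no projective summand, so by Proposition~\ref{prop:complete all positive} there is a unique indecomposable projective $Y_{j_i,r_i+1}^i$ with $M_i\oplus Y_{j_i,r_i+1}^i[1]$ support $\tau$-tilting, and the wall $C(M_i)$ of $\fD(\Lambda_{r_i})$ is $D(Y_{j_i,r_i}^i)$; pulling everything back through the linear isomorphism $\Psi$ of Theorem~\ref{thmB} --- under which, using Propositions~\ref{prop:same homs} and~\ref{prop:null g}, $\Psi^{-1}(g_0(X_{j,\ell}^i))$ equals $g(Y_{j,\ell}^i)$ in the $i$-th block and $0$ elsewhere, while $\Psi$ carries the lift of $D(Y_{j,\ell}^i)$ to $D_{reg}(X_{j,\ell}^i)$ --- identifies $C(M,\emptyset,\emptyset)$ with the common lift of the cones $C(M_i)=D(Y_{j_i,r_i}^i)$, that is, with $\bigcap_{i=1}^m D_{reg}(X_{j_i,r_i}^i)\subseteq D_{reg}(\eta)$. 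The step I expect to be the main obstacle is this last identification: carefully moving the explicit wall descriptions through the projections $v\mapsto v(i)$ and the co-amalgamation isomorphism $\Psi$, which rests on knowing that for the $\tau$-tilting finite algebras $\Lambda_{r_i}$ the cones of support $\tau$-rigid objects with $r_i-1$ summands are exactly the walls of $\fD(\Lambda_{r_i})$, together with the brick-labelling formula for walls between adjacent chambers.
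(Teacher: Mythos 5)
Your proposal is correct and follows essentially the same route as the paper's proof: the dimension counts via Corollary~\ref{cor:cone dim} (with $|\rho_i|\le r_i$, resp.\ $\le r_i-1$, and the characterization of equality), the transfer of semistability to the Nakayama algebras via Proposition~\ref{prop:defined projection}, and the Br\"ustle--Smith--Treffinger brick-labelling of walls (cokernel of the right $\add M_1$-approximation) for the explicit descriptions. The only local differences are cosmetic: in part (2) you obtain $C(M,\emptyset,\emptyset)\subseteq \eta^\perp\cap g(\eta)^\perp=D_{reg}(\eta)$ directly from $g_0(M_k)\in\eta^\perp$, in part (1) you exclude $D_{reg}(\eta)$ via the spanning vector $\pm p(\cX)$ rather than via $\undim X^i_{j,\ell}\neq\eta$, and you route the final identification $C(M,\emptyset,\emptyset)=\bigcap_i D_{reg}(X^i_{j_i,r_i})$ through the co-amalgamation isomorphism of Theorem~\ref{thmB} instead of directly through Proposition~\ref{prop:defined projection}; all of these are sound and no more (or less) detailed than the paper's own ``direct computation.''
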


\begin{proof}
	(1) ($a\Rightarrow b$): The dimension of $C(M,\cP^+,\cP^-)$ is $n-2$ by Corollary \ref{cor:cone dim}. Now let $M_1\oplus P_1[1]$ and $M_2\oplus P_2[1]$ be the two support $\tau$-tilting objects containing $\rho_i(M,\cP^+,\cP^-)$ so that $\mathsf{Fac}M_1\subset \mathsf{Fac}(M_2)$. By \cite[Proposition 3.17]{BST_wall}, the cokernel of the right $\add M_1$-approximation of $M_2$ is some brick $Y_{j,\ell}^i$ which is semistable for all $v_i \in C(\rho_i(M,\cP^+,\cP^-))$. Proposition \ref{prop:defined projection} thus implies that $X_{j,\ell}^i$ is $v$-regular semistable for all $v \in C(M,\cP^+,\cP^-)$. Thus $C(M,\cP^+,\cP^-) \subseteq D_{reg}(X_{j,\ell}^i)$. Moreover, since $\cP^+\cup \cP^- \neq \emptyset$, we must have that $\undim X_{j,\ell}^i \neq \eta$. This also shows that (a) implies the moreover part.
	
	($b\Rightarrow a$): Suppose $\dim C(M,\cP^+,\cP^-) = n-2$. Since $\cP^+\cup \cP^- \neq \emptyset$, Corollary \ref{cor:cone dim} implies that $$\sum_{i = 1}^m |\rho_i(M,\cP^+,\cP^-)| = n - 3 + m = \left(\sum_{i = 1}^m r_i\right) - 1.$$
	Thus there exists a unique $i$ for which $|\rho_i(M,\cP^+,\cP^-)| = r_i - 1$. For all other $i'$, we have $|\rho_{i'}(M,\cP^+,\cP^-)| = r_{i'}$ and therefore $\rho_{i'}(M,\cP^+,\cP^-)$ is support $\tau$-tilting.
	
	(2) ($a\Rightarrow b$): The dimension of $C(M,\cP^+,\cP^-)$ is $n-2$ by Corollary \ref{cor:cone dim}. Now for each tube $\cT_i$, let $Y_{j_i,r_i+1}^i$ be the unique projective so that $\rho_i(M,\cP^+,\cP^-)\oplus Y_{j_i,r_i+1}^i[1]$ is support $\tau$-tilting. Thus $j_i$ is the unique vertex (of $\Lambda_{r_i}$) on which $\rho_i(M,\cP^+,\cP^-)$ is not supported and $j_i-1$ is the unique vertex on which $\tau\rho_i(M,\cP^+,\cP^-)$ is not supported. Thus $\rho_i(M,\cP^+,\cP^-)\oplus Y_{j_i-1,r_i+1}^i$ is also support $\tau$-tilting. The cokernel of the right $\add \rho_i(M,\cP^+,\cP^-)$-approximation of $\rho_i(M,\cP^+,\cP^-)\oplus Y_{j_i-1,r_i+1}^i$ is then isomorphic to $Y_{j_i,r_i}^i$. By \cite[Proposition 3.17]{BST_wall}, this brick is $v$-semistable for all $v \in C(\rho_i(M,\cP^+,\cP^-))$. Direct computation shows that in fact $C(\rho_i(M,\cP^+,\cP^-)) = D(Y_{j_i,r_i}^i)$. Proposition \ref{prop:defined projection} thus implies that 
	$$C(M,\cP^+,\cP^-) = \bigcap_{i = 1}^m D_{reg}(X_{j_i,r_i}^i) \subset D(\eta).$$
	This also shows that (a) implies the moreover part.
	
	($b\Rightarrow a$): Suppose $\dim C(M,\cP^+,\cP^-) = n-2$. Since $\cP^+ \cup \cP^- = \emptyset$, Corollary \ref{cor:cone dim} implies that $|M| = n-2$. This means each $|\rho_i(M,\cP^+,\cP^-)|=r_i-1$ for each $i$.
\end{proof}

We now combine Theorems \ref{thmC} and \ref{thm:walls} in the spirit of \cite[Corollary 3.18]{BST_wall}.

\begin{cor}[Theorem~\ref{thmintro:mainC}, Part 2]\label{cor:walls}
	Let $(M,\cP^+,\cP^-)$ be a support regular cluster.
	\begin{enumerate}
		\item If $|\cP^+ \cup \cP^-| = 1$, then the chamber $\fC(M,\cP^+,\cP^-)$ has $n-1$ walls. $n-2$ of these walls are labeled by the bricks obtained by deleting a single direct summand from $M$ and applying Theorem \ref{thm:walls}(1). The other wall is contained in the null wall and is labeled by the bricks obtained from deleting the unique projective vector from $\cP^+\cup \cP^-$ and applying Theorem \ref{thm:walls}(2).
		\item If $|\cP^+ \cup \cP^-| \neq 1$, let $\mathcal{I}$ be the set of tubes $\cT_i$ for which $|\cT_i \cap (\cP^+ \cup \cP^-)| = 1$. Then the chamber $C(M,\cP^+,\cP^-)$ has $\left(\sum_{i = 1}^m |\rho_i(M,\cP^+,\cP^-)|\right)-|\mathcal{I}|$ walls. The walls are labeled by the bricks obtained as follows:
		\begin{enumerate}
			\item Delete any single direct summand from $M$ and apply Theorem \ref{thm:walls}(1).
			\item For any $X_{j,1}^i \in \tp(\cP^+ \cup \cP^-)$, delete all $p(\cX') \in \cP^+\cup\cP^-$ with $X_{j,1}^i \in \cX'$ and apply Theorem \ref{thm:walls}(1).
		\end{enumerate}
	\end{enumerate}
\end{cor}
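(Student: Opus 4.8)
The plan is to combine Theorem~\ref{thmCa} (which identifies $\fC(M,\cP^+,\cP^-)$ as a chamber), the facet analysis already carried out inside the proof of Proposition~\ref{prop:proj closed fan}, and Theorem~\ref{thm:walls} (which reads off the brick labels). The first step I would take is to observe that the walls of $\fD_{reg}(H)$ bounding the chamber $\fC(M,\cP^+,\cP^-)$ are in bijection with the codimension~$1$ faces of the cone $C(M,\cP^+,\cP^-)$. One direction is contained in the proof of Theorem~\ref{thmCa}: the entire relative boundary of $C(M,\cP^+,\cP^-)$ lies in the union of walls, so every facet of the cone sits inside some wall $D_{reg}(X)$. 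Conversely, if a facet $F$ lies in $D_{reg}(X)\subseteq(\undim X)^\perp\cap g(\eta)^\perp$, then since $\undim X$ is, up to scalar, the unique functional vanishing on the $(n-2)$-dimensional $F$, it has constant sign on the full-dimensional cone; hence $D_{reg}(X)$ meets $C(M,\cP^+,\cP^-)$ in exactly $F$, and distinct facets lie in distinct walls. So it suffices to enumerate the facets and name the corresponding bricks.

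For the enumeration I would simply invoke Claims~1--3 from the proof of Proposition~\ref{prop:proj closed fan}. Writing the spanning set of $C(M,\cP^+,\cP^-)$ as $\cS$: Claim~1 gives that deleting a single $g_0(M_k)$ (for $M_k$ an indecomposable summand of $M$, say in the tube $\cT_{i_k}$) always yields a facet, so these account for $|M|$ facets; Claim~2 gives that, for a quasi-simple $X_{j,1}^i\in\tp(\cP^+\cup\cP^-)$, deleting all projective vectors $p(\cX)$ with $X_{j,1}^i\in\cX$ yields a facet precisely when either $|\cP^+\cup\cP^-|=1$ or $a_i:=|\cT_i\cap\tp(\cP^+\cup\cP^-)|\ge 2$; and Claim~3 rules out any further facets. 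The two families are disjoint (the second retains every $g_0$-vector), and within each they are pairwise distinct (for the second, because for a fixed tube the nonempty sets $\{p(\cX):X_{j,1}^i\in\cX\}$ are pairwise disjoint over $j$, using projective closure). Since a support regular cluster is projectively closed, $\cP^+\cup\cP^-=\pc(\tp(\cP^+\cup\cP^-))$, so $|\cP^+\cup\cP^-|=\prod_i a_i$. If $|\cP^+\cup\cP^-|=1$, writing $\cP^+\cup\cP^-=\{p(\cX)\}$, all $m$ quasi-simples of $\cX$ produce the single facet obtained by deleting $p(\cX)$, which lies in $D_{reg}(\eta)$ by Theorem~\ref{thm:walls}(2); as each $\rho_i(M,\cP^+,\cP^-)$ is support $\tau$-tilting by Theorem~\ref{thm:maximal}, $|M|=\sum_i r_i-m=n-2$, giving $n-1$ walls and proving~(1). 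If $|\cP^+\cup\cP^-|\ne 1$, then $\prod_i a_i\ne 1$, the set $\mathcal I=\{i:a_i=1\}$ contributes no facet via Claim~2, the remaining tubes contribute $\sum_{i:a_i\ge2}a_i=\bigl(\sum_i a_i\bigr)-|\mathcal I|$ facets, and since $\sum_i|\rho_i(M,\cP^+,\cP^-)|=|M|+\sum_i a_i$ directly from the definition of $\rho_i$, the total is $\bigl(\sum_i|\rho_i(M,\cP^+,\cP^-)|\bigr)-|\mathcal I|$, which is the count in~(2).

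The last step is to match each facet to the stated recipe for its label. For the facet omitting $g_0(M_k)$, the triple obtained by deleting the summand $M_k$ from $M$ is again projectively closed and support regular rigid; since $(M,\cP^+,\cP^-)$ is a support regular cluster, applying $\rho_{i_k}$ drops it to $r_{i_k}-1$ indecomposable summands while every other $\rho_{i'}$ remains support $\tau$-tilting, so Theorem~\ref{thm:walls}(1) --- or Theorem~\ref{thm:walls}(2) when $\cP^+\cup\cP^-=\emptyset$ --- applies to this cone (which is exactly the facet) and returns the brick in~(1)(a)/(2)(a). For the facet obtained by deleting all $p(\cX)$ through a fixed $X_{j,1}^i$, the resulting triple is again projectively closed (using $a_i\ge2$, or $|\cP^+\cup\cP^-|=1$, so that the remaining projective vectors still close up), and $\rho_i$ of it drops exactly the projective summand $Y_{j,r_i+1}^i$; thus Theorem~\ref{thm:walls}(1) --- respectively Theorem~\ref{thm:walls}(2) for the null-wall facet in case~(1) --- applies and gives the stated brick, the correct ``side'' being preserved by Remark~\ref{rem:preserve null side}.

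I expect the bookkeeping around the degenerate tubes with $a_i=1$ to be the main obstacle. One has to check carefully, exactly as in Claim~3 of the proof of Proposition~\ref{prop:proj closed fan}, that when $|\cP^+\cup\cP^-|\ne1$ such a tube genuinely contributes nothing --- the subset of $\cS$ it would cut out lies in the relative interior of a facet coming from a tube with $a_i\ge2$, which exists since $\prod_i a_i\ne1$ --- while when $|\cP^+\cup\cP^-|=1$ the identical deletion instead produces precisely the single null-wall facet. Keeping these two regimes cleanly separated, and verifying in each that Theorem~\ref{thm:walls} is being invoked on exactly the right support regular rigid object, is the delicate part; everything else is a direct assembly of results already proved.
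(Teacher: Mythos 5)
Your proof is correct and follows essentially the same route as the paper's: both reduce the wall count to the facet enumeration in Claims 1--3 of Proposition~\ref{prop:proj closed fan} together with Corollary~\ref{cor:cone dim}, and both read off the labels from Theorem~\ref{thm:walls}. The only real difference is that you establish distinctness of the walls geometrically (distinct facets span distinct hyperplanes), whereas the paper instead checks that the brick labels are pairwise distinct by projecting to each $\Lambda_{r_i}$ and citing the corresponding result of Br\"ustle--Smith--Treffinger; both arguments are valid.
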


\begin{proof}
	Corollary \ref{cor:cone dim} and Proposition \ref{prop:proj closed fan} imply that the walls described in the theorem together form the boundary of $C(M,\cP^+,\cP^-)$. Thus we need only show the labels are pairwise distinct. By construction, if $\cT_i \notin \mathcal{I}$, the bricks in $\cT_i$ correspond to those in $\mods\Lambda_{r_i}$ labeling the $r_i$ walls of the chamber $\fC(\rho_i(M,\cP^+,\cP^-))$. These are pairwise distinct by \cite[Corollary 3.18]{BST_wall}. Otherwise, the bricks in $\cT_i$ correspond to the bricks in $\mods\Lambda_{r_i}$ labeling $r_i-1$ of the walls of the chamber $\fC(\rho_i(M,\cP^+,\cP^-))$. Again, these are uniquely determined by \cite[Corollary 3.18]{BST_wall}.
\end{proof}

We conclude this section by briefly comparing our regular wall-and-chamber structure to other known constructions.

In \cite{RS_affine}, Reading and Stella define a fan using a compatibility condition on the almost positive Schur roots of a tame valued quiver. Interpreted in the context of representation theory, the almost positive Schur roots correspond to the indecomposable rigid represesntations, the indecomposable negative projective representations, and the representation $M_\lambda$ with $\undim M_\lambda = \eta$ and $\lambda \in K^*$ arbitrary. The equivalence relation corresponds to generic ext-orthogonality (here generic means that $M_\lambda$ is considered to correspond to a rigid representation). In this model, maximal compatible sets of almost positive Schur roots containing $\eta$ correspond to \emph{imaginary clusters}. An imaginary cluster is a maximal rigid object in the category of regular representations together with $\eta$. Such a collection contains precisely $n-2$ objects. These also appear in the work of Scherotzke \cite{scherotzke_component} under the name \emph{component clusters}. Such objects correspond precisely to support regular clusters $(M,\cP^+,\cP^-)$ with $|\cP^+\cup \cP^-| = 1$. Namely, we have the following.

\begin{cor}\label{cor:imaginary clusters}
	Let $(M,\cP^+,\cP^-)$ be a support regular cluster. Then the following are equivalent.
	\begin{enumerate}
		\item $M \oplus \eta$ is an imaginary cluster (component cluster) in the sense of Reading-Stella (Sherotzke).
		\item $|\cP^+\cup \cP^-| = 1$.
		\item $\fC(M,\cP^+,\cP^-)$ is bounded by the null wall; that is, the intersection of $C(M,\cP^+,\cP^-)$ with $D_{reg}(\eta)$ is $(n-2)$-dimensional.
	\end{enumerate}
\end{cor}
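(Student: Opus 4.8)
The plan is to prove the equivalence $(1)\Leftrightarrow(2)\Leftrightarrow(3)$ by a cycle of implications, leaning on the structural results already established: the dimension formula of Corollary~\ref{cor:cone dim}, the characterization of walls in Theorem~\ref{thm:walls}, and the bijection between support regular clusters and chambers in Theorem~\ref{thmC}.

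First I would prove $(2)\Leftrightarrow(3)$ using Theorem~\ref{thm:walls}(2) and Corollary~\ref{cor:cone dim}. If $|\cP^+\cup\cP^-|=1$, say $\cP^+\cup\cP^-=\{p(\cX)\}$, then deleting this single projective vector from the support regular cluster produces a projectively closed support regular rigid object $(M,\emptyset,\emptyset)$ (after projective closure, which removes it since $\tp$ of the empty set is empty). By Corollary~\ref{cor:cone dim}(1), the resulting cone has dimension $|M| = n-2$ (since $(M,\cP^+,\cP^-)$ is a cluster, $1 - m + \sum|\rho_i| = n-1$, and removing the single projective vector's contribution drops each relevant $\rho_i$ count appropriately so that $|M| = n-2$). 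Theorem~\ref{thm:walls}(2) then says this face lies in the null wall $D_{reg}(\eta)$ and is $(n-2)$-dimensional, giving $(3)$. Conversely, if $|\cP^+\cup\cP^-|\neq 1$ then either it is empty or has size $\geq 2$; in the empty case the cone $C(M,\emptyset,\emptyset)$ has dimension $|M|=n-1$ (it is already a chamber) and contains no codimension-one face in $D_{reg}(\eta)$ by the argument of Theorem~\ref{thmCa}; in the size $\geq 2$ case, any codimension-one face obtained by deleting projective vectors still has $\cQ^+\cup\cQ^-\neq\emptyset$ after projective closure, so by Theorem~\ref{thm:walls}(1) it lies in a wall $D_{reg}(X_{j,\ell}^i)$ with $\undim X_{j,\ell}^i\neq\eta$, i.e., \emph{not} in the null wall. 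Hence $(3)$ fails, establishing $(3)\Rightarrow(2)$.

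Next I would establish $(2)\Leftrightarrow(1)$ by unwinding the definitions. A support regular cluster with $|\cP^+\cup\cP^-|=1$, by Theorem~\ref{thm:maximal}, has each $\rho_i(M,\cP^+,\cP^-)$ support $\tau$-tilting and, since only a single projective vector $p(\cX)$ appears, each tube contributes exactly one projective summand $Y_{j_i,r_i+1}^i$ to the $\rho_i$; removing these, the module $M$ is a \emph{maximal} rigid object in $\Reg H$ with no homogeneous summand (the projective vector condition forcing $M$ to avoid the quasi-simples in $\cX$ as composition factors of $\tau M$ or of $M$, depending on null-sign). Adding $\eta$ recovers precisely the notion of an imaginary cluster / component cluster: a maximal rigid object in $\Reg H$ together with $\eta$, which by Reading--Stella and Scherotzke contains exactly $n-2$ objects — matching $|M| = n-2$ from the $(2)\Leftrightarrow(3)$ step. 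Conversely, given an imaginary cluster $M\oplus\eta$, the module $M$ is maximal rigid in $\Reg H$ with $n-2$ summands; completing to a support regular cluster via Corollary~\ref{cor:complete} and counting dimensions forces exactly one projective vector, so $|\cP^+\cup\cP^-|=1$. The cross-reference to the count ``$n-2$ objects'' is what ties the three conditions together.

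The main obstacle I anticipate is the bookkeeping in $(1)\Leftrightarrow(2)$: making precise that the module $M$ appearing in a support regular cluster with $|\cP^+\cup\cP^-|=1$ is genuinely maximal rigid in the category of \emph{all} regular modules (including homogeneous ones), and conversely that maximality of $M\oplus\eta$ in the imaginary-cluster sense forces $M$ to have no homogeneous summand and to have exactly one associated projective vector rather than zero or several. This requires carefully invoking condition (4) of Definition~\ref{def:regular rigid} (at least one of $\cP^\pm$ empty, justified by Proposition~\ref{prop:one projective}) together with Proposition~\ref{prop:complete all positive} to rule out $\cP^+\cup\cP^-=\emptyset$ for a cluster containing a maximal rigid regular module, and a dimension count via Corollary~\ref{cor:cone dim} to rule out $|\cP^+\cup\cP^-|\geq 2$. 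Everything else is a direct translation through Theorem~\ref{thm:maximal}, Theorem~\ref{thm:walls}, and the dimension formula.
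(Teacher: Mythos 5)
The paper states Corollary~\ref{cor:imaginary clusters} without a written proof, so your proposal fills in an argument that the paper leaves implicit; your choice of tools (Corollary~\ref{cor:cone dim}, Theorem~\ref{thm:walls}, Theorem~\ref{thm:maximal}, Corollary~\ref{cor:complete}) is exactly the right one and matches the surrounding discussion.

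The one genuine slip is your treatment of the case $\cP^+\cup\cP^-=\emptyset$ in the direction $(3)\Rightarrow(2)$. You write that in this case ``the cone $C(M,\emptyset,\emptyset)$ has dimension $|M|=n-1$ (it is already a chamber) and contains no codimension-one face in $D_{reg}(\eta)$.'' This cannot happen: a regular $\tau$-rigid module $M$ has no homogeneous summands (a homogeneous $X$ satisfies $\Hom(X,\tau X)=\Hom(X,X)\neq 0$) and at most $r_i-1$ pairwise $\tau$-rigid summands in each exceptional tube $\cT_i$ by Proposition~\ref{prop:tube info}(3), so $|M|\le\sum(r_i-1)=n-2$ always. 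Hence by Corollary~\ref{cor:cone dim}(1) the cone $C(M,\emptyset,\emptyset)$ has dimension at most $n-2$ and can never be a chamber, and by Corollary~\ref{cor:complete} / Proposition~\ref{prop:complete all positive} the object $(M,\emptyset,\emptyset)$ is never maximal. So the correct disposition of that case is that it is vacuous for support regular clusters, not that it produces a chamber missing the null wall. Once that is corrected, the remainder of your argument goes through: the $|\cP^+\cup\cP^-|\ge 2$ case is handled by noting (via the face description in the proof of Proposition~\ref{prop:proj closed fan}) that every codimension-one face still has nonempty $\cQ^+\cup\cQ^-$ after projective closure, so Theorem~\ref{thm:walls}(1) places each such face in a wall $D_{reg}(X)$ with $\undim X\neq\eta$, and $D_{reg}(X)\cap D_{reg}(\eta)$ has dimension at most $n-3$; and the $(1)\Leftrightarrow(2)$ equivalence follows exactly as you sketch, with the dimension count forcing $|\cT_i\cap\tp(\cP^+\cup\cP^-)|=1$ for each $i$ and hence $|\pc(\tp(\cP^+\cup\cP^-))|=1$.
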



\section{Generalization to the cluster-tilted case}\label{sec:cluster-tilted}
In this section, we generalize our results to cluster-tilted algebras of tame type. We will assume that $K$ is algebraically closed.

\subsection{Formulas for Mutation}\label{sec:mutation formula}
We begin by discussing three mutation rules for scattering diagrams of affine (tame) type. The first is a formula of Reading \cite{reading_universal} which describes the mutation of $c$-vectors and $g$-vectors. The second is a formula of Mou \cite{mou_scattering} which describes functors related to the mutation of the algebraic scattering diagram of Bridgeland \cite{bridgeland_scattering}. The third is the mutation of (decorated) representations of quivers with potential due to Derksen-Weyman-Zelevinsky \cite{DWZ_quivers, DWZ_quivers2}.

Let $B$ be an $n\times n$ skew symmetric matrix and choose an index $1 \leq k \leq n$. There are two matrices associated to $B$, $A^+_k$ and $A^-_k$, given by
$$(A^+_k)_{ij} = \begin{cases} 1 & i = j \neq k\\-1 & i = j = k \\ \max\{B_{ij},0\} & j \neq i = k \\ 0 & i \neq j,k\end{cases} \qquad (A^-_k)_{ij} = \begin{cases} 1 & i = j \neq k\\-1 & i = j = k \\ \max\{-B_{ij},0\} & j \neq i = k \\ 0 & i \neq j,k\end{cases}.$$

We note that $A_k^+A_k^+ = \mathsf{Id}_n = A_k^-A_k^-$. The following is essentially \cite[Theorem 2.18]{BHIT_semi-invariant}.

\begin{thm}\label{readingFormula}
	Let $J(Q,W)$ be a cluster-tilted algebra. Let $D(M)$ be a wall in the standard wall-and-chamber structure $\mathfrak{D}(J(Q,W))$. Suppose $\undim M$ is a $c$-vector (see Remark \ref{rem:c-vector}).
	\begin{enumerate}
		\item Suppose there exists $w \in D(M)$ such that $w\cdot\undim S(k) > 0$ and let $v \in D(M)$ such that $v \cdot\undim S(k) \geq 0$. Then $(v^{tr} A_k^+)^{tr}$ is included in a wall $D(M')$ in the standard wall-and-chamber structure of $J(\mu_k(Q,W))$. Moreover, $\undim M' = A_k^+ \undim M$ and this is a $c$-vector.
		\item Suppose there exists $w \in D(M)$ such that $w\cdot\undim S(k) < 0$ and let $v \in D(M)$ such that $v \cdot\undim S(k) \leq 0$. Then $(v^{tr} A_k^-)^{tr}$ is included in a wall $D(M')$ in the standard wall-and-chamber structure of $J(\mu_k(Q,W))$. Moreover, $\undim M' = A_k^- \undim M$ and this is a $c$-vector.
		\item Suppose $M = S(k)$ and let $S'(k)$ be the simple module in $\mods J(\mu_k(Q,W))$ at vertex $k$. Then for all $v \in D(S(k))$, we have that $(v^{tr} A_k^+)^{tr} = (v^{tr} A_k^-)^{tr}$ is included in the wall $D(S'(k))$ in the standard wall-and-chamber structure of $J(\mu_k(Q,W))$.
	\end{enumerate}
\end{thm}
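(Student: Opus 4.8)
I would settle part (3) first, as it is a direct computation. Since $S(k)$ is simple, its only submodules are $0$ and itself, so $D(S(k)) = (\undim S(k))^\perp = e_k^\perp$, and likewise $D(S'(k)) = e_k^\perp$ inside $\fD(J(\mu_k(Q,W)))$. Inspecting the defining formulas for $A_k^+$ and $A_k^-$ shows that the linear map $v \mapsto (v^{tr}A_k^\pm)^{tr}$ restricts to the identity on the hyperplane $e_k^\perp$. Hence for every $v \in D(S(k)) = e_k^\perp$ one has $(v^{tr}A_k^+)^{tr} = v = (v^{tr}A_k^-)^{tr} \in e_k^\perp = D(S'(k))$, which is exactly (3).

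For parts (1) and (2) the plan is to reduce to the mutation of the $g$-vector fan. In the hereditary case the statement is essentially \cite[Theorem~2.18]{BHIT_semi-invariant}, and the extension to the cluster-tilted case will rely on the compatibility of quiver-with-potential mutation \cite{DWZ_quivers} with cluster mutation. Write $\mathcal{H}^+ = \{v : v\cdot\undim S(k) \geq 0\}$ and $\mathcal{H}^- = \{v : v\cdot\undim S(k) \leq 0\}$. Because $\undim M$ is a $c$-vector, the wall $D(M)$ contains codimension-$1$ cones $C(U)$ of the $g$-vector fan of $J(Q,W)$ (with $U$ support $\tau$-rigid having $n-1$ indecomposable summands, arising as the common face of two support $\tau$-tilting cones) with brick label $M$; since $J(Q,W)$ is tame these cones are dense in $D(M)$ (Theorem~\ref{thm:g fan = wall chamber}, Remarks~\ref{rem:c-vector} and~\ref{rem:g-fan walls}). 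The key input is Reading's mutation rule \cite{reading_universal}: the $g$-vector fans of $J(Q,W)$ and $J(\mu_k(Q,W))$ are identified by a piecewise-linear homeomorphism $\mu_k^\ast$ of $\RR^n$ that restricts to $v \mapsto (v^{tr}A_k^+)^{tr}$ on $\mathcal{H}^+$ and to $v \mapsto (v^{tr}A_k^-)^{tr}$ on $\mathcal{H}^-$ (the two formulas agreeing on $\mathcal{H}^+\cap\mathcal{H}^- = e_k^\perp$, by the computation used in part (3)), and which carries the $c$-vector $\undim M$ of $J(Q,W)$ to $A_k^+\undim M$, respectively $A_k^-\undim M$, of $J(\mu_k(Q,W))$ according to the relevant half-space.

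Granting this, for (1) I would fix $v \in D(M)$ with $v\cdot\undim S(k)\geq 0$, so $v\in\mathcal{H}^+$, while the hypothesis that some $w \in D(M)$ has $w\cdot\undim S(k)>0$ guarantees $D(M)\not\subseteq e_k^\perp$. As $\mu_k^\ast$ is a continuous homeomorphism carrying chambers to chambers and walls to walls, and agrees with $v\mapsto(v^{tr}A_k^+)^{tr}$ on the closed set $\mathcal{H}^+$, it maps each point of $D(M)\cap\mathcal{H}^+$ into a wall $D(M')$ of $\fD(J(\mu_k(Q,W)))$ whose brick label satisfies $\undim M' = A_k^+\undim M$ (by Reading's rule), again a $c$-vector. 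The relation $A_k^+A_k^+ = \mathsf{Id}_n$ shows that $v\mapsto(v^{tr}A_k^+)^{tr}$ carries $(\undim M)^\perp$ onto $(\undim M')^\perp$, so that $(v^{tr}A_k^+)^{tr}\in D(M')$ is consistent; this proves (1), and (2) is the same argument with $\mathcal{H}^+, A_k^+$ replaced by $\mathcal{H}^-, A_k^-$. Along the way one checks that $A_k^+\undim M$ is genuinely $\geq 0$ in case (1): by sign-coherence of $c$-vectors it is $\geq 0$ or $\leq 0$, and the latter forces $\undim M\in\RR e_k$, i.e. $M = S(k)$, which is excluded.

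I expect the main obstacle to be establishing the piecewise-linear identification $\mu_k^\ast$ together with the matching of brick labels. At the level of decorated representations this is Derksen--Weyman--Zelevinsky mutation of $(Q,W)$, and its translation into the matrices $A_k^\pm$ acting on semi-invariant domains is precisely what \cite[Theorem~2.18]{BHIT_semi-invariant} (hereditary case) and \cite{reading_universal} ($c$- and $g$-vectors) supply, so the real work is assembling these. A secondary point to verify is the behaviour of a wall $D(M)$ that meets $e_k^\perp$ in an $(n-2)$-dimensional set, where both (1) and (2) apply: $\mu_k^\ast$ then ``bends'' $D(M)$, sending its two halves into the distinct walls whose labels have dimension vectors $A_k^+\undim M$ and $A_k^-\undim M$, and these two walls share an $(n-2)$-dimensional face inside $e_k^\perp$ where the two linear formulas coincide; continuity of $\mu_k^\ast$ shows there is no inconsistency.
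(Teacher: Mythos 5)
The paper does not actually prove Theorem~\ref{readingFormula}: it is introduced with ``The following is essentially \cite[Theorem 2.18]{BHIT_semi-invariant}'' and taken as a citation, so your proposal is doing original reconstruction work rather than something to be checked against an explicit proof in the paper.

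Your treatment of part (3) is correct and self-contained. Since $S(k)$ is simple, $D(S(k))=e_k^\perp$, and the explicit formulas for $A_k^\pm$ show that $v\mapsto (v^{tr}A_k^\pm)^{tr}$ fixes every $v$ with $v_k=0$; the same computation identifies $D(S'(k))=e_k^\perp$ on the mutated side, so the conclusion follows.

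For parts (1) and (2), your route through Reading's piecewise-linear identification of $g$-vector fans is reasonable, but there is a scope problem: the theorem is stated for an \emph{arbitrary} cluster-tilted algebra $J(Q,W)$, whereas your key step asserting that codimension-one $g$-vector cones with label $M$ are ``dense in $D(M)$'' relies explicitly on tameness (via Remark~\ref{rem:g-fan walls}). For a wild cluster-tilted algebra the closure of the $g$-vector fan can be a proper subset of $\RR^n$, so $D(M)\cap \mathcal{H}^+$ may contain points to which the density-plus-continuity argument does not reach. As stated, your argument establishes the conclusion only on the closure of the $g$-vector cones inside $D(M)$, not on all of it, so either the hypothesis should be weakened to tame (matching the later Theorem~\ref{genReadingFormula}) or the argument must be replaced.

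A cleaner route that avoids the density issue is exactly the one the paper uses to prove the generalization Theorem~\ref{genReadingFormula}: apply Mou's mutation functors $F_k^\pm$ (Theorem~\ref{thm:mouFormula}) together with Proposition~\ref{prop:wallsToWalls}, which characterizes membership in $D(M)$ by inequalities indexed only by submodules lying in the half-category $\cS(k)^\pm(Q)$. Since $F_k^\pm$ is an equivalence between $\cS(k)^\pm(Q)$ and $\cS(k)^\mp(\mu_kQ)$, and on dimension vectors it is precisely multiplication by $A_k^\pm$ (Proposition~\ref{prop:mouFormula}), one gets a bijection between the defining inequalities for $D(M)$ restricted to $\mathcal{H}^\pm$ and those for $D(M')$, with no density hypothesis needed. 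Your verification that $(v^{tr}A_k^+)^{tr}\cdot A_k^+\undim M = v\cdot\undim M=0$ using $A_k^+A_k^+=\mathsf{Id}_n$ is the right start for the equality constraint; what is missing is this module-level correspondence for the inequality constraints, which is exactly what Proposition~\ref{prop:wallsToWalls} supplies.
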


In order to prove Theorem~\ref{thmintro:mainD}, we must consider \emph{all} of the walls in the standard wall-and-chamber structure, not just the walls labeled by $c$-vectors. Thus the remainder of this section is aimed at generalizing Theorem \ref{readingFormula} to the following.

\begin{thm}\label{genReadingFormula}
	Let $J(Q,W)$ be a tame cluster-tilted algebra. Let $D(M)$ be a wall in the standard wall-and-chamber structure $\mathfrak{D}(J(Q,W))$.
	\begin{enumerate}
		\item Suppose there exists $w \in D(M)$ such that $w\cdot\undim S(k) > 0$ and let $v \in D(M)$ such that $v \cdot\undim S(k) \geq 0$. Then $(v^{tr} A_k^+)^{tr}$ is included in a wall $D(M')$ in the standard wall-and-chamber structure of $J(\mu_k(Q,W))$. Moreover, $\undim M' = A_k^+ \undim M$.
		\item Suppose there exists $w \in D(M)$ such that $w\cdot\undim S(k) < 0$ and let $v \in D(M)$ such that $v \cdot\undim S(k) \leq 0$. Then $(v^{tr} A_k^-)^{tr}$ is included in a wall $D(M')$ in the standard wall-and-chamber structure of $J(\mu_k(Q,W))$. Moreover, $\undim M' = A_k^- \undim M$.
		\item Suppose $M = S(k)$ and let $S'(k)$ be the simple module in $\mods J(\mu_k(Q,W))$ at vertex $k$. Then for all $v \in D(S(k))$, we have that $(v^{tr} A_k^+)^{tr} = (v^{tr} A_k^-)^{tr}$ is included in the wall $D(S'(k))$ in the standard wall-and-chamber structure of $J(\mu_k(Q,W))$.
	\end{enumerate}
\end{thm}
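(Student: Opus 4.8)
The plan is to split Theorem~\ref{genReadingFormula} according to the brick labeling the wall. Every wall of $\mathfrak{D}(J(Q,W))$ has the form $D(M)$ for $M$ a brick, and in the tame cluster-tilted case the dimension vector of any wall-labeling brick is either a $c$-vector or a positive multiple of the null root $\eta$: the only accumulation of walls is at the null wall $D(\eta)$, so every other wall abuts a chamber and hence carries a $c$-vector label. When $\undim M$ is a $c$-vector, each of (1)--(3) is exactly the corresponding statement of Theorem~\ref{readingFormula} with the extra conclusion ``and this is a $c$-vector'' omitted, which we do not need; so nothing remains in that case. The genuinely new content — and the reason the tameness hypothesis enters — is the null wall $D(\eta)$.

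I would first dispose of part~(3) in full generality: since $S(k)$ is simple, $D(S(k))=e_k^\perp$, and a direct inspection of the matrices $A_k^+,A_k^-$ shows both restrict to the identity on $e_k^\perp$; as $D(S'(k))=e_k^\perp$ in the coordinates of $\mu_k(Q,W)$, part~(3) follows, and this also shows that $v\mapsto(v^{tr}A_k^+)^{tr}$ and $v\mapsto(v^{tr}A_k^-)^{tr}$ agree on $e_k^\perp=H(e_k)^+\cap H(e_k)^-$, so the pieces of (1) and (2) glue along that hyperplane. For the null wall I would invoke Mou's formula (Theorem~\ref{thm:mouFormula}) together with the identification \cite{bridgeland_scattering,mou_scattering} of $\mathfrak{D}(J(Q,W))$ with Bridgeland's algebraic scattering diagram: this says $\mathfrak{D}(J(\mu_k(Q,W)))$ is the image of $\mathfrak{D}(J(Q,W))$ under the piecewise-linear homeomorphism equal to $v\mapsto(v^{tr}A_k^+)^{tr}$ on $H(e_k)^+$ and $v\mapsto(v^{tr}A_k^-)^{tr}$ on $H(e_k)^-$; in particular this map carries $D(\eta)$ (or each of its halves) onto a wall of $\mathfrak{D}(J(\mu_k(Q,W)))$. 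That wall must again be the null wall $D(\eta')$, since a cone-structure-preserving homeomorphism between these wall-and-chamber structures must send the unique limiting wall to the unique limiting wall. It then remains only to identify $\undim M'$: mutating a homogeneous module $M_\lambda$ of dimension $\eta$ by the Derksen--Weyman--Zelevinsky rule (Theorem~\ref{thm:DWZFormula}) shows that for generic $\lambda$ it goes to a homogeneous module over $J(\mu_k(Q,W))$ with dimension vector $A_k^+\eta$ on $H(e_k)^+$ (resp.\ $A_k^-\eta$ on $H(e_k)^-$); since $A_k^\pm$ is a lattice automorphism and $\eta$, $\eta'$ are the primitive null roots, the two primitive positive vectors coincide, giving $\undim M'=A_k^\pm\eta=\eta'$. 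Alternatively one may obtain $A_k^\pm\eta=\eta'$ by continuity from the $c$-vector case, using that the rays spanned by real Schur roots accumulate on $\RR_{>0}\eta$ and that $d\mapsto A_k^\pm d$ is linear.

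The main obstacle I expect is not a single computation but the bookkeeping needed to see that Reading's $c$-vector combinatorics, Mou's functors on the scattering side, and the DWZ mutation of decorated representations all refer to the same piecewise-linear map $v\mapsto(v^{tr}A_k^\pm)^{tr}$ and the same wall labels. In particular, on the null wall one must verify that the module part of the DWZ-mutated decorated representation of $M_\lambda$ is nonzero — so that the image genuinely is a wall rather than a lower-dimensional cone — and, should $D(\eta)$ straddle $e_k^\perp$, that $A_k^+\eta=A_k^-\eta$ there, which amounts to $(B\eta)_k=0$ and should be read off from the fact that $D(\eta)$ then meets both open half-spaces $H(e_k)^\pm$. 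Pinning down these compatibilities, rather than any hard estimate, is the crux.
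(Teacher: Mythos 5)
Your route is genuinely different from the paper's. The paper proves all three parts by a single uniform argument: given a wall $D(M)$ with $w\cdot\undim S(k)>0$ for some $w\in D(M)$, Lemma~\ref{lem:oneSide} places $M\in\cS(k)^+(Q)$; Mou's equivalence $F(Q)_k^+:\cS(k)^+(Q)\to\cS(k)^-(\mu_kQ)$ (Theorem~\ref{thm:mouFormula}(2)) together with Proposition~\ref{prop:mouFormula} produces a brick $M'=F(Q)_k^+M$ with $\undim M'=A_k^+\undim M$; and Mou's characterization of $D(M')$ in terms of $\cS(k)^-$-submodules (Proposition~\ref{prop:wallsToWalls}) then yields $(v^{tr}A_k^+)^{tr}\in D(M')$. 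There is no case split at all, and none of these ingredients uses tameness — so your diagnosis that ``the reason the tameness hypothesis enters is the null wall $D(\eta)$'' is off: the paper's argument applies verbatim to any cluster-tilted algebra, and the tameness hypothesis in the statement is just the scope of the section.

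Your reduction has two soft spots worth naming. First, it rests on the claim that every wall not contained in $D(\eta)$ carries a $c$-vector label, justified only by ``the walls accumulate only at $D(\eta)$, so every other wall abuts a chamber.'' That is true but not a proof: you must show local finiteness of walls away from $D(\eta)$, that at a generic relative-interior point of $D(M)$ the other walls through it meet $D(M)$ in codimension $\ge 1$ inside $D(M)$, and hence that some chamber's closure meets $D(M)$ in a full codimension-one face rather than only along a lower-dimensional face. Second, for the null wall you invoke the piecewise-linear isomorphism of Bridgeland/Mou scattering diagrams under mutation. But as the paper itself remarks before Theorem~\ref{genReadingFormula}, this result (Mou's Proposition~4.15) already ``essentially'' implies the whole theorem, so using it to dispatch the null-wall case is closer to citing the conclusion than to reproving it. The paper deliberately descends to the lower-level ingredients (Mou's Lemmas~4.12–4.13, recorded here as Proposition~\ref{prop:wallsToWalls}, plus Proposition~\ref{prop:mouFormula}), which makes the argument self-contained, shorter, and more general than your case analysis. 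Your plan could be made to work, but you would need to close these two gaps, and it would still be logically downstream of the very result the paper is trying to re-derive elementarily.
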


This essentially follows from \cite[Proposition 4.15]{mou_scattering}, but we include a proof here for completeness. The reason for the difference in statement between the present paper and that of Mou is that we are taking a different basis of $\RR^n$ after mutation.

We begin by recalling the following construction from \cite{DWZ_quivers}. Note that for a representation $M$ of a quiver $Q$, we associate to each arrow $\rho \in Q_1$, a linear map $M_\rho: M_{s(\rho)} \rightarrow M_{t(\rho)}$. In \cite{DWZ_quivers}, on the other hand, our $s(\rho)$ is replaced with their ``tail'' $t(\rho)$ and our $t(\rho)$ is replaced with their ``head'' $h(\rho)$.

\begin{defn}\cite[Equations~10.2 and~10.3]{DWZ_quivers}\label{def:mutated rep}
Let $M$ be a representation of the quiver with potential $(Q,W)$\footnote{The definitions in \cite{DWZ_quivers} are given for \emph{decorated} representations of $(Q,W)$, but we simplify the construction here since the representations we are considering would have trivial decorations. Thus, for the purposes of this paper, $M$ can be considered as a representation of $Q/I$ where $I$ is the ideal defined by $W$.}. We consider $M$ as a $J(Q,W)$-module. Let $k \in Q_0$ and denote
$$M_{out} := \bigoplus_{\rho \in Q_1:s(\rho)=k}M_{t(\rho)}, \qquad M_{in} := \bigoplus_{\rho\in Q_1:t(\rho) = k}M_{s(\rho)},$$
$$\beta_{M,k} := \sum_{\rho\in Q_1: s(\rho) = k}M_\rho:M_k \rightarrow M_{out},\qquad \alpha_{M,k} := \sum_{\rho\in Q_1: t(\rho) = k}M_\rho:M_{in} \rightarrow M_k.$$
\end{defn}

\begin{rem}\label{rem:muted rep multiplicity}
The multiplicity of $M_j$ as a direct summand of $M_{out}$ is precisely $\max\{B_{k,j},0\}$ and the multiplicity of $M_j$ as a direct summand of $M_{in}$ is precisely $\max\{-B_{k,j},0\}$. Moreover, the condition that $\dim\Hom(S(k), M) = 0$ is equivalent to $\beta_{M,k}$ being injective and the condition that $\dim\Hom(M, S(k)) = 0$ is equivalent to $\alpha_{M,k}$ being surjective.
\end{rem}

\begin{nota}\label{notation:sides of wall}
We denote by $\cS(k)^+(Q)$ and $\cS(k)^-(Q)$ the full subcategories $\Hom(S(k), -) = 0$ and $\Hom(-,S(k)) = 0$ of $\mods J(Q,W)$.
\end{nota}

\begin{rem}\label{rem:brick on one side}
Note in particular that if
$M \ncong S(k)$ is a brick, then $M \in \cS(k)^+(Q) \cup \cS(k)^-(Q)$. Indeed, if $M \notin \cS(k)^+(Q) \cup \cS(j)^-(Q)$, then there are nonzero morphisms $M \twoheadrightarrow S(k) \hookrightarrow M$. Since $M$ is a brick, this composition must be an automorphism, and so $M \cong S(k).$
\end{rem}

We now recall the following mutation formulas from \cite[Section~4.2]{mou_scattering}. 

\begin{thm}\label{thm:mouFormula}
	Let $J(Q,W)$ be an arbitrary cluster-tilted algebra. Then there are functors $F(Q)_k^+, F(Q)_k^-: \mods J(Q,W) \rightarrow \mods J(\mu_k(Q,W))$ with the following properties.
	\begin{enumerate}
		\item $(F(Q)_k^+, F(\mu_kQ)_k^-)$ and $(F(Q)_k^-, F(\mu_kQ)_k^+)$ are adjoint pairs.
		\item $F(Q)_k^+$ induces an equivalence of categories $\cS(k)^+(Q) \rightarrow \cS(k)^-(\mu_kQ)$ which preserves short exact sequences.
		\item $F(Q)_k^-$ induces an equivalence of categories $\cS(k)^-(Q) \rightarrow \cS(k)^+(\mu_k Q)$ which preserves short exact sequences.
		\item Let $M \in \mods J(Q,W)$. For $j \neq k$, $F(Q)_k^+ M_j = M_j = F(Q)_k^- M_j$. Moreover, $F(Q)_k^+M_k = \coker \beta_{M,k}$ and $F(Q)_k^- M_k = \ker \alpha_{M,k}$.
	\end{enumerate}
\end{thm}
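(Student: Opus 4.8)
The plan is to recover this from Mou's construction \cite{mou_scattering}, realizing $F(Q)_k^+$ and $F(Q)_k^-$ as the quiver-with-potential analogues of the classical BGP reflection functors, to which they specialize when $k$ is a source or a sink of an acyclic quiver. Throughout, fix a vertex $k$; since $Q$ is mutation equivalent to an acyclic quiver we may assume it has no loops and no $2$-cycles at $k$.

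First I would give the functors explicitly. Recall from Definition~\ref{def:mutated rep} the maps $\alpha_{M,k}\colon M_{in}\to M_k$ and $\beta_{M,k}\colon M_k\to M_{out}$ attached to a $J(Q,W)$-module $M$. Cyclic differentiation of $W$ along the length-two paths through $k$ yields a canonical map $\gamma_{M,k}\colon M_{out}\to M_{in}$, and the Jacobian relations at the arrows incident to $k$ are precisely the identities $\gamma_{M,k}\beta_{M,k}=0$ and $\alpha_{M,k}\gamma_{M,k}=0$. To define $N:=F(Q)_k^+M$, set $N_j:=M_j$ for $j\neq k$ and $N_k:=\coker\beta_{M,k}$; arrows of $\mu_k(Q)$ neither incident to $k$ nor of composite type act as on $M$, a composite arrow $[\rho\sigma]$ acts by $M_\rho M_\sigma$, a reversed arrow $\rho^{\ast}$ (an arrow into $k$ in $\mu_k(Q)$) acts through the canonical surjection $M_{out}\twoheadrightarrow N_k$, and a reversed arrow $\sigma^{\ast}$ (an arrow out of $k$) acts through the map $N_k\to M_{in}$ that $\gamma_{M,k}$ induces on $\coker\beta_{M,k}$. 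The functor $F(Q)_k^-$ is defined dually with $N_k:=\ker\alpha_{M,k}$, using $\alpha_{M,k}\gamma_{M,k}=0$. Property~(4) then holds by construction, and functoriality is immediate once well-definedness is known, since any morphism $M\to M'$ induces compatible maps on all the relevant kernels and cokernels.

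The technical heart is verifying that $N$ is in fact a module over $J(\mu_k(Q),W')$, where $W'$ is the Derksen--Weyman--Zelevinsky mutation of $W$ \cite{DWZ_quivers}: replace each length-two path through $k$ by a composite arrow, add $\sum_{\rho,\sigma}[\rho\sigma]\,\sigma^{\ast}\rho^{\ast}$, and reduce by deleting $2$-cycles. For each arrow $a$ of $\mu_k(Q)$ one substitutes the arrow actions above into $\partial_a W'$ and rewrites the result using the relations $\partial_b W=0$ satisfied by $M$; the contributions coming from the added term $\sum[\rho\sigma]\sigma^{\ast}\rho^{\ast}$ and from the surjection $M_{out}\twoheadrightarrow N_k$ are what make the leftover terms cancel. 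Tracking the $2$-cycle reduction and the multiplicities in the decompositions $M_{in}=\bigoplus M_{s(\sigma)}$ and $M_{out}=\bigoplus M_{t(\rho)}$ is where essentially all the work lies, and I expect this to be the main obstacle.

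Granting this, the adjunctions in~(1) come from explicit units and counits. For the pair $(F(Q)_k^+,F(\mu_kQ)_k^-)$, one computes that $F(\mu_kQ)_k^-F(Q)_k^+M$ is $M_j$ at each $j\neq k$ and $\ker\!\big(M_{out}\twoheadrightarrow N_k\big)=\im\beta_{M,k}$ at $k$, so the unit is the identity away from $k$ and the corestriction $\beta_{M,k}\colon M_k\to\im\beta_{M,k}$ at $k$; the counit is built dually, and the triangle identities are routine. Finally, for~(2) and~(3) one restricts to the relevant subcategories: by Remark~\ref{rem:muted rep multiplicity}, $M\in\cS(k)^+(Q)$ exactly when $\beta_{M,k}$ is injective, in which case the unit is an isomorphism, so $F(\mu_kQ)_k^-F(Q)_k^+\cong\mathrm{Id}$ on $\cS(k)^+(Q)$; dually $F(Q)_k^+F(\mu_kQ)_k^-\cong\mathrm{Id}$ on $\cS(k)^-(\mu_kQ)$, and since $F(Q)_k^+$ carries $\cS(k)^+(Q)$ into $\cS(k)^-(\mu_kQ)$ (the structure map $M_{out}\twoheadrightarrow N_k$ being surjective), $F(Q)_k^+$ and $F(\mu_kQ)_k^-$ are mutually inverse equivalences $\cS(k)^+(Q)\simeq\cS(k)^-(\mu_kQ)$; the assertion of~(3) is the dual statement.
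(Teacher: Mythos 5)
The paper does not actually prove this statement: Theorem \ref{thm:mouFormula} is quoted from Mou \cite{mou_scattering} (his mutation functors together with the adjunction and equivalence statements), so there is no in-paper argument to compare against. Your reconstruction does follow the route of the cited source: the functors are the quiver-with-potential analogues of BGP reflection, with $N_k=\coker\beta_{M,k}$ (resp.\ $\ker\alpha_{M,k}$), reversed arrows acting through the canonical surjection $M_{out}\twoheadrightarrow N_k$ and through the map induced by $\gamma_{M,k}$ (legitimate because $\gamma_{M,k}\beta_{M,k}=0$). The formal consequences you draw are also right: $\alpha_{N,k}$ is surjective by construction, so $F(Q)_k^+$ lands in $\cS(k)^-(\mu_kQ)$; the unit of $(F(Q)_k^+,F(\mu_kQ)_k^-)$ at the vertex $k$ is the corestriction $M_k\to\im\beta_{M,k}$, which is invertible precisely on $\cS(k)^+(Q)$ by Remark \ref{rem:muted rep multiplicity}; and items (2)--(3) then follow from the triangle identities.

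As a standalone proof, however, the proposal is incomplete exactly where you flag it: the verification that $N$ satisfies the relations $\partial_a W'=0$ for the mutated potential, including the passage through the right-equivalence that deletes $2$-cycles, is the entire technical content of the theorem and is only described, not carried out. This reduction step is not a routine substitution: removing a $2$-cycle of the form $[\rho\sigma]\,c$ modifies the actions of the surviving arrows by correction terms coming from $\partial W$, and one must check both that the relations close up and that the resulting module structure is independent of the choices of splittings and of representative in the right-equivalence class (this is precisely the machinery of \cite{DWZ_quivers} that Mou builds on). Within the present paper it is legitimate to outsource this to \cite{mou_scattering}; if your argument is meant to be self-contained, that cancellation and reduction computation is the part that must actually be written out.
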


We will also need the following, which is similar to \cite[Theorem 4.5(5)]{LL_maximal}.

\begin{prop}\label{prop:mouFormula} Let $J(Q,W)$ be an arbitrary cluster-tilted algebra.
	\begin{enumerate}
	\item Let $k \in Q_0$ and let $M \in \mods J(Q,W)$. If $\dim\Hom(S(k), M) = 0$, then $\undim F(Q)_k^+ M = A_k^+ \undim M$.
	\item  Let $k \in Q_0$ and let $M \in \mods J(Q,W)$. If $\dim\Hom(M, S(k)) = 0$, then $\undim F(Q)_k^- M = A_k^- \undim M$.
	\end{enumerate}
\end{prop}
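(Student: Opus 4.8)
The plan is to verify Proposition~\ref{prop:mouFormula} by direct computation from the explicit description of the mutated representation in Theorem~\ref{thm:mouFormula}(4), reading off dimension vectors coordinate by coordinate. I will prove (1); the proof of (2) is dual (apply (1) to the opposite quiver, or argue symmetrically).

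First I would set up notation: write $\undim M = (d_1,\ldots,d_n)$ with $d_j = \dim_K M_j$, and write $\undim F_k^+(Q)M = (d'_1,\ldots,d'_n)$. By Theorem~\ref{thm:mouFormula}(4), for $j \neq k$ we have $F(Q)_k^+ M_j = M_j$ as vector spaces, so $d'_j = d_j$ for all $j \neq k$. This already matches the formula: by the definition of $A_k^+$, for $i = j \neq k$ the entry is $1$ and all off-diagonal entries in row $j$ vanish, so $(A_k^+\undim M)_j = d_j$. Hence the only coordinate that requires work is the $k$-th one.

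Next I would compute $d'_k$. By Theorem~\ref{thm:mouFormula}(4), $F(Q)_k^+ M_k = \coker\beta_{M,k}$, where $\beta_{M,k}: M_k \to M_{out}$ and $M_{out} = \bigoplus_{\rho:s(\rho)=k} M_{t(\rho)}$. The hypothesis $\dim\Hom(S(k),M) = 0$ implies, by Remark~\ref{rem:muted rep multiplicity}, that $\beta_{M,k}$ is injective, so $\dim_K\coker\beta_{M,k} = \dim_K M_{out} - \dim_K M_k$. By Remark~\ref{rem:muted rep multiplicity} again, the multiplicity of $M_j$ in $M_{out}$ is $\max\{B_{k,j},0\}$, so $\dim_K M_{out} = \sum_{j} \max\{B_{k,j},0\}\, d_j$. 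Therefore
\[
d'_k = \sum_{j} \max\{B_{k,j},0\}\, d_j - d_k.
\]
On the other hand, reading off row $k$ of $A_k^+$: the entry $(A_k^+)_{kk} = -1$ and $(A_k^+)_{kj} = \max\{B_{kj},0\}$ for $j \neq k$ (and one checks $B_{kk}=0$ so the $j \neq k$ restriction is harmless). Thus $(A_k^+\undim M)_k = -d_k + \sum_{j \neq k}\max\{B_{kj},0\}\, d_j$, which agrees with the expression for $d'_k$. This completes (1), and (2) follows by the dual argument using $\ker\alpha_{M,k}$ with $\alpha_{M,k}$ surjective (again via Remark~\ref{rem:muted rep multiplicity}) and $\dim_K\ker\alpha_{M,k} = \dim_K M_{in} - \dim_K M_k$, matched against row $k$ of $A_k^-$.

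I do not anticipate a serious obstacle here: the statement is essentially a bookkeeping consequence of the combinatorial descriptions already recorded in Theorem~\ref{thm:mouFormula}(4) and Remark~\ref{rem:muted rep multiplicity}. The one point requiring a little care is making sure the injectivity/surjectivity of $\beta_{M,k}$, respectively $\alpha_{M,k}$, is exactly what the vanishing hypotheses $\dim\Hom(S(k),M)=0$, respectively $\dim\Hom(M,S(k))=0$, provide — this is precisely the content of Remark~\ref{rem:muted rep multiplicity} — and then that the multiplicities of the $M_j$ in $M_{out}$ and $M_{in}$ are correctly identified with the positive and negative parts of the $k$-th row of $B$, so that they line up with the off-diagonal entries of $A_k^+$ and $A_k^-$.
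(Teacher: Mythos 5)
Your proposal is correct and follows essentially the same route as the paper's proof: both deduce injectivity of $\beta_{M,k}$ (resp.\ surjectivity of $\alpha_{M,k}$) from the vanishing Hom hypothesis, compute $\dim\coker\beta_{M,k} = \dim M_{out} - \dim M_k$ using the multiplicities $\max\{B_{k,j},0\}$ from Remark~\ref{rem:muted rep multiplicity}, and match the result against row $k$ of $A_k^{\pm}$ while the other coordinates are unchanged. No gaps.
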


\begin{proof}
	(1) Since $\Hom(S(k), M) = 0$, we have that $\beta_{M,k}$ is injective. This means $\dim\coker \beta_{M,k} = \dim M_{out} - \dim M_k$. Thus we have
		\begin{eqnarray*}
			\undim F(Q)_k^+ M &=& e_k\cdot(\dim M_{out} - \dim M_k)+ \sum_{j \neq k} e_j\cdot\dim M_j\\
				&=& e_k \cdot\left(- \dim M_k + \sum_{j \neq k}\max\{0, B_{k,j}\}\dim M_j\right) + \sum_{j \neq k} e_j\cdot\dim M_j\\
				&=& A^+_k \undim M.
		\end{eqnarray*}
		
	(2) Since $\Hom(M, S(k)) = 0$, we have that $\alpha_{M,k}$ is surjective. This means $\dim\ker\alpha_{M,k} = \dim M_{in} - \dim M_k$. Thus we have
		\begin{eqnarray*}
			\undim F(Q)_k^- M &=& e_k\cdot(\dim M_{in} - \dim M_k)+ \sum_{j \neq k} e_j\cdot\dim M_j\\
				&=& e_k \cdot\left(- \dim M_k + \sum_{j \neq k}\max\{0, -B_{k,j}\}\dim M_j\right) + \sum_{j \neq k} e_j\cdot\dim M_j\\
				&=& A^-_k \undim M.
		\end{eqnarray*}
\end{proof}

We now apply the mutation formulas of Mou and Reading to the null root and show that the result is the null root of the mutated quiver.

\begin{nota}\label{nota:homogeneous}
For a tame cluster-tilted algebra $\Lambda = \mods J(Q,W)$, we denote by $\eta(Q)$ the null root of $\Lambda$ and $M_{\lambda}(Q)$ a homogeneous $J(Q,W)$-module of dimension $\eta(Q)$ with arbitrary parameter $\lambda \in K^*$.
\end{nota}

\begin{prop} \label{prop:nullToNull}
	Let $J(Q,W)$ be a tame cluster-tilted algebra.
\begin{enumerate}
	\item Then $M_\lambda(Q)$ is a non-simple brick.
	\item Let $k \in Q_0$. Then at least one of $\dim\Hom(S(k),M_\lambda)$ and $\dim\Hom(M_\lambda, S(k))$ is 0.
	\item Let $k \in Q_0$. If $\dim\Hom(S(k),M_\lambda(Q)) = 0$, then $\eta(\mu_k Q) = A^+_k \eta(Q)$.
	\item Let $k \in Q_0$. If $\dim\Hom(M_\lambda(Q), S(k)) = 0$, then $\eta(\mu_k Q) = A^-_k \eta(Q)$.
	\end{enumerate}
\end{prop}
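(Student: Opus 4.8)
The plan is to prove the four claims in order, using the mutation formulas of Mou (Theorem~\ref{thm:mouFormula}) and the homomorphism-based formulas of Proposition~\ref{prop:mouFormula}, together with the basic structure theory of tame cluster-tilted algebras. For (1), I would argue that $M_\lambda(Q)$ is a brick because it generates a homogeneous tube: homogeneous modules satisfy $\tau M_\lambda \cong M_\lambda$ and are the quasi-simple objects of their (rank-$1$) tube, so they have no proper regular submodules; since the only other possibility for a proper submodule is a ``preprojective-like'' module with strictly negative $g(\eta)$-pairing (which cannot map nontrivially into a regular module in the relevant way), the endomorphism ring is local with trivial radical, hence $M_\lambda(Q)$ is a brick. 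It is non-simple because $\eta(Q)$ is a sincere vector (this is recorded in Section~\ref{sec:tame hereditary} and carries over to the cluster-tilted case via the fact that the homogeneous tubes of $\mods J(Q,W)$ are obtained from those of the hereditary algebra by deleting the cluster summands), so $\undim M_\lambda(Q) = \eta(Q)$ has more than one nonzero entry and cannot equal any $e_j$.

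For (2), I would invoke Remark~\ref{rem:brick on one side}: since $M_\lambda(Q)$ is a brick and, by (1), not isomorphic to $S(k)$, it lies in $\cS(k)^+(Q) \cup \cS(k)^-(Q)$; that is, either $\Hom(S(k),M_\lambda)=0$ or $\Hom(M_\lambda,S(k))=0$. For (3), suppose $\dim\Hom(S(k),M_\lambda(Q))=0$. By Theorem~\ref{thm:mouFormula}(2), the functor $F(Q)_k^+$ restricts to an equivalence $\cS(k)^+(Q)\to\cS(k)^-(\mu_kQ)$, so $F(Q)_k^+ M_\lambda(Q)$ is a module over $J(\mu_kQ,W')$. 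Since $F(Q)_k^+$ is an equivalence of categories, it sends the $\tau$-orbit structure and bricks to bricks, and in particular it sends the one-parameter family of homogeneous modules $\{M_\lambda(Q)\}_{\lambda\in K^*}$ to a one-parameter family of pairwise non-isomorphic bricks in $\mods J(\mu_kQ,W')$ — and (being an equivalence, hence preserving $\tau$ up to the appropriate identification and preserving Auslander--Reiten structure on the subcategory) to homogeneous modules. A one-parameter family of homogeneous modules in a tame cluster-tilted algebra must have the common dimension vector $\eta(\mu_kQ)$. Hence $\undim F(Q)_k^+ M_\lambda(Q) = \eta(\mu_kQ)$, and by Proposition~\ref{prop:mouFormula}(1) this dimension vector equals $A_k^+\undim M_\lambda(Q) = A_k^+\eta(Q)$. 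Claim (4) follows by the identical argument with $F(Q)_k^-$, Theorem~\ref{thm:mouFormula}(3), and Proposition~\ref{prop:mouFormula}(2) in place of their ``$+$'' counterparts, using the hypothesis $\dim\Hom(M_\lambda(Q),S(k))=0$.

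The main obstacle I anticipate is the step in (3) (and symmetrically (4)) where one must be certain that $F(Q)_k^+$ sends \emph{homogeneous} modules to \emph{homogeneous} modules, i.e.\ that the image family is again a one-parameter family of $\tau$-fixed bricks rather than, say, a family that lands partly in the exceptional tubes or becomes non-homogeneous. One way to secure this cleanly: $F(Q)_k^+$ and $F(\mu_kQ)_k^-$ are quasi-inverse equivalences between $\cS(k)^+(Q)$ and $\cS(k)^-(\mu_kQ)$ (Theorem~\ref{thm:mouFormula}(1,2)), so they match up indecomposables and preserve the property of being a brick and of having no proper regular (in the tame sense, equivalently $g(\eta)$-semistable) submodules; combined with the fact (Definition~\ref{def: regular module}) that regularity is detected by $g(\eta)\in D(M)$ and that the homogeneous modules are exactly the quasi-simple regular modules of rank-$1$ tubes, one gets that the image is homogeneous. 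A second, more computational safeguard is simply to note that whichever of (3)/(4) applies, Proposition~\ref{prop:mouFormula} already forces $\undim F_k^\pm(Q)M_\lambda(Q) = A_k^\pm\eta(Q)$; since $A_k^\pm$ is invertible, this is a nonzero sincere-up-to-reindexing vector with $g(\mu_kQ)$-pairing zero (the $g$-vector of the null root is mutation-covariant by Theorem~\ref{readingFormula} applied to $D(M_\lambda)$), and by uniqueness of the null root of $J(\mu_kQ,W')$ (each tame cluster-tilted algebra has a unique null root, Definition~\ref{def: regular module}) it must equal $\eta(\mu_kQ)$. Either route closes the argument; I would present the first for conceptual clarity and remark on the second.
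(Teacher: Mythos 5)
There is a genuine gap, in two places, and both trace back to the same missing idea. The paper proves this proposition by induction on the length of a mutation sequence $(Q,W)=\mu_{i_m}\circ\cdots\circ\mu_{i_1}(\Gamma)$ from a Euclidean quiver: the hereditary base case is known, and the inductive step pushes $M_\lambda(Q')$ through the Mou functor and then observes that the images form an \emph{infinite family of pairwise non-isomorphic bricks with a common dimension vector}, which by tameness is only possible if they are the mouths of the homogeneous tubes of $J(Q,W)$. That finiteness argument is the engine of the whole proof, and your write-up never supplies it or a substitute.

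Concretely: your direct proof of (1) imports the preprojective/regular/preinjective structure theory (Lemma~\ref{lem: preprojective maps to homogeneous} and the surrounding results of Section~\ref{sec:tame hereditary}) into the cluster-tilted setting, where the paper neither establishes it nor could, as stated; establishing that the mouth of a homogeneous tube of $J(Q,W)$ is a brick is precisely what the induction is for (compare the effort spent in Proposition~\ref{prop:nullgToNullg}, which needs Theorem~\ref{thm:DWZFormula}(4) just to see that $\mu_kM_\lambda$ is a brick). Moreover, your non-simplicity argument rests on the sincerity of $\eta(Q)$, which is \emph{false} for tame cluster-tilted algebras: Remark~\ref{rem:non-symmetric} exhibits $\eta=(1,1,1,0)$ (non-simplicity instead follows at once from the existence of infinitely many pairwise non-isomorphic modules of dimension vector $\eta$). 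Then in (3)/(4), your main route asserts that the equivalence $\cS(k)^+(Q)\to\cS(k)^-(\mu_kQ)$ "preserves $\tau$ and the AR structure"; this is an equivalence between a torsion-free class and a torsion class, not between abelian categories whose AR translates agree with the ambient ones, so homogeneity of the image family does not follow. Your fallback is a non sequitur: a nonzero vector $v$ with $g(\eta(\mu_kQ))\cdot v=0$ lies in an $(n-1)$-dimensional hyperplane containing, e.g., the dimension vectors of all quasi-simples of exceptional tubes, so it is nowhere near forced to equal $\eta(\mu_kQ)$. To close the argument you must instead argue as the paper does: the images $\{F(Q)_k^+M_\lambda(Q)\}_{\lambda\in K^*}$ are pairwise non-isomorphic bricks (by the equivalence) with the same dimension vector $A_k^+\eta(Q)$ (by Proposition~\ref{prop:mouFormula}), and tameness leaves no room for such a family outside the homogeneous tubes.
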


\begin{proof}
	Write $(Q,W) = \mu_{i_m}\circ\cdots\circ\mu_{i_1}(\Gamma)$ where $\Gamma$ is a Euclidean quiver. We will prove these results by induction on $m$. When $m = 0$, we are in the hereditary case where this result is known.
	
	Let $(Q', W') = \mu_{i_m}(Q,W)$. By the induction hypothesis, we have that $M_\lambda(Q')$ is a brick. Thus at least one of $\dim\Hom(S(i_m),M_\lambda(Q'))$ and $\dim\Hom(M_\lambda(Q,), S(i_m))$ is zero. We will assume $\dim\Hom(S(i_m), M_\lambda(Q')) = 0$ as the other case follows analogously. Now by Proposition \ref{prop:mouFormula}, we have that $\undim F(Q')_{i_m}^+ M_\lambda(Q')$ does not depend on the choice of parameter $\lambda$. Moreover, by Theorem \ref{thm:mouFormula}(2), we have that $\{F(Q')_{i_m}^+ M_\lambda(Q')\}_{\lambda \in K^*}$ is a set of pairwise non-isomorphic bricks. This is only possible if these bricks are the mouths of the homogeneous tubes in $\mods J(Q,W)$, which are not simple and of dimension $\eta(Q)$. This proves (1), (3), and (4). (2) then follows immediately from (1).
\end{proof}

Now that we have shown how the null root $\eta$ behaves under mutation, we focus on studying how the $g$-vector of $\eta$ behaves under mutation. To do so, we recall the following formula from \cite{DWZ_quivers}. Note that since $M_\lambda$ is indecomposable and not simple, we are able to state these results without mention of the associated \emph{decorations}.

Let $(Q,W)$ be a quiver with potential and $k \in Q_0$. Fix arrows $\rho:s(\rho) \rightarrow k$ and $\sigma:k \rightarrow t(\sigma)$. Following \cite[Definition~3.1]{DWZ_quivers} (with the same caveats on notational conventions as discussed before Definition~\ref{def:mutated rep}), for each simple cycle $\omega_1\cdots \omega_p$ in $Q$ with indices identified modulo $p$, we denote 
$$\partial_{[\rho\sigma]}(\omega_1\cdots\omega_p) = \begin{cases} \omega_{d+1}\omega_{d+2}\cdots \omega_{d+p-2} & \exists d: \omega_{d-1} = \sigma \text{ and } \omega_{d} = \rho\\0 & \text{otherwise}. \end{cases}$$
We then extend additively to define $\partial_{[\rho\sigma]}W$. As in \cite[Equation~10.4]{DWZ_quivers}, $\partial_{[\rho\sigma]}W$ induces a map $\gamma_{\rho,\sigma}: M_{t(\sigma)} \rightarrow M_{s(\rho)}$. Taking the sum over all possible choices of $\rho$ and $\sigma$ thus yields a map $\gamma_{M,k}: M_{out} \rightarrow M_{in}$.

\begin{thm}\label{thm:DWZFormula}
	Let $J(Q,W)$ be an arbitrary cluster-tilted algebra. For every non-simple indecomposable $M \in \mods J(Q,W)$ and every $k \in Q_0$, there is an indecomposable $\mu_k M \in \mods J\mu_k(Q,W)$ with the following properties:
	\begin{enumerate}
		\item \cite[Lemma 5.2]{DWZ_quivers2} If $g(M)\cdot \undim S(k) \geq0$, then $g(\mu_k M)^{tr} = g(M)^{tr} A_k^+$.
		\item \cite[Lemma 5.2]{DWZ_quivers2} If $g(M)\cdot \undim S(k) \leq0$, then $g(\mu_k M)^{tr} = g(M)^{tr} A_k^-$. 
		\item \cite[Prop 10.7]{DWZ_quivers} For $j \neq k$, $\mu_k M_j = M_j$. Moreover, $\mu_k M_k \cong \frac{\ker \gamma_{M,k}}{\im \beta_{M,k}} \oplus \ker \alpha_{M,k}$.
		\item \cite[Prop 6.2]{DWZ_quivers2} For all nonsimple $M'\in J(Q,W)$, $$\dim \Hom(M, M') - \dim \Hom^{[k]}(M,M') = \dim \Hom(\mu_k M, \mu_k M') - \dim \Hom^{[k]}(\mu_k M, \mu_k M'),$$ where $\Hom^{[k]}(-,-)$ is the set of morphisms which are only supported at the vertex $k$.
	\end{enumerate}
\end{thm}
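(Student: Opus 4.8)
The plan is to recognize that all four items are reformulations, in the language of honest modules, of results of Derksen--Weyman--Zelevinsky \cite{DWZ_quivers, DWZ_quivers2} on the mutation of \emph{decorated} representations of quivers with potential, and to carry out that translation carefully. First I would recall the framework: a decorated representation of $(Q,W)$ is a pair $\mathcal{M} = (M,V)$ with $M$ a $J(Q,W)$-module and $V$ a \emph{decoration} (a $K^{Q_0}$-module), and every indecomposable decorated representation is either of the form $(N,0)$ with $N$ an indecomposable $J(Q,W)$-module or a negative simple $(0,S_j^-)$. The DWZ mutation $\mu_k$ is an involution inducing a bijection between isoclasses of (reduced) indecomposable decorated representations of $(Q,W)$ and of $\mu_k(Q,W)$; it sends $(S(k),0) \mapsto (0,S_k^-)$ and fixes $(0,S_j^-)$ for $j\neq k$, hence permutes the remaining positive indecomposables among themselves. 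Since $M$ is non-simple, $M \not\cong S(k)$, so $\mu_k(M,0)$ is again of the form $(\mu_k M, 0)$ with $\mu_k M$ an indecomposable $J\mu_k(Q,W)$-module; this establishes the existence assertion.

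Next I would read off each numbered property from the corresponding DWZ result. Item (3) comes from \cite[Proposition~10.7]{DWZ_quivers}, which gives the underlying module of $\mu_k(M,0)$ vertex by vertex; the vertex-$k$ component is a priori a sum of the pieces $\ker\gamma_{M,k}/\im\beta_{M,k}$, $\im\gamma_{M,k}$, $\coker\alpha_{M,k}$ together with a decoration term, and I would simplify it to $\ker\gamma_{M,k}/\im\beta_{M,k} \oplus \ker\alpha_{M,k}$ using the relations imposed by $W$ at $k$ and the fact, established above, that the decoration of $\mu_k(M,0)$ vanishes. Items (1) and (2) come from \cite[Lemma~5.2]{DWZ_quivers2}, which records how the $g$-vector of a decorated representation transforms under $\mu_k$; since $(M,0)$ and $(\mu_k M, 0)$ carry no decoration, the relevant transformation matrices are exactly the $A_k^{\pm}$ of Section~\ref{sec:mutation formula} once conventions are matched, which is why the statements are phrased as $g(\mu_k M)^{tr} = g(M)^{tr} A_k^{\pm}$. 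Finally, item (4) is \cite[Proposition~6.2]{DWZ_quivers2} specialized to the case of trivial decorations.

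I expect the main obstacle to be the careful bookkeeping in this translation rather than any new idea: reconciling our matrices $A_k^+, A_k^-$ with DWZ's mutation matrices up to transpose and sign; verifying rigorously (via the classification of indecomposable decorated representations and the involutivity of $\mu_k$) that mutating a non-simple indecomposable never introduces a decoration; and correctly passing from DWZ's \emph{premutation} description of $\overline{M}_k$ to the reduced form asserted in (3) by means of the Jacobian relations at the vertex $k$.
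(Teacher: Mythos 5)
The paper offers no proof of this theorem beyond the citations attached to each item --- it is recorded purely as a recollection of Derksen--Weyman--Zelevinsky's results, with the remark that non-simplicity of $M$ lets one suppress the decorations --- and your proposal is exactly that translation carried out explicitly, so the approaches coincide. One small caveat: the third summand in DWZ's formula for the mutated space at $k$ is $\ker\alpha_{M,k}/\im\gamma_{M,k}$ rather than $\coker\alpha_{M,k}$; it is the former that combines with the summand $\im\gamma_{M,k}$ to yield the $\ker\alpha_{M,k}$ appearing in item (3).
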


We now use this result to study the $g$-vector of the null root.

\begin{prop}\label{prop:nullgToNullg}
	Let $J(Q,W)$ be a tame cluster-tilted algebra, and let $k \in Q_0$.
	\begin{enumerate}
		\item If $g(\eta(Q))\cdot\undim S(k) \geq 0$, then $g(\eta(\mu_k Q))^{tr} = g(\eta(Q))^{tr} A_k^+$.
		\item If $g(\eta(Q))\cdot\undim S(k) \leq 0$, then $g(\eta(\mu_k Q))^{tr} = g(\eta(Q))^{tr} A_k^-$.
	\end{enumerate}
\end{prop}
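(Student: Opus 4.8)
The plan is to deduce the statement from the Derksen--Weyman--Zelevinsky $g$-vector mutation formula (Theorem~\ref{thm:DWZFormula}(1)--(2)) applied to the homogeneous module $M_\lambda(Q)$, once we know that its DWZ mutation $\mu_k M_\lambda(Q)$ is again a homogeneous module over $J(\mu_k Q, W')$. First I would translate the hypothesis. Recall $g(\eta(Q)) = g(M_\lambda(Q))$, and by Proposition~\ref{prop:nullToNull}(1) the module $M_\lambda(Q)$ is a non-simple brick lying in a homogeneous (rank one) tube, so $\tau M_\lambda(Q)\cong M_\lambda(Q)$. The Euler--Ringel pairing then gives
$$g(\eta(Q))\cdot\undim S(k) = \dim_K\Hom(M_\lambda(Q),S(k)) - \dim_K\Hom(S(k),\tau M_\lambda(Q)) = \dim_K\Hom(M_\lambda(Q),S(k)) - \dim_K\Hom(S(k),M_\lambda(Q)),$$
and by Proposition~\ref{prop:nullToNull}(2) at least one of these two terms is zero. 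Hence if $g(\eta(Q))\cdot\undim S(k)\ge 0$, then either $\Hom(S(k),M_\lambda(Q)) = 0$ (when the pairing is strictly positive) or both Hom-spaces vanish (when it is zero); in either case Proposition~\ref{prop:nullToNull}(3) yields $\eta(\mu_k Q) = A_k^+\eta(Q)$. The case $g(\eta(Q))\cdot\undim S(k)\le 0$ is entirely symmetric, using Proposition~\ref{prop:nullToNull}(4) and $A_k^-$.

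Next I would apply the DWZ mutation $\mu_k$ to the non-simple indecomposable $M_\lambda(Q)$. Theorem~\ref{thm:DWZFormula} gives that $\mu_k M_\lambda(Q)$ is an indecomposable $J(\mu_k Q,W')$-module, and since $g(M_\lambda(Q))\cdot\undim S(k) = g(\eta(Q))\cdot\undim S(k)\ge 0$, part (1) of that theorem gives $g(\mu_k M_\lambda(Q))^{tr} = g(M_\lambda(Q))^{tr}A_k^+ = g(\eta(Q))^{tr}A_k^+$. It therefore remains only to check that $\mu_k M_\lambda(Q)$ is a homogeneous module for $J(\mu_k Q,W')$: once this is known it has dimension vector $\eta(\mu_k Q)$, whence $g(\eta(\mu_k Q)) = g(\mu_k M_\lambda(Q)) = g(\eta(Q))^{tr}A_k^+$, which is the assertion in case (1). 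The case $g(\eta(Q))\cdot\undim S(k)\le 0$ is identical with $A_k^-$ in place of $A_k^+$, and the two conclusions agree when the pairing is zero since then $g(M_\lambda(Q))^{tr}A_k^+ = g(M_\lambda(Q))^{tr}A_k^-$ by Theorem~\ref{thm:DWZFormula}.

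The main obstacle is this last identification, which I would handle exactly as in the proof of Proposition~\ref{prop:nullToNull}. As $\lambda$ ranges over $K^*$, the modules $\mu_k M_\lambda(Q)$ form a one-parameter family of indecomposables which are pairwise non-isomorphic (DWZ mutation is an involution up to isomorphism on decorated representations, so $\mu_k M_\lambda(Q)\cong \mu_k M_{\lambda'}(Q)$ would force $M_\lambda(Q)\cong M_{\lambda'}(Q)$) and which are bricks --- either from Theorem~\ref{thm:DWZFormula}(4) applied with $M = M' = M_\lambda(Q)$, or by identifying $\mu_k M_\lambda(Q)$ with Mou's $F_k^+(Q)M_\lambda(Q)$, which is the image of the brick $M_\lambda(Q)$ under the equivalence of Theorem~\ref{thm:mouFormula}(2). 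In a tame cluster-tilted algebra the only one-parameter families of pairwise non-isomorphic bricks are the mouths of the homogeneous tubes, so each $\mu_k M_\lambda(Q)$ is such a mouth, i.e. a homogeneous module of dimension $\eta(\mu_k Q)$, which finishes the argument. The two points that need care --- that DWZ mutation genuinely preserves brickness for $M_\lambda(Q)$, and that the classification of one-parameter families of bricks in tame type is available in the form used --- are precisely the inputs already invoked in the proof of Proposition~\ref{prop:nullToNull}, so no new difficulty is introduced.
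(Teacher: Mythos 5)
Your proposal is correct and follows essentially the same route as the paper: mutate the homogeneous modules $M_\lambda(Q)$ via Derksen--Weyman--Zelevinsky, show that the resulting one-parameter family consists of pairwise non-isomorphic bricks and hence must coincide with the homogeneous modules $M_\lambda(\mu_kQ)$, and then read off the $g$-vector from Theorem~\ref{thm:DWZFormula}(1,2). The only differences are cosmetic: your opening paragraph (re-deriving $\eta(\mu_kQ)=A_k^{\pm}\eta(Q)$) is not actually needed for the conclusion, and you justify pairwise non-isomorphism via the involutivity of DWZ mutation, whereas the paper deduces it (and the brickness) from the $\Hom^{[k]}$ identity of Theorem~\ref{thm:DWZFormula}(4).
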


\begin{proof}
	Recall from Proposition \ref{prop:nullToNull}(1) that $M_\lambda(Q)$ is a non-simple brick. Thus by Theorem \ref{thm:DWZFormula}(3), we have that $\{\mu_k M_\lambda\}_{\lambda\in K^*}$ is a collection of indecomposable $J(Q,W)$-modules with the same dimension vector. Now let $\lambda \neq \lambda' \in K^*$ and assume for a contradiction that there is an isomorphism $\mu_k M_\lambda(Q) \rightarrow \mu_k M_{\lambda'}(Q)$. By Theorem \ref{thm:DWZFormula}(4), this isomorphism is supported only at the vertex $k$. Thus it must be the case that $\mu_k M_\lambda(Q) \cong S(k)$. By Theorem \ref{thm:DWZFormula}(3), this means $(M_\lambda(Q))_j = 0$ for $j \neq k$; that is, $M_\lambda(Q)$ is simple. This is a contradiction.
	
	We have thus shown that $\{\mu_k M_\lambda(Q)\}_{\lambda\in K^*}$ is a collection of pairwise hom-orthogonal indecomposable modules. Thus these modules lie in the homogeneous tubes in $\mods J(Q,W)$. In particular, any morphism $\mu_k M_\lambda(Q) \rightarrow \mu_k M_\lambda(Q)$ is supported on all vertices in the support of $\eta(\mu Q)$. As $M_\lambda(\mu Q)$ is non-simple, no such morphism is an element of $\Hom^{[k]}(\mu_k M_\lambda(Q), \mu_k M_\lambda(Q))$. By Theorem \ref{thm:DWZFormula}(4), this means $\mu_k M_\lambda(Q)$ is a brick since $$\dim\End(\mu_kM_\lambda(Q)) = \dim\End(M_\lambda(Q)) = 1.$$ Therefore $\mu_k M_\lambda(Q) = M_\lambda(\mu_kQ)$. The result then follows from Theorem \ref{thm:DWZFormula}(1,2).
\end{proof}

We have shown that the mutation formulas of Mou, Derksen-Weyman-Zelevinsky, and Reading send the null root to the null root and the $g$-vector of the null root to the $g$-vector of the null root. In order to prove Theorem \ref{genReadingFormula}, we recall the following result of Mou.

\begin{prop}\cite[Lemma 4.12, 4.13]{mou_scattering}\label{prop:wallsToWalls}
	Let $J(Q,W)$ be an arbitrary cluster-tilted algebra. Let $M \in \mods J(Q,W)$ and let $v \in \RR^n$.
	\begin{enumerate}
		\item If $M \in \cS(k)^-(Q)$, then $v \in D(M)$ if and only if $v\cdot \undim M = 0$ and $v\cdot \undim M' \leq 0$ for all $M' \subseteq M$ with $M' \in \cS(k)^-(Q)$.
		\item If $M \in \cS(k)^+(Q)$, then $v \in D(M)$ if and only if $v\cdot \undim M = 0$ and $v\cdot \undim M' \geq 0$ for all quotient modules $M'$ of $M$ with $M' \in \cS(k)^+(Q)$.
	\end{enumerate}
\end{prop}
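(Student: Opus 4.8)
The plan is to prove part (1) and obtain part (2) by the dual argument (interchanging submodules with quotients and the functor $F(Q)_k^-$ with $F(Q)_k^+$). One implication is free: by Definition~\ref{def:semistable}, if $v\in D(M)$ then $v\cdot\undim M=0$ and $v\cdot\undim M'\le 0$ for \emph{every} submodule $M'\subseteq M$, in particular for those lying in $\cS(k)^-(Q)$. So all the work is in the converse, and the plan is to show that the (finitely many) inequalities coming from the submodules of $M$ that lie in $\cS(k)^-(Q)$, together with the sign constraint $v\cdot\undim S(k)\le 0$ pinning down the side on which these walls live, already force $v\cdot\undim M''\le 0$ for \emph{all} submodules $M''\subseteq M$.

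The key tool will be the torsion pair at the vertex $k$. Because a nonzero morphism out of a quotient of a module lifts to a nonzero morphism out of the module, $\cS(k)^-(Q)=\{X\mid\Hom_{J(Q,W)}(X,S(k))=0\}$ is closed under quotients; it is evidently closed under extensions, hence is a torsion class. As $J(Q,W)$ is a Jacobian algebra, $Q$ has no loops, so $\Ext^1_{J(Q,W)}(S(k),S(k))=0$ and the associated torsion-free class is $\add S(k)$; thus $(\cS(k)^-(Q),\add S(k))$ is a torsion pair. I would then take an arbitrary submodule $M''\subseteq M$ and use its canonical sequence
\[
0\longrightarrow tM''\longrightarrow M''\longrightarrow S(k)^{\oplus c}\longrightarrow 0,
\]
in which the torsion submodule $tM''$ is again a submodule of $M$ and lies in $\cS(k)^-(Q)$, while $c=c(M'')\ge 0$. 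Hence $\undim M''=\undim tM''+c\cdot\undim S(k)$, so $v\cdot\undim M''=v\cdot\undim tM''+c\,(v\cdot\undim S(k))\le 0$ by the hypothesis applied to $tM''$ and by $v\cdot\undim S(k)\le 0$. Combined with $v\cdot\undim M=0$, this shows $v\in D(M)$.

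The step I expect to be the main obstacle is justifying the sign of $v\cdot\undim S(k)$ and, more fundamentally, certifying that the $\add S(k)$-part of every submodule may legitimately be discarded --- i.e.\ fixing the ``side'' associated with $\cS(k)^-(Q)$. This is precisely what Mou's mutation machinery supplies: by Theorem~\ref{thm:mouFormula} the functor $F(Q)_k^-$ restricts to an equivalence $\cS(k)^-(Q)\xrightarrow{\ \sim\ }\cS(k)^+(\mu_kQ)$ that matches submodules of $M$ in $\cS(k)^-(Q)$ with submodules of $F(Q)_k^-M$ in $\cS(k)^+(\mu_kQ)$, and by Proposition~\ref{prop:mouFormula} it transforms dimension vectors by the involution $A_k^-$. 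One can therefore transport the assertion across $A_k^-$ and, writing $(Q,W)$ as an iterated mutation of an acyclic quiver (Section~\ref{sec:tame cluster-tilted}), reduce by induction on the length of the mutation sequence to the acyclic case, where the statement is classical; alternatively one argues directly with $F(Q)_k^\pm$ as in \cite{mou_scattering}. Part~(2) then follows by the dual argument outlined above.
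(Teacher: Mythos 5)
The paper offers no proof of this proposition — it is quoted directly from Mou — so I can only assess your argument on its own terms. Your torsion-pair reduction is exactly the right mechanism: since $Q$ has no loops, $(\cS(k)^-(Q),\add S(k))$ is indeed a torsion pair, and the canonical sequence $0\to tM''\to M''\to S(k)^{\oplus c}\to 0$ reduces the inequality for an arbitrary submodule $M''\subseteq M$ to the inequality for $tM''\in\cS(k)^-(Q)$ together with the single inequality $v\cdot\undim S(k)\le 0$. This is essentially the mechanism behind Mou's Lemmas 4.12--4.13.

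However, the obstacle you flag is a genuine and, within the stated hypotheses, unfixable gap: $v\cdot\undim S(k)\le 0$ is an \emph{additional hypothesis} on $v$ (in Mou's lemmas $v$ is restricted to a closed half-space bounded by $e_k^\perp$), not something derivable from the right-hand side of the biconditional. Without it the statement fails already in the hereditary case: take $Q\colon 1\to 2$, $k=2$, $M=P(1)$. Then $M\in\cS(2)^-(Q)$, and the only nonzero submodule of $M$ lying in $\cS(2)^-(Q)$ is $M$ itself (its socle $S(2)$ is excluded), so the right-hand side of (1) is the full hyperplane $\{v_1+v_2=0\}$, whereas $D(M)=\{v_1+v_2=0,\ v_2\le 0\}$. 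Consequently your fallback — inducting on the mutation sequence down to the acyclic case, where you call the statement ``classical'' — cannot succeed: the base case is already false, and transporting across $A_k^{\pm}$ presupposes knowing which half-space $v$ lies in, which is precisely the missing datum. A secondary issue: the ``dual argument'' for (2) naturally yields a criterion in terms of \emph{quotients} of $M$ lying in $\cS(k)^+(Q)$ (with $\ge 0$), not submodules; since $\cS(k)^+(Q)$ is closed under submodules, part (2) as written quantifies over \emph{all} submodules of $M$ with the sign $\ge 0$, which duality does not produce and which is not equivalent to $v\in D(M)$. The repair is to add the half-space hypothesis $v\cdot\undim S(k)\le 0$ (resp.\ $\ge 0$) — this is in fact how the proposition is applied in the proof of Theorem \ref{genReadingFormula}, where the mutated vector automatically satisfies it — after which your torsion-pair computation does complete the proof of (1), and the quotient version of the same computation proves the corrected form of (2).
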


We need one additional lemma to prove Theorem \ref{genReadingFormula}.

\begin{lem}\label{lem:oneSide}
	Let $J(Q,W)$ be an arbitrary cluster-tilted algebra. Let $D(M)$ be a wall in the standard wall-and-chamber structure $\mathfrak{D}(J(Q,W))$. If there exists $v \in D(M)$ such that $v\cdot \undim S(k) < 0$, then $M \in S(k)^-(Q)$. Likewise, if there exists $v \in D(M)$ such that $v\cdot \undim S(k) > 0$, then $M \in S(k)^+(Q)$.
\end{lem}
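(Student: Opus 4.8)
The plan is to prove the lemma by a contrapositive/structural argument using the description of $D(M)$ recalled in the preceding material, together with Mou's results on the one-sidedness of bricks and the fact that the label $M$ of a wall can be taken to be a brick.

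First I would reduce to the case where $M$ is a brick. By Asai's Lemma~\ref{lem: Asai} (or the discussion around $\fD(\Lambda)$), any wall $D(M)$ in the standard wall-and-chamber structure satisfies $D(M) \subseteq D(M')$ for some brick $M'$ with $\dim D(M') = n-1$; since $D(M)$ is a wall, $D(M) = D(M')$, so we may assume $M$ itself is a brick. Next, I would dispose of the exceptional case $M \cong S(k)$: if $M = S(k)$, then every $v \in D(S(k))$ satisfies $v \cdot \undim S(k) = v \cdot \undim M = 0$, so the hypotheses ``$v \cdot \undim S(k) < 0$'' and ``$v \cdot \undim S(k) > 0$'' are vacuous and there is nothing to prove. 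So we may assume $M$ is a brick with $M \ncong S(k)$, and by Remark~\ref{rem:brick on one side} (which is the consequence of Theorem~\ref{thm:mouFormula} that bricks not isomorphic to $S(k)$ lie on one side), we have $M \in \cS(k)^+(Q) \cup \cS(k)^-(Q)$.

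Now suppose for contradiction that there exists $v \in D(M)$ with $v \cdot \undim S(k) < 0$ but $M \notin \cS(k)^-(Q)$. Then by the dichotomy $M \in \cS(k)^+(Q)$, i.e. $\Hom_{J(Q,W)}(S(k),M) = 0$. Since $\Hom(S(k),M) = 0$ is equivalent (Remark~\ref{rem:muted rep multiplicity}) to $\beta_{M,k}$ being injective, and since $M \ncong S(k)$, the simple $S(k)$ does not occur in the socle of $M$; equivalently $S(k)$ is not a submodule of $M$. The key point is that for $M \in \cS(k)^+(Q)$, Proposition~\ref{prop:wallsToWalls}(2) describes $D(M)$ as $\{v \mid v\cdot\undim M = 0, \ v\cdot\undim M' \ge 0 \text{ for all } M' \subseteq M \text{ with } M' \in \cS(k)^+(Q)\}$. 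I would then argue that $S(k)$ is \emph{not} a submodule of $M$ (it would give a copy of $S(k)$ in the socle, contradicting $\Hom(S(k),M)=0$ unless $M\cong S(k)$, but in fact even more directly: $S(k)\subseteq M$ forces $\Hom(S(k),M)\ne 0$), hence the only submodule of $M$ whose dimension vector could be a positive multiple of $\undim S(k)$ would be $S(k)$ itself, which does not occur. The actual mechanism I expect to use: the hypothesis $v\cdot \undim S(k) < 0$ together with $v \in D(M)$ and the characterization of $D(M)$ via $\cS(k)^+$-submodules must be shown to be incompatible. This is where the symmetry with Theorem~\ref{readingFormula}/Theorem~\ref{thm:mouFormula}(2) enters: $F(Q)_k^+$ is an equivalence $\cS(k)^+(Q) \to \cS(k)^-(\mu_k Q)$, so membership in $\cS(k)^+(Q)$ is an intrinsic property, and walls of modules in $\cS(k)^+(Q)$ cannot extend into the open region $\{v \mid v\cdot \undim S(k) < 0\}$ — this is essentially \cite[Lemma 4.12, 4.13]{mou_scattering} applied from the correct side. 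The second statement follows by the symmetric argument swapping $+$ and $-$, and replacing $S(k)$-submodules with $S(k)$-quotients.

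The main obstacle I anticipate is making precise \emph{why} a module $M \in \cS(k)^+(Q)$ cannot have $v \in D(M)$ with $v\cdot\undim S(k) < 0$ — i.e., bridging from ``$M$ has no $\cS(k)^+$-submodule $M'$ with $v\cdot\undim M' < 0$'' to a genuine contradiction. The cleanest route is probably: every submodule of $M$ lying in $\cS(k)^+(Q)$ must have $v\cdot \undim M' \ge 0$; but one can build a submodule filtration of $M$ refining the $k$-isotypic part so that the ``socle-layer at $k$'' contributes, and since $M$ has no $S(k)$-submodule, the top of any such filtration picks up a strictly negative contribution from $v\cdot\undim S(k) < 0$, forcing some $\cS(k)^+$-submodule to violate the inequality — contradiction. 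Alternatively, and more economically, one invokes that the wall $D(M)$ of a brick $M \in \cS(k)^+(Q)$ is carried by $F(Q)_k^+$ to the wall of a brick in $\cS(k)^-(\mu_k Q)$ via the mutation $(v^{tr}A_k^+)^{tr}$, and a direct sign computation with $A_k^+$ shows this map sends $\{v \cdot \undim S(k) < 0\}$ outside the relevant half-space, so $D(M)$ cannot meet it. I would write up the economical version, citing Proposition~\ref{prop:wallsToWalls} and Remark~\ref{rem:brick on one side} as the substantive inputs and keeping the sign bookkeeping to a few lines.
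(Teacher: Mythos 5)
Your proposal has a genuine gap, and the intermediate claim on which it rests is false. You reduce to $M\in\cS(k)^+(Q)$ via the dichotomy of Remark~\ref{rem:brick on one side} and then try to derive a contradiction from ``$M\in\cS(k)^+(Q)$ and $v\cdot\undim S(k)<0$ for some $v\in D(M)$.'' But $\cS(k)^+(Q)$ and $\cS(k)^-(Q)$ are not disjoint: any brick with $\Hom(S(k),M)=0=\Hom(M,S(k))$ (for instance one not supported at $k$) lies in both, and its wall can perfectly well meet the open half-space $\{v\cdot\undim S(k)<0\}$. So no filtration argument or sign computation will make that step work; the hypothesis you actually need to exploit is $M\notin\cS(k)^-(Q)$, i.e.\ $\Hom(M,S(k))\neq 0$, which you never use. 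Two further problems: replacing $M$ by a brick $M'$ with $D(M)=D(M')$ does not help, since the conclusion ``$M\in\cS(k)^-(Q)$'' is about the specific module $M$, not about the wall as a subset of $\RR^n$; and the ``economical'' route via mutating the wall with $A_k^+$ is circular in the paper's logic, because Lemma~\ref{lem:oneSide} is an input to the proof of Theorem~\ref{genReadingFormula}.

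The intended argument is elementary and uses only Definition~\ref{def:semistable}. If $M\notin\cS(k)^-(Q)$, there is a nonzero, hence surjective, morphism $M\twoheadrightarrow S(k)$; its kernel $K$ is a submodule of $M$, so for any $v\in D(M)$ we have $v\cdot\undim K\le 0$ and $v\cdot\undim M=0$, whence $v\cdot\undim S(k)=v\cdot\undim M-v\cdot\undim K\ge 0$. Contrapositively, the existence of $v\in D(M)$ with $v\cdot\undim S(k)<0$ forces $M\in\cS(k)^-(Q)$. Dually, if $M\notin\cS(k)^+(Q)$ there is an injective morphism $S(k)\hookrightarrow M$, so $v\cdot\undim S(k)\le 0$ for all $v\in D(M)$, giving the second statement. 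No brick hypothesis, no mutation functors, and no appeal to Proposition~\ref{prop:wallsToWalls} are needed.
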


\begin{proof}
	Suppose $M \notin S(k)^-(Q)$. Then there exists a necessarily surjective morphism $M \rightarrow S(k)$. Thus if $v\in D(M)$, then $v\cdot \undim S(k) \geq 0$. Likewise, if $M \notin S(k)^+(Q)$, then there exists a necessarily injective morphism $S(k) \rightarrow M$. Thus if $v \in D(M)$, then $v\cdot\undim S(k) \leq 0$.
\end{proof}

\begin{proof}[Proof of Theorem \ref{genReadingFormula}]
	(1) Let $D(M)$ be a wall in the standard wall-and-chamber structure \linebreak $\mathfrak{D}(J(Q,W))$ and suppose that there exists $w \in D(M)$ so that $w \cdot \undim S(k) > 0$. Then by Lemma \ref{lem:oneSide}, $M \in S(k)^+(Q)$. Now let $v \in D(M)$ so that $v \cdot \undim S(k) \geq 0$. By definition, we have that $(v^{tr} A_k^+)(A_k^+\undim M) = 0$. Moreover, we have that $A_k^+\undim M = \undim M'$ for some brick $M' \in \cS(k)^-(\mu_k Q)$ by Theorem \ref{thm:mouFormula}(2) and Proposition \ref{prop:mouFormula}(2). Similarly, if $N' \subseteq M'$ and $N' \in \cS(k)^-(\mu_k Q)$, then the short exact sequence $N' \hookrightarrow M' \twoheadrightarrow M'/N'$ is in $\cS(k)^-(\mu_k Q)$. Thus Theorem \ref{thm:mouFormula}(2) and Proposition \ref{prop:mouFormula}(2) again imply that we can write $\undim N' = A_k^+\undim N$ for some $N \subseteq M$ in $\cS(k)^+(Q)$. In particular, this means $(v^{tr} A_k^+)\undim N' \leq 0$, so $(v^{tr} A_k^+) \in D(M')$ by Proposition \ref{prop:wallsToWalls}(1).
	
	The proof of (2) is similar and (3) is already part of Theorem \ref{readingFormula}.\end{proof}
	

We conclude this section with two brief corollaries that follow from comparing these mutation formulas.

\begin{cor}\label{cor:regular mutation}
	Let $J(Q,W)$ be a tame cluster-tilted algebra. If the simple module $S(k)$ is regular, then $g(\eta(\mu_kQ)) = g(\eta(Q))$.
\end{cor}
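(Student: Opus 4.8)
The plan is to reduce the statement to Proposition~\ref{prop:nullgToNullg} together with the observation that regularity of $S(k)$ forces the $k$-th coordinate of $g(\eta(Q))$ to vanish. Write $\Lambda = J(Q,W)$. First I would note that, since $S(k)$ is regular, Definition~\ref{def: regular module} gives $g(\eta(Q)) \in D_\Lambda(S(k))$, and the defining equation of the semi-invariant domain (Definition~\ref{def:semistable}) then yields $g(\eta(Q)) \cdot \undim S(k) = 0$. As $\Lambda$ is basic, $\undim S(k) = e_k$, so this says exactly that the $k$-th entry of $g(\eta(Q))$ is zero. In particular $g(\eta(Q)) \cdot \undim S(k) \ge 0$, so Proposition~\ref{prop:nullgToNullg}(1) applies and gives $g(\eta(\mu_k Q))^{tr} = g(\eta(Q))^{tr} A_k^+$.

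It then remains to check that $g(\eta(Q))^{tr} A_k^+ = g(\eta(Q))^{tr}$, i.e.\ that $A_k^+$ fixes every vector supported away from vertex $k$. This is a routine inspection of the definition of $A_k^+$: the matrix $A_k^+ - \mathsf{Id}_n$ has all of its nonzero entries in row $k$ (the $(k,k)$-entry is $-2$, and the $(k,j)$-entry for $j \ne k$ is $\max\{B_{kj},0\}$), so each coordinate of the row vector $g(\eta(Q))^{tr}(A_k^+ - \mathsf{Id}_n)$ is a scalar multiple of $g(\eta(Q))_k = 0$. Hence $g(\eta(Q))^{tr}(A_k^+ - \mathsf{Id}_n) = 0$, and we conclude $g(\eta(\mu_k Q)) = g(\eta(Q))$.

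I do not anticipate a real obstacle here; the only point requiring care is the last computation, namely that the hypothesis "$S(k)$ regular" is precisely calibrated to make $A_k^+$ (and, by the symmetric computation, $A_k^-$) act as the identity on $g(\eta(Q))$. One could equally well invoke Proposition~\ref{prop:nullgToNullg}(2), since $g(\eta(Q)) \cdot \undim S(k) = 0$ also satisfies the inequality $\le 0$.
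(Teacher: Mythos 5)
Your proof is correct and follows the same route as the paper: it observes that $S(k)$ regular forces $g(\eta(Q))_k = 0$, then applies Proposition~\ref{prop:nullgToNullg} together with the fact that $A_k^{\pm}$ fixes any row vector whose $k$-th entry vanishes. The only difference is that you spell out explicitly why $g(\eta(Q))^{tr} A_k^{+} = g(\eta(Q))^{tr}$ via the structure of $A_k^{+} - \mathsf{Id}_n$, which the paper leaves implicit.
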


\begin{proof}
	If $S(k)$ is regular, then $g(\eta(Q))\cdot \undim S(k) = g(\eta(Q))_k = 0$. Thus we have
	$g(\eta(Q))^{tr} A_k^+ = g(\eta(Q))^{tr}$ and $g(\eta(Q))^{tr} A_k^- = g(\eta(Q))$. The result then follows immediately from Proposition~\ref{prop:nullgToNullg}.
\end{proof}

\begin{cor}\label{cor:minimal cycle}
	Let $J(Q,W)$ be a cluster-tilted algebra of type $\widetilde{A}_n$. Then $Q$ contains a unique minimal unoriented cycle. Moreover, the null root $\eta(Q)$ is supported on exactly the vertices in this minimal cycle.
\end{cor}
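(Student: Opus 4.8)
The plan is to prove, by induction on the length $m$ of a mutation sequence $(Q,W)=\mu_{i_m}\circ\cdots\circ\mu_{i_1}(\Gamma)$ expressing $J(Q,W)$ as a mutation of an acyclic (Euclidean) quiver $\Gamma$ of type $\widetilde{A}_n$, the following strengthened statement: $Q$ has a unique minimal unoriented cycle $C(Q)$, and $\eta(Q)$ is exactly the $\{0,1\}$-valued indicator vector of the vertex set $V(C(Q))$. Both assertions of the corollary are then immediate. For $m=0$ the quiver $\Gamma$ is an acyclic orientation of the $(n+1)$-cycle, hence not cyclically oriented; its underlying graph is therefore a single unoriented cycle, which is trivially the unique minimal one, and $\eta(\Gamma)=(1,\dots,1)$ is supported on all of its vertices.

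For the inductive step write $Q=\mu_k(Q')$ with $Q'$ of type $\widetilde{A}_n$ reached by $m-1$ mutations and satisfying the hypothesis, with cycle $C':=C(Q')$. By Proposition~\ref{prop:nullToNull}(2), at least one of $\dim\Hom(S(k),M_\lambda(Q'))$ and $\dim\Hom(M_\lambda(Q'),S(k))$ vanishes, and Proposition~\ref{prop:nullToNull}(3)--(4) then gives $\eta(Q)=A_k^{+}\eta(Q')$ or $\eta(Q)=A_k^{-}\eta(Q')$ accordingly. From the definition of $A_k^{\pm}$, its $i$-th row equals $e_i$ for every $i\neq k$, so $\eta(Q)_i=\eta(Q')_i$ for $i\neq k$: the support of $\eta$ can change only at the vertex $k$. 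Writing $\varepsilon=+1$ or $-1$ for the relevant sign and using $\undim M_\lambda(Q')=\eta(Q')$ (so $\eta(Q')_j=1$ exactly when $j\in V(C')$), we get
\[
\eta(Q)_k \;=\; -\,\eta(Q')_k \;+\; \sum_{j\in V(C')\setminus\{k\}}\max\{\varepsilon\,B(Q')_{kj},0\},
\]
and, since $M_\lambda(Q')$ is a brick (Proposition~\ref{prop:nullToNull}(1)), the vanishing of $\dim\Hom(S(k),M_\lambda(Q'))$ (resp.\ of $\dim\Hom(M_\lambda(Q'),S(k))$) says exactly that $S(k)$ is not a submodule (resp.\ not a quotient) of $M_\lambda(Q')$.

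It remains to match this computation with the effect of $\mu_k$ on the minimal cycle, via the Fomin--Zelevinsky rules together with the structure of $\widetilde{A}_n$-quivers (the only unoriented chordless cycle is $C'$; all other chordless cycles are oriented triangles, attached along edges in a tree pattern to $C'$). Running through the positions of $k$: if $k$ is not adjacent to any vertex of $C'$, then $S(k)$ is regular (as $\undim M_\lambda(Q')$ vanishes at $k$), so Corollary~\ref{cor:regular mutation} gives $g(\eta(Q))=g(\eta(Q'))$, the formula gives $\eta(Q)_k=\eta(Q')_k$, and $\mu_k$ fixes $C'$; if $k$ is the apex of an oriented triangle $j\to j'\to k\to j$ resting on an edge $j\to j'$ of $C'$, the Fomin--Zelevinsky rule deletes the arrow $j\to j'$, re-routing the cycle through $k$, and the formula gives $\eta(Q)_k=1$, so $V(C(Q))=V(C')\cup\{k\}$; if $k\in V(C')$ has one incident $C'$-arrow entering and one leaving, the rule creates an arrow between the two $C'$-neighbours of $k$, forming an oriented triangle and removing $k$ from the cycle, while the formula gives $\eta(Q)_k=0$; and in the remaining positions (e.g.\ $k$ a source or sink of $C'$, or a corner of a triangle not resting on $C'$) neither $C'$ nor $\eta$ is altered. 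In each case one verifies that $\mu_kQ$ again has a unique minimal unoriented cycle whose vertex set is the support of the new $\eta$, and that $\eta$ stays $\{0,1\}$-valued. The main obstacle is precisely this synchronization between the linear map $\eta\mapsto A_k^{\pm}\eta$ and the Fomin--Zelevinsky rule: one must keep the ``unique minimal unoriented cycle'' invariant under control --- in particular exclude the creation of a competing unoriented cycle of length at most that of $C'$, and exclude coefficients $\ge 2$ in $\eta$ --- which is most cleanly done by proving the structural facts about $\widetilde{A}_n$-quivers in tandem with the induction rather than importing a classification. (An alternative, bypassing the case analysis, is to observe that $J(Q,W)$ is gentle and to identify $M_\lambda(Q)$ with the band module on the unique primitive cyclic string, whose dimension vector is the indicator vector of $V(C(Q))$.)
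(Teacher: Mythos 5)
Your route is genuinely different from the paper's, and the difference matters. The paper disposes of the first claim by citing Bastian's classification of the mutation class of $\widetilde{A}_n$ for the existence and uniqueness of the minimal unoriented cycle, and then proves the second claim in one line: the band-type representations supported on that cycle form a $1$-parameter family of pairwise non-isomorphic bricks, and in a tame cluster-tilted algebra such a family can only consist of the homogeneous quasi-simples, whose dimension vector is $\eta(Q)$. Your parenthetical ``alternative'' at the end (identify $M_\lambda(Q)$ with the band module on the unique primitive cyclic string) is essentially the paper's actual argument; your main proposal, by contrast, tries to re-derive both the combinatorics and the representation theory simultaneously by induction on a mutation sequence.

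The gap is that the combinatorial half of that induction is never carried out, and it is precisely the hard part. You assert that $Q$ has ``a unique minimal unoriented cycle $C(Q)$, all other chordless cycles being oriented triangles attached in a tree pattern,'' and that this picture, together with the listed positions of $k$, is stable under mutation --- but this is exactly the content of Bastian's theorem, and you explicitly defer it to an unperformed ``tandem'' induction. As stated, your structural hypothesis is also not quite right for the full mutation class: double arrows do occur (see the quiver $4\to 3\rightrightarrows 1\to 4\to 2$ in Figure~\ref{fig:mutation2}), vertices of $C'$ may simultaneously lie on attached oriented triangles, and an oriented triangle may share an edge with $C'$, so your list of positions of $k$ is not visibly exhaustive and the claim that no competing short unoriented cycle is created is unverified. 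A second, smaller issue: in several cases the two formulas $A_k^{+}\eta(Q')$ and $A_k^{-}\eta(Q')$ do \emph{not} agree (e.g.\ if $k$ is a sink of $C'$ with no other neighbours, the $+$ formula gives $\eta(Q)_k=-1$ while the $-$ formula gives $1$), so in each case you must actually determine which of $\Hom(S(k),M_\lambda(Q'))$, $\Hom(M_\lambda(Q'),S(k))$ vanishes via the sub/quotient criterion you state; this is doable but is additional case-by-case work you have only gestured at. In short: the numerical computations you display are correct, but the proof as written reduces the corollary to an unproven combinatorial classification that the paper simply imports from the literature.
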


\begin{proof}
	The existence of the unique minimal cycle is well-known (see \cite{bastian}). As there exists a 1-parameter family of bricks supported on this cycle, their dimension vector must be the null root.
\end{proof}

\subsection{Statement and proof of Theorem~\ref{thmintro:mainD}}\label{sec:thmD}

Let $J(Q,W)$ be a tame cluster-tilted algebra. We recall that we have defined $M \in \mods\Lambda$ to be regular if $\dim\Hom(M, M_\lambda) = 0 = \dim\Hom(M_\lambda, M)$, or equivalently, if $g(\eta) \in D(M)$. Moreover, recall that the walls in $\fD_{reg}(J(Q,W))=\fD_0(J(Q,W))$ are labeled by regular modules, and that each of these wall-and-chamber structures comes with an embedding into $\RR^{n}$. We denote by $\pi_Q: \RR^{n} \rightarrow \RR^{n-1}$ the projection onto $g(\eta(Q))^\perp$.

We are now ready to state and prove Theorem~\ref{thmintro:mainD}.

\begin{thm}[Theorem~\ref{thmintro:mainD}]\label{thmD}
	Let $J(Q,W)$ be a tame cluster-tilted algebra and let $k \in Q_0$. Let $(Q',W') = \mu_k(Q,W)$.
	\begin{enumerate}
		\item If $g(\eta(Q))\cdot \undim S(k) > 0$, then $\fD_{0}(J(Q',W')) = \pi_{Q'} \circ (A_k^+)^{tr} \fD_{0}(J(Q,W))$.
		\item If $g(\eta(Q))\cdot \undim S(k) < 0$, then $\fD_{0}(J(Q',W')) = \pi_{Q'} \circ (A_k^-)^{tr} \fD_{0}(J(Q,W))$.
		\item If $g(\eta(Q))\cdot \undim S(k) = 0$, then $\fD_{0}(J(Q',W')) = \pi_{Q'} \circ \psi\fD_{0}(J(Q,W))$, where
		$$\psi(g) = \begin{cases} (A_k^+)^{tr} v & v\cdot\undim S(k) \geq 0\\(A_k^-)^{tr} v & v\cdot\undim S(k) \leq 0\end{cases}$$
	\end{enumerate}
\end{thm}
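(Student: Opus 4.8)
The statement asserts that the regular (equivalently, infinitesimal-at-$g(\eta)$) wall-and-chamber structures of $J(Q,W)$ and its mutation $J(\mu_k Q, W')$ are related by the piecewise-linear map $\pi_{Q'}\circ(A_k^\pm)^{tr}$. The idea is to assemble this from the already-developed ingredients: (i) Theorem~\ref{genReadingFormula}, which says the \emph{standard} wall-and-chamber structures of $J(Q,W)$ and $J(\mu_kQ,W')$ are related (on each of the two half-spaces $S(k)^\pm$) by $(A_k^+)^{tr}$ resp.\ $(A_k^-)^{tr}$, with the wall $D(M)$ going to $D(M')$ where $\undim M' = A_k^\pm\undim M$; (ii) Theorem~\ref{thmA}, which identifies $\fD_{0,v}(\Lambda)$ with the $v^\perp$ structure $\fD_{v^\perp}(\Lambda)$, so that the regular structure is literally obtained from the standard one by taking infinitesimal slices at $v=g(\eta)$; and (iii) Propositions~\ref{prop:nullToNull}, \ref{prop:nullgToNullg} and Remark~\ref{rem:brick on one side}/Lemma~\ref{lem:oneSide}, which guarantee that $\eta$ and $g(\eta)$ are themselves sent to $\eta(\mu_kQ)$ and $g(\eta(\mu_kQ))$ under the appropriate $A_k^\pm$, and that each brick (other than $S(k)$) lies entirely in one of $S(k)^\pm$.

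\textbf{Step 1: Reduce to cases by the sign of $g(\eta(Q))\cdot\undim S(k)$.} In case (1), since $g(\eta(Q))\cdot\undim S(k)>0$, Lemma~\ref{lem:oneSide} forces $g(\eta(Q))\in D(M)$ to imply $M\in S(k)^+(Q)$ for every wall $D(M)$ meeting $g(\eta(Q))$; moreover $\dim\Hom(S(k),M_\lambda(Q))=0$ (since $g(\eta)$, a point of $D(M_\lambda)$, pairs strictly positively with $\undim S(k)$ and $M_\lambda$ lies in $S(k)^+$), so by Proposition~\ref{prop:nullgToNullg}(1) and Proposition~\ref{prop:nullToNull}(3) we have $g(\eta(\mu_kQ))^{tr}=g(\eta(Q))^{tr}A_k^+$ and $\eta(\mu_kQ)=A_k^+\eta(Q)$. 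Case (2) is symmetric with $A_k^-$. In case (3), $g(\eta(Q))$ lies on the hyperplane $\undim S(k)^\perp$, so both $A_k^+$ and $A_k^-$ fix $g(\eta(Q))^{tr}$ (indeed $g(\eta(Q))^{tr}A_k^\pm = g(\eta(Q))^{tr}$ because the only column of $A_k^\pm$ differing from the identity is the $k$th, and that column's effect on $g(\eta(Q))^{tr}$ involves the vanishing coordinate $g(\eta(Q))_k$); this is exactly Corollary~\ref{cor:regular mutation}, and it is the reason the two linear pieces $(A_k^+)^{tr}$ and $(A_k^-)^{tr}$ of $\psi$ agree on the hyperplane $\undim S(k)^\perp\ni g(\eta(Q))$ where they overlap, so $\psi$ is a well-defined piecewise-linear homeomorphism of $\RR^n$.

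\textbf{Step 2: Transport the standard structure by $(A_k^\pm)^{tr}$.} Fix a wall $D(M)$ of $\fD(J(Q,W))$ with $g(\eta(Q))\in D(M)$. In case (1), $M\in S(k)^+(Q)$, and Theorem~\ref{genReadingFormula}(1) (applied with any $v\in D(M)$ having $v\cdot\undim S(k)\ge 0$ — such $v$ exist near $g(\eta(Q))$ since $g(\eta(Q))\cdot\undim S(k)=0$ here when $M$ contains $g(\eta)$, wait: in case (1) $g(\eta(Q))\cdot\undim S(k)>0$, so $g(\eta(Q))$ itself is such a $v$) shows $(v^{tr}A_k^+)^{tr}\in D(M')$ with $\undim M'=A_k^+\undim M$. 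Since $(A_k^+)^{tr}g(\eta(Q))=g(\eta(\mu_kQ))$ by Step~1, the wall $D(M')$ contains $g(\eta(\mu_kQ))$; hence $(A_k^+)^{tr}$ carries walls of $\fD(J(Q,W))$ through $g(\eta(Q))$ bijectively onto walls of $\fD(J(\mu_kQ,W'))$ through $g(\eta(\mu_kQ))$, and the inverse is $(A_k^+)^{tr}$ again since $A_k^+A_k^+=\mathrm{Id}$. Now apply $\pi_{Q'}$: by Theorem~\ref{thmA} the regular structure $\fD_{0}(J(Q,W))=\fD_{g(\eta)^\perp}(J(Q,W))$ is the image under $\pi_Q$ of the cones of $\fD(J(Q,W))$ lying in the affine slice at $g(\eta(Q))$, and likewise for $Q'$; the linear map $(A_k^+)^{tr}$ intertwines $g(\eta(Q))^\perp$ with $g(\eta(\mu_kQ))^\perp$ (as it sends $g(\eta(Q))\mapsto g(\eta(\mu_kQ))$ and is invertible), so composing with the projections gives the claimed equality $\fD_0(J(Q',W'))=\pi_{Q'}\circ(A_k^+)^{tr}\fD_0(J(Q,W))$. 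Cases (2) and (3) follow by the same argument with $A_k^-$, resp.\ by splitting $\fD_0(J(Q,W))$ along $\undim S(k)^\perp$ into its $S(k)^+$ and $S(k)^-$ parts (Remark~\ref{rem:brick on one side} ensures every wall-labelling brick except $S(k)$ — which by Theorem~\ref{genReadingFormula}(3) maps to $D(S'(k))$ and is handled identically on either side — lies wholly on one side) and transporting each by the corresponding $(A_k^\pm)^{tr}$, the two pieces glued along the common hyperplane by Step~1.

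\textbf{The main obstacle.} The genuinely delicate point is the \emph{gluing in case (3)}: one must check that the two linear maps $(A_k^+)^{tr}$ and $(A_k^-)^{tr}$ not only agree as maps on the hyperplane $\undim S(k)^\perp$ but also send the relevant portions of $\fD_0(J(Q,W))$ to consistent portions of $\fD_0(J(Q',W'))$ — i.e., that a wall $D(M)$ through $g(\eta(Q))$ which straddles $\undim S(k)^\perp$ (crossing from $S(k)^+$ to $S(k)^-$, necessarily by passing through a sub-wall contained in $\undim S(k)^\perp$ where Theorem~\ref{genReadingFormula}'s two cases coincide) is carried by $\psi$ to an honest wall of $\fD_0(J(Q',W'))$ and not torn apart. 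This requires checking that $D(M)\cap\undim S(k)^\perp$ maps under $(A_k^+)^{tr}=(A_k^-)^{tr}$ to the corresponding intersection, which follows from $\undim M' = A_k^\pm\undim M$ being the \emph{same} vector on that hyperplane (since the two formulas for $\undim M'$ differ only in the $k$th coordinate, which is pinned by the requirement that $v^{tr}A_k^\pm \cdot \undim M' = 0$ together with $v\cdot\undim S(k)=0$). Once this compatibility is in hand, $\psi$ is a piecewise-linear homeomorphism and everything assembles; the rest is the bookkeeping of Step~2. A secondary, more routine obstacle is verifying that $\pi_{Q'}\circ(A_k^\pm)^{tr}$ restricted to the slice at $g(\eta(Q))$ genuinely realizes a piecewise-linear isomorphism in the sense of Definition~\ref{def:fan_isom} (finiteness of the cover by cones of matching dimension), which follows because $(A_k^\pm)^{tr}$ is linear and bijective and $\pi_{Q'}$ restricts to a linear isomorphism on each affine slice.
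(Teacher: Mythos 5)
Your proposal follows essentially the same route as the paper: establish via Propositions~\ref{prop:nullToNull} and~\ref{prop:nullgToNullg} that $\eta$ and $g(\eta)$ mutate correctly, transport walls through $g(\eta(Q))$ using Theorem~\ref{genReadingFormula}, project by $\pi_{Q'}$, and split case (3) along $(\undim S(k))^\perp$, where $(A_k^+)^{tr}$ and $(A_k^-)^{tr}$ agree. The case analysis, the use of the involution $A_k^+A_k^+=\mathsf{Id}$ (together with $(A')^-_k=A^+_k$ for the inverse direction), and the treatment of the gluing are all as in the paper.

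One step is misjustified. You assert that $(A_k^+)^{tr}$ ``intertwines $g(\eta(Q))^\perp$ with $g(\eta(\mu_kQ))^\perp$'' because it sends $g(\eta(Q))$ to $g(\eta(Q'))$ and is invertible. That inference is false: a linear involution sending $u$ to $u'$ need not send $u^\perp$ to $(u')^\perp$ (one would need $A_k^+(A_k^+)^{tr}=\mathsf{Id}$, not $A_k^+A_k^+=\mathsf{Id}$), and indeed $(A_k^+)^{tr}v$ generally acquires a nonzero component along $g(\eta(Q'))$. This is precisely why the composite with $\pi_{Q'}$ appears in the statement and why the paper's proof, after obtaining $g(\eta(Q'))+\varepsilon (A_k^+)^{tr}v\in D(M')$, writes $\varepsilon(A_k^+)^{tr}v=\varepsilon\,\pi_{Q'}((A_k^+)^{tr}v)+\varepsilon t\, g(\eta(Q'))$ and rescales by $\tfrac{\varepsilon}{1+\varepsilon t}>0$, using that $D(M')$ is a cone, to conclude $\pi_{Q'}((A_k^+)^{tr}v)\in D_{0}(M')$. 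Your argument needs this (short, but nontrivial) rescaling step in place of the intertwining claim; with it inserted, the proof goes through as in the paper.
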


\begin{proof}
Let $M \in \mods J(Q,W)$ be a regular brick, and suppose that $M \ncong S(k)$. (Note also that in cases (1) and (2) this assumption is satisfied for all regular bricks.) We claim that in each case, elements $v \in D_0(M)$ are mapped to elements of $D_0(M')$ with $\undim M' = A_k^{\pm}\undim M$ where the sign(s) depend on the sign of $g(\eta(Q))\cdot \undim S(k)$.

(1) Let $d = \undim M$. We observe that $g(\eta(Q))\cdot d = 0$ if and only if $(A_k^+)^{tr} g(\eta(Q))\cdot A_k^+ d = 0$. By Proposition \ref{prop:nullgToNullg}, this is equivalent to $g(\eta(Q'))\cdot A_k^+d = 0$.

Now let $v \in D_0(M)$, so by definition there exists $\varepsilon > 0$ such that $g(\eta(Q)) + \varepsilon v \in D(M)$. By Theorem~\ref{genReadingFormula}, this means there exists a regular brick $M' \in \mods J(Q',W')$ with $A_k^+ d = \undim M'$ for which 
$g(\eta(Q')) + \varepsilon (A_k^+)^{tr} v \in D(M')$. Now write $\varepsilon (A_k^+)^{tr} v = \varepsilon\cdot\pi_{Q'}((A_k^+)^{tr} v) + \varepsilon t\cdot g(\eta(Q'))$. Adjusting $\varepsilon$ if necessary, we can assume $ \frac{\varepsilon}{1+\varepsilon t} > 0$ and thus
$$g(\eta(Q')) + \frac{\varepsilon}{1+\varepsilon t}\pi_{Q'}((A_k^+)^{tr} v) \in D(M').$$
That is, $\pi_{Q'}((A_k^+)^{tr} v) \in D_0(M')$.

We observe that if $g \cdot \undim S(k) > 0$, then $(A_k^+)^{tr} g \cdot \undim S(k) < 0$. In this case, the new mutation matrix $(A')_k^-$ associated to $\Lambda'$ is the same as the matrix $A_k^+$ (see \cite{BHIT_semi-invariant}). Thus an analogous argument shows that $\pi_Q((A_k^+)^{tr} v) \in D_0(M)$ for any $v \in D_0(M')$. Moreover, for all $v \in \RR^{|Q_0|-1} = g(\eta(Q))^\perp$, there exists $t \in \RR$ so that
	\begin{eqnarray*}
		\pi_Q \circ (A_k^+)^{tr} \circ \pi_{Q'}\circ (A_k^+)^{tr} v &=& \pi_Q \circ (A_k^+)^{tr} \left[t\cdot g(\eta(Q')) + (A_k^+)^{tr} v\right]\\
		&=& \pi_Q\left[t \cdot g(\eta(Q)) + v\right]\\
		&=& v
	\end{eqnarray*}
This completes the proof of (1). The proof of (2) is identical to that of (1) with all of the signs reversed.

To prove (3), we recall from Proposition \ref{prop:nullgToNullg} that we have $(A_k^+)^{tr} g(\eta(Q)) = (A_k^-)^{tr} g(\eta(Q)) = g(\eta(Q'))$. Now let $v \in D_0(S(k))$, so by definition there exists $\varepsilon > 0$ such that $g(\eta) + \varepsilon v \in D(S(k))$. Theorem~\ref{readingFormula}(3) then implies that $g(\eta') + \varepsilon (A_k^+)^{tr} v = g(\eta') + \varepsilon (a_k^-)^{tr} v \in D(S'(k))$, where $S'(k)$ denotes the simple $J(Q',W')$-module at the vertex $k$. In particular, this means $\pi_Q'\circ \psi(v) \in D_0(S'(k))$. An argument similar to that in case (1) therefore implies that $\pi_{Q'}\circ \psi(D_0(S(k))) = D_0(S'(k))$.

Returning now to the case where $M$ is a regular brick different from $S(k)$, we note that, for any $v \in \RR^n$, the sign of $(g(\eta(Q)) + \varepsilon v)\cdot \undim S(k)$ is the same as the sign of $v\cdot\undim S(k)$ for all $\varepsilon > 0$. Thus the two sides of $D_0(M)$ - namely $\{v \in D_0(M): v\cdot\undim S(k) > 0\}$ and $\{v \in D_0(M): v\cdot\undim S(k) < 0\}$ - can be treated separately using the same arguments as above.
\end{proof}

\begin{cor}\label{cor:thmD}
	Let $\Gamma$ be a Euclidean quiver and let $J(Q,W)$ be a cluster-tilted algebra of type $\Gamma$. Then $\fD_{reg}(J(Q,W))$ is piecewise-linearly isomorphic to $\fD_{reg}(K\Gamma)$.
\end{cor}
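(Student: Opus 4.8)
The plan is to deduce the corollary from Theorem~\ref{thmD} by iterating a single mutation. Since $J(Q,W)$ is cluster-tilted of type $\Gamma$, the quiver $Q$ is mutation equivalent to $\Gamma$; write $\Gamma = \mu_{k_t}\circ\cdots\circ\mu_{k_1}(Q)$ and set $(Q_0,W_0) = (Q,W)$ and $(Q_s,W_s) = \mu_{k_s}(Q_{s-1},W_{s-1})$ for $1\le s\le t$, so that $Q_t = \Gamma$ with $W_t$ right-equivalent to the zero potential, i.e.\ $J(Q_t,W_t) \cong K\Gamma$. For each $s$, Theorem~\ref{thmD} provides a map $\Phi_s := \pi_{Q_s}\circ\psi_s$, where $\psi_s$ is $(A_{k_s}^+)^{tr}$, $(A_{k_s}^-)^{tr}$, or the piecewise-linear map glued from these two along $S(k_s)^\perp$ (according to the sign of $g(\eta(Q_{s-1}))\cdot\undim S(k_s)$), such that $\fD_0(J(Q_s,W_s)) = \Phi_s\big(\fD_0(J(Q_{s-1},W_{s-1}))\big)$. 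Composing, we obtain $\fD_{reg}(J(Q,W)) = \fD_0(J(Q,W)) \cong \fD_0(J(\Gamma,W_t)) = \fD_{reg}(K\Gamma)$, where the last equality uses that $K\Gamma$ is tame hereditary and Theorem~\ref{thmA} identifies its regular and infinitesimal structures at $g(\eta)$.

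The two steps to verify are that each $\Phi_s$ is a piecewise-linear isomorphism in the sense of Definition~\ref{def:fan_isom}, and that the class of such isomorphisms is closed under composition. For the first: restricted to $g(\eta(Q_{s-1}))^\perp$, the map $\Phi_s$ is a piecewise-linear homeomorphism onto $g(\eta(Q_s))^\perp$ with at most two linear pieces (meeting along $S(k_s)^\perp \cap g(\eta(Q_{s-1}))^\perp$); indeed, the proof of Theorem~\ref{thmD} exhibits its inverse explicitly (via $\pi_{Q_{s-1}}\circ(A_{k_s}^{\pm})^{tr}$ and the identity $\pi_Q\circ(A_k^+)^{tr}\circ\pi_{Q'}\circ(A_k^+)^{tr} = \mathrm{id}$). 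Moreover, on each linearity domain $\Phi_s$ is an invertible linear map up to the projection $\pi_{Q_s}$, hence sends a cone $D_0(M)$ of $\fD_0(J(Q_{s-1},W_{s-1}))$ either to the cone $D_0(M')$ with $\undim M' = A_{k_s}^{\pm}\undim M$, or, when the cone is cut by $S(k_s)^\perp$, to a union of two such cones; in either case the images have the same dimension as $D_0(M)$ and lie in $\fD_0(J(Q_s,W_s))$ by Theorem~\ref{thmD}. The same argument applied to $\Phi_s^{-1}$ gives condition (2) of Definition~\ref{def:fan_isom}. For the second step: a composite of piecewise-linear homeomorphisms is a piecewise-linear homeomorphism, and the common refinement of the finitely many linearity domains of the factors is a finite decomposition on each of which the composite is linear; conditions (1) and (2) of Definition~\ref{def:fan_isom} then propagate, since the image of a cone is a finite union of cones of the same dimension at each stage, and a finite union of finite unions is finite.

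The main obstacle is the careful bookkeeping in the piecewise case $g(\eta(Q_{s-1}))\cdot\undim S(k_s) = 0$: there a cone $D_0(M)$ may genuinely be split by the hyperplane $S(k_s)^\perp$, and one must confirm that the two halves are carried (by the two linear branches of $\psi_s$) to cones $D_0(M_1), D_0(M_2)$ that together still sit inside $\fD_0(J(Q_s,W_s))$ with the correct dimension, and that lower-dimensional cones (faces) are handled consistently. This is precisely what the combination of Theorem~\ref{genReadingFormula} and the equality of wall-and-chamber structures in Theorem~\ref{thmD} is designed to supply, so no new idea beyond a bit of care is needed; the remaining verifications (homeomorphism, finiteness, dimension preservation under composition) are routine.
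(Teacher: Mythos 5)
Your proof is correct and follows the same route as the paper: iterate Theorem~\ref{thmD} over a mutation sequence connecting $Q$ to $\Gamma$, and use Theorem~\ref{thmA} to identify the regular and infinitesimal wall-and-chamber structures. The paper's proof is a one-liner (``a direct consequence of Theorem~\ref{thmA} and Theorem~\ref{thmD}''); you merely unpack it by verifying that each $\Phi_s$ is a piecewise-linear isomorphism in the sense of Definition~\ref{def:fan_isom} and that these compose, which is the expected routine bookkeeping. One small cosmetic point: the first equality $\fD_{reg}(J(Q,W)) = \fD_0(J(Q,W))$ that you use without comment is also an application of Theorem~\ref{thmA} (to the cluster-tilted algebra, not just to $K\Gamma$); you only cite Theorem~\ref{thmA} for the last equality, but it is needed at both ends of the chain.
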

	\begin{proof}
		This is a direct consequence of Theorem \ref{thmA} and Theorem \ref{thmD}.
	\end{proof}
	
Examples of the mutation of regular semi-invariant pictures described in Theorem \ref{thmD} and Corollary \ref{cor:thmD} are shown in Figures \ref{fig:mutation1} and \ref{fig:mutation2}.

{\scriptsize
\begin{center}
\begin{figure}
\begin{tikzpicture}[scale=.7] 

\begin{scope}\clip (-6, -4.75) rectangle (6,4);

	\draw[very thick, dashed] (0,0) circle [radius=2.2cm]; 
		\begin{scope} 
		\draw[very thick] (1.73,1) circle [radius=2.73cm];
		\clip (0,-2) rectangle (-4,4);
		\draw[very thick,dotted] (0,1) ellipse [x radius=2.8cm,y radius=2.1cm];
		\end{scope} 
		\begin{scope}[rotate=120] 
		\draw[very thick] (1.73,1) circle [radius=2.73cm];
		\clip (0,-2) rectangle (-4,4);
		\draw[very thick,dotted] (0,1) ellipse [x radius=2.8cm,y radius=2.1cm];
		\end{scope} 
		\begin{scope}[rotate=-120] 
		\draw [very thick] (1.73,1) circle [radius=2.73cm];
		\clip (0,-2) rectangle (-4,4);
		\draw[very thick,dotted] (0,1) ellipse [x radius=2.8cm,y radius=2.1cm];
		\end{scope} 
		
	\draw (4,3) node{$3$};
	\draw (-4,3) node{$2$};
	\draw (-2.9,2) node{$23$};
	\draw (2,2) node[above]{$341$};
	\draw (0.5,-3.25) node{$412$};
	\draw (2,-4.5) node{$41$};
	%
\begin{scope}[xshift=-50mm,yshift=-40mm,scale=1.5]
\coordinate (A) at (.5,1);
\coordinate (B) at (0,.5);
\coordinate (Bm) at (0.1,.6);
\coordinate (Bp) at (0.1,.4);
\coordinate (C) at (1,.5);
\coordinate (Cm) at (.9,.6);
\coordinate (D) at (0.5,0);
\coordinate (Dm) at (0.4,0.1);
\coordinate (Dr) at (0.6,0.1);
\draw[thick,->] (A)--(Bm);
\draw[thick,<-] (Bp)--(D);
\draw[thick,->] (C)--(Dr);
\draw[thick,->] (A)--(Cm);
\foreach \x/\y in {A/1,B/4,C/2,D/3}
\draw[fill,white] (\x) circle[radius=4pt];
\foreach \x/\y in {A/1,B/4,C/2,D/3}
\draw (\x) node{$\y$};
\end{scope}

\end{scope}

\begin{scope}[shift = {(10,0)}]

\begin{scope}[xshift=-50mm,yshift=-40mm,scale=1.5]
\coordinate (A) at (.5,1);
\coordinate (Ar) at (.6,.9);
\coordinate (Ap) at (.5,.8);
\coordinate (B) at (0,.5);
\coordinate (Bm) at (0.1,.6);
\coordinate (Bp) at (0.1,.4);
\coordinate (C) at (1,.5);
\coordinate (Cb) at (.9,.4);
\coordinate (Cm) at (.9,.6);
\coordinate (D) at (0.5,0);
\coordinate (Dp) at (0.5,0.2);
\coordinate (Dm) at (0.4,0.1);
\coordinate (Dr) at (0.6,0.1);
\draw[thick,->] (A)--(Bm);
\draw[thick,->] (Ap)--(Dp);
\draw[thick,<-] (Bp)--(D);
\draw[thick,<-] (Cb)--(Dr);
\draw[thick,<-] (Ar)--(Cm);
\foreach \x/\y in {A/1,B/4,C/2,D/3}
\draw[fill,white] (\x) circle[radius=4pt];
\foreach \x/\y in {A/1,B/4,C/2,D/3}
\draw (\x) node{$\y$};
\end{scope}

\begin{scope}\clip (-6, -4.75) rectangle (6,4);

\begin{scope}[xscale=-1]

		\begin{scope} 
		\begin{scope} \clip (0, -2) rectangle (5, 4);
		\draw[very thick] (1.73,1) circle [radius=2.73cm];
		\end{scope}\begin{scope}
		\clip (0, -2) rectangle (-4, 4);
		\draw[very thick] (1.73,1) circle [radius=2.73cm];
		\end{scope}\begin{scope}
		\clip (0,-2) rectangle (-4,4);
		\draw[very thick] (0,1) ellipse [x radius=2.8cm,y radius=2.1cm];
		\end{scope} \end{scope}
		\begin{scope} \clip (0, -2) rectangle (-5, 4);
		\draw[very thick,dotted] (-1.73,1) circle [radius=2.73cm];
		\end{scope}
		\begin{scope}
		\clip (0, -2) rectangle (4, 4);
		\draw[very thick] (-1.73,1) circle [radius=2.73cm];
		\end{scope}
		
		\begin{scope}[rotate=120]
		\begin{scope}
		\clip (0,-2) rectangle (-4,4);
		\end{scope}\end{scope}
		\begin{scope}[rotate=-120] 
				\begin{scope} \clip (0, -2) rectangle (5, 4);
		\draw[very thick] (1.73,1) circle [radius=2.73cm];
		\end{scope}\begin{scope}
		\clip (0, -2) rectangle (-4, 4);
		\draw[very thick,dotted] (1.73,1) circle [radius=2.73cm];
		\end{scope}\begin{scope}
		\clip (0,-2) rectangle (-4,4);
		\draw[very thick] (0,1) ellipse [x radius=2.8cm,y radius=2.1cm];
		\end{scope} \end{scope}
%
\end{scope}

\begin{scope}[rotate=-120]
\draw[very thick,dashed] (0,1) ellipse [x radius=2.8cm,y radius=2.1cm];
\end{scope}
\begin{scope}\clip (3, -0.35) rectangle (-0.8, 3);
\draw[very thick,dotted] (0,0) circle [radius = 2.2cm];
\end{scope}

\draw (4,3) node{$3$};
\draw (-4,3) node{$2$};
\draw (-2.9,2) node{$412$};
\draw (2,2.5) node[right]{$23$};
\draw (2,-4.5) node{$41$};
\draw (2,1.4) node{$2341$};
\draw (0, 2.1) node[above]{$3412$};
\end{scope}

\end{scope}

\end{tikzpicture}
\caption{The regular semi-invariant pictures for the quivers $1\rightarrow 2 \rightarrow 3 \rightarrow 4 \leftarrow 1$ (left) and $1 \rightarrow 3 \rightarrow 4 \leftarrow 1 \leftarrow 2 \leftarrow 3$ (right).  Each wall is labeled with the support (both vertices and arrows) of its brick except the null wall, which is dashed. For example, 3412 in the picture on the right is shorthand for the brick which has dimension 1 at each vertex and is nonzero only on the arrows $3 \rightarrow 4 \leftarrow 1 \leftarrow 2$.} These pictures are piecewise-linearly isomorphic since the quivers are related by a mutation at the vertex 2. The shading of each (half-)wall in the second picture matches its preimage under the mutation formula.\label{fig:mutation1}
\end{figure}
\end{center}
}

{\scriptsize
\begin{center}
\begin{figure}
\begin{tikzpicture}[scale=.7] 

\begin{scope}\clip (-6, -4.75) rectangle (6,4);

\begin{scope}[xshift=-50mm,yshift=-43mm,scale=1.5]
\coordinate (A) at (.5,1);
\coordinate (Ab) at (.4,.9);
\coordinate (Ar) at (.6,.9);
\coordinate (Ap) at (.5,.8);
\coordinate (B) at (0,.5);
\coordinate (Bm) at (0.1,.6);
\coordinate (Bp) at (0.1,.4);
\coordinate (C) at (1,.5);
\coordinate (Cb) at (.9,.4);
\coordinate (Cm) at (.9,.6);
\coordinate (D) at (0.5,0);
\coordinate (Dp) at (0.5,0.2);
\coordinate (Dm) at (0.4,0.1);
\coordinate (Dr) at (0.6,0.1);
\draw[thick,<-] (Ab)--(Bm);
\draw[thick,->] (Ap)--(Dp);
\draw[thick,->] (Bp)--(Dm);
\draw[thick,<-] (Cb)--(Dr);
\draw[thick,<-] (Ar)--(Cm);
\foreach \x/\y in {A/4,B/3,C/2,D/1}
\draw[fill,white] (\x) circle[radius=4pt];
\foreach \x/\y in {A/4,B/3,C/2,D/1}
\draw (\x) node{$\y$};
\end{scope}

\begin{scope}[xscale=-1]

		\begin{scope} 
		\begin{scope} \clip (0, -2) rectangle (5, 4);
		\draw[very thick] (1.73,1) circle [radius=2.73cm];
		\end{scope}\begin{scope}
		\clip (0, -2) rectangle (-4, 4);
		\draw[very thick] (1.73,1) circle [radius=2.73cm];
		\end{scope}\begin{scope}
		\clip (0,-2) rectangle (-4,4);
		\draw[very thick,dotted] (0,1) ellipse [x radius=2.8cm,y radius=2.1cm];
		\end{scope} \end{scope}
		\begin{scope} \clip (0, -2) rectangle (-5, 4);
		\draw[very thick] (-1.73,1) circle [radius=2.73cm];
		\end{scope}
		\begin{scope}
		\clip (0, -2) rectangle (4, 4);
		\draw[very thick] (-1.73,1) circle [radius=2.73cm];
		\end{scope}
		\begin{scope}[rotate=120]
		\begin{scope}
		\clip (0,-2) rectangle (-4,4);
		\end{scope}\end{scope}
		\begin{scope}[rotate=-120] 
		\begin{scope} \clip (0, -2) rectangle (5, 4);
		\draw[very thick] (1.73,1) circle [radius=2.73cm];
		\end{scope}\begin{scope}
		\clip (0, -2) rectangle (-4, 4);
		\draw[very thick] (1.73,1) circle [radius=2.73cm];
		\end{scope}\begin{scope}
		\clip (0,-2) rectangle (-4,4);
		\draw[very thick,dotted] (0,1) ellipse [x radius=2.8cm,y radius=2.1cm];
		\end{scope} \end{scope}
%

\end{scope}
\begin{scope}[rotate=-120]
\draw[very thick,dashed] (0,1) ellipse [x radius=2.8cm,y radius=2.1cm];
\end{scope}
	
\begin{scope}\clip (3, -0.35) rectangle (-0.8, 3);
\draw[very thick,dotted] (0,0) circle [radius = 2.2cm];
\end{scope}

\draw (4,3) node{$31$};
\draw (-4,3) node{$2$};
\draw (-2.9,2) node{$24$};
\draw (2,2.5) node[right]{$312$};
\draw (2,-4.5) node{$4$};
\draw (2, 1.4) node{$4312$};
\draw (0, 2.1) node[above]{$2431$};

\end{scope}
\begin{scope}[shift = {(10,0)}]

\begin{scope}[xshift=-50mm,yshift=-43mm,scale=1.5]
\coordinate (A) at (.5,1);
\coordinate (Ar) at (.6,.9);
\coordinate (Ap) at (.5,.8);
\coordinate (B) at (0,.5);
\coordinate (B1) at (0.05,.5);
\coordinate (B2) at (.02,.32);
\coordinate (Bm) at (0.1,.6);
\coordinate (Bp) at (0.1,.4);
\coordinate (C) at (1,.5);
\coordinate (Cb) at (.9,.4);
\coordinate (Cm) at (.9,.6);
\coordinate (D) at (0.5,0);
\coordinate (D1) at (0.4,0.15);
\coordinate (D2) at (0.3,0.05);
\coordinate (Dp) at (0.5,0.2);
\coordinate (Dm) at (0.4,0.1);
\coordinate (Dr) at (0.6,0.1);
\draw[thick,->] (A)--(Bm);
\draw[thick,<-] (Ap)--(Dp);
\draw[thick,->] (B1)--(D1);
\draw[thick,->] (B2)--(D2);
\draw[thick,->] (Ar)--(Cm);
\foreach \x/\y in {A/4,B/3,C/2,D/1}
\draw[fill,white] (\x) circle[radius=4pt];
\foreach \x/\y in {A/4,B/3,C/2,D/1}
\draw (\x) node{$\y$};
\end{scope}

\begin{scope}\clip (-6, -4.75) rectangle (6,4);

\begin{scope}[xscale=-1]

		\begin{scope} 
		\begin{scope}\clip (5, -3) rectangle (2, 4);
		\draw[very thick] (1.73,1) circle [radius=2.73cm];
		\end{scope}\begin{scope}\clip (2, 4) rectangle (-1.5, 0.55);
		\draw[very thick] (1.73,1) circle [radius=2.73cm];
		\end{scope}\begin{scope}
		\clip (-1, 0.55) rectangle (2.7, -3);
		\draw[very thick,dotted] (1.73,1) circle [radius=2.73cm];
		\end{scope}\end{scope}
		\begin{scope} \clip (0, -2) rectangle (-5, 4);
		\draw[very thick,dashed] (-1.73,1) circle [radius=2.73cm];
		\end{scope}
		\begin{scope}
		\clip (0, -2) rectangle (4, 4);
		\draw[very thick,dashed] (-1.73,1) circle [radius=2.73cm];
		\end{scope}
		\begin{scope}[rotate=120]
		\begin{scope}
		\clip (0,-2) rectangle (-4,4);
		\end{scope}\end{scope}
		\begin{scope}[rotate=-120] 
		\begin{scope} \clip (0, -2) rectangle (5, 4);
		\draw[very thick] (1.73,1) circle [radius=2.73cm];
		\end{scope}\begin{scope}
		\clip (0, -2) rectangle (-4, 4);
		\draw[very thick] (1.73,1) circle [radius=2.73cm];
		\end{scope}\end{scope}
%

\end{scope}
\begin{scope}[rotate=-120]
\draw[very thick] (0,1) ellipse [x radius=2.8cm,y radius=2.1cm];
\end{scope}

\begin{scope}[rotate=120]\clip (0,-2) rectangle (-4,4);
\draw[very thick] (0,1) ellipse [x radius=2.8cm,y radius=2.1cm];
\end{scope}

\begin{scope}\clip (3, -0.35) rectangle (0.8, 3);
\draw[very thick,dotted] (0,0) circle [radius = 2.2cm];
\end{scope}

\begin{scope}\clip (3, -0.35) rectangle (-1.4, -3);
\draw[very thick,dotted] (0,0) circle [radius = 2.2cm];
\end{scope}

\begin{scope}\clip (0, -3.5)rectangle(2.5, 0);
\draw[very thick,dotted](0,0) ellipse [x radius = 2.2cm, y radius = 3.12cm];
\end{scope}

\draw (-4,3) node{$2$};
\draw (-2,-2.8) node{$42$};
\draw (-1.7,-0.5) node{$431$};
\draw (2,-4.5) node{$4$};
\draw (3.1, 0.2) node[right]{$314$};
\draw (2.1,1.3) node{$3142$};
\draw (0, -1.5) node[below]{$2431$};
\draw (3.75, -3.5) node{$43142$};

\draw[->] (3.2,-3.2)--(1.75,-2.25);

\end{scope}
\end{scope}
\end{tikzpicture}
\caption{The regular semi-invariant pictures for the quivers $4\rightarrow 1 \leftarrow 3 \rightarrow 4 \leftarrow 2 \leftarrow 1$ (left) and $4\rightarrow 3 \rightrightarrows 1 \rightarrow 4 \rightarrow 2$ (right). As in Figure~\ref{fig:mutation1}, each wall is labeled with the support (both vertices and arrows)} of its brick except the null wall, which is dashed. These pictures are piecewise-linearly isomorphic since the quivers are related by a mutation at the vertex 4. The shading of each (half-)wall in the second picture matches its preimage under the mutation formula. Note also that the left picture is the same as the right picture of Figure \ref{fig:mutation1} since the quivers are related by mutation at vertex 3 (vertex 4 in Figure \ref{fig:mutation1}) and $S(3)$ is not a regular module.\label{fig:mutation2}
\end{figure}
\end{center}
}


\section{Discussion and future work}
It is unclear how the definition of support regular rigid and the results of Section \ref{sec:regular rigid} extend to the cluster-tilted case. If we consider, for example, the quiver $1 \rightarrow 3 \rightarrow 4 \leftarrow 1 \leftarrow 2 \leftarrow 3$ and its regular semi-invariant picture (Figure \ref{fig:mutation1}, right). The vertices lying in the corners of the regions/chambers now come in five types, only the first three of which appear in the hereditary case:
\begin{enumerate}
	\item $g$-vectors of regular modules, projected to $g(\eta)^\perp$: for example, the intersection of $D_0(23)$ with the null wall is the ray through $g_0(3)$.
	\item Projective vectors: for example, the intersection of $D_0(2)$ and $D_0(41)$ on the positive side of $D_0(3)$ is the ray through $p(3)$.
	\item Negative projective vectors: for example, the intersection of $D_0(2)$ and $D_0(41)$ on the negative side of $D_0(3)$ is the ray through $-p(3)$.
	\item Negative $g$-vectors of modules which are both regular and projective: for example, the intersection of $D_0(41)$ and $D_0(3)$ on the negative side of $D_0(2)$ is the ray through $(0,-1,0,0) = -g_0(214)$. The module $214$ is regular and equal to $P(2)$.
	\item $g$-vectors of non-regular modules which are sent to regular modules by $\tau$: for example, the intersection of $D_0(41), D_0(23)$, and $D_0(2341)$ is the ray through $g_0(34)$. The module $34$ is not regular since $g(\eta)\cdot \undim(34) = -1$. However, $\tau(34) \cong 2$, which is regular.
\end{enumerate}

The mutation formulas imply that the mutation of a support $\tau$-rigid object is again support $\tau$-rigid. Thus the appropriate generalization of support regular rigid must contain these five cases with a notion of compatibility preserved under the mutation. It remains an open problem to characterize such a notion.


\bibliographystyle{amsalpha}
{\footnotesize
\bibliography{ClusterNullReferences}}

\end{document}